\newcommand{\mz}{\ensuremath{\mathbb Z}}
\newcommand{\mr}{\ensuremath{\mathbb R}}
\newcommand{\mh}{\ensuremath{\mathbb H}}
\newcommand{\mc}{\ensuremath{\mathbb C}}
\newcommand{\shortmod}{\ensuremath{\negthickspace \negthickspace \negthickspace \pmod}}
\newcommand{\half}{\ensuremath{ \frac{1}{2}}}
\newcommand{\intR}{\int_{-\infty}^{\infty}}
\newcommand{\thalf}{\tfrac12}
\newcommand{\sumstar}{\sideset{}{^*}\sum}
\newcommand{\e}[2]{e\left(\frac{#1}{#2}\right)}
\theoremstyle{plain}		
	\newtheorem{mytheo}{Theorem} [section]
	\newtheorem{myprop}[mytheo]{Proposition}
     \newtheorem{mylemma}[mytheo]{Lemma}
	\newtheorem{myconj}[mytheo]{Conjecture}
\theoremstyle{remark}
\numberwithin{equation}{section}
\begin{document}

% \author{Sheng-Chi Liu} 
% \address{Department of Mathematics \\
% 	  Texas A\&M University \\
% 	  College Station \\
% 	  TX 77843-3368 \\
% 		U.S.A.}
% \email{scliu@math.tamu.edu}
% 
\author{Matthew P. Young} 
\address{Department of Mathematics \\
	  Texas A\&M University \\
	  College Station \\
	  TX 77843-3368 \\
		U.S.A.}
\email{myoung@math.tamu.edu}
\thanks{This material is based upon work supported by the National Science Foundation under agreement No. DMS-1101261.  Any opinions, findings and conclusions or recommendations expressed in this material are those of the authors and do not necessarily reflect the views of the National Science Foundation.}

\begin{abstract}
We consider some analogs of the quantum unique ergodicity conjecture for geodesics, horocycles, or ``shrinking'' families of sets.  In particular, we prove the analog of the QUE conjecture for Eisenstein series restricted to the infinite geodesic connecting $0$ and $\infty$ inside the modular surface.

\end{abstract}
 \title{The quantum unique ergodicity conjecture for thin sets}
 \maketitle
 \section{Introduction}
The quantum unique ergodicity (QUE) conjecture of Rudnick and Sarnak \cite{RudnickSarnak} is an equidistribution statement for Hecke-Maass forms of large Laplace eigenvalue on the modular surface $\Gamma \backslash \mh$, $\Gamma = PSL_2(\mz)$.  It says that if $U(z)$ is a Hecke-Maass form 
and $\phi$ is a fixed, smooth and compactly-supported function on $\Gamma \backslash \mathbb{H}$, then
\begin{equation}
\label{eq:QUE}
 \int_{\Gamma \backslash \mathbb{H}} |U(z)|^2 \phi(z) \frac{3}{\pi} \frac{dx dy}{y^2} \rightarrow %\Big(\int_{\Gamma \backslash \mathbb{H}} |U(z)|^2  \frac{3}{\pi} \frac{dx dy}{y^2} \Big) \Big( 
\int_{\Gamma \backslash \mathbb{H}} \phi(z)  \frac{3}{\pi} \frac{dx dy}{y^2} %\Big)
.
\end{equation}
as the Laplace eigenvalue of $U$ tends to infinity, provided $U$ is  normalized with probability measure
\begin{equation}
\label{eq:L2normalization}
 \int_{\Gamma \backslash \mathbb{H}} |U(z)|^2 \frac{3}{\pi} \frac{dx dy}{y^2} =1.
\end{equation}
Lindenstrauss \cite{Lindenstrauss} recently proved the QUE conjecture in the compact setting (and proved it in the non-compact case except for the possibility of ``escape of mass'' at the cusp), and Soundararajan \cite{SoundQUE} completed the proof in this non-compact case.  The mass equidistribution conjecture \cite{RudnickSarnak} is the analog of the QUE conjecture where $U(z) = y^{k/2} f(z)$ for $f$ a weight $k$ holomorphic Hecke cusp form, with $k \rightarrow \infty$.  It was proven by Holowinsky and Soundararajan \cite{HolowinskySound}.

In this paper, we investigate the possibility of equidistribution of $U(z)$ restriced to some ``thin'' sets, such as a geodesic, a horocycle, or a shrinking family of discs.  Since the rate of convergence in \eqref{eq:QUE} is either not known (in the Maass case) or rather slow (in the holomorphic case, where the error term is a small power of $\log{k}$), it seems unlikely using current technology to prove QUE for the restrictions of Hecke cusp forms.  Instead, one of our goals has been to find natural and (presumably) reliable conditions that imply QUE in these extreme cases.  For Maass forms, questions of this type were raised and studied numerically by Hejhal and Rackner \cite{HejhalRackner}.  Very recently, Ghosh, Reznikov, and Sarnak \cite{GRS} have proven strong upper and lower bounds for the $L^2$ norm of Hecke-Maass cusp forms restricted to geodesics or horocycles, with the application of proving (on the Lindel\"{o}f hypothesis) that the number of nodal domains goes to infinity with the Laplace eigenvalue.  

Quantum ergodicity states that $\eqref{eq:QUE}$ holds for a density one subsequence of $U$'s \cite{ZelditchQE} \cite{ZelditchEisenstein}, and has been extended in many different directions.  The analog of quantum ergodicity  for restricted eigenfunctions has also recently been studied, for instance, see \cite{TothZelditchI} \cite{TothZelditchII}. 

One particularly interesting example, having extra structure, is the vertical geodesic connecting $0$ and $i \infty$.  
\begin{myconj}
\label{conj:geodesicQUE}
 Suppose that $\psi: \mr^{+} \rightarrow \mr$ is a smooth, compactly-supported function.  Then
\begin{equation}
\label{eq:geodesicQUE}
 \lim_{j \rightarrow \infty} \int_0^{\infty} |u_j(iy)|^2 \psi(y) \frac{dy}{y} =  \int_0^{\infty} 2\psi(y) \frac{dy}{y},
\end{equation}
where $u_j$ runs over the even Hecke-Maass forms, normalized by \eqref{eq:L2normalization}.  Furthermore,
\begin{equation}
\label{eq:geodesicMassEquidistribution}
 \lim_{k \rightarrow \infty} \int_0^{\infty} y^k |f(iy)|^2 \psi(y) \frac{dy}{y} =  \int_0^{\infty} \psi(y) \frac{dy}{y},
\end{equation}
where $f(z)$ runs over weight $k$ holomorphic Hecke cusp forms, $L^2$-normalized with probability measure.
\end{myconj}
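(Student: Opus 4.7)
The natural plan is to combine the Fourier expansion of $u_j$ at the geodesic $x=0$ with Mellin inversion in $\psi$, reducing both statements to bounds on symmetric-square $L$-functions attached to $u_j$ (or $f$).

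For \eqref{eq:geodesicQUE}, I would start from the Fourier expansion of an even Hecke-Maass form restricted to the imaginary axis,
\[
u_j(iy) = 2\sqrt{y}\sum_{n=1}^\infty \rho_j(n) K_{it_j}(2\pi n y).
\]
Squaring and integrating against $\psi(y)\frac{dy}{y}$ produces a double sum over $m,n$ of $\rho_j(m)\overline{\rho_j(n)}$ against a Bessel integral. Inserting Mellin inversion $\psi(y)=\frac{1}{2\pi i}\int_{(\sigma)}\tilde\psi(s) y^{-s}ds$ and swapping orders, the $y$-integral reduces to the Mellin transform of a product of $K$-Bessel functions, which has a classical closed form (Gradshteyn--Ryzhik 6.576, or via the Mellin--Barnes representation of $K_{it_j}$) as an explicit ratio of Gamma functions involving $\Gamma(\tfrac{s}{2}\pm it_j)$ and $\Gamma(\tfrac{s}{2})^2$. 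Regrouping the double Dirichlet series by Hecke multiplicativity, I expect an identity of the schematic form
\[
\int_0^\infty |u_j(iy)|^2 \psi(y)\frac{dy}{y} = \frac{1}{2\pi i}\int_{(\sigma)} \tilde\psi(-s)\, G(s,t_j)\, R_j(s)\, ds,
\]
where $G$ is explicit in Gamma functions and $R_j(s)$ is built from the Rankin--Selberg $L$-function $L(s,u_j\otimes u_j)=\zeta(s)L(s,\mathrm{sym}^2 u_j)/\zeta(2s)$.

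The main term should come from shifting the contour across a distinguished point. Using the Rankin--Selberg formula that expresses the normalization \eqref{eq:L2normalization} in terms of $L(1,\mathrm{sym}^2 u_j)$ and explicit Gamma factors, the main contribution is expected to collapse to the constant $2\tilde\psi(0)=2\int_0^\infty \psi(y)\frac{dy}{y}$, independent of $j$; the residual contour integral is the error.

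The hard part will be controlling this error as $t_j\to\infty$: after Stirling on $G$ and the rapid decay of $\tilde\psi$ on vertical lines, the problem reduces to a subconvex (or Lindel\"of) estimate for $L(\tfrac12+it,\mathrm{sym}^2 u_j)$ in the spectral aspect, uniform in $t$ of modest size. Such a bound is not currently available, which is consistent with \eqref{eq:geodesicQUE} being stated as a conjecture. The holomorphic case \eqref{eq:geodesicMassEquidistribution} admits a parallel strategy, with the Fourier expansion $f(iy)=\sum_n a_f(n)n^{(k-1)/2}e^{-2\pi ny}$ replacing $K$-Bessel functions by exponentials, so the $y$-integrals become $\Gamma(k-s)(2\pi(m+n))^{s-k}$; the main obstacle is again subconvexity for $L(s,\mathrm{sym}^2 f)$, now in the weight aspect.
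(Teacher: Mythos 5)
Your proposal breaks at the step ``regrouping the double Dirichlet series by Hecke multiplicativity'' into a single $R_j(s)$ built from $\zeta(s)L(s,\mathrm{sym}^2 u_j)/\zeta(2s)$. The Mellin transform $\int_0^\infty K_{it_j}(2\pi m y)K_{it_j}(2\pi n y)\,y^{s}\frac{dy}{y}$ is a pure ratio of Gamma functions only on the diagonal $m=n$; for $m\neq n$ it carries a hypergeometric factor in $m/n$ (that is what GR 6.576.4 actually gives), so the double sum does \emph{not} collapse to the Rankin--Selberg $L$-function. The terms with $m\neq n$ survive and, after the Bessel localization $m,n\ll t_j^{1+\varepsilon}$, they are precisely shifted convolution sums $\sum_{n\approx t_j}\lambda_j(n)\lambda_j(n+h)$ with small shifts $h$ (equivalently, the off-diagonal of a shifted second moment of the \emph{standard} $L(s,u_j)$, since the Mellin transform of $u_j(iy)$ along the geodesic is the completed degree-two $L$-function, not $\mathrm{sym}^2$). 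Controlling these is the actual open problem, and it is not implied by Lindel\"of or subconvexity for $L(\tfrac12+it,\mathrm{sym}^2 u_j)$, nor (as the paper notes, following Ghosh--Reznikov--Sarnak) by QUE or GRH; so your identification of the sole obstruction as sym$^2$ subconvexity is a misdiagnosis. The same issue recurs in the holomorphic case, where the weight $\Gamma(k-s)(2\pi(m+n))^{s-k}$ depends on $m+n$ and the terms with $m\neq n$ become $\sum_{n\approx k}\lambda(n)\lambda(n+h)$ with $h$ up to about $\sqrt{k}$, which is harder still.

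Your treatment of the diagonal itself is broadly sound and close in spirit to the paper's derivation: the $m=n$ terms give the Rankin--Selberg series, the contour shift picks up the pole, and the normalization $|\rho_j(1)|^2\asymp \cosh(\pi t_j)/L(1,\mathrm{sym}^2 u_j)$ makes the residue collapse to $2\widetilde{\psi}(0)$; bounding the shifted contour via sym$^2$ bounds is a legitimate (conditional) alternative to what the paper does, which is to recognize the diagonal as $(1+\lambda(-1))\langle |u_j|^2, H(\cdot,\psi)\rangle$ with $H$ essentially an incomplete Eisenstein series and then apply the \emph{unconditional} QUE theorem. But note the statement you are addressing is a conjecture: the paper does not prove it, it derives the prediction by the CFKRS moment recipe, which is exactly a principled way of discarding the off-diagonal terms you have silently dropped, and then uses QUE to evaluate the resulting main term. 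As written, your argument asserts an identity that is false and therefore does not even constitute a complete heuristic, let alone a proof.
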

Remarks.  Conjecture \ref{conj:geodesicQUE} says that QUE should hold for functions restricted to the geodesic joining $0$ and $i \infty$.  Notice that even Maass forms are predicted to be twice as big on this geodesic as on the fundamental domain.
%We should emphasize that this $2$ apparently arises because we are comparing normalizations on two different spaces.  
One might naturally speculate that the factor of $2$ in \eqref{eq:geodesicQUE} arises from the fact that the odd Maass forms all vanish along the geodesic and so the even Maass forms have to be twice as large (on average) to account for this disparity.  One can see this type of behavior from the Selberg pre-trace formula where $\sum_{j} |u_j(z)|^2 h(t_j) \sim \sum_{j} h(t_j)$ for $z \in \mathbb{H}$ a fixed non-elliptic point, and for certain classes of weight functions $h$; cf. p.179 of \cite{IwaniecSpectral}.  The point is that this spectral sum includes both the even and odd Maass forms, but if $u_j(z) = 0$ for all the odd forms, say, then $|u_j(z)|^2$ has to be $2$ on average over the even forms.
In the course of the derivation, the only difference between \eqref{eq:geodesicQUE} and \eqref{eq:geodesicMassEquidistribution} is that the Fourier expansion of a Maass form has Fourier coefficients at both negative and positive integers.  We do not have a conjecture for the precise size of the error terms in these two asymptotics, but we expect a power saving based on the integral moment conjectures of Conrey, Farmer, Keating, Rubinstein, and Snaith \cite{CFKRS}.  It would be interesting to prove an Omega-type result for the error terms.

Conjecture \ref{conj:geodesicQUE} is apparently more difficult than the usual QUE conjecture.  Geometrically, since the domain is $1$-dimensional (hence, of measure $0$), it is not obviously implied by \eqref{eq:QUE}.  
Actually, in the Maass case, \eqref{eq:geodesicQUE} is tantalizingly close to the usual QUE.  The reason is that after applying harmonic analysis on the positive reals, \eqref{eq:geodesicQUE} becomes related to the second moment of the $L$-function associated to $U$; see \eqref{eq:secondmomentformulaforIU} below for the exact formula.  The problem then becomes related to solving the shifted convolution problem $\sum_{n \approx t_j} \lambda(n) \lambda(n+m)$ with $m \ll t_j^{\varepsilon}$, and with a smooth weight function.  Ghosh, Reznikov, and Sarnak explain (see \cite{GRS} Appendix A) that QUE only implies bounds on shifted convolution sums with a certain class of weight functions, basically those arising from incomplete Poincare series, and this class is not rich enough for many purposes.  The holomorphic case \eqref{eq:geodesicMassEquidistribution} is quite different from the Maass case.  In this case, Luo and Sarnak showed that the mass equidistribution theorem does imply a bound on the shifted convolution sum with an arbitrary fixed smooth weight function \cite{LuoSarnakQUE}, but unfortunately \eqref{eq:geodesicMassEquidistribution} reduces to a much more difficult shifted convolution sum, one roughly of the form $S=\sum_{m \approx \sqrt{k}} \sum_{n \approx k} \lambda(n) \lambda(n+m)$, and it is required to show $S = o(k)$.

%It turns out that \eqref{eq:geodesicQUE} and \eqref{eq:geodesicMassEquidistribution} are of very different levels of difficulty.  
See \cite{GRS} for upper and lower bounds on integrals of the form \eqref{eq:geodesicQUE}; %which also utilizes the QUE theorem; 
these bounds are close to the right order of magnitude.  
For the holomorphic case, see \cite{BKY} for an upper bound on \eqref{eq:geodesicMassEquidistribution} of the size $k^{1/4 + \varepsilon}$ while a bound of the form $k^{\varepsilon}$ would prove that $L(f, 1/2) \ll k^{1/4 + \varepsilon}$ which would be stronger than any known subconvexity bound for any $L$-function (here the conductor is $k^2$).

It is interesting to compare the tools needed to derive the usual QUE conjecture with the geodesic version.  We briefly sketch 
a standard way to approach the proof of \eqref{eq:QUE}; see Section \ref{section:tripleproductapproach} for a more in-depth discussion. Using the spectral decomposition, write $\langle |U|^2, \phi \rangle = \langle |U|^2, \frac{3}{\pi} \rangle \langle 1, \phi \rangle + \sum_{j} \langle |U|^2, u_j \rangle \langle u_j, \phi \rangle + \dots$ with the dots indicating the continuous spectrum.  The constant eigenfunction provides a main term.  An integration by parts argument shows that the spectral coefficients satisfy the bound $|\langle u_j, \phi \rangle| \leq C(A) (1/4 + t_j^2)^{-A}$ where $t_j$ is the spectral parameter of $u_j$, and $A > 0$ arbitrary, so that the 
spectral sum can be effectively truncated almost instantly.  Watson's formula \cite{Watson} relates $|\langle |U|^2, u_j \rangle |^2$ to the central value of a triple product $L$-function.  The convexity bound for the $L$-function then barely fails to prove \eqref{eq:QUE} and a subconvexity bound would succeed.  The Lindel\"{o}f hypothesis is known to give an 
optimal error term by work of Luo and Sarnak \cite{LuoSarnakQUE} \cite{LuoSarnakQuantumVariance}.  It is a common theme, nicely illustrated in this example, that subconvexity and equidistribution are often closely 
related (almost equivalent).  %However, thin equidistribution of the type studied in this paper is not known to be directly related to subconvexity.  Instead, we require asymptotics for mean values of $L$-functions or robust shifted convolution sum estimates.  

As in the sketch of the proof of QUE outlined above, one is naturally inclined to apply the spectral decomposition to approach Conjecture \ref{conj:geodesicQUE}, but this is not effective.
The problem can already be anticipated in \eqref{eq:geodesicQUE} where the constant eigenfunction only gives half the main term so one must expect to extract a main term from the spectral sum\footnote{For odd Maass forms, the constant term gives a main term which must be cancelled by the spectral sum.}.  The other issue is that the spectral sum is very ``long'' (the 
spectral coefficients do not immediately rapidly decay anymore) and there must be cancellation in the sum over the spectral coefficients.  The {\em squares} of the spectral coefficients are given as triple product $L$-functions, and the 
change in sign of these coefficients is difficult to detect.  Rather than using the spectral decomposition, one may use Parseval (for $\mathbb{R}^+$) to relate the geodesic integral to a second moment of $L$-functions.  Lindel\"{o}f here gives an upper bound of the right order of magnitude but the asymptotic is a more subtle issue and is not known to follow from GRH.  
See \cite{HarperZeta} following \cite{SoundZeta} for the best results on 
upper bounds on moments assuming GRH.  The five authors'  conjecture \cite{CFKRS} predicts an asymptotic for this second moment of $L$-functions.  
We noticed that the main term in this asymptotic could then be expressed as $\langle |U|^2, H(z,\psi) \rangle$ for some nice $\Gamma$-invariant function $H$ (almost an incomplete Eisenstein series-- see \eqref{eq:Hformula} for the exact formula).  Finally, we use the 
QUE/mass equidistribution theorems to relate this inner product to $\langle 1, H(z,\psi) \rangle$ which then leads immediately to \eqref{eq:geodesicQUE} and \eqref{eq:geodesicMassEquidistribution}.  

Our main (unconditional) result in this paper is the following
\begin{mytheo}
\label{thm:geodesicEisenstein}
Suppose $\psi$ is a smooth, compactly-supported function on $\mathbb{R}^+$.  We have
\begin{equation}
\frac{\pi}{3 \log(1/4 +T^2)} \int_0^{\infty}  |E(iy, 1/2 + iT)|^2 \psi(y) \frac{dy}{y} \sim  \int_0^{\infty} 2 \psi(y) \frac{dy}{y},
\end{equation}
as $T \rightarrow \infty$.
\end{mytheo}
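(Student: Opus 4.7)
The plan follows the strategy outlined in the introduction, which becomes fully unconditional for Eisenstein series because of their explicit Fourier coefficients and the availability of the Luo--Sarnak QUE theorem. I would substitute the Fourier expansion of $E(iy, 1/2+iT)$ at $x = 0$:
$$E(iy, 1/2+iT) = a_0(y) + 2\sum_{n \geq 1} a_n(y), \qquad a_0(y) = y^{1/2+iT} + \phi(1/2+iT)\,y^{1/2-iT},$$
with $a_n(y) = \frac{2\sqrt{y}}{\xi(1+2iT)}\tau_{iT}(n)K_{iT}(2\pi n y)$ for $n \geq 1$, where $\tau_{iT}(n) = \sum_{ab=n}(a/b)^{iT}$. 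Comparing the pointwise expansion of $|a_0 + 2\sum a_n|^2$ with the Parseval horocycle identity $\int_0^1|E(x+iy)|^2\,dx = |a_0|^2 + 2\sum_{n\geq 1}|a_n|^2$ yields the decomposition
$$|E(iy)|^2 = \int_0^1|E(x+iy)|^2\,dx + 2\sum_{n\geq 1}|a_n|^2 + 4\mathrm{Re}\bigl(a_0\sum_{n\geq 1}\overline{a_n}\bigr) + 4\sum_{\substack{m,n\geq 1 \\ m\neq n}}a_m\overline{a_n}.$$
The extra diagonal $2\sum|a_n|^2$ and off-diagonal sums arise because on the geodesic $x=0$ each pair $\{n,-n\}$ of Fourier modes sums coherently, whereas Parseval on the horocycle counts each $|a_n|^2$ only once.

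For the first two ``diagonal'' terms, the horocycle piece is unfolded against the incomplete Eisenstein series $P_\psi(z) = \sum_{\gamma\in\Gamma_\infty\backslash\Gamma} y(\gamma z)\psi(y(\gamma z))$:
$$\int_0^\infty\psi(y)\int_0^1|E(x+iy, 1/2+iT)|^2\,dx\,\frac{dy}{y} = \int_{\Gamma\backslash\mathbb{H}}|E(z, 1/2+iT)|^2 P_\psi(z)\,d\mu(z).$$
The Luo--Sarnak QUE theorem for Eisenstein series yields this as $\frac{3\log(1/4+T^2)}{\pi}\int_0^\infty\psi(y)\,dy/y + o(\log T)$. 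Since $\int|a_0|^2\psi\,dy/y = 2\int\psi(y)\,dy + O(T^{-A})$ is $O(1)$, the extra diagonal $\int 2\sum|a_n|^2\psi\,dy/y$ contributes the same $\frac{3\log(1/4+T^2)}{\pi}\int\psi\,dy/y$ asymptotic, and the two together give $\frac{6\log(1/4+T^2)}{\pi}\int\psi\,dy/y + o(\log T)$, which yields the claimed limit on multiplication by $\pi/(3\log(1/4+T^2))$.

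The main technical obstacle is bounding the cross and off-diagonal terms by $o(\log T)$. For the cross term $4\mathrm{Re}(a_0\sum\overline{a_n})$, inserting the Mellin--Barnes representation of $K_{iT}$ and shifting the contour past the nearest poles (from $\zeta(s\mp iT)$ at $s = 1\pm iT$) yields residues of magnitude $|\Gamma(1/2\pm iT)||\zeta(1\pm 2iT)|/|\xi(1-2iT)|$; crucially, the $\Gamma$-ratios $\Gamma(1/2+iT)/\Gamma(1/2-iT)$ and $\zeta$-ratios $\zeta(1+2iT)/\zeta(1-2iT)$ each have unit modulus, so these residues are $O(1)$, and the remaining contour integrals are exponentially small. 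For the off-diagonal $4\sum_{m\neq n}a_m\overline{a_n}$, via Mellin--Barnes and the identities $\sum\tau_{iT}(n) n^{-s} = \zeta(s+iT)\zeta(s-iT)$ and $\sum\tau_{iT}(n)^2 n^{-s} = \zeta(s)^2\zeta(s+2iT)\zeta(s-2iT)/\zeta(2s)$, the sum becomes a double contour integral with Dirichlet series
$$\zeta(s+iT)\zeta(s-iT)\zeta(u+iT)\zeta(u-iT) - \frac{\zeta(s+u)^2\zeta(s+u+2iT)\zeta(s+u-2iT)}{\zeta(2s+2u)}.$$
Shifting the $(s,u)$ contours past the shifted critical lines, the residues at $s,u\in\{1\pm iT\}$ and $s+u=1$ produce terms that, after cancellation with $|\xi(1+2iT)|^{-2}\sim e^{\pi T}/(2|\zeta(1+2iT)|^2)$ and the exponential $e^{-\pi T}$ from the Stirling asymptotics of the four $\Gamma((\cdot\pm iT)/2)$-factors, are bounded via the classical mean square $\int_{-T}^T|\zeta(\tfrac12+it)|^2\,dt\ll T\log T$ together with the de la Vall\'ee Poussin bound $|\zeta(1+2iT)|\gg 1/\log T$. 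This yields an off-diagonal bound of $o(\log T)$, and is analogous in spirit to the standard unconditional evaluation of the second moment of $\zeta$ on the critical line.
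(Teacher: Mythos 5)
Your decomposition of $|E(iy,1/2+iT)|^2$ into the horocycle integral, the extra diagonal, the cross term with $a_0$, and the off-diagonal is algebraically sound, and it correctly locates the factor $2$: indeed it repackages the same skeleton as the paper, namely
$\int_0^\infty |E(iy,\tfrac12+iT)|^2\psi(y)\tfrac{dy}{y} = 2\langle |E(\cdot,\tfrac12+iT)|^2, E(\cdot,y\psi(y))\rangle + O(1) + (\text{off-diagonal})$,
followed by the Luo--Sarnak evaluation of the inner product. The problem is that everything difficult has been pushed into the off-diagonal term, and your proposed treatment of it does not work. After inserting the Mellin--Barnes representations, the off-diagonal is exactly the off-diagonal of the shifted fourth moment of $\zeta$ with shifts $\pm iT$, equivalently the shifted divisor sums $\sum_{n}\tau_{iT}(n)\tau_{iT}(n+m)$ with $n\asymp T$ against an archimedean weight of essential size $|\rho(1)|^2\asymp e^{\pi T}$ times exponentially decaying Bessel factors. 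You cannot dispatch this by shifting the $(s,u)$-contours and invoking $\int_{-T}^{T}|\zeta(\tfrac12+it)|^2dt\ll T\log T$ and $|\zeta(1+2iT)|\gg 1/\log T$: once the contours reach the critical lines the relevant object is $\int (1+|t-T|)^{-1/2}(1+|t+T|)^{-1/2}|\zeta(\tfrac12+it+iT)\zeta(\tfrac12+it-iT)|^2\,dt$, a fourth-power quantity at widely separated points, and taking absolute values there (even with Iwaniec's short-interval fourth-moment bound) yields a positive power of $T$, far larger than the target $o(\log T)$. Genuine cancellation must be exhibited. This is precisely what occupies the bulk of the paper: the conductor-dropping ranges $|t|$ near $T$ are handled by the short-interval fourth-moment bound, and the bulk range is handled by an asymptotic for the shifted divisor sum proved via the Estermann function, the Kuznetsov formula, the Jutila--Motohashi subconvexity bound $L(\tfrac12+2iT,u_j)\ll (T+t_j)^{1/3+\varepsilon}$, Weyl's bound, and a spectral mean-value estimate; moreover the off-diagonal is not merely an error term but produces additional bounded main terms (the constants $c$, involving $\log\big(y^{1/2}|\eta(iy)|^2\big)$), which your argument would have to extract or at least bound.

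A secondary inaccuracy: for the cross term $4\,\mathrm{Re}\big(a_0\sum_{n\ge1}\overline{a_n}\big)$, the ``remaining contour integrals'' are not exponentially small. The normalization $|\rho(1)|^2\asymp e^{\pi T}$ cancels the exponential decay of the gamma factors, leaving only polynomial decay (about $T^{-1/4}$ on the shifted line), so one is left with $\zeta(\tfrac12+it)\zeta(\tfrac12+it+2iT)$ in a short $t$-range against the weight $T^{-1/4}$; convexity gives only $O(T^{\varepsilon})$ and one needs a subconvex bound (Weyl suffices, giving $O(T^{-1/12+\varepsilon})$, as in the paper's reduction step) to make this term admissible. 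So while your pointwise decomposition is a legitimate alternative packaging of the paper's reduction, the analytic core of the theorem --- the unconditional estimation of the large-shift off-diagonal/shifted-convolution contribution --- is missing, and the tools you cite are not strong enough to supply it.
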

Theorem \ref{thm:geodesicEisenstein} is inspired by an analogous result of Luo-Sarnak \cite{LuoSarnakQUE}  who showed that $\frac{3}{\pi} \log(1/4+T^2)$ is the average value of $|E(z, 1/2 + iT)|^2$ restricted to any fixed compact Jordan measurable subset of $\Gamma \backslash \mh$ having positive measure (here we have corrected the constant in place of $\frac{48}{\pi} \log{T}$ as stated by \cite{LuoSarnakQUE}; for a correct statement, see \cite{HejhalRackner}, (7.9) or \cite{SpinuL4norm}, (1.1)).  Then we interpet Theorem \ref{thm:geodesicEisenstein} as saying the Eisenstein series is twice as big (on average) on any segment of the geodesic as on the rest of the fundamental domain, just as is predicted by Conjecture \ref{conj:geodesicQUE} for even Maass forms. In fact, we appeal to the Luo-Sarnak result just as in the derivation of Conjecture \ref{conj:geodesicQUE} we appeal to the QUE theorem.  
Moreover, we prove a more precise version of the asymptotic with a power saving in the error term; see Theorem \ref{thm:geodesicEisenstein2} for this result.
Since we have a power saving, this indicates that we could probably estimate some variations such as by having $\psi$ vary with $T$ in some way--see Proposition \ref{prop:thinEisensteinQUE} for an example of what is meant here.
See also \cite{Koyama} for the level aspect of QUE for Eisenstein series.

The Eisenstein series is often an interesting test case for more advanced problems with Maass forms.  For instance, Spinu \cite{SpinuL4norm} bounded the $L^4$ norm of the Eisenstein series when restricted to a compact set. 

Our proof of Theorem \ref{thm:geodesicEisenstein} requires the full spectral theory of automorphic forms.  As mentioned earlier in the introduction, it is not effective to directly apply the spectral decomposition to $|E(iy, 1/2 + iT)|^2$ and then integrate over $y$, because the spectral sum is very ``long.''  Instead, we use Parseval to relate it to the fourth moment of the Riemann zeta function which of course has been extensively studied over many years, e.g. see \cite{Ingham26} \cite{Ramachandra} \cite{HB79} \cite{IwaniecFourthMoment} \cite{Motohashi} for some notable results.  However, the present case is somewhat unusual in that it samples the zeta function at widely-separated points.  Here it is roughly of the form
\begin{equation}
 \int_{|t| \leq T} (1 + |t-T|)^{-1/2} (1 + |t+T|)^{-1/2} |\zeta(1/2 + it + iT) \zeta(1/2 + it - iT)|^2 dt.
\end{equation}
Furthermore, there are ranges in the integral where the conductor drops, e.g. for $T-2\Delta \leq t \leq T-\Delta$ with $\Delta = o(T)$, in which case the weight becomes larger.  Luckily, the measure of the set where the conductor drops is also relatively small, so these effects partially negate each other.  In fact, we can use short-interval bounds on the fourth moment of zeta (e.g., see \eqref{eq:IwaniecFourthMoment} below) to dispense with the conductor-dropping ranges, so the main issue is to understand the range $|t| \leq .99 T$.
In a related direction, Bettin \cite{Bettin} considered large shifts in the second moment of zeta.  See \cite{Chandee} for strong bounds on shifted moments assuming GRH.

Our basic approach is to use an approximate functional equation for the product of zeta functions, leading to the problem of asymptotically evaluating the shifted divisor sum $\sum_{n \approx T} \tau_{iT}(n) \tau_{iT}(n+m)$, with a smooth weight.  For this we use spectral theory in the guise of the Kuznetsov formula.  It is an important point that we only use the spectral tools on the favorable part of the integral with $|t| \leq .99T$.  It might be illuminating to relate this favorable portion of the $t$-integral directly to an inner product of $|E(z, 1/2 + iT)|^2$ with Poincare series.  See \cite{GoodSquareMean} \cite{GoodConvolution} \cite{EHS} for some clues in this direction.  One curious feature of the proof is that after applying the spectral theory to the shifted divisor sum, its estimation comes down to subconvexity for the Riemann zeta function and Hecke-Maass $L$-functions.  We are nevertheless reluctant to say that the geodesic QUE theorem for Eisenstein series is equivalent to subconvexity, because in fact we require the full spectral machinery to even get to this point, and because these tools themselves lead to strong subconvexity bounds.  Furthermore, to treat the ranges with $\Delta = o(T)$ requires a short interval mean value bound for the zeta function which itself implies subconvexity (but not vice-versa).

Another interesting quantity along the lines of Conjecture \ref{conj:geodesicQUE} is the
 $L^2$-norm of $u_j(iy)$ along the geodesic, that is, the case $\psi(y) = 1$ for all $y$ (a problem considered by \cite{GRS}).  Although this problem should be easier than the geodesic integral weighted by $\psi$ (because it is a longer average), 
Conjecture \ref{conj:geodesicQUE} breaks down in this case.  In this direction, we have
\begin{myconj}
\label{conj:L2geodesic}
 For $\alpha$ fixed, $|\text{Re}(\alpha)| < 1/2$, and $u_j$ an even Hecke-Maass cusp form, we have
\begin{equation}
\label{eq:geodesicMaassalphafixed}
\int_0^{\infty} |u_j(iy)|^2 y^\alpha \frac{dy}{y} \sim 2 \int_{\Gamma \backslash \mh} |u_j(z)|^2 (E(z, 1 + \alpha) + E(z,1-\alpha)) \frac{dx dy}{y^2},
\end{equation}
as $j \rightarrow \infty$.  Similarly, 
\begin{equation}
\label{eq:geodesicHolomorphicalphafixed}
 \int_0^{\infty} y^k |f(iy)|^2 y^{\alpha} \frac{dy}{y} \sim \int_{\Gamma \backslash \mh} y^k |f(z)|^2 (E(z, 1 + \alpha) + E(z,1-\alpha)) \frac{dx dy}{y^2},
\end{equation}
as $k \rightarrow \infty$.
\end{myconj}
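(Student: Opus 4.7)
The plan is to adapt the Fourier-Parseval strategy used in the paper to derive Conjecture \ref{conj:geodesicQUE} to the scale-invariant weight $\psi(y) = y^\alpha$. I describe the argument for \eqref{eq:geodesicMaassalphafixed}; the holomorphic case is strictly analogous, with $K_{it_j}$ replaced by a gamma-type weight and Kuznetsov replaced by Petersson. Using evenness, write $u_j(iy) = 2\sqrt{y} \sum_{n \geq 1} \rho_j(n) K_{it_j}(2\pi n y)$; then
\[
\int_0^\infty |u_j(iy)|^2 y^\alpha \frac{dy}{y} = 4 \sum_{n,m\geq 1} \rho_j(n) \overline{\rho_j(m)} \, I(n,m;\alpha,t_j),
\]
with $I(n,m;\alpha,t) := \int_0^\infty K_{it}(2\pi n y) K_{it}(2\pi m y) y^\alpha \, dy$. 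The diagonal $n=m$, by the Rankin-Selberg unfolding of $\langle |u_j|^2, E(\cdot, 1+\alpha) \rangle$ and Parseval on $[0,1]$, equals exactly $2 \int_{\Gamma \backslash \mh} |u_j|^2 E(z, 1+\alpha) \frac{dxdy}{y^2}$. Conjecture \ref{conj:L2geodesic} therefore reduces to showing that the off-diagonal sum over $n \neq m$ is asymptotic to $2 \int_{\Gamma \backslash \mh} |u_j|^2 E(z, 1-\alpha) \frac{dxdy}{y^2}$ as $t_j \to \infty$.

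For the off-diagonal, the key is that after rescaling $y \mapsto y/(2\pi\sqrt{nm})$, the integral $I(n,m;\alpha,t_j)$ admits an explicit hypergeometric expression in $(m/n)^2$, or equivalently a Mellin-Barnes contour integral in a variable $s$ whose integrand is built from gamma factors that are symmetric under the shift $s \mapsto \alpha - s$. Moving the contour across the pole forced by this symmetry produces a secondary term in which $(n/m)^{s}$ is effectively replaced by $(n/m)^{\alpha - s}$; resumming over $(n,m)$ and applying the Rankin-Selberg unfolding in reverse, this secondary term yields precisely $2 \int |u_j|^2 E(z, 1-\alpha) \frac{dxdy}{y^2}$. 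The appearance of the ``dual'' Eisenstein series $E(z, 1-\alpha)$ is the analytic manifestation of the $s \leftrightarrow 1-s$ functional equation of the Rankin-Selberg $L$-function $L(s, u_j \otimes u_j)$ at the level of Bessel integrals. What remains is the integral along the shifted contour, which must be estimated as $o(1)$ times either main term.

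The principal obstacle is controlling this shifted-contour remainder. Applying Hecke multiplicativity $\lambda_j(n) \lambda_j(m) = \sum_{d \mid (n,m)} \lambda_j(nm/d^2)$, the remainder reduces to shifted convolution sums of the form $\sum_{n,m \asymp t_j} \lambda_j(n) \lambda_j(m) W(n,m)$ for a smooth oscillatory weight $W$. Opening these via the Kuznetsov formula recasts them as spectral averages of Rankin-Selberg $L$-values $L(\tfrac12 + i\tau, u_j \otimes u_k)$, and the desired estimate becomes a subconvex bound uniform in $t_j$, $\tau$, and the ratio $n/m$. Uniformity at this level appears to lie beyond current technology in general, though one might hope to exploit the symmetric-square structure of $u_j \otimes u_j$ to reduce to more tractable $L$-functions. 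A secondary subtlety is extracting the residue corresponding to $E(z, 1-\alpha)$ cleanly, being careful of the pole of $E(z, s)$ at $s = 1$ as $\alpha \to 0$; one expects a logarithmic correction near $\alpha = 0$ replacing the sum $E(z, 1+\alpha) + E(z, 1-\alpha)$, consistent with the $\log(1/4 + T^2)$ factor in Theorem \ref{thm:geodesicEisenstein}.
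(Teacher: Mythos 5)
Your diagonal computation is correct, and it is a genuinely cleaner observation than anything in the paper's own derivation: expanding $|u_j(iy)|^2$ in Fourier coefficients and unfolding, the terms $n=m$ are \emph{identically} equal to $2\langle |u_j|^2, E(\cdot,1+\alpha)\rangle$, so the conjecture is exactly the statement that the off-diagonal sum is asymptotic to $2\langle |u_j|^2, E(\cdot,1-\alpha)\rangle$. The paper proceeds differently: it uses Mellin/Parseval to write $\int_0^\infty |u_j(iy)|^2 y^{\alpha}\frac{dy}{y}$ as a weighted shifted second moment of $L(s,u_j)$ and then invokes the CFKRS recipe, in which one writes \emph{formal approximate functional equations}, discards oscillating terms, and keeps the diagonals of both the original and the swapped ($\alpha\to-\alpha$) pieces; the two Eisenstein terms then come from these two diagonals via Lemma \ref{lemma:RankinSelbergIntegral}. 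In that derivation the reflected term is injected by the functional equation of $L(s,u_j)$, not by the off-diagonal.

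The gap in your proposal is the mechanism by which the off-diagonal is supposed to produce $2\langle |u_j|^2, E(\cdot,1-\alpha)\rangle$. The Mellin--Barnes integrand for $\int_0^\infty K_{it_j}(2\pi ny)K_{it_j}(2\pi my)y^{1+\alpha}\frac{dy}{y}$ is indeed symmetric under $s\mapsto 1+\alpha-s$ (together with $n\leftrightarrow m$), but a symmetry does not force a pole: the actual poles are at $s=\mp it_j-2k$ and $s=1+\alpha\pm it_j+2k$, and their residues, resummed over $n\neq m$ against $\rho_j(n)\overline{\rho_j(m)}$, give shifted Dirichlet series/$L$-values, not the reflected \emph{diagonal} sum $2\sum_{n\geq1}|\rho_j(n)|^2\int_0^\infty K_{it_j}(2\pi ny)^2 y^{1-\alpha}\frac{dy}{y}$ that reversing the Rankin--Selberg unfolding of $\langle |u_j|^2,E(\cdot,1-\alpha)\rangle$ would require. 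More structurally, no manipulation of the archimedean factor of each individual term can convert a sum over $n\neq m$ into a sum over $n=m$; the reflected main term has to enter through the arithmetic, either via the functional equation (the CFKRS swap the paper uses) or via main terms of shifted convolution sums, and for cusp forms the sums $\sum_n\lambda_j(n)\lambda_j(n+h)$ have no such main terms (unlike the divisor correlations that produce the secondary terms in the Eisenstein case, Theorem \ref{thm:SCS}). So your second main term is not actually derived, and the subsequent reduction of the ``remainder'' to shifted convolutions, while plausibly describing the true difficulty, is not anchored to a correct identification of what must be subtracted. To repair the heuristic along your lines you would need to first apply a (formal) functional equation or Voronoi step to one copy of the coefficients, which is precisely the step the paper's recipe supplies.
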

The right hand side of \eqref{eq:geodesicMaassalphafixed} has a removable singularity at $\alpha = 0$.  When $\alpha = 0$, $E(z, 1+\alpha) + E(z, 1-\alpha)$ becomes the constant term %\footnote{Do not confuse this constant term with the more common usage of the constant term in the Fourier expansion of $E(z,s)$ in terms of $z$} 
of $2E(z,s)$ in the Laurent expansion around $s=1$ 
which can then be expressed in terms of $\log ( \sqrt{y} |\eta(z)|^2 )$;
see (22.69) of \cite{IK}.  See also Remark 6.2 of \cite{GRS} for an alternative formulation of the $\alpha = 0$ case.  However, they did not provide a derivation of their argument so we briefly include one below with Conjecture \ref{conj:L2geodesicalpha0}.

% One can show unconditionally (without deep tools like QUE) that $\int_0^{\infty} |u_j(iy)|^2 \frac{dy}{y} \gg  (L(1, \mathrm{sym}^2 u_j))^{-1}$, so that a proof of Conjecture \ref{conj:L2geodesic} would show $L(1, \mathrm{sym}^2 u_j) \gg t_j^{-\varepsilon}$ (or even $\gg (\log t_j)^{-1}$), which is a famous result of Hoffstein and Lockhart \cite{HL} which has a proof relying on the automorphy of the symmetric-square lift.  A similar remark holds for holomorphic forms of large weight.

It is also natural to consider horocycle integrals.  To this end, we have
\begin{myconj}
\label{conj:horocycle}
Let $U(z)$ be either $y^{k/2} f(z)$ or $u_j(z)$, and let $\psi: \mz \backslash \mr: \rightarrow \mr$ be a smooth function.  Then
%Suppose $U(z)$ is a Hecke-Maass cusp form.  
\begin{equation}
 \int_0^{1} \psi(x) |U(z)|^2 dx \sim \int_0^{1} \psi(x) dx
\end{equation}
for fixed $y > 0$, as the weight/eigenvalue of $U$ becomes large.  
% In case $U(z) = E(z, 1/2 + iT)$ then we have ??
% \begin{equation}
%  \int_0^{1} |E(z, 1/2 + iT)|^2 dx \sim 1.
% \end{equation}
\end{myconj}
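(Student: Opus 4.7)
The plan is to analyze the horocycle integral via Fourier expansion in the $x$-variable. Expanding $\psi(x) = \sum_{h \in \mz} \hat{\psi}(h) e(hx)$ and using the rapid decay $\hat\psi(h) \ll_A (1+|h|)^{-A}$, we may truncate the $h$-sum at $|h| \leq T^{\varepsilon}$ (with $T=t_j$ or $k$) and reduce to analyzing, for each fixed $h$, the coefficient
\[
I_h(y) := \int_0^1 |U(x+iy)|^2 e(-hx)\, dx.
\]
One expects $I_0(y) \to 1$ and $I_h(y) \to 0$ for $h \neq 0$ as $T \to \infty$. Substituting the Fourier expansion $u_j(x+iy) = \sqrt{y}\sum_{n\neq 0} \rho_j(n) K_{it_j}(2\pi|n|y) e(nx)$ (and the analogous exponential-weighted expansion for holomorphic $f$), this coefficient becomes the shifted convolution sum
\[
I_h(y) = y \sum_n \rho_j(n)\overline{\rho_j(n-h)}\, K_{it_j}(2\pi|n|y) K_{it_j}(2\pi|n-h|y),
\]
in which the Bessel weights effectively restrict $n$ to a window of length $\asymp t_j/(2\pi y)$.

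The $h=0$ diagonal yields the main term. Via Mellin inversion it reduces to the analytic behavior of the Rankin--Selberg Dirichlet series $\sum_n |\rho_j(n)|^2 n^{-s}$, whose pole at $s=1$ is tied to the $L^2$-normalization \eqref{eq:L2normalization}. A direct residue computation confirms that the pole contributes exactly $1$, matching the expected main term. Controlling the remaining shifted contour requires the Stirling asymptotics of the Mellin factor $\Gamma(s/2+it_j)\Gamma(s/2-it_j)$ (which localizes the integrand near $\mathrm{Im}(s) = \pm 2t_j$) together with mean-value estimates for $L(s, u_j \otimes \bar u_j)$ on the critical line. As a sanity check, the version of this statement averaged against a smooth $\phi(y)$ on $\mr^+$ follows from the usual QUE/mass equidistribution theorems via the unfolding identity $\int_0^\infty \phi(y) \int_0^1 |U|^2\, dx\, \frac{dy}{y^2} = \int_{\Gamma\backslash\mh} |U|^2\, E_\phi(z)\, \frac{dx\,dy}{y^2}$, where $E_\phi$ is the incomplete Eisenstein series built from $\phi$. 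The conjecture is the unaveraged pointwise version at fixed $y$.

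The main obstacle is the off-diagonal contribution $h\neq 0$: one must establish a power-saving estimate
\[
\sum_n \lambda_j(n)\lambda_j(n-h)\, W(n; y, t_j) = o(t_j), \qquad |h| \leq t_j^{\varepsilon},
\]
for smooth weights $W$ of effective length $\asymp t_j$ arising from the Bessel products, where $\lambda_j(n)$ are the Hecke eigenvalues. This is a shifted convolution sum for a single Hecke--Maass form with large spectral parameter, which is the same type of input that lies behind the QUE conjecture. One approach is the Jutila--Motohashi--Blomer spectral decomposition of the shifted sum, which would reduce the estimate to subconvexity for Rankin--Selberg $L$-functions attached to $u_j \otimes u_j$ twisted by elements of the spectrum. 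In the holomorphic case, the shifted-convolution estimates of Holowinsky combined with Soundararajan's weak-subconvexity method should make the problem accessible to current technology, whereas the Maass case appears to lie at roughly the same depth as the full QUE conjecture itself.
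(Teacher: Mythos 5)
Your route is the same as the paper's own derivation in Section \ref{section:horocycle}: expand along the horocycle, split off the zero Fourier mode, obtain the main term from the pole of the Rankin--Selberg series (equivalently the residue of $E(z,1+s)$ at $s=0$, which equals $1$ by the normalization \eqref{eq:L2normalization}), and relegate the modes $h\neq 0$ to shifted convolution bounds --- exactly the role played by Conjectures \ref{conj:SCH} and \ref{conj:SCHholo}. So the decomposition and the conditional inputs match the paper.

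Two of your steps, however, are asserted more strongly than can be justified, and they are precisely where the statement remains conjectural. First, for the $h=0$ term you claim the shifted contour is controlled by Stirling plus mean-value estimates for the Rankin--Selberg $L$-function on the critical line. This does not suffice: after moving \eqref{eq:horocycleIntegralLfunction} to $\mathrm{Re}(s)=-1/2$, the weight $|\rho(1)|^2\gamma_{V_T^2}(1/2+it)$ has size about $(1+|t|)^{-1/2}(1+|t-2T|)^{-1/4}(1+|t+2T|)^{-1/4}$ on $|t|\le 2T$, so even the Lindel\"{o}f hypothesis yields only $O(T^{\varepsilon})$ for the remaining integral, which is as large as the main term $1$; the paper makes this explicit and can only invoke presumed oscillation of $Z(1/2+it,U)$ against the gamma factors to argue the integral is $o(1)$. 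No size or second-moment input, conditional or not, closes this step. (Also, the relevant Mellin factor for $|U|^2$ is the four-gamma ratio \eqref{eq:gammaVsquaredBessel}, not $\Gamma(s/2+it_j)\Gamma(s/2-it_j)$, and it is not localized near $\mathrm{Im}(s)=\pm 2t_j$; it merely cuts off at $|\mathrm{Im}(s)|\le 2t_j$ with polynomial decay.) Second, the suggestion that the holomorphic off-diagonal is within reach of Holowinsky-type shifted-convolution bounds plus weak subconvexity is too optimistic: by \eqref{eq:Vklocalization} the weight localizes $n$ to a window of length $\asymp k^{1/2}$ at height $\asymp k$, so what is needed (Conjecture \ref{conj:SCHholo}) is genuine cancellation in a \emph{short} shifted sum of effective length $k^{1/2}$, whereas those methods treat full-length sums with only logarithmic savings and give nothing here. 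With these two points flagged, your write-up is a faithful reconstruction of the paper's heuristic, not a proof.
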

In \cite{SarnakReznikov} (see points 3 and 4) it is conjectured that the $2n$-th moment of $U(z)$ on the horocycle converges to the $2n$-th moment of the Gaussian. 
For $\psi = 1$ and $U(z) = u_j(z)$, Theorem 1.1 of \cite{GRS} gives unconditional upper and lower bounds for the horocycle integral that are within $T^{\varepsilon}$ of the conjectured asymptotic.  A minor variation of Conjecture \ref{conj:horocycle} is given by Hejhal and Rackner \cite{HejhalRackner} (6.12).  Conjecture \ref{conj:horocycle} really consists of two parts which have different flavors.  The first step is to say that $\int_0^{1} \psi(x) |U(z)|^2 dx = (\int_0^{1} \psi(x) dx)(\int_0^{1} |U(z)|^2 dx) + \mathcal{S}$ where $\mathcal{S}$ is a kind of shifted convolution sum.  Cancellation in such a sum would indicate that $\mathcal{S} = O(T^{-\delta})$.  We shall make some conjectures on these shifted convolution sums in Section \ref{section:shiftedconvolutionsums}.  If we normalize $U$ so that $\int_0^{1} |U(z)|^2 dx = 1$ (rather than with \eqref{eq:L2normalization}) then at this point one obtains a natural notion of equidistribution along the horocycle.  Next we argue that if $U$ is normalized by \eqref{eq:L2normalization} then $\int_0^{1} |U(z)|^2 dx \sim 1$; some heuristic reasoning along these lines is given in Section \ref{section:horocycle}.

A different way to study the QUE conjecture for thin sets is to restrict $|U|^2$ to (say) a small disc with fixed center but with radius that shrinks at some rate with $U$.  These questions are examined in Section \ref{section:shrinking}.  We were partially motivated to study this question based on an analogous problem for Heegner points of discriminant $D$ and level $q$ (prime) which was considered in \cite{LMY}.  A simple question asked in \cite{LMY} is, given $q$, how large does $D$ have to be to guarantee that a Heegner point of discriminant $D$ and level $q$ lies in $\omega SL_2(\mz) \backslash \mh$ for any coset $\omega SL_2(\mz) \in \Gamma_0(q) \backslash SL_2(\mz)$?  Obviously the number of Heegner points ($\approx |D|^{1/2}$) must exceed the index of $\Gamma_0(q)$ in $SL_2(\mz)$ ($\approx q$), so that $q \ll D^{1/2-\delta}$ is a necessary condition.  On the other hand, the Lindel\"{o}f hypothesis for certain Rankin-Selberg $L$-functions implies that $q \ll D^{1/2-\delta}$ is sufficient, which is then best-possible (up to $|D|^{\varepsilon}$).
 
To this end, let $\phi: SL_2(\mz) \backslash \mh \rightarrow \mr$ be smooth and compactly supported, and consider the inner product $\langle |U|^2, \phi \rangle $ where we shall keep track of the dependence on $\phi$, as it may vary with $U$ in some way.  One natural question is to understand how fast the support of $\phi$ can shrink (which increases the size of the derivatives of $\phi$), and still expect QUE to hold.  An obvious limitation to this is that a Maass form $U$ typically oscillates at frequency $T$, and is hence roughly constant at distances less than the de Broglie wavelength of size $\approx 1/T$.  See \cite{HejhalRackner}, Sections 3 and 5.1 for elaboration here.
So if $\phi$ has support on a disc of radius $T^{-1-\delta}$ and center $z_0$, then we expect that $\langle U^2, \phi \rangle \approx U(z_0)^2 \langle 1, \phi \rangle$, and clearly QUE would not hold on this scale.

There are two standard approaches to QUE, one being Watson's formula and bounds for triple product $L$-functions, and the other being Poincare series and bounds for shifted convolution sums.  Unlike in the Heegner point case described above, assuming the Lindel\"{o}f hypothesis (for triple product $L$-functions) does not  give a result valid on discs of radius $T^{-1+\delta}$ (the smallest size discs upon which one might expect QUE to hold).  See Proposition \ref{prop:QUEshrinkingWatsonapproach} for this result.
However, the Poincare series approach seems to be better-suited for this problem, and via this method, we have
\begin{myprop}
\label{prop:thinQUE}
Suppose that a family of functions $\phi$ satisfy
\begin{equation}
\label{eq:phiderivativesCartesianintroduction}
 \frac{\partial^{k+l}}{\partial x^k \partial y^l} \phi(x + iy) \ll_{k,l} A^k B^l,
\end{equation}
for some $A, B \geq 1$ (with implied constants independent of the family of $\phi$'s)
and that each $\phi$ in the family has support contained in a fixed compact set $K$.  Let $U$ be a Hecke-Maass cusp form.  Suppose the Lindel\"{o}f hypothesis holds for $L(\mathrm{sym}^2  U, s)$, and assume Conjecture \ref{conj:SCH2} holds (this is a bound for a double sum of shifted convolution sums).  Then we have
\begin{equation}
 \langle U^2, \phi \rangle = \langle 1, \phi \rangle + O(\|\phi\|_1 T^{-1/2 + \varepsilon} (A^{1/2} + B^{1/2})).
\end{equation}
\end{myprop}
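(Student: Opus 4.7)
The plan is to use the incomplete Poincar\'e series framework, which converts $\langle U^{2},\phi\rangle$ into a sum of shifted convolution sums in the Fourier coefficients of $U$. Fix a smooth compactly-supported bump $\eta$ on $\mathbb{H}$ such that $\phi(z)=\sum_{\gamma\in\Gamma_{\infty}\backslash\Gamma}\eta^{\sharp}(\gamma z)$, where $\eta^{\sharp}(z)=\sum_{n\in\mathbb{Z}}\eta(z+n)$ is the $\Gamma_{\infty}$-periodicization; the Cartesian derivative bounds \eqref{eq:phiderivativesCartesianintroduction} transfer to $\eta$ with the same constants $A,B$. Fourier-expand in $x$: $\eta^{\sharp}(x+iy)=\sum_{m\in\mathbb{Z}}\tilde{c}_{m}(y)e(mx)$. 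Repeated integration by parts using the $A$-bound gives $\tilde{c}_{m}(y)\ll_{K}\|\phi\|_{\infty}(A/|m|)^{K}$ for $m\neq 0$, effectively truncating the $m$-sum at $|m|\ll A^{1+\varepsilon}$. This yields the decomposition $\phi=\sum_{m\in\mathbb{Z}}P_{m}$ into incomplete Poincar\'e series $P_{m}(z)=\sum_{\gamma\in\Gamma_{\infty}\backslash\Gamma}\tilde{c}_{m}(\mathrm{Im}\,\gamma z)e(m\,\mathrm{Re}\,\gamma z)$.

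Unfolding gives $\langle U^{2},P_{m}\rangle=\int_{0}^{\infty}\tilde{c}_{m}(y)D_{m}(y)\,dy/y^{2}$, where $D_{m}(y)=\int_{0}^{1}|U(x+iy)|^{2}e(-mx)\,dx$ is the $m$-th Fourier mode of $|U|^{2}$. For the diagonal $m=0$ term, $P_{0}$ is an incomplete Eisenstein series, which I resolve spectrally via Mellin inversion of $\tilde{c}_{0}$. The pole at $s=1$ of $E(z,s)$, combined with the Rankin-Selberg identity relating $\mathrm{Res}_{s=1}L(U\otimes U,s)$ to $\|U\|_{2}^{2}$, produces the expected main term $\langle 1,\phi\rangle$. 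The remaining contour integral pairs $|U|^{2}$ with $E(\cdot,1/2+it)$, which Rankin-Selberg expresses in terms of $L(\mathrm{sym}^{2}U,1/2+it)$, and the assumed Lindel\"{o}f bound dispatches this with a power-saving in $T$.

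For the off-diagonal contributions $m\neq 0$, insert the Whittaker-Fourier expansion of $U$ to write $D_{m}(y)$ as a shifted convolution sum in $\rho(n)\overline{\rho(n-m)}$ weighted by the Bessel product $yK_{iT}(2\pi|n|y)K_{iT}(2\pi|n-m|y)$, whose support effectively restricts to $|n|\ll T$. I apply Cauchy-Schwarz in the $m$-sum (truncated to $|m|\ll A$) and reduce to a mean-square bound on these $D_{m}$, which is precisely the double sum of shifted convolutions controlled by Conjecture~\ref{conj:SCH2}. The factor $A^{1/2}$ in the final bound originates from this Cauchy-Schwarz step, while the companion $B^{1/2}$ arises from repeated integration by parts in $y$ against the Bessel $K$-factors, using the $B$-bound on $\partial_{y}^{L}\eta$.

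The main obstacle is making the off-diagonal estimate uniformly sharp in the spectral parameter $T$, the shift $m$, and the $y$-support. The Bessel transition at $|n|\asymp T$ is where both the analytic weight and the shifted convolution sum are largest, so the dependence on $m$ and $T$ must be tracked simultaneously; the mean-square formulation of Conjecture~\ref{conj:SCH2} is precisely what is needed to handle this regime while preserving the sharp $A^{1/2}+B^{1/2}$ dependence.
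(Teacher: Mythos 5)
Your overall framework is the same as the paper's (incomplete Poincar\'e series, unfolding, $m=0$ main term via the symmetric-square $L$-function, off-diagonal terms as shifted convolutions), but the step that is supposed to deliver the crucial factor $A^{1/2}\|\phi\|_1$ does not work as you describe it. Conjecture \ref{conj:SCH2} is \emph{not} a mean-square bound on the horocycle Fourier coefficients $D_m(y)=\int_0^1|U(x+iy)|^2e(-mx)\,dx$: it bounds the linearly twisted sums $\sum_{1\le|m|\le M}e(m\alpha)D_m(y)\ll T^{-1/2+\varepsilon}M^{1/2}$ uniformly in $\alpha$ (that is what ``double sum'' means here -- a sum over $m$ of shifted convolution sums over $n$). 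Your Cauchy--Schwarz in $m$ requires $\sum_m|D_m(y)|^2$, whose ``dual coefficients'' $\overline{D_m}$ are not unimodular exponentials, so it does not follow from \eqref{eq:SCH2horocycleform}; moreover, even if you granted such a mean-square bound, Cauchy--Schwarz together with Parseval applied to the Fourier coefficients $\Phi_m(y)$ of the periodized test function would produce an $\|\phi\|_2$-type quantity, not the stated $\|\phi\|_1$. The way Conjecture \ref{conj:SCH2} is actually exploited is to write $\Phi_m(y)=\int_0^1\Phi(t+iy)e(-mt)\,dt$ and exchange the order of summation and integration, so that
\begin{equation}
\sum_{1\le|m|\le AT^{\varepsilon}}\langle U^2,P_m(\cdot,\Phi_m)\rangle=\int_0^1\int_0^\infty \Phi(t+iy)\Big(\sum_{1\le|m|\le AT^{\varepsilon}}e(mt)D_m(y)\Big)\frac{dy}{y^2}\,dt,
\end{equation}
after which \eqref{eq:SCH2horocycleform} with $\alpha=t$ bounds the inner sum by $T^{-1/2+\varepsilon}A^{1/2}$ pointwise in $(t,y)$, and the trivial integration of $|\Phi|$ yields $\|\phi\|_1T^{-1/2+\varepsilon}A^{1/2}$.

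Your accounting of the $B^{1/2}$ factor is also misplaced. The off-diagonal terms contribute no $B$ at all (the shifted-convolution input is at fixed $y$ in a compact interval, and one simply integrates $|\Phi_m(y)|\,dy/y^2$); the factor $B^{1/2}$ comes from the $m=0$ term, which you treat too optimistically (``power saving in $T$''). After unfolding $P_0(z,\Phi_0)=E(z,\Phi_0)$ as in \eqref{eq:UsquaredinnerproductincompleteEisenstein} and shifting the contour to $\mathrm{Re}(s)=-1/2$, the residue at $s=0$ gives $\langle 1,\phi\rangle$, and the remaining integral is controlled using Stirling and the Lindel\"of hypothesis for $L(\mathrm{sym}^2 U,\tfrac12+it)$ by $T^{-1/2+\varepsilon}\int_{|t|\le BT^{\varepsilon}}(1+|t|)^{-1/2}\,dt\ll T^{-1/2+\varepsilon}B^{1/2}$, the truncation at $|t|\le BT^{\varepsilon}$ coming from integration by parts in $y$ on the Mellin transform $\widetilde{\Phi_0}$ using the $B$-derivative bound as in \eqref{eq:phitildebound}. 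Integration by parts in $y$ against the Bessel factors in the off-diagonal, as you propose, neither produces this factor nor is needed.
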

Thus Proposition \ref{prop:thinQUE}, with $A = B = T^{1-\delta}$, indicates that QUE should hold on any small scale larger than $T^{-1+\delta}$.  

For the Eisenstein series case, Luo and Sarnak \cite{LuoSarnakQUE} showed that 
\begin{equation}
\label{eq:QUEEisenstein}
\langle |E(z, 1/2 + iT)|^2, \phi \rangle \sim \tfrac{3}{\pi} \log(1/4 +T^2) \langle 1, \phi \rangle 
\end{equation}
for \emph{fixed} $\phi$.  
We shall show in Section \ref{section:QUEEIsensteinsubsection} the following (unconditional)
\begin{myprop}
\label{prop:thinEisensteinQUE}
 Suppose that a family of functions $\phi$ satisfy \eqref{eq:phiderivativesCartesianintroduction} for some $A =B \leq T^{1-\delta}$, and that each $\phi$ in the family has support contained in a fixed compact set.  Then
\begin{equation}
\label{eq:thinQUEEisenstein}
 \langle |E(z, 1/2 + iT)|^2, \phi \rangle = \log(1/4 + T^2) \langle \phi, \tfrac{3}{\pi}  \rangle  + O(A^{1/2} T^{-1/6 + \varepsilon} \| \phi \|_2) + O\Big(\frac{\log{T}}{\log \log{T}} \|\phi\|_1\Big).
\end{equation}
\end{myprop}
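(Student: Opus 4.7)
The strategy is to spectrally decompose $\phi$ in $L^2(\Gamma\backslash\mh)$ and to quantify the Luo--Sarnak \cite{LuoSarnakQUE} derivation of \eqref{eq:QUEEisenstein} with explicit dependence on the family parameters $A$, $\|\phi\|_2$, and $\|\phi\|_1$. Writing
\[
\phi = \tfrac{3}{\pi}\Big\langle \phi, \tfrac{3}{\pi}\Big\rangle + \sum_j \langle \phi, u_j\rangle\, u_j + \tfrac{1}{4\pi}\int_{-\infty}^{\infty} \langle \phi, E(\cdot, \tfrac{1}{2}+it)\rangle\, E(z, \tfrac{1}{2}+it)\, dt
\]
and pairing with $|E(\cdot, \tfrac{1}{2}+iT)|^2$ produces three contributions. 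The constant projection, combined with a Zagier-style regularization of the Eisenstein integral near $t=0$, produces the main term $\log(1/4+T^2)\langle \phi, \tfrac{3}{\pi}\rangle$ exactly as in Luo--Sarnak; the second error term $\frac{\log T}{\log\log T}\|\phi\|_1$ absorbs pointwise refinements of this regularization, using the classical bound $|\zeta(1+2iT)|^{-1} \ll \log\log T$ for the constant-term part of $E(z, \tfrac{1}{2}+iT)$.

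Next I would truncate both spectra. Integration by parts against the hyperbolic Laplacian gives $(\tfrac{1}{4}+t_j^2)^K\langle\phi,u_j\rangle = \langle\Delta^K\phi,u_j\rangle$, and the Cartesian derivative hypothesis \eqref{eq:phiderivativesCartesianintroduction} with $A=B$ combined with the fixed compact support yields $\|\Delta^K\phi\|_2 \ll_K A^{2K}$. Hence $|\langle\phi,u_j\rangle|$ is rapidly decaying for $t_j \gtrsim AT^{\varepsilon}$, so the discrete spectrum may be truncated at $t_j \leq AT^{\varepsilon}$ at negligible cost, and the continuous spectral parameter $t$ is treated identically.

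For the truncated range I would express $\langle |E|^2, u_j\rangle$ by Rankin--Selberg unfolding as proportional to archimedean gamma factors times $L(u_j, \tfrac{1}{2})\,L(u_j, \tfrac{1}{2}+2iT)/L(\mathrm{sym}^2 u_j, 1)$. By Stirling, the gamma factors supply polynomial decay in $T$ and effectively confine the sum to $t_j \ll T$. Inserting the Weyl--Meurman subconvexity bound $L(u_j, \tfrac{1}{2}+iT) \ll t_j^{O(\varepsilon)}\, T^{1/3+\varepsilon}$ in the $T$-aspect, together with convexity for $L(u_j, \tfrac{1}{2})$ and the standard lower bound for $L(\mathrm{sym}^2 u_j, 1)$, and combining with a spectral large-sieve average over $t_j \leq AT^{\varepsilon}$, one derives a bound for $\sum_{t_j \leq AT^{\varepsilon}}|\langle |E|^2, u_j\rangle|^2$ of the required strength. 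Bessel's inequality for $\{\langle \phi, u_j\rangle\}$ together with Cauchy--Schwarz then yields the claimed cuspidal error $A^{1/2}T^{-1/6+\varepsilon}\|\phi\|_2$. The continuous spectrum is handled the same way, with $\zeta(\tfrac{1}{2}+iT\pm it)$ replacing $L(u_j, \tfrac{1}{2}+2iT)$ and the classical Weyl bound $\zeta(\tfrac{1}{2}+iT) \ll T^{1/6+\varepsilon}$ playing the role of Meurman's theorem.

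The hard part will be obtaining the Weyl-strength mean-value bound for $\sum_{t_j \leq X}|\langle |E|^2, u_j\rangle|^2$ uniformly in the hybrid window $X \leq AT^{\varepsilon}$, where $A$ may be nearly as large as $T$. This hinges on simultaneous control of the subconvexity saving in $T$ and the $t_j$-dependence of the implied constants; any weakening there directly degrades the allowed shrinking scale $A$ in the proposition. The analogous continuous-spectrum bound, which effectively requires the Weyl exponent for $\zeta(\tfrac{1}{2}+iT)$ along with a large-sieve-type second moment over the sampling window, presents the same difficulty.
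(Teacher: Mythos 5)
Your error-term analysis is essentially the paper's own: truncating the discrete (and continuous) spectrum at $t_j \leq AT^{\varepsilon}$ by iterating the Laplacian, expressing $\langle |E(\cdot,1/2+iT)|^2, u_j\rangle$ via the unfolding formula as gamma factors times $L(u_j,1/2)L(u_j,1/2-2iT)$, and combining Bessel's inequality and Cauchy--Schwarz with the hybrid subconvexity bound and a spectral large-sieve second moment does give $O(A^{1/2}T^{-1/6+\varepsilon}\|\phi\|_2)$. The uniformity you flag as the ``hard part'' is exactly what the Jutila--Motohashi bound $L(u_j,\tfrac12-2iT)\ll (t_j+T)^{1/3+\varepsilon}$ supplies (Meurman's $t$-aspect bound for a \emph{fixed} form would not suffice), together with $\sum_{t_j\leq AT^{\varepsilon}}L(u_j,1/2)^2\ll A^2T^{\varepsilon}$; the continuous spectrum is handled the same way via Zagier's formula \eqref{eq:threeEisensteinseries}, Weyl's bound and a fourth-moment estimate.

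The genuine gap is in your main term. Pairing $|E(z,1/2+iT)|^2$ term by term against the spectral expansion of $\phi$ is precisely what fails here: $|E(z,1/2+iT)|^2$ grows like $y$ at the cusp, so $\langle |E|^2,\tfrac{3}{\pi}\rangle$ and $\langle |E|^2,E_t\rangle$ diverge and the interchange is not justified. After regularizing in Zagier's sense the picture changes qualitatively: $\langle E(z,s),1\rangle_{\text{reg}}=0$ and the regularized Eisenstein pairings only contribute $O(T^{-1/6+\varepsilon}\|\phi\|_2)$, so \emph{no} main term comes from ``the constant projection combined with a regularization of the Eisenstein integral near $t=0$.'' The main term lives entirely in the extra term of the regularized Parseval identity (Lemma \ref{lemma:Plancherel}): one must subtract from $E(z,s_1)E(z,s_2)$ the combination $\mathcal{E}(z)$ of Eisenstein series matching its constant term, and the omitted term $\langle \mathcal{E},\phi\rangle$ is what produces $\lim_{\alpha\to 0}\langle D_{\alpha},\phi\rangle$ with $D_{\alpha}$ as in \eqref{eq:Dalphadef}. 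The factor $\log(1/4+T^2)$ then comes from the scattering derivative $\Phi_T'(0)$, i.e.\ from $\frac{\Gamma'}{\Gamma}(1/2\pm iT)$, while the secondary error $O\big(\frac{\log T}{\log\log T}\|\phi\|_1\big)$ comes from $\frac{\zeta'}{\zeta}(1\pm 2iT)\ll \frac{\log T}{\log\log T}$ (your proposed input $|\zeta(1+2iT)|^{-1}\ll\log\log T$ is conditional and in any case not the quantity that enters). Without isolating $\langle\mathcal{E},\phi\rangle$ --- via the Michel--Venkatesh regularized Plancherel formula or an equivalent truncation argument --- your decomposition cannot recover the stated main term, so this step needs to be added before the rest of your argument goes through.
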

As a corollary, we deduce that the Eisenstein series $E(z, 1/2 + iT)$ cannot be exceptionally small (nor large) on a disc of radius $\gg T^{-1/9 + \delta}$ (the calculation for this exponent $1/9$ is that for $\phi$ approximating such a disc, $\langle 1, \phi \rangle \asymp A^{-2}$, while $\|\phi\|_2 \asymp A^{-1}$, and so one requires $A^{-2} \gg A^{-1/2} T^{-1/6 + \varepsilon}$).  
We also give a more precise version of the main term here with a power saving; see \eqref{eq:thinQUEEisenstein2} below.  

Since our paper is partially conjectural, we have organized it so that the conjectural results appear only in Sections \ref{section:derivation}, \ref{section:SCSandHorocycle}, \ref{section:tripleproductapproach}, and \ref{section:PoincareQUE}.  Within those sections, it should be clear from context whether a given formula is heuristically valid, conditional on some specific unproved hypothesis (e.g., GRH), or is valid unconditionally.  

\textbf{Acknowledgements.}  I think Roman Holowinsky, Sheng-Chi Liu, Riad Masri, and Steve Zelditch for discussions on this work.

\section{Notation and standard results}
\subsection{Fourier expansions and the standard $L$-functions}
Let $U(z)$ be one of the three functions $u_j(z)$, $y^{k/2} f(z)$, $E(z, 1/2 + iT)$.  We recall the definition
% Let $\Gamma = PSL_2(\mz)$, $\Gamma_{\infty} = \{ \begin{pmatrix} 1 & b \\ 0 & 1 \end{pmatrix}: b \in \mz \}$.  Let
\begin{equation}
 E(z,s) = \sum_{\gamma \in \Gamma_{\infty} \backslash \Gamma} \text{Im}(\gamma z)^s,
\end{equation}
where $\Gamma = PSL_2(\mz)$ and $\Gamma_{\infty}$ is the stabilizer of $\infty$.
Then $\text{Res}_{s=1} E(z,s) = \frac{3}{\pi}$.
Each $U(z)$ has the Fourier expansion
\begin{equation}
\label{eq:UFourier}
 U(z) = c_0(y) + \rho(1) \sum_{n \neq 0} \frac{\lambda(n)}{\sqrt{|n|}} e(nx) V(2 \pi |n| y),
\end{equation}
with additional notation as follows.  Here $c_0(y) = c_0(y,s)$ is the constant term in the Fourier expansion which is nonzero only in the Eisenstein case for which $c_0(y,s) = y^{s} + \varphi(s) y^{1-s}$ with $\varphi(s) = \frac{\theta(1-s)}{\theta(s)}$ and $\theta(s) = \pi^{-s} \Gamma(s) \zeta(2s)$.  Note $|\varphi(1/2 + iT)| =1$.
Here $\lambda(n)$ are Hecke eigenvalues which on the Ramanujan conjecture are bounded in absolute value by the divisor function $d(n)$, and
\begin{equation}
 \label{eq:Vdef}
V(y) = \begin{cases}
	  V_T(y) := \sqrt{y} K_{iT}(y), \qquad &\text{Maass and Eisenstein cases} \\
	  V_k(y) := y^{k/2} \exp(-y), \qquad &\text{Holomorphic case},
       \end{cases}
\end{equation}
where $T$ is the spectral parameter of $u_j$. In the Maass and Eisenstein cases, $\lambda(-n) = \lambda(n)$ since $u_j$ is assumed to be even, while in the holomorphic case $\lambda(n) = 0$ for $n < 0$.  In case $U$ is the Eisenstein series, $\lambda(n) = \tau_{iT}(n) := \sum_{ab = |n|} (a/b)^{iT}$, and
with the above normalization of the constant term, we have
\begin{equation}
 \label{eq:rho1def}
\theta(1/2 + iT) \rho(1) = (2/\pi)^{1/2}, \qquad \theta(s) = \pi^{-s}\Gamma(s) \zeta(2s).
\end{equation}
Then by Stirling's formula and standard bounds for the Riemann zeta function, we deduce
\begin{equation}
\label{eq:rho1squared}
 |\rho(1)|^2 = \frac{2}{\pi} \frac{ \cosh(\pi T)}{ |\zeta(1+2iT)|^2} = T^{o(1)} \exp(\pi T).
\end{equation}
%Our experience indicates that it is pleasant to fix the normalization of $U$ only at the very end and up to that point give formulas that are valid for any normalization of $U$.

Suppose initially that $\text{Re}(s) > \half$.  Then we define
\begin{equation}
\label{eq:completedLFunction}
 \mathcal{L}(1/2 + s) = \int_0^{\infty} (U(iy) - c_0(y )) y^s \frac{dy}{y}.
\end{equation}
This integral converges absolutely since $U(iy) - c_0(y,1/2+iT) \ll_T  y^{-1/2}$ for $y \ll 1$ (it is $O(\exp(-y))$ for $y \gg 1$).  If $U$ is cuspidal then the integral converges absolutely for all $s \in \mc$.
For $\text{Re}(s) > \half$, we can reverse the order of integration and summation to show
\begin{equation}
\label{eq:completedLFunction2}
 \mathcal{L}(1/2 + s) = (1 + \lambda(-1)) \rho(1) L(1/2 + s,U) \gamma_V(1/2+ s) ,
\end{equation}
where we write
\begin{equation}
 L(s, U) = \sum_{n \geq 1} \frac{\lambda(n)}{n^s}, \qquad \gamma_V(1/2+s) = \int_0^{\infty} V(2 \pi y) y^s \frac{dy}{y}. 
\end{equation}
It is sometimes useful to express $\gamma_V$ in terms of gamma functions.  From  \cite{GR} (6.561.16), 
\begin{equation}
\label{eq:gammaVformula}
 \gamma_{V_T}(1/2 + s) = 2^{-3/2} \pi^{-s} \Gamma\Big(\frac{\thalf + s + iT}{2}\Big) \Gamma\Big(\frac{\thalf + s - iT}{2}\Big).
\end{equation}
For the holomorphic case, we have by a direct calculation 
\begin{equation}
 \gamma_{V_k}(1/2 + s) = (2 \pi )^{-s} \Gamma(\tfrac{k}{2} + s).
\end{equation}
When $U(z) = E(z, 1/2 + iT) = E_T$, %$\lambda(n) = \tau_{iT}(n)$, and so 
a short calculation shows
\begin{equation}
\label{eq:EisensteinSeriesLfunction}
 L(s, E_T) = \zeta(s + i T) \zeta(s-iT).
\end{equation}

\subsection{Rankin-Selberg integrals}
\begin{mylemma}
\label{lemma:Mellin}
 Suppose $F$ and $G$ are smooth functions on $\mr^{+}$ with rapid decay at $0$ and $\infty$.  Then for any $\alpha, \beta \in \mc$, we have
\begin{equation}
 \int_0^{\infty} F(y)y^{\alpha} G(y) y^{\beta} \frac{dy}{y} = \frac{1}{2 \pi i} \int_{(c)} \widetilde{F}(s+\alpha) \widetilde{G}(\beta -s) ds,
\end{equation}
where $\widetilde{W}(s)$ denotes the Mellin transform of a function $W$:
\begin{equation}
\widetilde{W}(s) = \int_0^{\infty} W(y) y^s \frac{dy}{y}, \qquad W(y) = \frac{1}{2 \pi i} \int_{(c)} \widetilde{W}(s) y^{-s} ds.
\end{equation}
\end{mylemma}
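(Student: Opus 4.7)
The plan is to invoke Mellin inversion for the function $F(y)y^{\alpha}$ and then apply Fubini's theorem. First I would observe that since $F$ is smooth with rapid decay at $0$ and $\infty$, the function $y \mapsto F(y) y^{\alpha}$ shares these properties, so
\begin{equation*}
\widetilde{F}(s+\alpha) = \int_0^{\infty} F(y) y^{s+\alpha} \frac{dy}{y}
\end{equation*}
is entire in $s$, and by repeated integration by parts in $y$ it decays faster than any polynomial along any fixed vertical line $\text{Re}(s) = c$; the same applies to $\widetilde{G}$.

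Next I would substitute the Mellin inversion identity
\begin{equation*}
F(y) y^{\alpha} = \frac{1}{2\pi i} \int_{(c)} \widetilde{F}(s+\alpha) y^{-s}\, ds
\end{equation*}
into the left-hand side of the lemma. The joint integrand over $(s, y)$ is absolutely integrable: the $s$-integral converges absolutely thanks to the vertical decay of $\widetilde{F}(s+\alpha)$, while the $y$-integral converges because of the rapid decay of $G$ at both $0$ and $\infty$. Hence Fubini's theorem permits the interchange, giving
\begin{equation*}
\frac{1}{2\pi i} \int_{(c)} \widetilde{F}(s+\alpha) \left[ \int_0^{\infty} G(y) y^{\beta - s} \frac{dy}{y} \right] ds,
\end{equation*}
and recognizing the inner integral as $\widetilde{G}(\beta - s)$ yields the claim.

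The only real issue is verifying the Fubini hypothesis; given the rapid-decay assumptions on $F$ and $G$, this is a routine matter. There is no serious obstacle in the proof, since the lemma is essentially a disguised form of Parseval's formula for the Mellin transform, with the parameters $\alpha, \beta$ absorbed into the exponents so that it applies directly to the twisted integrals that arise later in the paper.
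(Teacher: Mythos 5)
Your proof is correct and is exactly the route the paper has in mind: the paper omits the argument, remarking only that the lemma is a special case of Mellin convolution ``easily proved in the smooth setting by Mellin inversion,'' which is precisely your inversion-plus-Fubini argument. The decay and absolute-convergence justifications you sketch are the routine points, so nothing is missing.
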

Lemma \ref{lemma:Mellin} is a special case of Mellin convolution (easily proved in the smooth setting by Mellin inversion) so we omit the proof.

Since we have the Mellin pair
\begin{equation}
 \gamma_V(1/2+s) = \int_0^{\infty} V(2\pi y) y^s \frac{dy}{y}, \qquad V(2\pi y) = \frac{1}{2 \pi i} \int_{(c)} \gamma_V(1/2+s) y^{-s} ds,
\end{equation}
by Lemma \ref{lemma:Mellin}, we have for $\text{Re}(s) > -1$,
\begin{equation}
\label{eq:gammaVsquaredDef}
\gamma_{V^2}(1+s) := \int_0^{\infty} V(2\pi y)^2 y^s \frac{dy}{y} = \frac{1}{2\pi i} \int_{(0)} \gamma_{V}(1/2 + s + v) \gamma_{V}(1/2-v) dv.
\end{equation}
Technically, the conditions of Lemma \ref{lemma:Mellin} are not met, but the formula still holds in this range.  It is also posible to find a closed formula for $\gamma_{V^2}$.
By (6.576.4) of \cite{GR} we have for $V_T(y) = \sqrt{y} K_{iT}(y)$ that
\begin{equation}
\label{eq:gammaVsquaredBessel}
 \gamma_{V_T^2}(1+s) = 2^{-2} \pi^{-s} \frac{\Gamma(\frac{1+s+2iT}{2}) \Gamma(\frac{1+s}{2})^2\Gamma(\frac{1+s-2iT}{2})}{\Gamma(1+s)}.
\end{equation}
If $V_k(y) = y^{k/2} \exp(-y)$, then by direct calculation
\begin{equation}
\label{eq:gammaVsquaredExponential}
 \gamma_{V_k^2}(1+s) = 2^{-k} (4\pi )^{-s} \Gamma(k+s).
\end{equation}
In many cases throughout this paper we have preferred to use the definition \eqref{eq:gammaVsquaredDef} over the evaluations \eqref{eq:gammaVsquaredBessel}--\eqref{eq:gammaVsquaredExponential}, in part because it allows us to treat the holomorphic and Maass cases simultaneously, but also because the exact evaluations are not necessary and it is easier to work directly with the definition.

The Dirichlet series analog of the above definitions is
\begin{equation}
\label{eq:Zdef}
 Z(s, U) = \sum_{n=1}^{\infty} \frac{\lambda(n)^2}{n^s},
\end{equation}
which converges absolutely for $\text{Re}(s) > 1$ by Rankin-Selberg theory (see below).
For $U = u_j$ or $y^{k/2} f(z)$, we have
\begin{equation}
\label{eq:Zformulacusp}
 Z(s, U)  = \frac{\zeta(s)}{\zeta(2s)} L(\mathrm{sym}^2 U, s),
\end{equation}
while if $U = E(z, 1/2 + iT) = E_T$, then
\begin{equation}
\label{eq:ZformulaEisenstein}
Z(s, E_T) = \frac{\zeta^2(s) \zeta(s-2iT) \zeta(s+2iT)}{\zeta(2s)}.
\end{equation}

\begin{mylemma}
\label{lemma:RankinSelbergIntegral}
 Suppose that $U(z)$ is either $u_j(z)$ or $y^{k/2} f(z)$.  Then for $\text{Re}(s) > 0$, we have
\begin{equation}
\label{eq:RankinSelbergIntegral}
 \int_{\Gamma \backslash \mh} |U(z)|^2 E(z, 1+s) \frac{dx dy}{y^2} 
=  (1 + \lambda(-1)^2) |\rho(1)|^2 Z(1+s, U) \gamma_{V^2}(1+s).
%= (1 + \lambda(-1)^2) |\rho(1)|^2 \Big(\sum_{n \geq 1} \frac{\lambda(n)^2}{n^{1+s}} \Big) \frac{1}{2 \pi i} \int_{(0)} \gamma_V(1/2+s+v) \gamma_V(1/2-v) dv.
\end{equation}
The formula extends to hold for all $s \in \mc$ by meromorphic continuation.
\end{mylemma}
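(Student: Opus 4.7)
The plan is a classical Rankin--Selberg unfolding. For $\mathrm{Re}(s)$ sufficiently large, the Eisenstein series $E(z,1+s)$ is absolutely convergent as a sum over $\Gamma_\infty\backslash\Gamma$, and $|U(z)|^2$ decays rapidly at the cusp since $U$ is cuspidal. I would unfold to replace the integral over $\Gamma\backslash\mh$ by
\[
\int_0^\infty \int_0^1 |U(z)|^2\, y^{1+s}\, \frac{dx\,dy}{y^2}
\]
over the strip $\Gamma_\infty\backslash\mh \cong [0,1)\times(0,\infty)$.

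Next I would compute the $x$-integral by Parseval applied to the Fourier expansion \eqref{eq:UFourier} (with $c_0 \equiv 0$ since $U$ is cuspidal), giving
\[
\int_0^1 |U(x+iy)|^2\, dx = |\rho(1)|^2 \sum_{n\neq 0} \frac{|\lambda(n)|^2}{|n|}\, V(2\pi|n|y)^2.
\]
Splitting the sum into $n>0$ and $n<0$ and using $\lambda(-n) = \lambda(-1)\lambda(n)$ (with $\lambda(-1)=1$ for even Maass forms and $\lambda(-1)=0$ in the holomorphic case) produces the prefactor $(1+\lambda(-1)^2)$. The $y$-integral is then carried out termwise: the substitution $u = 2\pi|n|y$ reduces $\int_0^\infty V(2\pi|n|y)^2 y^{s-1}\,dy$ to $|n|^{-s}\gamma_{V^2}(1+s)$ in the notation of \eqref{eq:gammaVsquaredDef}, and summing over $n\geq 1$ produces the Dirichlet series $Z(1+s,U)$. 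Collecting these factors yields the claimed identity in a half-plane of absolute convergence.

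Meromorphic continuation to all $s\in\mc$ then follows from the standard meromorphic continuation of $E(z,1+s)$ on the left and from \eqref{eq:Zformulacusp} combined with the explicit gamma expressions \eqref{eq:gammaVsquaredBessel}--\eqref{eq:gammaVsquaredExponential} on the right. The only technical point is justifying the various interchanges: termwise absolute convergence in the Parseval and $y$-integration steps follows from the Rankin--Selberg bound $\sum_{n\leq X}\lambda(n)^2 \ll X$ together with the rapid decay of $V$ at infinity and its controlled behavior near zero, while the unfolding is valid in the range $\mathrm{Re}(s)>0$ by absolute convergence of $E(z,1+s)$ against the cuspidal decay of $|U|^2$. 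No serious obstacle arises; this is a standard Rankin--Selberg computation whose content amounts to careful bookkeeping of constants and a check of the $n<0$ contribution.
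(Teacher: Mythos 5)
Your proposal is correct and follows essentially the same route as the paper: unfold the Eisenstein series, insert the Fourier expansion of $U$, execute the $x$-integral, rescale the $y$-integral to produce $\gamma_{V^2}(1+s)$, and sum over $n$ to obtain $Z(1+s,U)$, with the factor $(1+\lambda(-1)^2)$ coming from the split into positive and negative $n$. The convergence justifications and the meromorphic continuation you indicate are the standard ones the paper implicitly relies on.
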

This is the standard unfolding argument but since we use it repeatedly we give the full proof.

\begin{proof}[Proof of Lemma \ref{lemma:RankinSelbergIntegral}]
For $\text{Re}(s) > 1$, we have
\begin{equation}
 \langle |U|^2, E(\cdot, \overline{s}) \rangle := \int_{\Gamma \backslash \mh} |U(z)|^2 E(z,s) \frac{dx dy}{y^2} =  \sum_{\gamma \in \Gamma_{\infty} \backslash \Gamma} \int_{\Gamma \backslash \mh} |U(z)|^2 (\text{Im}(\gamma z))^s \frac{dx dy}{y^2},
\end{equation}
which by unfolding gives
\begin{equation}
\langle |U|^2, E(\cdot, \overline{s}) \rangle =\int_0^{1} \int_0^{\infty} |U(z)|^2 y^s \frac{dx dy}{y^2}.
\end{equation}
Inserting the Fourier expansion of $U$ and performing the $x$-integral, we obtain
\begin{multline}
\label{eq:EisensteinUnfoldingCalculation}
\langle |U|^2, E(\cdot, \overline{s}) \rangle=  |\rho(1)|^2 \sum_{n \neq 0} \frac{\lambda(n)^2}{|n|} \int_0^{\infty} |V(2 \pi |n| y)|^2 y^{s-1} \frac{dy}{y} 
\\
= (1 + \lambda(-1)^2) |\rho(1)|^2 \Big(\sum_{n \geq 1} \frac{\lambda(n)^2}{n^{s}}\Big) \int_0^{\infty} |V(2\pi y)|^2 y^{s-1} \frac{dy}{y},
\end{multline}
which by \eqref{eq:gammaVsquaredDef} and \eqref{eq:Zdef}, finishes the proof.
\end{proof}
When $U$ is the Eisenstein series then \eqref{eq:RankinSelbergIntegral} does not converge.  Instead, we consider the inner product against the incomplete Eisenstein series as do Luo-Sarnak in \cite{LuoSarnakQUE}, but we briefly re-derive the formula to have a self-contained exposition.  Suppose that $h: \mathbb{R}^+ \rightarrow \mathbb{R}$ is smooth and compactly-supported.  By definition,
\begin{equation}
 E(z, h) = \sum_{\gamma \in \Gamma_{\infty} \backslash \Gamma} h(\text{Im}(\gamma z)),
\end{equation}
and Mellin inversion gives for $c > 1$,
\begin{equation}
\label{eq:incompleteEisensteincontourintegralintermsofEzs}
 E(z,h) = \frac{1}{2\pi i} \int_{(c)} E(z,s) \widetilde{h}(-s) ds.
\end{equation}
%Our formulas are most pleasant for the function $H(y) = y h(y)$.
The formula (valid for $U$ Eisenstein series or a cusp form) states
\begin{multline}
\label{eq:UsquaredinnerproductincompleteEisenstein}
 \langle |U|^2, E(z,h) \rangle = \int_0^{\infty} |c_0(y)|^2 h(y) \frac{dy}{y^2} + 
\\
(1 + \lambda(-1)^2) \frac{|\rho(1)|^2}{2\pi i} \int_{(\varepsilon)} \widetilde{h}(-1-s) Z(1+s, U) \gamma_{V^2}(1+s) ds.
\end{multline}
\begin{proof}
Unfolding as in the proof of Lemma \ref{lemma:RankinSelbergIntegral}, we obtain
\begin{equation}
 \langle |U|^2, E(z,h) \rangle = \int_0^{\infty} \int_0^{1} |U(z)|^2 h(y) \frac{dx dy}{y^2}.
\end{equation}
Inserting the Fourier expansion of $U$, we obtain
\begin{multline}
\label{eq:unfoldingincompleteEisenstein}
 \langle |U|^2, E(z,h) \rangle = \int_0^{\infty} |c_0(y)|^2 h(y) \frac{dy}{y^2} 
\\
+ (1 + \lambda(-1)^2) |\rho(1)|^2 \sum_{n \geq 1} \frac{\lambda(n)^2}{n} \int_0^{\infty} |V(2\pi n y)|^2 h(y) \frac{dy}{y^2}.
\end{multline}
Using Mellin inversion on $h$, we obtain that the non-constant terms in \eqref{eq:unfoldingincompleteEisenstein} equal
\begin{equation}
\label{eq:nonconstanttermsusingMellininversion}
 (1 + \lambda(-1)^2 )|\rho(1)|^2 \frac{1}{2\pi i} \int_{(\varepsilon)} \widetilde{h}(-1-s)  \Big(\sum_{ n \geq 1} \frac{\lambda(n)^2}{n^{1+s}} \Big) \int_0^{\infty} |V(2\pi y)|^2  y^{s} \frac{dy}{y} ds.
\end{equation}
Again using \eqref{eq:gammaVsquaredDef} and \eqref{eq:Zdef}, we obtain \eqref{eq:UsquaredinnerproductincompleteEisenstein}.
\end{proof}

\section{Derivation of Conjectures \ref{conj:geodesicQUE} and \ref{conj:L2geodesic}}
\label{section:derivation}
In this section, we suppose $U$ is either $u_j(z)$ or $y^{k/2} f(z)$.  By Lemma \ref{lemma:Mellin} and \eqref{eq:completedLFunction}, 
\begin{equation}
I(U,\alpha):= \int_0^{\infty} |U(iy)|^2 y^{\alpha} \frac{dy}{y} = \frac{1}{2\pi i} \int_{(0)} \mathcal{L}(1/2 + s + \alpha) \mathcal{L}(1/2-s) ds.
\end{equation}
By \eqref{eq:completedLFunction2}, we have
\begin{equation}
\label{eq:secondmomentformulaforIU}
 I(U,\alpha) =  (1+ \lambda(-1))^2  \frac{|\rho(1)|^2}{2 \pi i} \int_{(c)} L( \thalf + \alpha + s, U) L(\thalf  - s, U) \gamma_V(\thalf + \alpha + s) \gamma_V(\thalf - s) ds.
\end{equation}
This is effectively a shifted second moment of the $L$-function with a weight depending on $U$.  The weight is very natural, being the Archimedean part of the completed $L$-function.
Although such moments for $U$ fixed have been well-studied, the case of $U$ varying is more difficult, as mentioned in the introduction.  Therefore we appeal to the recipe for computing the main term of a moment of $L$-functions due to \cite{CFKRS}.  The following formula should be understood in a formal sense:
\begin{multline}
I(U,\alpha) \sim 
(1+ \lambda(-1))^2 |\rho(1)|^2   
\\
\Big(\sum_{m,n \geq 1} \frac{\lambda(m) \lambda(n)}{m^{1/2+\alpha} n^{1/2}} \frac{1}{2 \pi}  \intR \Big(\frac{n}{m}\Big)^{it} \gamma_V(1/2 + \alpha + it) \gamma_V(1/2  -it) dt + \dots\Big),
\end{multline}
where the dots indicate an identical term with $\alpha$ replaced by $-\alpha$, consistent with $I(U,\alpha) = I(U, -\alpha)$.  This is the first step of the \cite{CFKRS} recipe which calls for us to write a formal approximate functional equation for each $L$-function, and to discard terms where the ``root number'' is oscillating with the family.
We view the $t$-integral as selecting only the diagonal term $m=n$, leading to
\begin{align}
\nonumber
I(U,\alpha) \sim (1+ \lambda(-1))^2 |\rho(1)|^2 \Big(\sum_{n \geq 1} \frac{\lambda(n)^2}{n^{1+\alpha}} \frac{1}{2\pi i} \int_{(0)} \gamma_V(1/2 + \alpha + s) \gamma_V(1/2 -s) ds + \dots \Big)
\\
\label{eq:I2alphadiagonalterm}
 = (1+ \lambda(-1))^2 |\rho(1)|^2 \big( Z(1+\alpha, U) \gamma_{V^2}(1+\alpha) + Z(1-\alpha, U) \gamma_{V^2}(1-\alpha) \big).
\end{align}
Conveniently there was no need to invoke the evaluations \eqref{eq:gammaVsquaredBessel}--\eqref{eq:gammaVsquaredExponential}.
Using \eqref{eq:RankinSelbergIntegral}, we obtain the conjectured asymptotic
% That is, we have
% \begin{equation}
% \label{eq:I2withKBessel}
%  I_2(\alpha+\beta) \sim 4 |\rho(1)|^2 (\sum_{n \geq 1} \frac{|\lambda(n)|^2}{n^{1+\alpha+\beta}} \int_0^{\infty} |K_{iT}(2\pi y)|^2 y^{1 + \alpha+\beta} \frac{dy}{y} + \dots),
% \end{equation}
\begin{equation}
\label{eq:I2withEisenstein}
 I(U,\alpha) \sim (1 + \lambda(-1)) \int_{\Gamma \backslash \mathbb{H}} |U(z)|^2 (E(z, 1 + \alpha) +  E(z, 1 - \alpha) )\frac{dx dy}{y^2}.
\end{equation}
This is a compact way to express the two formulas in Conjecture \ref{conj:L2geodesic}, since $\lambda(-1) = 1$ for $U= u_j$ an even Maass form, and $\lambda(-1) = 0$ for $U = y^{k/2} f(z)$ coming from a holomorphic form.

Next we derive Conjecture \ref{conj:geodesicQUE} from Conjecture \ref{conj:L2geodesic}, supposing that the implied error term in Conjecture \ref{conj:L2geodesic} has some uniformity in $\alpha$.
Let $\psi(y)$ be a smooth function with compact support on $\mathbb{R}^+$, and 
define
\begin{equation}
 I(U, \psi) = \int_0^{\infty} |U(iy)|^2 \psi(y) \frac{dy}{y}.
\end{equation}
We may suppose that $\psi$ is even, i.e., $\psi(y^{-1}) = \psi(y)$, because in general one can decompose $\psi$ as the sum of an even and an odd function, and $I(U, \psi)$ vanishes if $\psi$ is odd.
Thus $\widetilde{\psi}(s)$ is entire and satisfies $\widetilde{\psi}(s) = \widetilde{\psi}(-s)$.

By Mellin inversion, and using \eqref{eq:I2withEisenstein}, we have
\begin{multline}
 I(U, \psi) = \frac{1}{2\pi i} \int_{(0)} \widetilde{\psi}(-s) I(U,s) ds 
\\
\sim (1 + \lambda(-1)) \frac{1}{2\pi i} \int_{(0)} \widetilde{\psi}(-s)  \int_{\Gamma \backslash \mathbb{H}} |U(z)|^2 (E(z, 1 + s) +  E(z, 1 - s) )\frac{dx dy}{y^2} ds.
\end{multline}
Here is where we assumed the error term in \eqref{eq:I2withEisenstein} holds with some uniformity.

Reversing the order of integration, and with the definition
\begin{equation}
\label{eq:Hdef}
H(z, \psi) = \frac{1}{2\pi i} \int_{(0)} \widetilde{\psi}(-s) (E(z, 1 + s) +  E(z, 1 - s) ) ds,
\end{equation}
we have the conjecture
\begin{equation}
\label{eq:IUpsiwithH}
 I(U, \psi) \sim (1 + \lambda(-1)) \int_{\Gamma \backslash \mathbb{H}} |U(z)|^2 
H(z, \psi)
\frac{dx dy}{y^2}.
\end{equation}
Finally we show unconditionally, using the QUE theorem, that the right hand side of \eqref{eq:IUpsiwithH} agrees with the statement of Conjecture \ref{conj:geodesicQUE}. 

Observe that $H(z, \psi)$ is $SL_2(\mz)$-invariant.  In fact $H$ is related to an incomplete Eisenstein series, namely
\begin{equation}
\label{eq:Hformula}
 H(z, \psi) = -\frac{3}{\pi} \widetilde{\psi}(0) + 2 E(z, y \psi(y)),
\end{equation}
% where for a smooth compactly-supported function $\phi(y)$, we define the incomplete Eisenstein series
% \begin{equation}
%  E(z, \phi) = \sum_{\gamma \in \Gamma_{\infty} \backslash \Gamma} \phi(\text{Im}(\gamma z)).
% \end{equation}
We prove this now.
First, shift the contour in \eqref{eq:Hdef} to $\text{Re}(s) = \varepsilon$ and expand the integral as the sum of two integrals corresponding to the two terms $E(z,1+s)$ and $E(z,1-s)$.  Then by shifting the latter of these contours to the left, we have
\begin{equation}
 \frac{1}{2\pi i} \int_{(\varepsilon)} \widetilde{\psi}(-s)   E(z, 1 - s) ds = -\frac{3}{\pi} \widetilde{\psi}(0)  + \frac{1}{2\pi i} \int_{(-\varepsilon)} \widetilde{\psi}(-s)   E(z, 1 - s) ds,
\end{equation}
which by the change of variables $s \rightarrow -s$ and the evenness of $\widetilde{\psi}$ gives 
\begin{equation}
 H(z, \psi) =-\frac{3}{\pi} \widetilde{\psi}(0) +  \frac{2}{2 \pi i} \int_{(\varepsilon)} \widetilde{\psi}(-s) E(z,1+s) ds.
\end{equation}
Finally we observe from \eqref{eq:incompleteEisensteincontourintegralintermsofEzs} that
\begin{equation}
 \frac{1}{2 \pi i} \int_{(\varepsilon)} \widetilde{\psi}(-s) E(z,1+s) ds = E(z, y \psi(y)). 
\end{equation}

  %If we drop the condition \eqref{eq:L2normalization}, then the QUE conjecture \eqref{eq:QUE} can be expressed as
% \begin{equation}
% \label{eq:QUE2}
%  \int_{\Gamma \backslash \mathbb{H}} |U(z)|^2 \phi(z) \frac{3}{\pi} \frac{dx dy}{y^2} \rightarrow \Big(\int_{\Gamma \backslash \mathbb{H}} |U(z)|^2  \frac{3}{\pi} \frac{dx dy}{y^2} \Big) \Big( \int_{\Gamma \backslash \mathbb{H}} \phi(z)  \frac{3}{\pi} \frac{dx dy}{y^2} \Big).
% \end{equation}
Inserting \eqref{eq:Hformula} into \eqref{eq:IUpsiwithH}, applying \eqref{eq:QUE} to $I(U, \psi)$ (with $\psi$ fixed, the eigenvalue/weight of $U$ large), and recalling that $U$ is normalized by \eqref{eq:L2normalization},
we obtain
\begin{multline}
 I(U, \psi) \sim (1 + \lambda(-1)) \int_{\Gamma \backslash \mathbb{H}} \Big(-\frac{3}{\pi} \widetilde{\psi}(0) +  2E(z, y \psi(y)) \Big)  \frac{dx dy}{y^2} 
\\
= (1 + \lambda(-1)) \Big(-  \widetilde{\psi}(0) + 2 \int_0^1 \int_0^{\infty} y \psi(y)  \frac{dx dy}{y^2}\Big).
\end{multline}
This simplifies as
\begin{equation}
I(U, \psi) \sim (1 + \lambda(-1))  \int_0^{\infty} \psi(y) \frac{dy}{y},
\end{equation}
which again is a compact way to write both asymptotics in Conjecture \ref{conj:geodesicQUE}.

The case $\alpha = 0$ is particularly interesting, and Conjecture \ref{conj:L2geodesic} is perhaps not the best form in this case.
\begin{myconj}
\label{conj:L2geodesicalpha0} 
We have
\begin{equation}
 \int_0^{\infty} |u_j(iy)|^2 \frac{dy}{y} \sim 2 \log(1/4 + t_j^2),
\end{equation}
as $t_j \rightarrow \infty$.
\end{myconj}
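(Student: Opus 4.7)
The plan is to deduce Conjecture \ref{conj:L2geodesicalpha0} from Conjecture \ref{conj:L2geodesic} by specializing \eqref{eq:geodesicMaassalphafixed} to $\alpha = 0$ and extracting the constant term of the Rankin--Selberg integral's Laurent expansion at the pole $s = 1$ of $E(z,s)$. First, observe that $E(z, 1+\alpha) + E(z, 1-\alpha)$ has a removable singularity at $\alpha = 0$ with value $2 E^{\mathrm{reg}}(z)$, where $E^{\mathrm{reg}}(z)$ is the constant term in the Laurent expansion of $E(z,s)$ around $s=1$. Setting $\alpha = 0$ in \eqref{eq:geodesicMaassalphafixed} reduces the conjecture to showing
\begin{equation*}
4 \int_{\Gamma \backslash \mh} |u_j(z)|^2 E^{\mathrm{reg}}(z) \frac{dx\,dy}{y^2} \sim 2\log(1/4+t_j^2).
\end{equation*}

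To compute the integral, I would apply Lemma \ref{lemma:RankinSelbergIntegral} (with $\lambda(-1)=1$) to obtain
\begin{equation*}
\int_{\Gamma \backslash \mh} |u_j(z)|^2 E(z, 1+\alpha) \frac{dx\,dy}{y^2} = 2 |\rho(1)|^2 Z(1+\alpha, u_j)\, \gamma_{V_T^2}(1+\alpha),
\end{equation*}
and match Laurent coefficients at $\alpha=0$. The left side has a simple pole with residue $(3/\pi)\int|u_j|^2 y^{-2}\,dx\,dy = 1$ by \eqref{eq:L2normalization}, and constant term equal to $\int |u_j|^2 E^{\mathrm{reg}}(z)\,y^{-2}\,dx\,dy$. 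Writing $Z(1+\alpha, u_j) = r/\alpha + z_0 + O(\alpha)$, residue matching gives $2|\rho(1)|^2 r\, \gamma_{V_T^2}(1) = 1$, and comparing constant terms then yields
\begin{equation*}
\int_{\Gamma \backslash \mh} |u_j(z)|^2 E^{\mathrm{reg}}(z) \frac{dx\,dy}{y^2} = \frac{z_0}{r} + \frac{\gamma'_{V_T^2}(1)}{\gamma_{V_T^2}(1)}.
\end{equation*}

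The Archimedean factor supplies the main term: differentiating \eqref{eq:gammaVsquaredBessel} logarithmically at $s=0$ and using $\operatorname{Re}(\Gamma'/\Gamma)(1/2+iT) = \tfrac{1}{2}\log(1/4+T^2) + O(T^{-2})$ yields $(\log \gamma_{V_T^2})'(1) = \tfrac{1}{2}\log(1/4+T^2) + O(1)$. Meanwhile \eqref{eq:Zformulacusp} with the expansions $\zeta(1+\alpha) = \alpha^{-1} + \gamma + O(\alpha)$, etc., gives
\begin{equation*}
\frac{z_0}{r} = \gamma - \frac{2\zeta'(2)}{\zeta(2)} + \frac{L'(\mathrm{sym}^2 u_j, 1)}{L(\mathrm{sym}^2 u_j, 1)}.
\end{equation*}
Multiplying by $4$ and recalling $T = t_j$ reproduces $2\log(1/4+t_j^2) + O(1) + O(|L'/L(\mathrm{sym}^2 u_j, 1)|)$. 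The main obstacle beyond Conjecture \ref{conj:L2geodesic} itself is showing that $L'(\mathrm{sym}^2 u_j, 1)/L(\mathrm{sym}^2 u_j, 1) = o(\log t_j)$; standard zero-free region estimates (or GRH, which gives $O(\log\log t_j)$) are more than enough, so the derivation is complete modulo the conjectural input.
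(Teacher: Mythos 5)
Your derivation is correct and is essentially the paper's own: the paper likewise obtains the conjecture by Taylor-expanding $(1+\lambda(-1))^2|\rho(1)|^2 Z(1+\alpha)\gamma_{V_T^2}(1+\alpha)$ at $\alpha=0$ against the residue normalization coming from Lemma \ref{lemma:RankinSelbergIntegral} and \eqref{eq:L2normalization}, which via that same lemma is exactly your Laurent-coefficient matching, with your $z_0/r$ being the paper's $r_0$ and $\frac{\gamma'_{V_T^2}}{\gamma_{V_T^2}}(1)=\frac12\log(1/4+T^2)+O(1)$ supplying the main term. The only overreach is the parenthetical claim that a standard zero-free region yields $L'/L(\mathrm{sym}^2 u_j,1)=o(\log t_j)$: the classical $c/\log$-width region in the spectral aspect (even with Siegel zeros for $\mathrm{sym}^2$ excluded) only gives $O(\log t_j)$, which is why the paper instead invokes GRH (Theorem 5.17 of \cite{IK}) to get $r_0\ll\log\log T$ --- though since this is a conjecture derivation rather than a proof, that does not affect the conclusion.
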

One way to guess the form of this answer is to imagine that $|u_j(iy)|^2 \sim 2$ (on average) until it begins to decay quickly, which is for $y + y^{-1} \gg t_j$ (for this, see \eqref{eq:BesselKUB} and surrounding discussion below).

 We set $t_j = T$ and continue with \eqref{eq:I2alphadiagonalterm}.
% , which takes the form
%  \begin{equation}
% I(U,\alpha) \sim  (1 + \lambda(-1))^2 |\rho(1)|^2 (Z(1+\alpha) \gamma_{V_{T}^2}(1+\alpha) + Z(1-\alpha) \gamma_{V_{T}^2}(1-\alpha) ),
%  \end{equation}
%  where recall \eqref{eq:gammaVsquaredDef} and \eqref{eq:Zdef}.  
With this notation, Lemma \ref{lemma:RankinSelbergIntegral} shows
\begin{equation}
 \text{Res}_{\alpha = 0} (1 + \lambda(-1)) |\rho(1)|^2 Z(1+\alpha) \gamma_{V_T^2}(1 + \alpha) = 1.
\end{equation}
Let $Z(1+\alpha) = \frac{r_{-1}}{\alpha}(1 + r_0 \alpha + \dots)$.  By Theorem 5.17 of \cite{IK} (conditional on GRH and Ramanujan), we have $r_0 \ll \log \log T$.   By taking a Taylor expansion, we have the prediction
\begin{equation}
\int_0^{\infty} |u_j(iy)|^2 y^{\alpha} \frac{dy}{y} \sim (1 + \lambda(-1))(2 \frac{\gamma_{V_{T}^2}'}{\gamma_{V_T^2}}(1) + 2 r_0 + O(\alpha)).
\end{equation}
Using the explicit evaluation \eqref{eq:gammaVsquaredBessel} and Stirling's formula, we have
\begin{equation}
 \frac{\gamma_{V_{T}^2}'}{\gamma_{V_T^2}}(1) = \half \log(1/4 + T^2) + O(1).
\end{equation}
This quickly leads to Conjecture \ref{conj:L2geodesicalpha0}.

\section{Shifted convolution sums and horocycle integrals}
\label{section:SCSandHorocycle}
The shifted convolution problem is a well-known problem in analytic number theory having direct applications to subconvexity.  See Michel's Park City lecture notes \cite{MichelParkCity} for a good general introduction.  

\subsection{Conjectures}
\label{section:shiftedconvolutionsums}
We have found that for applications to QUE of restricted eigenfunctions, the following formulation of the shifted convolution problem is natural.
\begin{myconj}
\label{conj:SCH}
 Let $U(z)$ be a Hecke-Maass cusp form with Laplace eigenvalue $1/4 + T^2$.  Then if $0 < a \leq y \leq b$, and $m \neq 0$, we have
\begin{equation}
\label{eq:SCH}
|\rho(1)|^2 \sum_{n \in \mz} \lambda(n) \lambda(n + m) K_{iT}(2\pi |n|y) K_{iT}(2 \pi |m+n| y) \ll_{a,b,\tau, R} T^{-\tau+\varepsilon} |m|^R,
\end{equation}
for some fixed $\tau>0$ and some fixed $R \geq 0$.
\end{myconj}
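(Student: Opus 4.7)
The plan is to recognize that, up to a factor of $2\pi y$, the left side of \eqref{eq:SCH} equals the $m$-th Fourier coefficient in $x$ of $|U(x+iy)|^2$. Indeed, inserting the Fourier expansion \eqref{eq:UFourier} into $|U|^2$, extracting the coefficient of $e(mx)$, and using $V_T(2\pi|n|y) = \sqrt{2\pi|n|y}\,K_{iT}(2\pi|n|y)$, one recovers precisely \eqref{eq:SCH} up to that factor. Thus the conjecture amounts to showing decay in $T$ of the non-constant Fourier coefficients of $|U|^2$ on the horocycle of fixed height $y$, which invites a spectral attack.

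The natural approach is to spectrally decompose $|U|^2$ on $\Gamma \backslash \mh$ as a constant piece, a cuspidal sum over Hecke--Maass forms $u_k$ with spectral parameter $t_k$, and an Eisenstein continuous part. The constant contributes nothing to the $m$-th Fourier coefficient for $m \neq 0$, so the cuspidal piece yields a spectral sum proportional to
\begin{equation*}
\sum_k \langle |U|^2, u_k \rangle\, \rho_k(m)\, \sqrt{y}\, K_{it_k}(2\pi|m|y),
\end{equation*}
with an analogous integral from the Eisenstein spectrum. By Watson's formula, $|\langle |U|^2, u_k \rangle|^2$ is an explicit Archimedean factor times $L(1/2, U\times U \times u_k)/[L(1,\mathrm{Ad}\,U)^2 L(1,\mathrm{Ad}\,u_k)]$, and this Archimedean factor decays exponentially once $|t_k| > 2T$, effectively truncating the sum to the Weyl range $|t_k| \ll T$. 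Then one estimates $L(1/2, U\times U\times u_k)$ by a hybrid subconvex bound, controls $\rho_k(m)$ via Rankin--Selberg together with Hecke relations, and performs the sum over $k$ by Cauchy--Schwarz together with the spectral large sieve (or, equivalently, the Kuznetsov formula).

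The principal obstacle is uniformity of the subconvex bound across the full range $|t_k| \leq 2T$, and especially in the transition zone $|t_k| \approx 2T$. There the Archimedean factor in Watson's formula is in its stationary-phase regime, the analytic conductor of $L(1/2, U \times U \times u_k)$ drops substantially, and both the size and the oscillation of the spectral weight require a delicate uniform Stirling analysis. This is the triple-product analogue of the ``conductor-dropping'' phenomenon the paper already confronts in the fourth moment of zeta, and likely requires a short-range spectral mean-value bound near $|t_k| = 2T$ alongside subconvexity. The polynomial dependence on $m$ should emerge harmlessly from Hecke relations, so the substantive difficulty lies in these uniform analytic estimates.
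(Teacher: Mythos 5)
The statement you are addressing is Conjecture \ref{conj:SCH}: the paper does not prove it, and in fact explicitly remarks that it is ``very far from current technology,'' the only known case being $\tau=0$, $R=0$ via Theorem 1.1 of \cite{GRS} (proved by forgoing all cancellation and using uniform Bessel asymptotics). Your opening reduction is correct and is exactly the paper's own observation: up to the factor $2\pi y$, the left side of \eqref{eq:SCH} is the $m$-th Fourier coefficient of $|U(x+iy)|^2$ along the horocycle, i.e.\ \eqref{eq:SCHhorocyleform}. But what follows is a strategy sketch, not a proof, and the strategy itself cannot succeed as described.

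The concrete gap is quantitative. After spectrally decomposing $|U|^2$ and extracting the $m$-th Fourier coefficient, the cuspidal sum $\sum_k \langle |U|^2, u_k\rangle\,\rho_k(m)\sqrt{y}\,K_{it_k}(2\pi|m|y)$ is effectively supported on $t_k \leq 2T + O(\log T)$, so it has about $T^2$ terms. Even assuming the Lindel\"of hypothesis for the triple product $L$-functions (far stronger than any hybrid subconvexity you invoke), Watson's formula together with \eqref{eq:gammafactorboundinWatson}--\eqref{eq:Pdef} gives $|\langle |U|^2,u_k\rangle| \ll T^{-1/4}(1+|2T-t_k|)^{-1/4}t_k^{-1/2+\varepsilon}$, while $|\rho_k(m)\sqrt{y}K_{it_k}(2\pi|m|y)| \ll m^{\theta+\varepsilon} t_k^{-1/2+\varepsilon}$ for fixed $y$; summing absolute values against the Weyl density $\asymp t_k\,dt_k$ yields a bound of order $T^{1/2+\varepsilon}m^{\theta}$, not $T^{-\tau}$. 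Your fallback of Cauchy--Schwarz with the spectral large sieve does no better: Bessel's inequality bounds $\sum_k|\langle|U|^2,u_k\rangle|^2$ by $\|U\|_4^4$ (conjecturally $O(1)$), and the local Weyl law bounds the companion sum by $\asymp T^2$, giving $O(T^{1+\varepsilon})$. So no arrangement of absolute-value estimates --- subconvex, Lindel\"of, or mean-value --- closes a gap of size roughly $T^{1/2+\tau}$; one needs genuine cancellation among the signs of $\langle|U|^2,u_k\rangle$ and the oscillating Bessel/Archimedean factors over the long spectral range, which is precisely the obstruction the paper describes (``the spectral sum is very long \dots the change in sign of these coefficients is difficult to detect,'' and see the discussion of \cite{GRS}, Appendix A). Moreover, the hybrid subconvexity in the transition zone $t_k \approx 2T$ that you identify as the ``principal obstacle'' is itself unproven and, even granted, would not repair the loss above. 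In short, the statement remains a conjecture, and your proposal does not supply the missing idea.
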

An equivalent way to formulate \eqref{eq:SCH} is
\begin{equation}
\label{eq:SCHhorocyleform}
 \int_0^{1} |U(x+iy)|^2 e(-mx) dx \ll T^{-\tau+\varepsilon} |m|^{R},
\end{equation}
which we interpret as a bound on the $m$-th Fourier coefficient of $|U|^2$ along the horocycle at height $y$.

It may even be true that  $\tau = 1/2$ and $R=0$ are valid which amounts to ``square-root'' cancellation in $n$ and strong uniformity in $m$.  The estimate is true with $\tau = 0$ and $R = 0$ by Theorem 1.1 of \cite{GRS}.  Their proof proceeds by
foregoing any cancellation in the sum; this requires a close analysis of the uniform asymptotic behavior of the Bessel function for which see Lemma 3.1 of \cite{GRS}.  The basic point is that $\cosh(\pi T) K_{iT}(y)$ is exponentially small for $y > T + CT^{1/3}$, and 
\begin{equation}
\label{eq:BesselKUB}
 \cosh(\pi T) K_{iT}(y) \ll
\begin{cases}
 T^{-1/4} |T-y|^{-1/4}) \qquad &\text{for } y < T - CT^{1/3} \\
T^{-1/3} \qquad &\text{for } |y - T| \leq CT^{1/3}.
\end{cases}
\end{equation}
%In particular, for $y \leq T/2$, we have $\cosh(\pi T) K_{iT}(y) \ll T^{-1/2}$. 

In Conjecture \ref{conj:SCH} we did not impose a restriction on the size of $m$ because the Bessel function (times $\rho(1)^2$) is exponentially small for $|n| \gg T^{1+\varepsilon}$ or $|n+m| \gg T^{1+\varepsilon}$, so if $|m| \geq T^{1+\varepsilon}$ then Conjecture \ref{conj:SCH} is trivial.  This formulation of the shifted convolution problem is non-standard in the sense that the weight function is oscillatory and also that the automorphic form is varying.
Since the above shifted convolution hypothesis, if true, is very far from current technology, it seems natural to explore some of its consequences.

We will use
Conjecture \ref{conj:SCH} to aid in studying various QUE integrals.

Conjecture \ref{conj:SCH} implies a nontrivial bound on the sup-norm of the Maass form $U(z)$ restricted to a fixed compact subset of $\mathbb{H}$, which requires some explanation.  
To this end, we present a neat variant of van der Corput's method as follows.
\begin{myprop}
\label{prop:vdCvariant}
Suppose that $G(x)$ is a $\mz$-periodic function having a Fourier series supported on an interval containing $J$ integers.  Then for any $t \in \mr$, 
\begin{equation}
\label{eq:vdCvariant}
|G(t)|^2 \leq J \int_0^{1} |G(x)|^2 dx.
\end{equation}
\end{myprop}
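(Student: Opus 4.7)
The plan is to prove this by a straightforward application of Cauchy--Schwarz combined with Parseval's identity. The hypothesis gives us a Fourier expansion of a very restricted form, and the two standard inequalities align to give exactly the stated bound with constant $J$.

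First I would write out the Fourier expansion. Since $G$ is $\mathbb{Z}$-periodic with Fourier series supported on some interval $I \subset \mathbb{R}$ containing at most $J$ integers, we have
\begin{equation}
G(x) = \sum_{n \in I \cap \mathbb{Z}} \widehat{G}(n)\, e(nx),
\end{equation}
where the index set has cardinality at most $J$. Evaluating at any real number $t$ and estimating by Cauchy--Schwarz gives
\begin{equation}
|G(t)|^2 = \Big| \sum_{n \in I \cap \mathbb{Z}} \widehat{G}(n) e(nt) \Big|^2 \leq |I \cap \mathbb{Z}| \sum_{n \in I \cap \mathbb{Z}} |\widehat{G}(n)|^2 \leq J \sum_{n \in I \cap \mathbb{Z}} |\widehat{G}(n)|^2.
\end{equation}

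Next I would invoke Parseval's identity on the unit interval, which gives
\begin{equation}
\sum_{n \in I \cap \mathbb{Z}} |\widehat{G}(n)|^2 = \int_0^1 |G(x)|^2\, dx,
\end{equation}
and combining this with the previous display yields \eqref{eq:vdCvariant}. There is essentially no obstacle here; the content of the statement lies entirely in the hypothesis that the Fourier support is restricted to $J$ integers, which collapses the $\ell^\infty$-to-$\ell^2$ transition via Cauchy--Schwarz with a loss of only $J^{1/2}$ rather than the much larger loss one would incur for an arbitrary trigonometric polynomial. The only thing worth emphasizing in the write-up is that the bound is sharp (achieved when all Fourier coefficients are equal and aligned in phase at $t$), which is presumably why it is useful as a van der Corput-style substitute in the sup-norm application mentioned before the statement.
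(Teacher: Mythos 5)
Your proof is correct. It differs from the paper's only in where Cauchy--Schwarz is applied: you work on the frequency side, bounding $|G(t)|^2$ by $J\sum_n |\widehat{G}(n)|^2$ and then invoking Parseval for $G$ itself, whereas the paper works on the physical side, writing $G = G * D_{A,J}$ for a shifted Dirichlet kernel $D_{A,J}(x) = \sum_{h=A+1}^{A+J} e(hx)$, applying Cauchy's inequality to the convolution integral, and using Parseval only to compute $\int_0^1 |D_{A,J}|^2 = J$. The two arguments are essentially dual and equally rigorous (since the Fourier support is finite, the pointwise identification of $G(t)$ with its Fourier sum is harmless in both). Your route is the more direct and self-contained one; the paper's kernel formulation is chosen because the same convolution device, with $D_{A,J}$ replaced by the amplifying kernel $D_H(t-x)$, is reused immediately afterwards to derive the classical van der Corput inequality \eqref{eq:vdCvariantHorocycle2}, so the kernel language buys continuity with the application rather than any logical advantage. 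Your closing remark about sharpness (all coefficients equal and phase-aligned at $t$) is a nice observation not made in the paper.
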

\begin{proof}
For $A, B \in \mz$ and $B \geq 1$, let $D_{A, B}(x) := \sum_{h= A + 1}^{A + B} e(hx) = e(Ax) D_{0,B}(x)$, which is a shifted Dirichlet kernel.  Supposing $G(t) = \sum_{m \in \mz} b_m e(mt)$, 
it is an easy calculation to show $(G* D_{A,B})(t) = \sum_{m=A+1}^{A+B} b_m e(mt)$.  With $B=J$ and some choice of $A$, we have $G*D_{A,J} = G$.
Thus by Cauchy's inequality,
\begin{equation}
|G(t)|^2 = \Big|\int_0^{1} G(y) D_{A,J}(t-y) dy \Big|^2 \leq \Big(\int_0^{1} |G(x)|^2 dx\Big)  \Big(\int_0^{1} |D_{A,J}(x)|^2 dx\Big).
\end{equation}
It is easy to check (by Parseval) that $\int_0^{1} |D_{A,J}(x)|^2 dx = J$.
\end{proof}
To see the relation between Proposition \ref{prop:vdCvariant} and the usual van der Corput bound, suppose that $a_n$ is a sequence vanishing outside an interval containing $J$ integers, and let $F(x) = \sum_{n \in \mz} a_n e(nx)$.  For a positive integer $H$, let $D_H(x) = \sum_{h=1}^{H} e(hx)$, and set $G(x) = F(x) D_H( t-x)$.  Note that $G(x)$ has a Fourier series supported on an interval containing at most $J+H$ integers, $D_H(0) = H$, and so that by \eqref{eq:vdCvariant}, we have
\begin{equation}
H^2 |F(t)|^2 \leq (J+H) \int_0^{1} |F(x)|^2 |D_H(t-x)|^2 dx.
\end{equation}
One can see this as a form of amplification.
Writing out the definition of $D_H$, opening the square, and reversing the order of integration, we obtain
\begin{equation}
\label{eq:vdCvariantHorocyle}
 H^2 |F(t)|^2 \leq (J+H)  \sum_{k=1}^{H} \sum_{l=1}^{H} e(t(-k+l)) \int_0^{1} |F(x)|^2 e((k-l)x) dx.
\end{equation}
Setting $k-l = h$, and dividing through by $H^2$, we obtain
\begin{equation}
\label{eq:vdCvariantHorocycle2}
  |F(t)|^2 \leq \frac{(J+H)}{H}  \sum_{|h| < H} \Big(1 - \frac{|h|}{H}\Big)  e(-ht) \int_0^{1} |F(x)|^2 e(hx) dx.
\end{equation}
Taking $t=0$, and inserting the definition of $F$, we rewrite this as
\begin{equation}
 \Big| \sum_{n \in \mz} a_n \Big|^2 \leq \frac{(J+H)}{H}  \sum_{|h| < H} \Big(1 - \frac{|h|}{H}\Big) \sum_{n \in \mz} a_n \overline{a_{n+h}},
\end{equation}
which is the usual formulation of the van der Corput inequality.

If Conjecture \ref{conj:SCH} is true, then we claim
\begin{equation}
\label{eq:supnormofU}
|U(z)| \ll T^{\frac12 - \frac{\tau}{2(R+1)} + \varepsilon}.
\end{equation}
For reference, the trivial sup-norm bound is $|U(z)| \ll T^{1/2 + \varepsilon}$.
To derive \eqref{eq:supnormofU} from Conjecture \ref{conj:SCH}, we apply \eqref{eq:vdCvariantHorocycle2} to $F(x) = U(x+iy)$ (with $y$ in some fixed compact interval).  Actually we first truncate the Fourier expansion for $U(z)$ at $|n| \leq T^{1+\varepsilon}$ using the fact that the tail is very small coming from the exponential decay of the Bessel function in this range.  In this way, we obtain for $1 \leq H \leq T^{1+\varepsilon}$
\begin{equation}
 |U(x+iy)|^2 \ll \frac{T^{1+\varepsilon}}{H} \Big(\int_0^{1} |U(x+iy)|^2 dx + \sum_{0 < |h| < H} \Big|\int_0^{1} |U(x+iy)|^2 e(hx) dx \Big|\Big) + T^{-100},
\end{equation}
to which we appeal to \eqref{eq:SCHhorocyleform}, leading to
\begin{equation}
 |U(z)|^2 \ll \frac{T^{1+\varepsilon}}{H} + T^{1-\tau+\varepsilon} H^R.
\end{equation}
Thus choosing $H \asymp T^{\frac{ \tau}{R+1}}$, we obtain \eqref{eq:supnormofU}.  

Even with the optimal bound $\tau = 1/2$, $R=0$, \eqref{eq:supnormofU} leads to $|U(z)| \ll_{\varepsilon} T^{1/4 + \varepsilon}$ which is far from the conjectured bound $|U(z)| \ll_{} T^{\varepsilon}$ \cite{IwaniecSarnak} \cite{SarnakMorowetz} (for $z$ in a fixed compact set).  To get to this optimal bound, we would need to assume some additional cancellation in the shift $m$ appearing in \eqref{eq:SCH}.  To this end, we suppose a more flexible and stronger shifted convolution hypothesis:
\begin{myconj}
\label{conj:SCH2}
 Let $U(z)$ be a Hecke-Maass cusp form with Laplace eigenvalue $1/4 + T^2$.  Then if $0 < a \leq y \leq b$, $\alpha \in \mr$, and $1 \leq M \leq T^{1+\varepsilon}$, we have
\begin{equation}
\label{eq:SCH2}
|\rho(1)|^2 \sum_{1 \leq |m| \leq M} e(m\alpha) \sum_{n \in \mz} \lambda(n) \lambda(n + m) K_{iT}(2\pi |n|y) K_{iT}(2 \pi |m+n| y) \ll_{a,b, \varepsilon} T^{-1/2+\varepsilon} M^{1/2}.
\end{equation}
Equivalently, uniformly in  $\alpha \in \mr$,
\begin{equation}
\label{eq:SCH2horocycleform}
\sum_{1 \leq |m| \leq M}  \int_0^{1} |U(x+iy)|^2 e(m(\alpha-x)) dx \ll_{a,b, \varepsilon} T^{-1/2+\varepsilon} M^{1/2}.
\end{equation}
\end{myconj}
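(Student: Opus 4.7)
\emph{Proof plan.}  My approach is to spectrally decompose $|U(z)|^2$ on $\Gamma\backslash\mathbb{H}$ and analyze the resulting truncated Fourier series of Maass forms on the horocycle at height $y$.  First, observe that
\begin{equation*}
\sum_{|m|\leq M}e(m(\alpha-x)) = \frac{\sin((2M+1)\pi(\alpha-x))}{\sin(\pi(\alpha-x))}
\end{equation*}
is the Dirichlet kernel of length $2M+1$, so the left-hand side of \eqref{eq:SCH2horocycleform} equals the integral of $|U(x+iy)|^2$ against this kernel, minus the mean value $\int_0^1|U(x+iy)|^2\,dx$.  I would then substitute the spectral decomposition
\begin{equation*}
|U(z)|^2 = \mathrm{const} + \sum_j \langle |U|^2, u_j\rangle\, u_j(z) + (\text{continuous spectrum}),
\end{equation*}
and interchange the orders of summation, reducing matters to bounding truncated Fourier partial sums of each $u_j$ on the horocycle evaluated at $\alpha+iy$.

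After swapping sums, each cusp form $u_j$ contributes $\langle |U|^2, u_j\rangle\,\mathcal{P}_{M,j}(\alpha,y)$, where $\mathcal{P}_{M,j}(\alpha,y)$ denotes the partial sum of the Fourier expansion of $u_j$ at $\alpha+iy$ truncated to $|m|\leq M$.  For $M\gg t_j^{1+\varepsilon}$ the tail is exponentially small thanks to \eqref{eq:BesselKUB} and the decay of $K_{it_j}(2\pi|m|y)$ for $|m|y>t_j$, so $\mathcal{P}_{M,j}(\alpha,y)\approx u_j(\alpha+iy)$, conjecturally $O(t_j^{\varepsilon})$; for smaller $M$ one must exploit the transition of $K_{it_j}$ near $|m|y\sim t_j$ together with van der Corput-type cancellation in $m$ as in Proposition \ref{prop:vdCvariant}.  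For the spectral coefficients, I would invoke Watson's formula, which expresses $|\langle |U|^2, u_j\rangle|^2$ via a central value of a triple-product $L$-function.  Under Lindel\"of this gives $|\langle |U|^2, u_j\rangle|\ll T^{-1/2+\varepsilon}t_j^{\varepsilon}$ for $t_j\leq 2T$ with exponential decay beyond, effectively truncating the spectral sum at $t_j\leq 2T+T^{\varepsilon}$.  Assembling these ingredients and applying Cauchy--Schwarz together with the spectral large sieve should yield the claimed bound $T^{-1/2+\varepsilon}M^{1/2}$.

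The main obstacle is uniform estimation of $\mathcal{P}_{M,j}(\alpha,y)$ across the full range $1\leq M\leq T^{1+\varepsilon}$ and for all $\alpha\in\mathbb{R}$.  This is genuinely harder than the pointwise sup-norm problem for $u_j$: generic Gibbs-type effects make partial Fourier sums slightly larger than the function itself, and in the transition regime $|m|y\sim t_j$ the delicate oscillation of $K_{it_j}$ must be balanced against the Hecke-eigenvalue twists $\lambda_j(m)e(m\alpha)$.  A secondary obstacle is that Lindel\"of for the required triple-product $L$-functions is far from reach.  A natural alternative that avoids spectrally decomposing $|U|^2$ is to apply the Kuznetsov formula directly to the double sum in \eqref{eq:SCH2}: open each $K_{iT}$-factor via its Mellin representation, apply Voronoi to one of the resulting divisor-like sums, sum the ensuing Kloosterman sums over $m$ exploiting oscillation in $\lambda(n)\lambda(n+m)$, and estimate the resulting spectral sum via large sieve and subconvexity.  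This route makes the cancellation in $m$ more transparent but still reduces to strong subconvexity estimates for $L$-functions of Hecke--Maass forms.
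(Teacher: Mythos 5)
The statement you have attempted to prove is Conjecture \ref{conj:SCH2}: the paper does not prove it, and explicitly regards shifted convolution hypotheses of this type (oscillatory Bessel weights, with the form $U$ varying) as far beyond current technology. The paper offers only consistency checks: the equivalence of \eqref{eq:SCH2} and \eqref{eq:SCH2horocycleform} by unfolding the $x$-integral against the Fourier expansion of $U$; the observation that the case $M = T^{1+o(1)}$ would follow from the sup-norm conjecture $U(z)\ll T^{\varepsilon}$ via the $L^1$ bound for the Dirichlet kernel; and the converse implication that \eqref{eq:SCH2horocycleform} implies $U(z)\ll T^{\varepsilon}$ on compact sets. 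So no unconditional proof is expected, and your proposal is not one: it is a conditional sketch whose hypotheses are at least as deep as the conjecture itself.

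Beyond that, the final assembly step has a concrete quantitative gap. After spectrally decomposing $|U|^2$, the cuspidal sum runs up to $t_j\approx 2T$ (the coefficients $\langle |U|^2,u_j\rangle$ decay only beyond $2T$), so about $T^2$ forms contribute. Even granting Lindel\"{o}f, Watson's formula gives $|\langle |U|^2,u_j\rangle|\ll T^{-1/2+\varepsilon}t_j^{-1/2+\varepsilon}$, and any estimate taking absolute values in $j$ --- including Cauchy--Schwarz against $\sum_j|\mathcal{P}_{M,j}(\alpha,y)|^2$, which by the pre-trace formula or large sieve is of size about $T^{2+\varepsilon}$ for $t_j\le 2T$ --- yields at best a bound of order $T^{1+\varepsilon}$, whereas the target $T^{-1/2+\varepsilon}M^{1/2}$ is at most $T^{\varepsilon}$ (and a negative power of $T$ for small $M$). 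The missing ingredient is cancellation in the signs of the coefficients $\langle |U|^2,u_j\rangle$ themselves, which Watson's formula (giving only the square) cannot detect; this is precisely the obstruction the paper cites when explaining why direct spectral decomposition is not effective for these restricted QUE problems. Moreover, your required uniform estimate for the truncated partial sums $\mathcal{P}_{M,j}(\alpha,y)$, uniformly in $\alpha$ and $1\le M\le T^{1+\varepsilon}$, is itself an open hypothesis at least as strong as the sup-norm conjecture; since Conjecture \ref{conj:SCH2} implies $U(z)\ll T^{\varepsilon}$, any argument resting only on Lindel\"{o}f for triple products plus a large sieve must either fail or smuggle in an assumption of that strength. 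Your alternative Kuznetsov route hits the same wall: with $n\asymp T$ and the form varying with $T$, current spectral and subconvexity tools give nothing near square-root cancellation in $n$ uniformly in $m$, let alone the additional $M^{1/2}$ saving from summing over $m$.
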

If the sup-norm bound $U(z) \ll T^{\varepsilon}$ holds (for a family of $U$'s restricted to $z$ in a fixed compact set), then \eqref{eq:SCH2horocycleform} holds when $M = T^{1+o(1)}$, since if $D_M(x) = \sum_{m=1}^{M} e(mx)$ is the Dirichlet kernel, then the left hand side of \eqref{eq:SCH2horocycleform} is $\ll T^{\varepsilon} \int_0^{1} |D_M(x)| dx \ll T^{\varepsilon}$.  This gives some evidence towards Conjecture \ref{conj:SCH2}.

In the opposite direction, Conjecture \ref{conj:SCH2} implies $U(z) \ll T^{\varepsilon}$ (uniformly on any fixed compact set).  This follows by applying Conjecture \ref{conj:SCH2} to \eqref{eq:vdCvariantHorocycle2} after partial summation, getting $|U(x+iy)|^2 \ll H^{-1} T^{1+\varepsilon} + H^{-1/2} T^{1/2 + \varepsilon}$, and the optimal choice is $H = T$.

So far we have concentrated on the Hecke-Maass case, but it is equally interesting to consider holomorphic cusp forms.  In this case, we have
by \eqref{eq:EisensteinUnfoldingCalculation} and \eqref{eq:gammaVsquaredExponential} that 
\begin{equation}
\label{eq:rhosquaredholomorphic}
 1 = \frac{|\rho(1)|^2}{2^k} \Gamma(k) \mathop{Res}_{s=1} Z(s, U), \quad \text{hence} \quad |\rho(1)|^2 = \frac{2^k k^{o(1)}}{\Gamma(k)}.
\end{equation}
Then a calculation with Stirling's formula shows
\begin{equation}
\label{eq:Vklocalization}
 \rho(1) V_k(y) = k^{1/4 + o(1)} \exp\Big(- \frac{(y-\frac{k}{2})^2}{k}\Big).
\end{equation}
Thus $\rho(1) V_k(2 \pi ny)$ is localized in $|4\pi n y -k| \leq C(k \log {k})^{1/2}$.  We make the following
\begin{myconj}
\label{conj:SCHholo}
 Let $U(z) = y^{k/2} f(z)$ with $f$ a holomorphic Hecke cusp form of weight $k$.  Then if $0 < a \leq y \leq b$, and $m \neq 0$, we have
\begin{equation}
\label{eq:SCHholo}
\rho(1)^2 \sum_{n \geq 1} \frac{\lambda(n) \lambda(n + m)}{\sqrt{n(n+m)}} V_k(2\pi ny) V_k(2 \pi (m+n) y) \ll_{a,b, \varepsilon} k^{-1/4+\varepsilon},
\end{equation}
equivalently,
\begin{equation}
 \int_0^{1} |U(x+iy)|^2 e(mx) dx \ll_{a,b,\varepsilon} k^{-1/4 + \varepsilon}.
\end{equation}
More generally, we conjecture that for $M \leq k^{1/2 + \varepsilon}$,
\begin{equation}
 \sum_{1 \leq |m| \leq M} e(-m\alpha) \int_0^{1} |U(x+iy)|^2 e(mx) dx \ll_{a,b,\varepsilon} k^{-1/4 + \varepsilon} M^{1/2}.
\end{equation}

\end{myconj}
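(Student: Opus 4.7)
The plan is to attack Conjecture \ref{conj:SCHholo} by combining the sharp Gaussian localization \eqref{eq:Vklocalization} with spectral techniques for shifted convolution sums. The key observation is that the sum on the left of \eqref{eq:SCHholo} is effectively supported on $n$ in a window of size $\sqrt{k\log k}$ around $n_0 = k/(4\pi y)$, with each normalized term of magnitude $k^{-1/2+o(1)}$ (using \eqref{eq:rhosquaredholomorphic} and Stirling). Applying Cauchy--Schwarz and Rankin--Selberg over this window yields only the trivial bound $O(k^{o(1)})$, so the conjectured $k^{-1/4+\varepsilon}$ corresponds to square-root cancellation across the $\sqrt{k}$ effective terms of the shifted convolution $\sum_n \lambda(n)\lambda(n+m)$ for $n \asymp k$.

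The first step would be to pass to the horocycle formulation $\int_0^1 |U(x+iy)|^2 e(mx)\, dx$ and unfold against the nonholomorphic Poincar\'e series
\[
P_m(z,s) = \sum_{\gamma \in \Gamma_\infty \backslash \Gamma} \mathrm{Im}(\gamma z)^s\, e(m\gamma z),
\]
which, after a Mellin transform in $y$ (to handle the extraneous $e^{-2\pi m y}$ factor from $P_m$), yields expressions of the form $\langle |U|^2, P_m(\cdot,\bar s)\rangle$. The second step is to spectrally decompose $P_m$ into Hecke--Maass cusp forms $u_j$ and Eisenstein series. By standard unfolding, $\langle P_m, u_j\rangle$ is proportional to the $m$-th Hecke eigenvalue of $u_j$ times ratios of gamma functions, while the triple products $\langle |U|^2, u_j\rangle$ are given by Watson's formula in terms of $L(f\otimes f\otimes u_j, 1/2)$ and decay exponentially in $t_j/k$; hence only $|t_j| \ll k^\varepsilon$ contribute effectively. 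On each surviving $u_j$ one could invoke either Watson's formula combined with hybrid subconvexity for the triple-product $L$-function, or the direct shifted-convolution sieve method of Holowinsky, to obtain a saving of $k^{-1/4+\varepsilon}$ per spectral term. For the extended statement with $|m|\leq M \leq k^{1/2+\varepsilon}$, cancellation in the additive twists of the Hecke eigenvalues of $u_j$ follows from Wilton-type bounds, delivering the additional $M^{1/2}$ factor.

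The main obstacle is achieving uniformity in $m$ simultaneously with the full $k^{-1/4+\varepsilon}$ saving. For $m$ near the upper endpoint $k^{1/2+\varepsilon}$, the Gaussian weights $V_k(2\pi n y)$ and $V_k(2\pi(n+m)y)$ become nearly disjoint -- their centers are separated by $m/(4\pi y) \asymp k^{1/2}$, comparable to the Gaussian width $\sqrt{k}$ -- so the overlap must be tracked quantitatively to avoid losing powers of $k$. More fundamentally, Watson's formula produces a triple-product $L$-function whose Archimedean conductor is a substantial power of $k$ (polynomial also in $1+|t_j|$), and the convexity bound falls short of the required $k^{-1/4}$ saving by exactly an $\varepsilon$-margin. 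Closing the gap appears to require either genuine hybrid subconvexity in the $k$-aspect with polynomial control in the shift $m$, or a Holowinsky-style direct estimate maintaining strong uniformity in $m$. Since neither is presently known with the necessary dependencies, the statement is posed as a conjecture rather than a theorem.
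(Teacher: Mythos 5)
This statement is a conjecture in the paper, and the paper gives no proof of it: its entire justification is the heuristic you reproduce in your first paragraph, namely that by \eqref{eq:rhosquaredholomorphic} and \eqref{eq:Vklocalization} the sum in \eqref{eq:SCHholo} has effectively $k^{1/2+o(1)}$ terms, each of size $k^{-1/2+o(1)}$ times $\lambda(n)\lambda(n+m)$, so the bound $k^{-1/4+\varepsilon}$ expresses square-root cancellation in $n$, and the Gaussian localization makes the sum negligible unless $|m| \ll k^{1/2+\varepsilon}/y$ (which is why the range $M \leq k^{1/2+\varepsilon}$ appears). On that level your account agrees with the paper, and you rightly do not claim a proof. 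One small slip: viewed as functions of $n$, the two Gaussian weights have centers separated by $m$ and width $\asymp \sqrt{k\log k}/y$, not separation $m/(4\pi y)$; since $y \asymp 1$ this does not affect the conclusion.

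Your speculative spectral plan, however, contains a step that would fail, and it is precisely the obstruction that keeps this a conjecture. For $U = y^{k/2}f$ the triple products $\langle U^2, u_j\rangle$ do \emph{not} restrict the spectral sum to $|t_j| \ll k^{\varepsilon}$: the $t_j$-dependent archimedean factors in Watson's formula decay like $\exp(-c\,t_j^2/k)$, so they only cut off around $t_j \asymp k^{1/2}$, and on the Poincar\'e-series side the coefficient attached to $u_j$ and a shift $m$ as large as $k^{1/2+\varepsilon}$ likewise survives up to $t_j$ of roughly that size. The spectral sum is therefore genuinely long (about $k^{1/2}$ in $t_j$), and a saving of $k^{-1/4+\varepsilon}$ ``per spectral term'' is not enough without additional cancellation over the spectrum; even the Lindel\"{o}f hypothesis applied termwise would not obviously close the gap, let alone uniformly in $m$. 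This is the same difficulty the paper highlights for \eqref{eq:geodesicMassEquidistribution}, which it reduces to $\sum_{m\approx\sqrt{k}}\sum_{n\approx k}\lambda(n)\lambda(n+m)$ and describes as much harder than mass equidistribution. There is also the separate point that the conjecture is at a fixed height $y$, so a Mellin smoothing in $y$ only yields averaged versions. None of this contradicts what you ultimately assert — the statement remains conjectural — but the plan as written overstates how far the Watson/Poincar\'e machinery gets toward it.
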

There are effectively $O(k^{1/2 + o(1)})$ terms in the sum in \eqref{eq:SCHholo}, so this is predicting square-root cancellation in the sum over $n$.  One easily checks using \eqref{eq:Vklocalization} that \eqref{eq:SCHholo} is very small unless $m = O(k^{1/2+\varepsilon}/y)$.

\subsection{Horocycle integrals}
\label{section:horocycle}
In this section we discuss Conjecture \ref{conj:horocycle}, aided by the conjectures in Section \ref{section:shiftedconvolutionsums}.
Define, with $U$ either $u_j$ or $y^{k/2} f(z)$,
\begin{equation}
 I_H(U, \psi) = \int_0^{1} \psi(x) |U(z)|^2 dx.
\end{equation}
Inserting the Fourier expansion \eqref{eq:UFourier}, we obtain
\begin{equation}
\label{eq:IHUpsiformula}
 I_H(U, \psi) = |\rho(1)|^2 \sum_{m,n \neq 0} \frac{\lambda(m) \lambda(n)}{\sqrt{|mn|}} V(2\pi |m| y) V(2\pi |n| y) \widehat{\psi}(m-n),
\end{equation}
where $\widehat{\psi}(k) = \int_0^{1} \psi(x) e(-kx) dx$.  Thus
\begin{equation}
\label{eq:IHUpsi}
 I_H(U, \psi) = I_H(U, 1) \int_0^{1} \psi(x) dx + \mathcal{S},
\end{equation}
where Conjecture \ref{conj:SCH} or Conjecture \ref{conj:SCHholo} (depending on if $U$ comes from a Maass form or a holomorphic form) implies $\mathcal{S} = o(1)$ .

Next we examine $I_H(U, 1)$.  By \eqref{eq:IHUpsiformula} specialized to $\psi = 1$,
\begin{equation}
 I_H(U,\psi) = (1 + \lambda(-1)^2) |\rho(1)|^2 \sum_{n=1}^{\infty} \frac{\lambda(n)^2}{n} V(2\pi n y)^2.
\end{equation}
The inverse Mellin version of \eqref{eq:gammaVsquaredDef} gives $V(2\pi y)^2 = \frac{1}{2 \pi i} \int_{(\sigma)} \gamma_{V^2}(1+s) y^{-s} ds$, so
\begin{equation}
\label{eq:horocycleIntegralLfunction}
I_H(U, 1) =  (1 + \lambda(-1)^2) |\rho(1)|^2  \frac{1}{2 \pi i} \int_{(1)} y^{-s} Z(1+s, U) \gamma_{V^2}(1+s) ds.
\end{equation}
Next the basic idea is to move the contour of integration to $\text{Re}(s) = -\half$ (this is the optimal location because a completed $L$-function is smallest on the critical line, at least if one assumes the Lindel\"{o}f hypothesis).  By Lemma \ref{lemma:RankinSelbergIntegral}, 
the residue at $s=0$ gives to $I_H(U,1)$
\begin{equation}
 \int_{\Gamma \backslash \mathbb{H}} |U(z)|^2 \text{Res}_{s=0} E(z, 1+s) \frac{dx dy}{y^2}.
\end{equation}
This equals $1$, since the residue of the Eisenstein series is $3/\pi$, and we normalize according to \eqref{eq:L2normalization}.  

Next we explain that the Lindel\"{o}f Hypothesis (on average) applied to the right hand side of \eqref{eq:horocycleIntegralLfunction} but at $\text{Re}(s) = -\half$ would show show that $I_H(U, 1) = 1 + O(T^{\varepsilon})$ (for $U$ a Maass form--in the holomorphic case we replace $T^{\varepsilon}$ by $k^{\varepsilon}$).  Of course, this error term is larger than the main term, but since the integrand is oscillatory (the phase of the $L$-function should not correlate with any simple function such as a ratio of gamma functions), it seems reasonable to suppose that there is some cancellation in the integral showing that $I_H(U, 1) =1 + o(1)$.
To this end, we consider the two cases of $U = u_j$ and $U = y^{k/2} f(z)$ separately.  Recall the evaluation \eqref{eq:gammaVsquaredBessel}.
Stirling's approximation applied to \eqref{eq:gammaVsquaredBessel} then shows in the Maass case that
\begin{equation}
 e^{\pi T} \gamma_{V_T^2}(1/2 + it) \ll \exp(\tfrac{\pi}{4} Q(t,T)) (1 + |t+2T|)^{-\frac14} (1 + |t-2T|)^{-\frac14}  (1 + |t|)^{-\frac12},
\end{equation}
where
\begin{equation}
\label{eq:Qdef}
 Q(t,T) = 4T - |t+2T| - |t-2T|.
\end{equation}
By a simple calculation, $Q(t,T) = 0$ for $|t| \leq 2T$, and $Q(t,T) = 2(2T-|t|)$ for $|t| > 2T$, so the contribution to $I_H(U, 1)$ from $|t| \geq 2T + T^{\varepsilon}$ is very small.  A short calculation shows that the Lindel\"{o}f Hypothesis is sufficient to prove that the new integral contributes $O(T^{\varepsilon})$ to $I_H(U, 1)$.  

For the holomorphic case, by \eqref{eq:rhosquaredholomorphic} and \eqref{eq:gammaVsquaredExponential} we have
\begin{equation}
 |\rho(1)|^2 \gamma_{V_k^2}(1/2 + it) = k^{o(1)} \frac{\gamma_{V_k^2}(1/2 + it)}{\gamma_{V_k^2}(1)} \ll k^{\varepsilon} \frac{\Gamma(k-\thalf + it)}{\Gamma(k)}.
\end{equation}
By Stirling's formula, this is exponentially small for $t \gg k^{1/2 + \varepsilon}$, and for $t \ll k^{1/2 + \varepsilon}$ we have
\begin{equation}
 |\rho(1)|^2 \gamma_{V_k^2}(1/2 + it) \ll k^{-1/2 + \varepsilon} \exp(-t^2/k).
\end{equation}
Just like in the Maass case, we see that the Lindel\"{o}f Hypothesis gives a bound of $k^{\varepsilon}$ for the new integral, and since the integral is presumably oscillatory it is reasonable to suppose there is cancellation.  This concludes the derivation of Conjecture \ref{conj:horocycle}.

\section{The QUE conjecture for shrinking sets}
\label{section:shrinking}
This section concerns a discussion around Proposition \ref{prop:thinQUE}.  Recall that our notation is such that QUE is an asymptotic for $\langle U^2, \phi \rangle$ where $\phi$ is a family of functions that may vary with $U$ (but the $\phi$'s are generally not as oscillatory as $U$ is).  In this section we take $U = u_j$ or $U = E_T$, but it would be interesting to study the holomorphic case too.

It is well-known that the QUE conjecture can be approached either via bounds for triple product $L$-functions via Watson's formula \cite{Watson}, or alternatively by shifted convolution sums via Poincare series.  These two methods are apparently not equivalent and indeed Holowinsky and Soundararajan \cite{HolowinskySound} exploit both approaches in their proof of the mass equidistribution conjecture.
We shall present both of these two standard approaches below.

First we impose some conditions on $\phi$ (or more accurately, the \emph{sequence} of $\phi$'s depending on $U$).  One natural choice is to pick a sequence of numbers $C(k)$ and a constant $A$, and consider $\phi$ satisfying
\begin{equation}
\label{eq:phiderivatives}
 \| \Delta^k \phi \|_{1} \leq C(k) A^{2k},
\end{equation}
for all $k=0,1,2, \dots$.
As $A$ gets larger, this allows for more functions $\phi$, which can then be chosen to approximate the characteristic function of a disc of radius $A^{-1}$, for example.
This formulation is good for the triple product approach.  If $\phi$ is an approximation to the characteristic function of a disc with fixed center and of radius $A^{-1}$ then $\| \phi \|_1 = \langle 1, \phi \rangle \asymp A^{-2} \asymp \langle \phi, \phi \rangle$ so $\|\phi \|_2 \asymp A^{-1}$.
Another interesting choice of $\phi$ is $\phi(x+iy) = w(x) \psi(y)$ or $\phi(x+iy) = \psi(x) w(y)$ where $\psi$ is a fixed smooth compactly-supported function on either $\mathbb{R}^+$ or $\mathbb{Z}\backslash \mathbb{R}$ as in Conjecture \ref{conj:geodesicQUE} or Conjecture \ref{conj:horocycle}, respectively, and $w$ satisfies $w^{(k)} \ll A^k$.  With such choices of $\phi$ we can approximate a segment of a vertical geodesic or a horocycle, for instance.  For such $\phi$, we have $\langle 1, \phi \rangle \asymp A^{-1} \asymp \langle \phi, \phi \rangle$ so $\|\phi \|_2 \asymp A^{-1/2}$.  

The discussion in this section has some connections to the recent work of \cite{GRS}, especially their Appendix A on ``quantitative QUE,'' but the overlap is minimal because here we focus on understanding precise rates of convergence.

\subsection{The triple product approach}
\label{section:tripleproductapproach}
\begin{myprop}
\label{prop:QUEshrinkingWatsonapproach}
 Let $U$ be a Hecke-Maass cusp form, and suppose $\phi$ (possibly depending on $U$) satisfies \eqref{eq:phiderivatives} for some $A \leq T^{1-\delta}$.  Assuming the Lindel\"{o}f hypothesis for triple product $L$-functions, we have
\begin{equation}
\label{eq:QUEthin1}
 \langle U^2, \phi \rangle = \langle 1, \phi \rangle + O(\|\phi\|_2 T^{-1/2 + \varepsilon} A^{1/2}).
\end{equation}
Alternatively, if in addition we assume the bound
$|u_j(z) | \ll T^{\varepsilon}$ uniformly for $z$ in the support of $\phi$, then we have
\begin{equation}
\label{eq:QUEthin2}
\langle U^2, \phi \rangle = \langle 1, \phi \rangle + O(\|\phi\|_1 T^{-1/2 + \varepsilon} A^{3/2}).
\end{equation}
The implied constants depend on $\varepsilon > 0$ and the choice of constants $C(k)$ in \eqref{eq:phiderivatives}.
\end{myprop}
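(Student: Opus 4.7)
The plan is to apply the spectral decomposition on $\Gamma\backslash\mathbb{H}$ to $\phi$ and evaluate $\langle U^2, \phi\rangle$ termwise. Writing
\[
\phi(z) = \tfrac{3}{\pi}\langle\phi,1\rangle + \sum_j \langle\phi,u_j\rangle\,u_j(z) + \tfrac{1}{4\pi}\int_{-\infty}^{\infty} \langle\phi, E_t\rangle\,E(z,\tfrac12+it)\,dt,
\]
where $E_t = E(\cdot,\tfrac12+it)$, and pairing with $U^2$ (of unit mass under $\tfrac{3}{\pi}\tfrac{dx\,dy}{y^2}$), the constant part yields the main term $\langle 1,\phi\rangle$. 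It remains to show that the cuspidal spectral sum and the Eisenstein integral are of size $O(\|\phi\|_2 T^{-1/2+\varepsilon} A^{1/2})$, or $O(\|\phi\|_1 T^{-1/2+\varepsilon} A^{3/2})$ under the supplementary sup-norm assumption.

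Two estimates drive the argument. First, the smoothness of $\phi$ controls $\langle\phi,u_j\rangle$: since $\Delta u_j = (\tfrac14+t_j^2)u_j$, iterated integration by parts gives $\langle\phi,u_j\rangle = (\tfrac14+t_j^2)^{-k}\langle\Delta^k\phi,u_j\rangle$, so by \eqref{eq:phiderivatives} the spectral sum may be truncated to $|t_j|\leq A^{1+\varepsilon}$, which thanks to $A\leq T^{1-\delta}$ stays safely inside $|t_j|\ll T$. Second, Watson's formula \cite{Watson} expresses
\[
|\langle U^2, u_j\rangle|^2 \;\asymp\; \frac{\Lambda(\tfrac12,\, u_j\otimes\mathrm{sym}^2 U)\,\Lambda(\tfrac12, u_j)}{\Lambda(1,\mathrm{sym}^2 U)^2\,\Lambda(1,\mathrm{sym}^2 u_j)},
\]
and a Stirling analysis shows that the exponential decays $e^{-\pi(\cdot)}$ in the six archimedean gamma factors of the numerator precisely cancel those in the denominator, leaving a polynomial ratio of size $T^{-1}(1+|t_j|)^{-1}$ throughout the range $|t_j|\ll T$. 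Applying Lindel\"{o}f to the finite parts then gives $|\langle U^2, u_j\rangle| \ll T^{-1/2+\varepsilon}(1+|t_j|)^{-1/2}$.

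It remains to combine these. For \eqref{eq:QUEthin1}, Cauchy--Schwarz and Parseval yield
\[
\Bigl|\sum_{|t_j|\leq A^{1+\varepsilon}}\langle U^2, u_j\rangle\langle\phi,u_j\rangle\Bigr| \leq \Bigl(\sum_{|t_j|\leq A^{1+\varepsilon}}|\langle U^2, u_j\rangle|^2\Bigr)^{1/2}\|\phi\|_2 \ll A^{1/2}T^{-1/2+\varepsilon}\|\phi\|_2,
\]
where Weyl's law evaluates $\sum_{|t_j|\leq A}(1+|t_j|)^{-1}\asymp A$. For \eqref{eq:QUEthin2}, the sup-norm assumption bounds $|\langle\phi,u_j\rangle| \leq \|u_j\|_{\infty,\mathrm{supp}\,\phi}\|\phi\|_1 \ll T^\varepsilon\|\phi\|_1$ directly, and the triangle inequality together with the Weyl-weighted sum $\sum_{|t_j|\leq A}(1+|t_j|)^{-1/2}\asymp A^{3/2}$ produces the claim. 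The continuous spectrum is handled identically after writing $\langle U^2, E_t\rangle$ via Rankin--Selberg (analogous to \eqref{eq:RankinSelbergIntegral}) as a ratio of shifted Hecke $L$-values with the same archimedean shape. The main obstacle is the careful bookkeeping of Watson's archimedean factors: one must verify by Stirling that the six exponential decays in the numerator exactly cancel those in the denominator and in the normalizing factor $|\rho(1)|^2$, so that Lindel\"{o}f delivers the polynomial saving $T^{-1}(1+|t_j|)^{-1}$ used above. The transition region $|t_j|\sim 2T$ is avoided throughout by the hypothesis $A\leq T^{1-\delta}$.
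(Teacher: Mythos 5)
Your proposal is correct and follows essentially the same route as the paper: spectral (Plancherel) decomposition with the constant eigenfunction giving the main term, truncation of the spectral sum at $t_j \ll A T^{\varepsilon}$ via self-adjointness of $\Delta$ and \eqref{eq:phiderivatives}, Watson's formula plus Stirling giving $|\langle U^2,u_j\rangle| \ll T^{-1/2+\varepsilon}t_j^{-1/2+\varepsilon}$ under Lindel\"{o}f, then Cauchy--Schwarz with Bessel's inequality for \eqref{eq:QUEthin1} and the sup-norm hypothesis with the triangle inequality for \eqref{eq:QUEthin2}, with the Eisenstein contribution treated in parallel.
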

% \begin{equation}
%  \langle U^2, \phi \rangle = \langle 1, \phi \rangle + O(A^{1/2} T^{-1/2 + \varepsilon} \| \phi \|_2).
% \end{equation}
We conclude from \eqref{eq:QUEthin2} that QUE should hold for any such sequence of $\phi$'s provided $A \leq T^{1/3 - \delta}$ for some fixed $\delta > 0$.

Sometimes the former bound \eqref{eq:QUEthin1} is superior to \eqref{eq:QUEthin2}, even though \eqref{eq:QUEthin2} requires additional assumptions; for instance, if $\langle 1, \phi \rangle \asymp A^{-1} \asymp \langle \phi, \phi \rangle$
then \eqref{eq:QUEthin1} says
\begin{equation}
 \langle U^2, \phi \rangle = \langle 1, \phi \rangle + O(T^{-1/2 + \varepsilon} ),
\end{equation}
so as long as $A \leq T^{1/2-\delta}$ for some fixed $\delta > 0$ we can conclude that QUE holds.

It is implicit that even when QUE does not hold, the above work gives upper bounds on the $L^2$ norm of $U$ restricted to a shrinking family of sets, conditionally on the Lindel\"{o}f hypothesis.

We also emphasize that we do not expect these results to be optimal, even though they rely on the Lindel\"{o}f hypothesis.  The reason is that in the derivation, there appears a sum over the spectrum, and it is possible that there is cancellation when combining the spectral coefficients.  This situation is similar to the prime geodesic theorem where the analog of the Riemann Hypothesis holds (this means there are no exceptional eigenvalues for $PSL_2(\mz)$), yet from this one does not immediately deduce the presumably optimal ``square-root'' error term; see \cite{IwaniecGeodesic}.
Instead see for example p.139 of \cite{IwaniecGeodesic} where a natural conjecture on sums of Kloosterman sums is stated, which would then give the optimal error term.  In Section \ref{section:PoincareQUE} we show how the strongest possible error terms could follow from robust shifted convolution sum bounds.

\begin{proof}
The Plancherel formula gives
\begin{equation}
\label{eq:PlancherelUsquaredphi}
 \langle U^2, \phi \rangle = \langle U^2, \tfrac{3}{\pi}  \rangle \langle 1, \phi \rangle + \sum_{j \geq 1} \langle U^2, u_j \rangle \langle u_j, \phi \rangle + \frac{1}{4\pi} \intR \langle U^2, E(\cdot,1/2 + it) \rangle \langle E(\cdot,1/2 + it), \phi \rangle dt,
\end{equation}
where recall
the inner product is with respect to $\frac{dx dy}{y^2}$ (not probability measure).  Note that if $U$ is normalized with \eqref{eq:L2normalization}, then $\langle U^2, \frac{3}{\pi} \rangle = 1$ and the constant eigenfunction gives the expected main term in the QUE conjecture.  If $u_j$ is even and $U$ is a Maass form then Watson's formula reads
\begin{equation}
\label{eq:Watsonformula}
 |\langle U^2, u_j \rangle |^2 = \frac{\pi}{8} \frac{|\Gamma(\frac{\frac12 + 2iT + i t_j}{2})|^2 |\Gamma(\frac{\frac12 + 2iT - i t_j}{2})|^2 |\Gamma(\frac{\frac12 +  i t_j}{2})|^4}{|\Gamma(\frac{1 + 2iT }{2})|^4 |\Gamma(\frac{1 + 2it_j }{2})|^2} \frac{L(U \times U \times u_j, 1/2)}{L(\mathrm{sym}^2 U, 1)^2 L(\mathrm{sym}^2 u_j, 1)}.
\end{equation}
A similar formula holds for the Eisenstein series by an unfolding argument along the lines of Lemma \ref{lemma:RankinSelbergIntegral}.  
We shall simply quote the work of Section 2 of \cite{LuoSarnakQUE} for the following:
\begin{multline}
\label{eq:tripleproductTwoEisensteinOneMaass}
 \langle |E(z, 1/2 + iT)|^2 , u_j \rangle 
\\
= c(T) \rho_j(1) \frac{|\Gamma(\frac{\frac12 + it_j}{2} )|^2}{|\zeta(1 + 2iT)|^2} \frac{\Gamma(\frac{\frac12 - it_j - 2iT}{2}) \Gamma(\frac{\frac12 + it_j - 2iT}{2})}{|\Gamma(\frac12 + iT)|^2} L(u_j, 1/2) L(u_j, 1/2 - 2iT),
\end{multline}
where $c(T)$ is such that $|c(T)|$ is an absolute constant independent of $T$.
Note that $|\langle |E(z, 1/2 + iT)|^2 , u_j \rangle|^2$ is a close cousin to 
\eqref{eq:Watsonformula}, recalling that $|\rho_j(1)|^2 = c |\Gamma(\tfrac12 + it_j)|^{-2} (L(1, \mathrm{sym}^2 u_j))^{-1} $.  In particular, the gamma factors are the same as in \eqref{eq:Watsonformula}, and the triple product $L$-function is replaced by $|L(u_j, 1/2) L(u_j, 1/2 - 2iT)|^2$.

By Stirling's formula, the ratio of gamma factors in \eqref{eq:Watsonformula} is
\begin{equation}
\label{eq:gammafactorboundinWatson}
 %|\langle U^2, u_j \rangle |^2 
\ll P(t_j, T) \exp(\tfrac{\pi}{2} Q(t_j,T)), \quad Q(t_j, T)  = 4T - |2T + t_j| - |2T - t_j|,
\end{equation}
where we encountered $Q(t,T)$ earlier in \eqref{eq:Qdef}, and where
$P$ is given by
\begin{equation}
\label{eq:Pdef}
 P(t_j, T) = (1 + |2T+t_j|)^{-1/2} (1 + |2T - t_j|)^{-1/2} t_j^{-1}.
\end{equation}
Since $Q(t_j, T) = 2(2T-t_j)$ for $t_j \geq 2T$ (and $=0$ otherwise) we can bound the terms in \eqref{eq:PlancherelUsquaredphi} with $t_j \geq 2T + C \log{T}$ with
\begin{equation}
\| \phi \|_{1} \sum_{t_j \geq 2T + C \log{T}} \text{Polynomial}(t_j, T)  \exp(-\pi (t_j - 2T)),
\end{equation}
using the trivial bound $|\langle u_j, \phi \rangle | \leq \| u_j \|_{\infty} \| \phi \|_1 \ll (1/4 + t_j^2)^{1/4} \| \phi \|_1$.
Taking $C$ large enough compared to the degree of the unspecified polynomial, we can bound this error term by $\ll \| \phi \|_1 T^{-100}$, with an absolute implied constant.  In summary, we have shown
\begin{equation}
\label{eq:spectraldecomposition}
 \langle U^2, \phi \rangle - \langle 1, \phi \rangle = \sum_{t_j \leq 2T + C \log{T}} \langle U^2, u_j \rangle \langle u_j, \phi \rangle + (\text{Eisenstein}) + O(\|\phi\|_1 T^{-100}).
\end{equation}
Without some additional assumptions on $\phi$, it is not reasonable to expect that the terms with $t_j \leq 2T + C \log{T}$ also constitute an error term.  For instance, if $\phi = U^2$ then in \cite{BKY} it is conjectured that $\frac{3}{\pi} \langle U^2, U^2 \rangle \sim 3$.  In an even more extreme direction, we could take a sequence of $\phi$'s tending to a delta function in which case one would not expect an asymptotic law for $\langle U^2, \phi \rangle$.  %One natural case to consider is where $\phi$ is a smooth and compactly-supported function approximating the characteristic function of a small disc of radius $A^{-1}$ with $A = A(T)$ tending to infinity with $T$ at some  as yet unspecified rate.  

Assuming \eqref{eq:phiderivatives} holds, then by the self-adjointness of the Laplacian, we can bound the spectral coefficients by
\begin{equation}
\label{eq:boundingujphi}
 (1/4 + t_j^2)^k \langle u_j, \phi \rangle = \langle \Delta^k u_j, \phi \rangle = \langle  u_j, \Delta^k \phi \rangle  \ll A^{2k} \| u_j \|_{\infty},
\end{equation}
whence
\begin{equation}
 \langle u_j, \phi \rangle \ll (1/4 + t_j^2)^{1/4} \Big(\frac{A^2}{1/4 + t_j^2}\Big)^{k}.
\end{equation}
Thus if $A\leq T^{1-\delta}$ for some fixed $\delta > 0$ (meaning in some sense that $u_j$ is more oscillatory than $\phi$) then in \eqref{eq:spectraldecomposition} we can truncate the sum at $t_j \leq A T^{\varepsilon}$.  We conclude that %by Cauchy's inequality and Bessel's inequality that
\begin{equation}
 |\langle U^2, \phi \rangle - \langle 1, \phi \rangle | \ll \|\phi \|_1 \sum_{t_j \leq A T^{\varepsilon}} \| u_j \|_{\infty} | \langle U^2, u_j \rangle| + (\text{Eisenstein}) + O(\|\phi \|_1 T^{-100}).
\end{equation}
% \begin{equation}
%  |\langle U^2, \phi \rangle - \langle 1, \phi \rangle |^2 \leq \langle \phi, \phi \rangle  \sum_{t_j \leq A T^{\varepsilon}} | \langle U^2, u_j \rangle|^2 + \text{Eisenstein} + O(T^{-100}).
% \end{equation}
Assuming the Lindel\"{o}f Hypothesis, we have by Watson's formula that
\begin{equation}
 \sum_{t_j \leq A T^{\varepsilon}} | \langle U^2, u_j \rangle|  \ll\sum_{t_j \leq A T^{\varepsilon}}  T^{-1/2+\varepsilon} t_j^{-1/2+\varepsilon} \ll A^{3/2} T^{-1/2 + \varepsilon}.
\end{equation}
% \begin{equation}
%  \sum_{t_j \leq A T^{\varepsilon}} | \langle U^2, u_j \rangle|^2 \ll\sum_{t_j \leq A T^{\varepsilon}}  T^{-1+\varepsilon} t_j^{-1+\varepsilon} \ll A T^{-1 + \varepsilon}.
% \end{equation}
Similar estimates hold for the Eisenstein series so we suppress those arguments.  Using the assumed bound $\|u_j \|_{\infty} \ll T^{\varepsilon}$ (we only need this for $u_j$ restricted to the support of $\phi$), we derive \eqref{eq:QUEthin2}.

To derive \eqref{eq:QUEthin1}, we use the arrangement
\begin{equation}
 |\langle U^2, \phi \rangle - \langle 1, \phi \rangle | \ll \Big(\sum_{t_j \leq A T^{\varepsilon}} |\langle U^2, u_j \rangle |^2 \Big)^{1/2} \Big(\sum_{t_j } |\langle u_j, \phi \rangle |^2 \Big)^{1/2} + (\text{Eisenstein}) + O(\|\phi \|_1 T^{-100}).
\end{equation}
In this case, Bessel's inequality implies $\sum_{t_j} |\langle u_j, \phi \rangle |^2 \leq \langle \phi, \phi \rangle$. Thus by Watson's formula and the Lindel\"{o}f hypothesis, we derive \eqref{eq:QUEthin1}.
\end{proof}

\subsection{Poincare series approach}
\label{section:PoincareQUE}
Now we consider the approach to QUE by Poincare series as in Section 4 of \cite{LuoSarnakQUE}.  This method gives the following
\begin{myprop}
 Suppose that a family of functions $\phi: \Gamma \backslash \mh \rightarrow \mr$ satisfy for all $k,l = 0, 1,2, \dots$
\begin{equation}
\label{eq:phiderivativesCartesian}
 \frac{\partial^{k+l}}{\partial x^k \partial y^l} \phi(x + iy) \ll_{k,l} A^k B^l,
\end{equation}
and that each $\phi$ in the family has support contained in a fixed compact set $K$.  Let $U$ be a Hecke-Maass cusp form.  Suppose the Lindel\"{o}f hypothesis holds for $L(\mathrm{sym}^2 U, s)$, and assume Conjecture \ref{conj:SCH} holds with $\tau = 1/2$, $R=0$.  Then
\begin{equation}
\label{eq:QUEthinPoincare}
 \langle U^2, \phi \rangle = \langle 1, \phi \rangle + O(\|\phi\|_1 T^{-1/2 + \varepsilon}(A + B^{1/2})).
\end{equation}
If in addition Conjecture \ref{conj:SCH2} holds then
\begin{equation}
\label{eq:QUEthinPoincare2}
 \langle U^2, \phi \rangle = \langle 1, \phi \rangle + O(\|\phi\|_1 T^{-1/2 + \varepsilon}(A^{1/2} + B^{1/2})).
\end{equation}
\end{myprop}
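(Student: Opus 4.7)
The plan is to use incomplete Poincar\'e series as in Section 4 of \cite{LuoSarnakQUE}. Choose a fundamental domain $\mathcal{F}\subset\mathbb{H}$ containing the support of $\phi$ in its interior, let $h$ equal $\phi$ on $\mathcal{F}$ and vanish elsewhere, so that $\phi(z)=\sum_{\gamma\in\Gamma_\infty\backslash\Gamma}h(\gamma z)$. Unfolding against $|U|^2$ and Fourier expanding in $x$ gives
$$\langle U^2,\phi\rangle=\sum_{m\in\mathbb{Z}}\int_0^\infty\hat h_m(y)\,a_m(y)\,\frac{dy}{y^2},\qquad a_m(y):=\int_0^1|U(x+iy)|^2 e(mx)\,dx,$$
where $\hat h_m(y):=\int_0^1 h(x+iy) e(-mx)\,dx$. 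The $m=0$ term, involving the horocycle integral $a_0(y)=I_H(U,1)$, will produce the main term $\langle 1,\phi\rangle$ together with the $B^{1/2}$ error; the $m\neq 0$ terms are shifted convolution sums.

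For the $m\neq 0$ part, integration by parts in $x$ using $\partial_x^k\phi\ll A^k$ truncates the series at $|m|\leq AT^\varepsilon$ with negligible tail. Under Conjecture~\ref{conj:SCH} with $\tau=\tfrac12$, $R=0$, the uniform bound $|a_m(y)|\ll T^{-1/2+\varepsilon}$ together with the trivial estimate $\sum_{|m|\leq AT^\varepsilon}|\hat h_m(y)|\ll AT^\varepsilon\|h(\cdot+iy)\|_{L^1_x}$ integrates against $dy/y^2$ to give $O(\|\phi\|_1\,AT^{-1/2+\varepsilon})$, proving \eqref{eq:QUEthinPoincare}. For the sharper \eqref{eq:QUEthinPoincare2}, swap the $m$-sum and $x$-integral,
$$\sum_{1\leq|m|\leq M}\hat h_m(y)a_m(y)=\int_0^1 h(x+iy)\Big(\sum_{1\leq|m|\leq M}e(-mx)\,a_m(y)\Big)dx,$$
and apply \eqref{eq:SCH2horocycleform} with $\alpha=x$, which bounds the inner sum by $T^{-1/2+\varepsilon}M^{1/2}$ uniformly in $x$; choosing $M=AT^\varepsilon$ yields $O(\|\phi\|_1 A^{1/2}T^{-1/2+\varepsilon})$.

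For the $m=0$ term, substitute the Rankin--Selberg representation \eqref{eq:horocycleIntegralLfunction}, take the residue at $s=0$ to extract $\langle 1,\phi\rangle$ (using the normalization \eqref{eq:L2normalization}), and shift the remaining contour to $\text{Re}(s)=-\tfrac12$. After Fubini the error becomes
$$\frac{(1+\lambda(-1)^2)|\rho(1)|^2}{2\pi i}\int_{(-1/2)}\widetilde g(-s-1)\,Z(1+s,U)\,\gamma_{V^2}(1+s)\,ds,\qquad g:=\hat h_0.$$
Stirling combined with Lindel\"of for $L(\mathrm{sym}^2 U,\cdot)$ gives the Plancherel-type bound $\int_{-\infty}^{\infty}|\rho(1)|^4|\gamma_{V^2}(\tfrac12+it)Z(\tfrac12+it,U)|^2\,dt\ll T^{-1+\varepsilon}$. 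Meanwhile $|\widetilde g(-\tfrac12-it)|\ll\|\phi\|_1$ trivially (using $\|\phi\|_\infty\ll 1$ from the $k=l=0$ case of \eqref{eq:phiderivativesCartesianintroduction}), while one integration by parts in $y$ using $\partial_y\phi\ll B$ improves this to $\|\phi\|_1\min(1,B/(1+|t|))$. Splitting the $t$-integral into dyadic ranges $|t|\sim 2^j B$ and applying Cauchy--Schwarz on each block, the contributions decay geometrically in $j$ and the dominant piece $|t|\leq B$ yields the error $O(\|\phi\|_1 B^{1/2}T^{-1/2+\varepsilon})$.

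The main obstacle is this $m=0$ analysis: the gamma factor $|\rho(1)|^2\gamma_{V^2}(\tfrac12+it)$ has transition behavior near $|t|=2T$ and is only modestly small in the bulk, so the naive pointwise bound on the integrand is insufficient, and obtaining $B^{1/2}$ rather than a trivial $B$ requires simultaneously exploiting the $y$-smoothness of $\phi$ and the $L^2$ average of the Rankin--Selberg integrand along the critical line. The $m\neq 0$ contribution, by contrast, is a relatively clean conditional consequence of the shifted-convolution conjectures, producing $A$ or $A^{1/2}$ according to whether one uses Conjecture~\ref{conj:SCH} or Conjecture~\ref{conj:SCH2}.
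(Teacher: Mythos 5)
Your overall architecture matches the paper's exactly: realize $\phi$ as a sum of incomplete Poincar\'e series (equivalently Fourier-expand the $\Gamma_\infty$-periodization along horocycles), unfold, isolate $m=0$ from $m\neq0$, handle $m\neq0$ via Conjectures~\ref{conj:SCH}/\ref{conj:SCH2} after truncating at $|m|\ll AT^\varepsilon$ by $x$-integration by parts, and handle $m=0$ by the Rankin--Selberg expansion with a contour shift to $\mathrm{Re}(s)=-\tfrac12$, the residue at $s=0$ producing $\langle 1,\phi\rangle$. The $m\neq 0$ analysis, including the swap of the $m$-sum and the $x$-integral to invoke \eqref{eq:SCH2horocycleform}, is essentially verbatim what the paper does.

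Where you diverge is the $m=0$ error estimate, and there your diagnosis of the difficulty and one key step are both off. You claim ``the naive pointwise bound on the integrand is insufficient'' and therefore invoke an $L^2$ mean-value of $|\rho(1)|^2\gamma_{V^2}(\tfrac12+it)Z(\tfrac12+it,U)$ together with dyadic Cauchy--Schwarz. This is not needed. By Stirling and \eqref{eq:rho1squared}, for $|t|\ll T^{1-\delta}$ one has $|\rho(1)|^2|\gamma_{V_T^2}(\tfrac12+it)|\ll T^{-1/2+o(1)}(1+|t|)^{-1/2}$; under Lindel\"of for $L(\mathrm{sym}^2U,\cdot)$ the factor $Z(\tfrac12+it,U)$ contributes $T^\varepsilon(1+|t|)^\varepsilon$, and the trivial bound $|\widetilde{\Phi}_0(-\tfrac12+it)|\ll\|\phi\|_1$ holds for all $t$. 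Truncating at $|t|\leq BT^\varepsilon$ (which is justified by taking $l$ large in \eqref{eq:phitildebound}; that bound need not carry a $\|\phi\|_1$ factor since it is only used to render the tail utterly negligible) and then integrating the pointwise bound over $|t|\leq BT^\varepsilon$ immediately yields $\|\phi\|_1 T^{-1/2+\varepsilon}\int_{|t|\leq BT^\varepsilon}(1+|t|)^{-1/2+\varepsilon}\,dt\ll\|\phi\|_1 B^{1/2}T^{-1/2+\varepsilon}$. The $(1+|t|)^{-1/2}$ decay from the central gamma factor $\Gamma(\tfrac{1+s}{2})^2/\Gamma(1+s)$ at $s=-\tfrac12+it$ is precisely what converts length $B$ into $B^{1/2}$; no mean-value statement is required.

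Your alternative route also has a genuine gap at the step $|\widetilde{g}(-\tfrac12-it)|\ll\|\phi\|_1\min\bigl(1,B/(1+|t|)\bigr)$. One integration by parts gives $|\widetilde{\Phi}_0(-\tfrac12+it)|\ll(1+|t|)^{-1}\int_0^\infty|\Phi_0'(y)|y^{1/2}\,dy$, and the hypothesis $|\partial_y\phi|\ll B$ pointwise only bounds $\int|\Phi_0'(y)|\,dy$ by $B$ times the volume of the support, not by $B\|\phi\|_1$. These can differ by a large factor: take $\phi$ of size $\epsilon\ll1$ but oscillating in $y$ at frequency $\asymp B$ with amplitude $\epsilon$; then $\int|\partial_y\phi|\asymp B\epsilon\cdot\mathrm{vol}$ while $\|\phi\|_1\asymp\epsilon\cdot\mathrm{vol}$, so your bound holds, but for a superposition of well-separated narrow bumps with small amplitude one can arrange $\int|\partial_y\Phi_0|\gg B\|\phi\|_1$. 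Inserting the actual bound $\int|\Phi_0'|\ll B\cdot\mathrm{vol(supp)}$ into your dyadic scheme spoils the geometric decay in $j$ unless you also use higher-order integrations by parts to suppress $|t|\gg BT^\varepsilon$, which brings you back to the paper's simpler truncation argument. In short: keep the truncation from repeated $y$-integration by parts (no $\|\phi\|_1$ needed there), then apply the pointwise Stirling + Lindel\"of bound and the trivial $|\widetilde{\Phi}_0|\ll\|\phi\|_1$ on $|t|\leq BT^\varepsilon$.
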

The assumption that each $\phi$ has support in $K$ is to avoid unusual behavior of the functions high in the cusp.  Note that \eqref{eq:phiderivativesCartesian} implies \eqref{eq:phiderivatives} with $A$ replaced by $A + B$, since $y$ is restricted to a compact set.

\begin{proof}
For notational simplicity, suppose that $\overline{K} \subset \mh$ is a connected component of the inverse image of $K$ under the natural projection, and that $\overline{K}$ is contained in the interior of the usual fundamental domain for $\Gamma \backslash\mh$.  The general case can be treated as in Section 4 of \cite{LuoSarnakQUE}; one needs to modify the formula slightly in neighborhoods of the elliptic points $i, \rho$.  Define $\overline{\phi}(z): \mathbb{H} \rightarrow \mr$ via $\overline{\phi}(z) = \phi(z)$ for $z \in \overline{K}$, and $0$ otherwise.  Define $\Phi$ to be the extension of $\overline{\phi}$ to $\mh$ 
by $\Gamma_{\infty}$-periodicity.  

The usual Fourier expansion for $\Phi$ takes the form
\begin{equation}
 \Phi(x+iy) = \sum_{m \in \mz} e(mx) \Phi_m(y), \qquad \Phi_m(y) = \int_0^{1} \Phi(x+iy) e(-mx) dx.
\end{equation}
Furthermore, $\phi(z) = \sum_{\gamma \in \Gamma_{\infty} \backslash \Gamma} \Phi(\gamma z)$.  
For $\psi$ a compactly-supported function on the positive reals, define the incomplete Poincare series
\begin{equation}
 P_n(z, \psi) = \sum_{\gamma \in \Gamma_{\infty} \backslash \Gamma} e(n \text{Re}(\gamma z)) \psi(\text{Im}(\gamma z)).
\end{equation}
Thus
\begin{equation}
\label{eq:PoincareSeriesExpansion}
 \phi(x+iy) = \sum_{\gamma \in \Gamma_{\infty} \backslash \Gamma} \sum_{m \in \mz} e(m \text{Re}(\gamma z)) \Phi_m(\text{Im}(\gamma z)) = \sum_{m \in \mz} P_m(z, \Phi_m).
\end{equation}
As in the triple product method of Section \ref{section:tripleproductapproach}, we consider a sequence of functions $\phi$ and we wish to impose conditions that allow us to specify a practical place to truncate the sum over $m$.  To this end, we note that by integration by parts,
\begin{equation}
 \Phi_m(y) = \Big(\frac{1}{2 \pi i m} \Big)^k \int_0^{1} \frac{\partial^k \Phi(x + iy)}{\partial x^k}  e(-mx) dx.
\end{equation}
So if \eqref{eq:phiderivativesCartesian} holds, then
 $\Phi_m(y) \ll (A/|m|)^k$.

By the Poincare series expansion \eqref{eq:PoincareSeriesExpansion}, we have
\begin{equation}
\label{eq:PoincareSeriesExpansionInnerProduct}
 \langle U^2,   \phi \rangle = \sum_{m \in \mz} \langle U^2, P_m(\cdot,  \Phi_m) \rangle.
\end{equation}
We can already see a potential improvement over \eqref{eq:PlancherelUsquaredphi}--there are roughly $A$ terms in \eqref{eq:PoincareSeriesExpansionInnerProduct} while \eqref{eq:PlancherelUsquaredphi} has roughly $A^2$ Maass forms with $t_j \leq A$.
Next we calculate each of these inner products by unfolding (see the proof of Lemma \ref{lemma:RankinSelbergIntegral} for a similar calculation):
\begin{equation}
\label{eq:UsquaredPmunfolded}
 \langle U^2, P_m(z,  \Phi_m) \rangle = \int_0^{\infty} \int_0^{1} |U(x+iy)|^2 e(-mx)  \Phi_m(y) \frac{dx dy}{y^2}.
\end{equation}
Hence by Conjecture \ref{conj:SCH}, with $\tau = 1/2$, $R=0$, we have
\begin{equation}
 \langle U^2, P_m(z,  \Phi_m) \rangle  \ll T^{-1/2 + \varepsilon} \int_0^{\infty} |\Phi_m(y)| \frac{dy}{y^2} \leq T^{-1/2 + \varepsilon} \int_0^{1} \int_0^{\infty} |\Phi(x+iy)| \frac{dx dy}{y^2}.
\end{equation}
By truncating the sum at $|m| \leq AT^{\varepsilon}$ with a very small error, we conclude
\begin{equation}
 \sum_{m \neq 0} \langle U^2, P_m(z,  \Phi_m) \rangle  \ll \|\phi \|_1 T^{-1/2 + \varepsilon} A,
\end{equation}
which is the first of two error terms claimed in \eqref{eq:QUEthinPoincare}.  If one is willing to accept Conjecture \ref{conj:SCH2}, then we can 
show
\begin{equation}
\label{eq:UsquaredPmboundsummedoverm}
 \sum_{m \neq 0} \langle U^2, P_m(z,\Phi_m) \ll \|\phi \|_1  T^{-1/2 + \varepsilon} A^{1/2},
\end{equation}
as follows.
Using \eqref{eq:UsquaredPmunfolded}, the definition of $\Phi_m$, and rearranging the orders of integration and summation appropriately, we have
\begin{multline}
 \sum_{1 \leq |m| \leq A T^{\varepsilon}} \langle U^2, P_m(z,  \Phi_m) \rangle
\\
= \int_0^{1} \int_0^{\infty} \Phi(t+iy) \Big( \sum_{1 \leq |m| \leq A T^{\varepsilon}} e(mt) \int_0^{1} |U(x+iy)|^2 e(-mx) dx \Big) \frac{dy}{y^2} dt.
\end{multline}
The inner expression inside the parentheses is $O(A^{-1/2} T^{1/2 + \varepsilon})$, assuming Conjecture \ref{conj:SCH2}, which immediately leads to \eqref{eq:UsquaredPmboundsummedoverm}, the first of two error terms stated in \eqref{eq:QUEthinPoincare2}.

% Thus
% \begin{equation}
%  \sum_{m \neq 0} \langle U^2, P_m(z,  \phi_m) \rangle = \int_0^{\infty} \Big(\sum_{1 \leq |m| \leq AT^{\varepsilon}} \phi_m(y) \int_0^{1} |U(x+iy)|^2 e(mx)  dx \Big) \frac{ dy}{y^2},
% \end{equation}
% which by Conjecture \ref{conj:SCH} gives

% Inserting the Fourier expansion, \eqref{eq:UFourier}, we obtain
% \begin{equation}
%  \langle U^2, P_m(z,  \phi_m(y)) \rangle = |\rho(1)|^2 \sum_{n \neq 0, -m} \frac{\lambda(n) \lambda(n+m)}{\sqrt{|n(n+m)|}} \int_0^{\infty} \phi_m(y) V(2 \pi |n| y) V(2 \pi |n+m| y) \frac{dy}{y^2}.
% \end{equation}

% In any event, we derive the nice formula
% \begin{equation}
%  \langle U^2,  \phi \rangle = |\rho(1)|^2 \sum_{m \in \mz} \sum_{n \neq 0, -m} \frac{\lambda(n) \lambda(n+m)}{\sqrt{|n(n+m)|}} \int_0^{\infty} y^{-1} \phi_m(y) V(2 \pi |n| y) V(2 \pi |n+m| y) \frac{dy}{y}.
% \end{equation}
% For the record, the term with $m=0$ gives
% \begin{equation}
%  |\rho(1)|^2 \sum_{n \neq 0} \frac{\lambda(n)^2}{|n|} \int_0^{\infty} y^{-1} \phi_0(y) V(2 \pi |n|y)^2 \frac{dy}{y}.
% \end{equation}
% As in Section 3 of \cite{LuoSarnakQUE}, this term with $m=0$ should, in many cases, be closely approximated by $\langle 1, \phi \rangle$, but proving this (with $\phi$ fixed) relies on subconvexity for $L(\mathrm{sym}^2 U, s)$ in the $U$-aspect.

Next we examine the term $m=0$, and show that
\begin{equation}
 \langle U^2, P_0(z, \Phi_0) \rangle = \langle 1, \phi \rangle + O(\|\phi\|_1 T^{-1/2 + \varepsilon} B^{1/2}),
\end{equation}
assuming the generalized Lindel\"{o}f hypothesis.
Here we have $P_0(z, \Phi_0) = E(z, \Phi_0)$ and so \eqref{eq:UsquaredinnerproductincompleteEisenstein} gives
\begin{equation}
 \langle U^2, P_0(\cdot, \Phi_0) \rangle = \frac{1}{2 \pi i} \int_{(\varepsilon)} \widetilde{\Phi_0}(-1-s) 2|\rho(1)|^2 Z(1+s, U) \gamma_{V_T^2}(1+s)
ds.
\end{equation}
Recall $Z(s, U)$ is defined by \eqref{eq:Zdef} and $\gamma_{V_T^2}(s)$ is given by \eqref{eq:gammaVsquaredBessel}.  We shift the contour of integration to $\text{Re}(s) = -1/2$, crossing a pole at $s=0$ which gives
\begin{equation}
 \widetilde{\Phi_0}(-1) \langle U^2, \tfrac{3}{\pi} \rangle = \int_0^{\infty} \Phi_0(y) \frac{dy}{y^2} = \int_0^{\infty} \int_0^{1} \Phi(x+iy) \frac{dx dy}{y^2} = \langle 1, \phi \rangle,
\end{equation}
the stated main term in \eqref{eq:QUEthinPoincare} and \eqref{eq:QUEthinPoincare2}.  By Stirling's formula and Lindel\"{o}f, the new contour integral is of size
\begin{equation}
\label{eq:PoincareErrorterm}
\ll T^{\varepsilon} \intR |\widetilde{\Phi_0}(-1/2 + it)| (1 +|t - 2T|)^{-1/4} (1 +|t + 2T|)^{-1/4} (1 + |t|)^{-1/2+\varepsilon} dt.
\end{equation}
In fact there is extra exponential decay in the integrand for $|t| \geq 2T$, but we do not need this.  It is a simple matter of integration by parts to show
% We derive
% \begin{equation}
%  \frac{\partial^{l}}{ \partial y^l} \phi_m(y) = \Big(\frac{1}{2 \pi i m} \Big)^k \int_0^{1} \frac{\partial^{k+l} \phi(x + iy)}{\partial x^k \partial y^l}  e(-mx) dx \ll_{k,l} (A/m)^k B^l.
% \end{equation}
% We also deduce
\begin{multline}
\label{eq:phitildebound}
 \widetilde{\Phi_m}(s) := \int_0^{\infty} \Phi_m(y) y^s \frac{dy}{y} = \frac{(-1)^l}{s(s+1)\dots(s+l-1)} \int_0^{\infty} \frac{\partial^l \Phi_m(y)}{\partial y^l}  y^{s+j-1} dy 
\\
\ll_{k,l} \frac{B^l}{|s(s+1)\dots(s+l-1)|}.
\end{multline}
%Be aware that $\widetilde{\phi}_m(s)$ does not have poles since $\phi_m$ has compact support on the positive reals.
Thus in \eqref{eq:PoincareErrorterm} we can truncate the integral at $|t| \leq B T^{\varepsilon}$ at no cost.  Hence \eqref{eq:PoincareErrorterm} is
$\ll T^{-1/2 + \varepsilon} B^{1/2}$, which is the other error term stated in \eqref{eq:QUEthinPoincare} and \eqref{eq:QUEthinPoincare2}.
\end{proof}

\subsection{QUE with shrinking sets for Eisenstein series}
\label{section:QUEEIsensteinsubsection}
In this section we prove Proposition \ref{prop:thinEisensteinQUE}.
Our approach most naturally shows a more precise form
\begin{equation}
\label{eq:thinQUEEisenstein2}
 \langle |E(z, 1/2 + iT)|^2, \phi \rangle = \lim_{\alpha \rightarrow 0} \langle D_{\alpha}, \phi \rangle  + O(A^{1/2} T^{-1/6 + \varepsilon} \| \phi \|_2),
\end{equation}
where
\begin{equation}
\label{eq:Dalphadef}
 D_{\alpha}(z) = E(z,1+\alpha) + \Phi_{T}(\alpha)
 E(z, 1-\alpha), \quad \Phi_{T}(\alpha) = \frac{\theta(1/2 - iT - \alpha)}{\theta(1/2 + iT + \alpha)} \frac{\theta(1/2 + iT )}{\theta(1/2 - iT)}.
\end{equation}
Write $E(z, 1 + \alpha) = \frac{3/\pi}{\alpha} + a(z) + O(\alpha)$.  
The constant term in the Taylor expansion for $D_{\alpha}(z)$ around $\alpha = 0$ is
\begin{equation}
 2 a(z) - \frac{3}{\pi} \Phi_T'(0).
\end{equation}
One can find expressions for $a(z)$ via (22.69) of \cite{IK}, and by a calculation,
\begin{equation}
 \Phi_T'(0) = \frac{\Phi_T'}{\Phi_T}(0) = - \sum_{\pm   } \Big(\frac{\Gamma'}{\Gamma}(1/2 \pm iT) + 2 \frac{\zeta'}{\zeta}(1 \pm 2  i T) - \log{\pi} \Big).
\end{equation}
By Stirling's formula, $\frac{\Gamma'}{\Gamma}(1/2 + iT) + \frac{\Gamma'}{\Gamma}(1/2 - iT) = \log(1/4 + T^2) + O(T^{-2})$, while $\frac{\zeta'}{\zeta}(1 + \pm 2iT) = O(\frac{\log T}{\log \log{T}})$, which explains how \eqref{eq:thinQUEEisenstein} follows from \eqref{eq:thinQUEEisenstein2}.

One can also compare Proposition \ref{prop:thinEisensteinQUE} with the Maass-Selberg relation (cf. Proposition 6.8 of \cite{IwaniecSpectral}).

The Luo-Sarnak \cite{LuoSarnakQUE} approach proceeds by showing \eqref{eq:QUEEisenstein} for $\phi$ a Maass form, or $\phi$ an incomplete Eisenstein series.  The incomplete Eisenstein series span the space $\mathcal{E}(\Gamma \backslash \mh)$ which is the orthogonal complement of the span of the Maass forms $\mathcal{C}(\Gamma \backslash \mh)$ (here we use notation as in Iwaniec's book \cite{IwaniecSpectral}), so this suffices to show \eqref{eq:QUEEisenstein}, though with an inexplicit error term.  On the other hand, the error term in \eqref{eq:QUEEisenstein} when $\phi$ is a fixed Maass form gives a power saving in $T$ (and similarly for a fixed incomplete Eisenstein series, but in this case there is a lower-order term that must be included as in \eqref{eq:thinQUEEisenstein2}).  However, it seems difficult to constructively approximate the projection of $\phi$ onto $\mathcal{E}(\Gamma \backslash \mh)$ in terms of incomplete Eisenstein series.  Of course, the spectral expansion does give such a decomposition of the projection of $\phi$, but in terms of Eisenstein series themselves which have moderate growth at the cusp, even though the projection of $\phi$ has rapid decay at the cusp.  
This is the main technical difficulty in analyzing \eqref{eq:QUEEisenstein} using the spectral decomposition and Parseval's formula, because $|E(z,1/2 + iT)|^2$ grows too fast at the cusp. 
However, there is a way around this problem of convergence that was discovered by Zagier \cite{Zagier}, namely, to work with renormalized integrals.  Michel and Venkatesh \cite{MichelVenkateshIHES} have recently given an interpretation of this renormalization in the language of representation theory.  In addition they give a regularized Plancherel formula that we shall use here; see the Proposition on p.243 of \cite{MichelVenkateshIHES}.  See also \cite{ZelditchEisenstein} for an application of regularization to quantum ergodicity with Eisenstein series.

First we define the regularized inner product.  Suppose that $F$ is a function of moderate growth which by definition means that 
\begin{equation}
 F(z) = \varphi(y) + O(y^{-N})
\end{equation}
as $y \rightarrow \infty$, for any $N > 0$, where
\begin{equation}
 \varphi(y) = \sum_{i=1}^{l} \frac{c_i}{n_i!} y^{\alpha_i} \log^{n_i} y,
\end{equation}
for $c_i, \alpha_i \in \mc$, and $n_i \geq 0$ an integer.  Suppose that no $\alpha_i = 1$.  Let $\mathcal{E}(z)$ denote a linear combination of  Eisenstein series $E(z,\alpha)$ (possibly including derivatives with respect to $\alpha$) with $\text{Re}(\alpha) > 1/2$ such that $F(z) - \mathcal{E}(z) = O(y^{1/2})$.  The regularized integral of $F$ is then defined to be
\begin{equation}
\label{eq:regularizedintegraldefinition}
\int_{\Gamma \backslash \mh}^{\text{reg}} F(z) \frac{dx dy}{y^2} := \int_{\Gamma \backslash \mh} (F(z) - \mathcal{E}(z)) \frac{dx dy}{y^2}.
\end{equation}
Zagier computed some special cases of these regularized integrals that we require.  It is obvious from the definition that $\langle E(z,s), 1 \rangle_{\text{reg}} = 0$.  For $0 < \text{Re}(s) < 1$, $E(z,s) \in L^1(\Gamma \backslash \mh)$, and it is easy to see that the integral (without regularization) vanishes using the fact that the Laplacian is self-adjoint, yet $E(z,s)$ and $1$ have different Laplace eigenvalues.  For the case of two Eisenstein series, we have $\langle E(z, s_1), E(z, \overline{s_2}) \rangle_{\text{reg}} = 0$; see p.428 of \cite{Zagier}.  This formula should be heuristically natural because again $E(z, s_1)$ and $E(z,s_2)$ have distinct Laplace eigenvalues (for $s_1(1-s_1) \neq s_2(1-s_2)$).
Technically, to even define the regularized integral we require that $s_1 \neq s_2$ and $s_1 \neq 1-s_2$. %, but then one naturally extends the definition to all $s_1, s_2$ by meromorphic continuation.  
Finally, for the case of the product of three Eisenstein series, Zagier computes (see p.430 of \cite{Zagier})
that
\begin{multline}
\label{eq:threeEisensteinseries}
 \langle E(z, \tfrac12+s_1) E(z, \tfrac12+s_2), E(z,\tfrac12+\overline{s}) \rangle_{\text{reg}} 
=
\\
 %\frac{4 \pi^{-1/2-s}}{{ \zeta(1+2s) \Gamma(1/2+s)} }  
\frac{c}{\theta(\frac12 + s) \theta(\frac12 + s_1) \theta(\frac12 + s_2)}
%\\
%\times
\prod_{\delta_1, \delta_2 \in \{\pm 1\}} \zeta(\tfrac12 + s +\delta_1 s_1 +\delta_2 s_2) \Gamma\Big(\frac{\frac12 + s +\delta_1 s_1 +\delta_2 s_2}{2}\Big).
\end{multline}

\begin{mylemma}[\cite{MichelVenkateshIHES}]
\label{lemma:Plancherel}
 Suppose that $F$ and $G$ are smooth functions on $\Gamma \backslash \mh$, $G$ with compact support, $F$ of moderate growth with $\text{Re}(\alpha_i) \neq 1/2$ for all $i$ and no $\alpha_i = 1$.  Then the following regularized version of Parseval's formula holds:
\begin{equation}
\label{eq:RegularizedPlancherel}
 \langle F, G \rangle_{} = \sum_{j} \langle F, u_j \rangle \langle u_j, G \rangle + \langle F, \tfrac{3}{\pi} \rangle_{\text{reg}} \langle  1,  G\rangle + \frac{1}{4 \pi} \intR \langle F, E_t \rangle_{\text{reg}} \langle E_t, G \rangle dt
%\\
+ \langle \mathcal{E}, G \rangle,
\end{equation}
where $E_t$ denotes $E(z, 1/2 + it)$.
\end{mylemma}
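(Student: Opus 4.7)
The plan is to decompose $F = \mathcal{E} + F_0$ with $F_0 := F - \mathcal{E} = O(y^{1/2})$, reducing the claim to a Plancherel-type identity for the more decaying piece $F_0$ while isolating the Eisenstein piece $\mathcal{E}$ as the explicit last summand on the right-hand side of \eqref{eq:RegularizedPlancherel}. Since $G$ is compactly supported, both $\langle F_0, G \rangle$ and $\langle \mathcal{E}, G \rangle$ are absolutely convergent and $\langle F, G \rangle = \langle F_0, G \rangle + \langle \mathcal{E}, G \rangle$, so it remains to expand $\langle F_0, G \rangle$ spectrally and to match its coefficients to the regularized inner products of $F$.

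To produce the spectral expansion, I would insert the spectral decomposition of $\overline{G}$ (which converges uniformly on compacta in any reasonable sense, since $G$ is smooth and compactly supported) into $\int F_0 \overline{G}\, \frac{dx\,dy}{y^2}$ and interchange with the sum over cuspidal eigenforms and the integral over the continuous spectrum. The cuspidal coefficients satisfy $\langle F_0, u_j \rangle = \langle F, u_j \rangle$, because $\langle \mathcal{E}, u_j \rangle = \sum_i c_i \langle E(\cdot, \alpha_i), u_j \rangle = 0$ by the standard unfolding against cusp forms; the constant coefficient equals $\langle F, \tfrac{3}{\pi} \rangle_{\text{reg}}$ by the very definition of the regularized integral in \eqref{eq:regularizedintegraldefinition}.

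The delicate step is the continuous-spectrum contribution. Because $F_0 \overline{E_t}$ is only $O(y)$ in the cusp, $\langle F_0, E_t \rangle$ is not absolutely convergent. I would handle this by introducing a smooth $\Gamma$-invariant cutoff $\chi_Y$ that is $1$ for $\mathrm{Im}(z) \leq Y$ and $0$ for $\mathrm{Im}(z) \geq 2Y$ in the cusp, applying the standard $L^2$-Plancherel formula to $F_0 \chi_Y$ against $G$, and sending $Y \to \infty$. For $Y$ larger than the cusp-height of $\operatorname{supp}(G)$ the left-hand side is constant and equals $\langle F_0, G \rangle$; on the right-hand side the cuspidal and constant coefficients converge by dominated convergence, and the Eisenstein piece should tend to $\frac{1}{4\pi} \intR \langle F, E_t \rangle_{\text{reg}} \langle E_t, G \rangle\, dt$. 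This identification rests on Zagier's computation \cite{Zagier} that the regularized pairing of two Eisenstein series vanishes, applied to $\mathcal{E}$ against $E_t$, together with the additivity $\langle F, E_t \rangle_{\text{reg}} = \langle F_0, E_t \rangle_{\text{reg}} + \langle \mathcal{E}, E_t \rangle_{\text{reg}}$ built into the regularization.

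The main obstacle will be exchanging the limit $Y \to \infty$ with the $t$-integration and controlling the truncated Eisenstein coefficients $\langle F_0 \chi_Y, E_t \rangle$ uniformly in $t$. The Maass--Selberg relations give explicit closed forms for inner products involving truncated Eisenstein series, and I would follow \cite{MichelVenkateshIHES} in using them to secure the required uniformity and to verify that the limit reproduces $\langle F, E_t \rangle_{\text{reg}}$ rather than a spurious boundary artifact. Modulo this analytic step, the remainder of the argument is routine bookkeeping with the definition of the regularized integral.
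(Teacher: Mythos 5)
Your decomposition $F = \mathcal{E} + F_0$ and the identification of the cuspidal and constant coefficients match the paper, but there is a genuine gap at exactly the point you flag as the ``main obstacle'': the exchange of the limit $Y \to \infty$ with the $t$-integration and the uniform control of the truncated Eisenstein pairings is never carried out, only deferred to ``following \cite{MichelVenkateshIHES} via Maass--Selberg.'' As written, the continuous-spectrum step is therefore not proved, and it is the only nontrivial step in your argument.

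Moreover, the whole truncation apparatus is unnecessary, and the reason is precisely the hypothesis you did not use: $\text{Re}(\alpha_i) \neq 1/2$ for all $i$. Under that assumption, after subtracting $\mathcal{E}$ (which removes every exponent with $\text{Re}(\alpha_i) > 1/2$, introducing only exponents $1-\alpha_i$ of real part $< 1/2$), the remainder satisfies $F_1 := F - \mathcal{E} = O(y^{1/2 - \delta})$ for some $\delta > 0$, not merely $O(y^{1/2})$. Hence $F_1 \in L^2(\Gamma \backslash \mh)$ and, since the constant term of $E_t$ is of size $y^{1/2}$, the pairing $\langle F_1, E_t \rangle$ is an \emph{absolutely convergent} integral -- your claim that it fails to converge rests on the weaker bound $O(y^{1/2})$. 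One then simply applies the ordinary Plancherel formula to $\langle F_1, G \rangle$, notes $\langle F_1, u_j \rangle = \langle F, u_j \rangle$ (unfolding kills $\langle \mathcal{E}, u_j \rangle$), $\langle F_1, \tfrac{3}{\pi} \rangle = \langle F, \tfrac{3}{\pi} \rangle_{\text{reg}}$ by the definition \eqref{eq:regularizedintegraldefinition}, and $\langle F_1, E_t \rangle = \langle F, E_t \rangle_{\text{reg}}$ by Zagier's vanishing $\langle \mathcal{E}, E_t \rangle_{\text{reg}} = 0$, which is the paper's two-line proof. Your truncation-plus-Maass--Selberg route could in principle be pushed through, but until the uniformity and limit-interchange are actually established it does not constitute a proof, and exploiting the hypothesis on the $\alpha_i$ makes it superfluous.
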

\begin{proof}
We give a minor variation of the proof of Michel and Venkatesh \cite{MichelVenkateshIHES}.  Suppose that $F_1(z) := F(z) - \mathcal{E}(z) = O(y^{1/2-\delta})$ for some $\delta > 0$, whence $F_1 \in L^2(\Gamma \backslash \mh)$.  Since $G$ has rapid decay, we have $\langle F, G \rangle = \langle F_1, G \rangle + \langle \mathcal{E}, G \rangle$ as absolutely convergent integrals.  Then by the usual Plancherel formula, we have
\begin{equation}
 \langle F_1, G \rangle = \sum_j \langle F_1,u_j \rangle \langle u_j, G \rangle + \langle F_1, \tfrac{3}{\pi} \rangle \langle  1,  G\rangle + \frac{1}{4 \pi} \intR \langle F_1, E_t \rangle \langle E_t, G \rangle dt.
\end{equation}
Note that $\langle F_1, u_j \rangle = \langle F, u_j \rangle$ since $\langle \mathcal{E}, u_j \rangle = 0$, and $\langle F_1, \frac{3}{\pi} \rangle = \langle F, \frac{3}{\pi} \rangle_{\text{reg}}$ by definition.  As mentioned above, $\langle \mathcal{E}, E_t \rangle_{\text{reg}} = 0$, so
$\langle F_1, E_t \rangle = \langle F, E_t \rangle_{\text{reg}}$.  Gathering the terms finishes the proof.
\end{proof}

\begin{proof}[Proof of Proposition \ref{prop:thinEisensteinQUE}]
We shall apply Lemma \ref{lemma:Plancherel} with $G = \phi$ and $F(z) = E(z, s_1) E(z, s_2)$.  We shall eventually let $s_1 = 1/2 + iT$ and $s_2 = 1/2 - iT$ by analytic continuation.  In fact, we shall be able to do this for each of the four terms appearing in \eqref{eq:RegularizedPlancherel}.

By following the arguments in Section \ref{section:tripleproductapproach}, we have that
\begin{equation}
 \sum_j \langle |E(z, 1/2 + iT)|^2, u_j \rangle \langle u_j, \phi \rangle \leq \|\phi \|_2 \Big(\sum_{t_j \leq A T^{\varepsilon}} |\langle |E(z, 1/2 + iT)|^2 , u_j \rangle|^2 \Big)^{1/2} + O(T^{-100}).
\end{equation}
Consulting \eqref{eq:gammafactorboundinWatson} and \eqref{eq:Pdef}, we then have
\begin{multline}
 \sum_j \langle |E(z, 1/2 + iT)|^2, u_j \rangle \langle u_j, \phi \rangle 
\\
\ll \|\phi \|_2 T^{-1/2+\varepsilon} \Big(\sum_{t_j \leq A T^{\varepsilon}} t_j^{-1} L(u_j, 1/2)^2 |L(u_j, 1/2-2iT)|^2 \Big)^{1/2} 
+ O(T^{-100}).
\end{multline}
Next we apply the uniform subconvexity bound $L(u_j, 1/2 - 2iT) \ll T^{1/3 + \varepsilon}$ of Jutila-Motohashi \cite{JutilaMotohashi}, and the following bound which follows from the spectral large sieve inequality
\begin{equation}
\label{eq:secondmoment}
\sum_{t_j \leq AT^{\varepsilon}} L(u_j, 1/2)^2 \ll A^2 T^{\varepsilon}.
\end{equation}
In this way, we obtain
\begin{equation}
\label{eq:EsquaredprojectionontoMaass}
 \sum_j \langle |E(z, 1/2 + iT)|^2, u_j \rangle \langle u_j, \phi \rangle \ll T^{-1/6 + \varepsilon} A^{1/2} \|\phi \|_2,
\end{equation}
which is the error term stated in \eqref{eq:thinQUEEisenstein2}.

Next we examine the regularized projections of $|E|^2$ onto the constant eigenfunction and the Eisenstein series.  The inner product with the constant eigenfunction vanishes as remarked following \eqref{eq:regularizedintegraldefinition} (after taking a limit to treat the case $s_1 = -s_2 = iT$).  For the Eisenstein contribution, we use \eqref{eq:threeEisensteinseries}.  Note that $|\langle |E(z, 1/2 + iT)|^2, E_t \rangle_{\text{reg}} |^2$ takes the form
\begin{equation}
 c \frac{|\Gamma(\frac{\frac12 + 2iT + i t}{2})|^2 |\Gamma(\frac{\frac12 + 2iT - i t}{2})|^2 |\Gamma(\frac{\frac12 +  i t}{2})|^4}{|\Gamma(\frac{1 + 2iT }{2})|^4 |\Gamma(\frac{1 + 2it }{2})|^2} 
\frac{|\zeta(\tfrac12 + it + 2iT)|^2 |\zeta(\tfrac12 + it)|^4 |\zeta(\tfrac12 + it - 2iT)|^2}{|\zeta(1 + 2it)|^2 |\zeta(1+2iT)|^4},
\end{equation}
for some absolute constant $c$.  This, as expected, has the exact shape as \eqref{eq:Watsonformula}.  By a similar argument as \eqref{eq:boundingujphi}, we can truncate the integral at $|t| \leq A T^{\varepsilon}$ with a very small error.  In this way, we obtain the bound
\begin{multline}
 \frac{1}{4 \pi} \intR \langle |E(z, 1/2 + iT)|^2, E_t \rangle_{\text{reg}} \langle E_t, \phi \rangle dt \ll  \|\phi\|_2 T^{-100}
\\
+ \|\phi \|_2 T^{-1/2+\varepsilon} \Big(\int_{|t| \leq A T^{\varepsilon}} (1 + |t|)^{-1} |\zeta(\tfrac12 + it)|^4 |\zeta(\tfrac12 + it + 2iT)|^2 |\zeta(\tfrac12 + it - 2iT)|^2 \Big)^{1/2} .
\end{multline}
Using Weyl's bound $\zeta(1/2 + it \pm 2iT) \ll T^{1/6 + \varepsilon}$, and a bound for the fourth moment of zeta (on the level of the mean value theorem for Dirichlet polynomials), we obtain
\begin{equation}
 \frac{1}{4 \pi} \intR \langle |E(z, 1/2 + iT)|^2, E_t \rangle_{\text{reg}} \langle E_t, \phi \rangle dt
\ll T^{-1/6 + \varepsilon} \|\phi\|_2.
\end{equation}
This bound is better than \eqref{eq:EsquaredprojectionontoMaass} due to the smaller spectral measure of the Eisenstein series.

Finally, we evaluate $\langle \mathcal{E}, \phi \rangle$.  First we need to identify $\mathcal{E}(z)$.  The constant term of $E(z, 1/2 + s_1) E(z, 1/2 + s_2)$ is
\begin{equation}
  y^{1+ s_1 + s_2} 
+ %\frac{\theta(1/2-s_1)}{\theta(1/2+s_1)} 
c_1 y^{1-s_1+s_2} 
+ %\frac{\theta(1/2-s_2)}{\theta(1/2+s_2)} 
c_2 y^{1+s_1-s_2} 
+ %\frac{\theta(1/2-s_1)}{\theta(1/2+s_1)} \frac{\theta(1/2-s_2)}{\theta(1/2+s_2)} 
c_1 c_2 y^{1-s_1-s_2}, \quad c_1 = \frac{\theta(1/2-s_1)}{\theta(1/2+s_1)}, \quad c_2 = \frac{\theta(1/2-s_2)}{\theta(1/2+s_2)},
\end{equation}
so
\begin{equation}
 \mathcal{E}(z) = E(z, 1+s_1 + s_2) 
+ %\frac{\theta(1/2-s_1)}{\theta(1/2+s_1)} 
c_1 E(z, 1-s_1 + s_2)
+ %\frac{\theta(1/2-s_2)}{\theta(1/2+s_2)} 
c_2 E(z, 1+s_1 - s_2)
+ %\frac{\theta(1/2-s_1)}{\theta(1/2+s_1)} \frac{\theta(1/2-s_2)}{\theta(1/2+s_2)} 
c_1 c_2 E(z, 1-s_1-s_2).
\end{equation}
Note that by unfolding (and analytic continuation), $\langle E(z, 1-s_1 + s_2), \phi \rangle = \widetilde{\phi_0}(-s_1 + s_2)$, where $\phi_0(y) = \int_0^{1} \phi(x+iy) dx$.  In our application, we have $s_2 = -iT$ and $s_1 = iT+ \alpha$ with $\alpha \rightarrow 0$, and we may use the rapid decay of $\widetilde{\phi_0}(-2iT)$, recalling \eqref{eq:phitildebound}, to absorb this term into the error term (and similarly for $E(z, 1+s_1 - s_2)$).  Thus, we have
\begin{equation}
 \langle \mathcal{E}, \phi \rangle = \langle D_{\alpha}, \phi \rangle + O(T^{-100}),
\end{equation}
where $D_{\alpha}$ is defined by \eqref{eq:Dalphadef}.  This completes the proof of Proposition \ref{prop:thinEisensteinQUE} in the form of \eqref{eq:thinQUEEisenstein2}.
\end{proof}

\subsection{Unconditional upper bounds}
\begin{myprop}
Let $U$ be a Hecke-Maass cusp form, and suppose the family of $\phi$'s have support in a fixed compact set, and  %is a nonnegative function 
satisfy \eqref{eq:phiderivatives} for some $A \leq T^{1-\delta}$.  Then
\begin{equation}
 \label{eq:L2normalongshrinkingdiscsL2}
\langle U^2, \phi \rangle \ll T^{\varepsilon} A^{1/4} \|\phi \|_2.
\end{equation}
If instead of \eqref{eq:phiderivatives} we assume $\frac{\partial^k}{\partial x^k} \phi(x+iy) \ll A^k$, uniformly for $y$ in the fixed compacet set, then
\begin{equation}
\label{eq:L2normalongshrinkingdiscsL1}
 \langle U^2, \phi \rangle \ll T^{\varepsilon} A \|\phi \|_1.
\end{equation}
\end{myprop}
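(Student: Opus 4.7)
The plan is to prove the two bounds by complementary methods: a Poincar\'e series decomposition for \eqref{eq:L2normalongshrinkingdiscsL1} and a spectral decomposition for \eqref{eq:L2normalongshrinkingdiscsL2}, in both cases adapting the techniques of Sections \ref{section:tripleproductapproach} and \ref{section:PoincareQUE} but replacing the conditional inputs (Conjectures \ref{conj:SCH}, \ref{conj:SCH2}, and Lindel\"of) with unconditional estimates.

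For \eqref{eq:L2normalongshrinkingdiscsL1}, I would expand $\phi$ via \eqref{eq:PoincareSeriesExpansion}. The hypothesis $\partial_x^k \phi \ll A^k$ (uniformly in $y$ on the compact support of $\phi$) together with $k$ integrations by parts in $x$ gives $|\Phi_m(y)| \ll_k (A/|m|)^k$ and hence $\|\Phi_m\|_{L^1(dy/y^2)} \ll_k (A/|m|)^k$ for $|m| \geq 1$, permitting truncation of the $m$-sum at $|m| \leq AT^\varepsilon$ with negligible error. For each surviving $m \neq 0$, \eqref{eq:UsquaredPmunfolded} combined with the unconditional horocycle bound $\int_0^1 |U(x+iy)|^2 e(-mx) dx \ll 1$ on a compact $y$-range (Theorem 1.1 of \cite{GRS}, i.e.\ the $\tau = 0$, $R = 0$ case of Conjecture \ref{conj:SCH}) yields $|\langle U^2, P_m(z, \Phi_m)\rangle| \ll \|\Phi_m\|_{L^1(dy/y^2)} \leq \|\phi\|_1$. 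Summing over $|m| \leq AT^\varepsilon$, together with the $m=0$ term (handled via \eqref{eq:UsquaredinnerproductincompleteEisenstein} and \eqref{eq:RankinSelbergIntegral}, yielding the main contribution $\langle 1, \phi \rangle = O(\|\phi\|_1)$), delivers \eqref{eq:L2normalongshrinkingdiscsL1}.

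For \eqref{eq:L2normalongshrinkingdiscsL2}, I would expand $\phi$ spectrally as in \eqref{eq:spectraldecomposition}. The hypothesis \eqref{eq:phiderivatives} combined with \eqref{eq:boundingujphi} allows truncating the discrete spectrum at $t_j \leq AT^\varepsilon$ at negligible cost. Cauchy-Schwarz applied to the spectral sum, followed by Bessel's inequality on the $\phi$-coefficients, then yields
\[
|\langle U^2, \phi \rangle - \langle 1, \phi \rangle| \ll \|\phi\|_2 \Big( \sum_{t_j \leq AT^\varepsilon} |\langle U^2, u_j \rangle|^2 \Big)^{1/2} + (\mathrm{Eisenstein}) + O(\|\phi\|_1 T^{-100}),
\]
with the Eisenstein contribution treated analogously via \eqref{eq:tripleproductTwoEisensteinOneMaass}. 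The claim thus reduces to the unconditional estimate $\sum_{t_j \leq X} |\langle U^2, u_j \rangle|^2 \ll X^{1/2} T^\varepsilon$. Watson's formula \eqref{eq:Watsonformula}, the factorization $L(U \times U \times u_j, s) = L(u_j, s) L(\mathrm{sym}^2 U \times u_j, s)$, and the Archimedean decay supplied by $P(t_j, T)$ of \eqref{eq:Pdef} reduce this to a weighted moment of $L(u_j, 1/2) L(\mathrm{sym}^2 U \times u_j, 1/2)$, which can be handled by Cauchy-Schwarz, the spectral large sieve (cf.\ \eqref{eq:secondmoment}) for the second moment of $L(u_j, 1/2)/L(\mathrm{sym}^2 u_j, 1)$, and a dyadic treatment of the degree-$6$ factor combining convexity with spectral-aspect subconvexity (e.g.\ via the Jutila-Motohashi bound \cite{JutilaMotohashi}).

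The principal technical obstacle is the second moment estimate for $\sum_{t_j \leq X} |\langle U^2, u_j \rangle|^2$. A naive Cauchy-Schwarz using only convexity for the degree-$6$ factor falls short of the desired $A^{1/4}$ savings; matching the claimed exponent will require careful dyadic decomposition in $t_j$ that fully exploits the localization of the Watson gamma factor in the bulk range $t_j \asymp T$, or else an appeal to subconvexity for $L(\mathrm{sym}^2 U \times u_j, 1/2)$ in the spectral aspect.
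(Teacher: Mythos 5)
Your treatment of \eqref{eq:L2normalongshrinkingdiscsL1} is essentially the paper's argument: expand in incomplete Poincar\'e series, truncate at $|m|\le AT^{\varepsilon}$, and invoke the unconditional horocycle bound from \cite{GRS} (the paper quotes $\int_0^1|U(x+iy)|^2\,dx\ll T^{\varepsilon}$, so your ``$\ll 1$'' should be ``$\ll T^{\varepsilon}$'', which is harmless). Likewise, your reduction of \eqref{eq:L2normalongshrinkingdiscsL2} via spectral expansion, truncation at $t_j\le AT^{\varepsilon}$, Cauchy--Schwarz and Bessel to the estimate $\sum_{t_j\le AT^{\varepsilon}}|\langle U^2,u_j\rangle|^2\ll A^{1/2}T^{\varepsilon}$ is correct and is equivalent to the paper's arrangement (the paper applies H\"older with exponents $4,4,2$ directly, which amounts to your Cauchy--Schwarz followed by one more Cauchy--Schwarz inside the $t_j$-sum).

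The genuine gap is that you do not prove this second-moment estimate, and the completions you suggest do not work. The Jutila--Motohashi bound \cite{JutilaMotohashi} applies to the degree-two $L$-functions $L(1/2+it,u_j)$, not to the degree-six factor $L(1/2,\mathrm{sym}^2 U\times u_j)$; no spectral-aspect subconvexity for that Rankin--Selberg $L$-function is known unconditionally, and even a typical subconvex pointwise saving would be far from sufficient, since with $\sim A^2$ forms of conductor $T^4t_j^2$ one would need roughly $L(1/2,\mathrm{sym}^2U\times u_j)\ll T^{1+\varepsilon}t_j^{-1/2}$ on average, a full $t_j$-aspect saving beyond Lindel\"of quality. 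Also, ``exploiting the localization of the Watson gamma factor in the bulk $t_j\asymp T$'' is beside the point: after truncation the sum lives in $t_j\le AT^{\varepsilon}=o(T)$. The missing ingredient is a mean-value bound, not a pointwise one: writing $|\langle U^2,u_j\rangle|^2\ll T^{-1+\varepsilon}t_j^{-1}L(1/2,u_j)L(1/2,\mathrm{sym}^2U\times u_j)$ and applying Cauchy--Schwarz once more, one needs \eqref{eq:secondmoment} together with
\begin{equation*}
\sum_{t_j\le AT^{\varepsilon}}L(1/2,\mathrm{sym}^2U\times u_j)^2\ll AT^{2+\varepsilon},
\end{equation*}
which is \eqref{eq:largesievebound}. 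This is proved unconditionally by the approximate functional equation (Dirichlet series of length $\approx AT^{2}$, the square root of the degree-six conductor $T^4t_j^2$), the spectral large sieve giving the factor $(A^2+AT^2)\ll AT^2$, and the coefficient bound $\sum_{n\le N}|A(1,n)|^2/n\ll (NT)^{\varepsilon}$, which follows from the \emph{convexity} bound for the Rankin--Selberg $L$-function attached to $\mathrm{sym}^2U$ as in \cite{XiannanLi} (cf.\ Proposition 6.3 of \cite{LMY}). So convexity does enter, but only through the mean square of the coefficients, not as a pointwise bound on the central value; with this input your reduction closes, yielding $T^{-1+\varepsilon}\cdot T^{\varepsilon}\cdot (AT^{2+\varepsilon})^{1/2}\ll A^{1/2}T^{\varepsilon}$ for the second moment, hence \eqref{eq:L2normalongshrinkingdiscsL2}. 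The Eisenstein contribution is handled the same way and is smaller.
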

Here we can take $\phi$ nonnegative such that $\chi_{A}(z) \leq \phi(z) \leq \chi_{2A}(z)$ 
where $\chi_{r}(z)$ is the characteristic function of the disc $D_{z_0}(r)$ centered at a fixed point $z_0$ having radius $r$, and \eqref{eq:L2normalongshrinkingdiscsL1} gives an upper bound on the $L^2$ norm of $U$ restricted to such a disc, namely
\begin{equation}
\label{eq:L2normalongshrinkingdiscs2}
\int_{D_{z_0}(A^{-1})} |U(z)|^2 \frac{dx dy}{y^2} \ll_{z_0} A^{-1} T^{\varepsilon}.
\end{equation}
This is curiously just as strong as what follows from Cauchy's inequality and the Sarnak-Watson bound \cite{SarnakBAMS} $\|U\|_4 \ll T^{\varepsilon}$ (conditional on Ramanujan).  Here \eqref{eq:L2normalongshrinkingdiscsL2} comes from the triple product approach while \eqref{eq:L2normalongshrinkingdiscsL1} arises from the Poincare series approach.

One may also wonder about bounds on average.  It is easy to produce the following strong average bound:
\begin{equation}
\sum_{T \leq t_j \leq T+1} \langle u_j^2, \phi \rangle \ll T \|\phi\|_1.
\end{equation}
The proof follows immediately upon using (13.8) of \cite{IwaniecSpectral} which says $\sum_{T \leq t_j \leq T+1} |u_j(z)|^2 \ll T$, uniformly for $z$ in some fixed compact set.

\begin{proof}
We begin with the Poincare series method.  By the computations in Section \ref{section:PoincareQUE}, we have
\begin{align}
\langle U^2, \phi \rangle &= \sum_{|m| \leq AT^{\varepsilon}} \langle U^2, P_m(z, \Phi_m) \rangle + O(T^{-100} \| \phi \|_1) \\
 &= \sum_{|m| \leq AT^{\varepsilon}} \int_0^{\infty} \int_0^{1} |U(x+iy)|^2 e(-mx) \Phi_m(y) \frac{dx dy}{y^2}.
\end{align}
By the unconditional horocycle bound $\int_0^{1} |U(x+iy)|^2 dx \ll T^{\varepsilon}$, uniform for $y$ in a fixed compact set (see \cite{GRS}, Theorem 5.1 (2)), we have
\begin{equation}
\langle U^2, \phi \rangle \ll T^{\varepsilon} \sum_{|m| \leq AT^{\varepsilon}}  \int_0^{\infty} \Big| \int_0^{1} \Phi(x+iy) e(-mx) dx \Big| \frac{dy}{y^2} \leq 3A T^{2\varepsilon} \| \phi \|_1,
\end{equation}
giving \eqref{eq:L2normalongshrinkingdiscsL1}.

Next we use the triple product approach.
As in the proof of Proposition \ref{prop:QUEshrinkingWatsonapproach}, we have
\begin{equation}
\langle U^2, \phi \rangle = \sum_j \langle U^2, u_j \rangle \langle u_j, \phi \rangle + \dots,
\end{equation}
with the dots indicating the constant eigenfunction and the Eisenstein series contributions.  We can truncate the spectral sum at $t_j \leq A T^{\varepsilon}$ with an error of size $O(T^{-100} \| \phi \|_1)$.  By Watson's formula \eqref{eq:Watsonformula}, for $t_j = o(T)$, 
\begin{equation}
\langle U^2, u_j \rangle = \theta_{U,j} T^{-1/2} t_j^{-1/2}  \sqrt{L(1/2, \mathrm{sym}^2 U \times u_j)} \sqrt{L(1/2, u_j)},
\end{equation}
where $\theta_{U,j}$ is a real number satisfying $|\theta_{U,j}| \ll T^{\varepsilon}$.  The conductor of $L(1/2, U \times U \times u_j) = L(1/2, \mathrm{sym}^2 U \times u_j) L(1/2, u_j)$ is, for $t_j = o(T)$, $T^4 t_j^4$.  The conductor of $L(1/2, u_j)$ is $t_j^2$, so the conductor of the degree $6$ factor is $T^4 t_j^2$.  

By H\"{o}lder's inequality (with exponents $4, 4, 2$), we obtain
\begin{multline}
\langle U^2, \phi \rangle \ll T^{-1/2 + \varepsilon} \Big(\sum_{t_j \leq AT^{\varepsilon}} t_j^{-2} L(1/2, u_j)^2 \Big)^{1/4} \Big(\sum_{t_j \leq AT^{\varepsilon}}  L(1/2, \mathrm{sym}^2 U \times u_j)^2 \Big)^{1/4}
\\
\times \Big( \sum_{t_j} |\langle u_j, \phi \rangle|^2 \Big)^{1/2} + \dots.
\end{multline}
By Bessel's inequality, $\sum_{t_j} |\langle u_j, \phi \rangle|^2 \leq \langle \phi, \phi \rangle$.
As noted earlier, \eqref{eq:secondmoment} holds, and in addition we claim that the spectral large sieve inequality proves
\begin{equation}
\label{eq:largesievebound}
\sum_{t_j \leq AT^{\varepsilon}} L(1/2, \mathrm{sym}^2 U \times u_j)^2 \ll A T^{2+\varepsilon},
\end{equation}
which taken together leads to \eqref{eq:L2normalongshrinkingdiscsL2}.
A case similar to \eqref{eq:largesievebound} (varying the level) appeared in \cite{LMY}, Proposition 6.3, so we omit a detailed explanation and instead give a sketch that displays the main ideas.  The rough idea of the proof is to apply the approximate functional equation to see
\begin{equation}
\sum_{t_j \leq AT^{\varepsilon}} L(1/2, \mathrm{sym}^2 U \times u_j)^2 \approx \sum_{t_j \leq AT^{\varepsilon}} \Big| \sum_{n \leq A T^{2+\varepsilon}} \frac{a_n \lambda_j(n)}{\sqrt{n}} \Big|^2,
\end{equation}
where $a_n = A(1,n)$ occur as Fourier coefficients of $\mathrm{sym}^2 U$.
Then the spectral large sieve inequality shows
\begin{equation}
\sum_{t_j \leq V} \Big| \sum_{n \leq N} a_n \lambda_j(n) \Big|^2 \ll (VN)^{\varepsilon}(V^2 +N) \sum_{n \leq N} |a_n|^2.
\end{equation}
In our application, we have $\sum_{n \leq N} \frac{|A(1,n)|^2}{n} \ll (NT)^{\varepsilon}$ by the convexity bound for Rankin-Selberg $L$-functions \cite{XiannanLi}.  
\end{proof}

\section{The geodesic QUE theorem for Eisenstein series}
Here we present the proof of our main result, Theorem \ref{thm:geodesicEisenstein}.  
We shall prove the following more precise version of Theorem \ref{thm:geodesicEisenstein} which has a power saving in the error term.
\begin{mytheo}
\label{thm:geodesicEisenstein2} 
Suppose $\psi$ is a fixed smooth, compactly-supported function on $\mathbb{R}^+$.  Then
 \begin{equation}
\label{eq:EisensteinGeodesicIntegral}
 \int_0^{\infty} \psi(y) |E(iy, 1/2 + iT)|^2 \frac{dy}{y} = 2\langle |E(z, 1/2 + iT)|^2, E(z, y \psi(y))\rangle + a+b(T) + c + O_{\delta}(T^{-\delta}),
 \end{equation}
where $a$ is defined by \eqref{eq:adef}, $b(T) = O(1)$ is defined by \eqref{eq:bdef}, $c$ is defined by \eqref{eq:cdef} and $\delta < 1/33$.
\end{mytheo}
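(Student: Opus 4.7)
\medskip

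\noindent\textbf{Proof plan.}
The plan is to follow the three-step strategy outlined in the introduction: (i) use Mellin--Parseval on $\mathbb{R}^+$ to turn the geodesic integral into a (shifted) fourth moment of $\zeta$, after peeling off the constant term; (ii) treat the conductor-dropping ranges $|t \mp T| = o(T)$ with a short-interval fourth moment of $\zeta$; (iii) treat the bulk range $|t|\le 0.99\,T$ by an approximate functional equation, reducing to shifted divisor sums $\sum_n \tau_{iT}(n)\tau_{iT}(n+m)$ that are then handled by the Kuznetsov formula plus subconvexity. I will then follow the CFKRS-style recipe from Section \ref{section:derivation} to recognize the main term as $2\langle |E_T|^2, E(z, y\psi(y))\rangle$, with $a$, $b(T)$, $c$ arising from constant-term contributions.

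\medskip

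The first step is the setup. Write $E(iy, 1/2+iT) = c_0(y) + E^*(iy)$, where $E^*(iy) = \rho(1)\sum_{n\neq 0}\frac{\tau_{iT}(n)}{\sqrt{|n|}}V_T(2\pi|n|y)$. Expanding $|E|^2 = |c_0|^2 + 2\operatorname{Re}(c_0\overline{E^*}) + |E^*|^2$, the first two pieces are readily evaluated against $\psi(y)\tfrac{dy}{y}$; the $|c_0|^2$-term gives a contribution that will combine into the constant $a$, while the cross term produces $b(T)$, which is $O(1)$ because $V_T(2\pi|n|y)$ oscillates in $y$ and $\psi$ is smooth. For the main piece $\int_0^\infty \psi(y)|E^*(iy)|^2\tfrac{dy}{y}$ I would invoke Mellin--Parseval exactly as in \eqref{eq:secondmomentformulaforIU} to obtain a contour integral
\begin{equation*}
\frac{4|\rho(1)|^2}{2\pi i}\int_{(0)}\widetilde{\psi}(-s)\,\zeta(\tfrac12+s+iT)\zeta(\tfrac12+s-iT)\zeta(\tfrac12-s+iT)\zeta(\tfrac12-s-iT)\,\gamma_V(\tfrac12+s)\gamma_V(\tfrac12-s)\,ds,
\end{equation*}
using $L(s,E_T) = \zeta(s+iT)\zeta(s-iT)$ and \eqref{eq:gammaVformula}. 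Setting $s = it$ and exploiting \eqref{eq:rho1squared} together with Stirling on $\gamma_V$, this reduces to the integral alluded to in the introduction with weight $(1+|t-T|)^{-1/2}(1+|t+T|)^{-1/2}$ and integrand $|\zeta(\tfrac12 + it + iT)\zeta(\tfrac12+it-iT)|^2$, modulated by $\widetilde{\psi}(-it)$.

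\medskip

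The second step treats the conductor-dropping windows $|t\mp T|\le \Delta$ with $\Delta = T^{\eta}$ for some small $\eta>0$. In these windows the gamma weight is larger, but the $t$-measure is smaller, and the short-interval fourth moment bound for $\zeta(\tfrac12+iu)$ (Iwaniec's estimate \eqref{eq:IwaniecFourthMoment}) applied to each factor separately yields a contribution $\ll T^{-\delta}$ for an explicit $\delta>0$; optimizing $\eta$ against the errors from step three gives the stated exponent $\delta < 1/33$. The third step is the heart of the matter: on the complementary range $|t|\le T - \Delta$ and $|t| \ge T+\Delta$ (truncated at $|t|\ll T^{1+\varepsilon}$ by rapid decay of $\widetilde{\psi}$ and the gamma factors), I apply an approximate functional equation to the two pairs $\zeta(\tfrac12+\cdots+iT)\zeta(\tfrac12-\cdots+iT)$ and $\zeta(\tfrac12+\cdots-iT)\zeta(\tfrac12-\cdots-iT)$, producing sums of the form $\sum_{n}\tau_{iT}(n)\tau_{iT}(n+m)$ with $m\ll T^\varepsilon$ and a smooth, oscillatory cutoff localizing $n\approx T$. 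The Kuznetsov formula transforms these shifted divisor sums into spectral sums of $L(u_j,\tfrac12\pm iT)\lambda_j(m)$ and analogous Eisenstein integrals, at which point subconvexity for $\zeta$ (Weyl) and for $L(u_j,\tfrac12\pm iT)$ (Jutila--Motohashi \cite{JutilaMotohashi}) produces the desired power saving on this range.

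\medskip

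The final step is to match the main term. The diagonal $m=n$ contributions from the AFE in step three, together with the residues picked up when shifting contours in the contour-integral representation, produce (after combining with the $|c_0|^2$ and cross-term pieces) a main term of exactly the form predicted by the CFKRS recipe of Section \ref{section:derivation}: the diagonal gives $2|\rho(1)|^2 Z(1,E_T)\gamma_{V^2}(1) + \cdots$, which via \eqref{eq:UsquaredinnerproductincompleteEisenstein} (and \eqref{eq:Hformula} with $\psi$ replaced by $y\mapsto y\psi(y)$) is precisely $2\langle |E_T|^2, E(z, y\psi(y))\rangle$, up to the explicit constants $a, b(T), c$ collecting the constant-term, cross-term and polar contributions. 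The main obstacle I anticipate is step three, specifically controlling the Kuznetsov output uniformly in the shift $m$ and in the position of $t$: the spectral averages must be estimated with enough uniformity that the cumulative error saves a fixed power of $T$, and it is the balance between this bulk error and the short-interval fourth-moment estimate in step two that pins down the exponent $\delta<1/33$.
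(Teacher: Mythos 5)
Your plan follows the paper's proof essentially step for step: a regularized Mellin--Parseval reduction of the geodesic integral to the shifted fourth moment $M(v)$ (as in Proposition \ref{prop:reductiontofourthmoment}), a dyadic split in $|t \mp T|$ with Iwaniec's short-interval bound \eqref{eq:IwaniecFourthMoment} on the conductor-dropping side and the approximate functional equation plus Kuznetsov/subconvexity treatment of the shifted divisor sums $\sum_n \tau_{iT}(n)\tau_{iT}(n+m)$ on the bulk side (Lemmas \ref{lemma:MDeltaUB} and \ref{lemma:MDeltaAsymptotic}, Theorems \ref{thm:SCS} and \ref{thm:HughesYounglargeshifts}), and recognition of the main term via \eqref{eq:UsquaredinnerproductincompleteEisenstein}, with the balance at $\Delta = T^{31/33}$ giving the exponent $1/33$. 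The only caveats are bookkeeping ones fully consistent with your outline: the reduction is genuinely a double contour integral (the Mellin variable of $\psi$ and the Parseval variable are coupled, not identified), the short-interval bound ends up covering all $|t\mp T|\le T^{31/33}$ rather than a small power of $T$, and the constant $c$ arises from the off-diagonal ``swap'' main terms $\mathcal{M}_2+\mathcal{M}_3$ of the shifted divisor sums (the $\log\big(y^{1/2}|\eta(iy)|^2\big)$ computation) rather than from constant-term or polar contributions alone.
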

Luo and Sarnak \cite{LuoSarnakQUE} evaluated asymptotically the inner product appearing on the right hand side of \eqref{eq:EisensteinGeodesicIntegral}, showing (see their Proposition 2.2)
\begin{equation}
\label{eq:LuoSarnakQUEresult}
 \langle |E(z,1/2 + iT)|^2, E(z, y \psi(y)) \rangle = \frac{3}{\pi} \log(1/4 + T^2) \int_0^{\infty} \psi(y) \frac{dy}{y} + O\Big(\frac{\log{T}}{\log \log T}\Big).
\end{equation}
Indeed, they show (see their Proposition 2.3) that $\frac{3}{\pi} \log(1/4 + T^2)$ is the average size of $|E(z, 1/2 + iT)|^2$ restricted to any fixed compact Jordan measurable subset of $\Gamma \backslash \mh$ having positive measure.
Thus Theorem \ref{thm:geodesicEisenstein} follows from Theorem \ref{thm:geodesicEisenstein2}.  See also Proposition \ref{prop:thinEisensteinQUE} or the more precise version appearing in Section \ref{section:QUEEIsensteinsubsection}.

The proof is long and we have presented some auxiliary results in Sections \ref{section:maintermstwistedfourthmoment} and \ref{section:SCSunconditional}, so here we map out the strategy.  The first step, appearing in Section \ref{section:reductiontofourthmoment}, is to use harmonic analysis on the positive reals to relate the integral of the Eisenstein series to a shifted fourth moment of the Riemann zeta function, similarly to the method used in Section \ref{section:derivation}.  The main difference here is that this requires some regularization.
In Section \ref{section:EisensteingeodesicQUEMainterm}, working on the assumption that the asymptotic formula for the shifted fourth moment with large shifts takes the same form as for small shifts, we relate the main term in the fourth moment to an inner product of the Eisenstein series with an incomplete Eisenstein series, again similarly to the approach in Section \ref{section:derivation}; this appears as Theorem \ref{thm:J(s)squaredintegral}.  
This asymptotic for the fourth moment of zeta requires, as one ingredient, the asymptotic behavior of a shifted divisor sum.  We present this part separately as Theorem \ref{thm:SCS}; in fact all of Section \ref{section:SCSunconditional} is devoted to the proof of this estimate.  In Section \ref{section:maintermstwistedfourthmoment}, which is also self-contained, we show (loosely speaking) that if one has an asymptotic for a shifted divisor sum with a power saving, then one can evaluate the fourth moment of zeta including all the lower-order main terms.  This amounts to a calculation of various main terms and is a generalization of work of Hughes-Young \cite{HughesYoung} which assumed that the shifts are all small.  Finally, in Section \ref{section:fourthmomentofzeta} we collect these auxiliary results and show that indeed the shifted fourth moment with large shifts does take the expected form, thus completing the proof of Theorem \ref{thm:geodesicEisenstein2}.

\subsection{Reduction to a fourth moment}
\label{section:reductiontofourthmoment}
The Fourier expansion for Eisenstein series, i.e., \eqref{eq:UFourier}, may be alternatively expressed in the symmetric form $\theta(s) E(z,s) = \theta(s)y^{s} + \theta(1-s) y^{1-s} + \dots$.  Analogously to the definition of the Hardy $Z$-function, set $E^*(z,s) = \frac{\theta(s)}{|\theta(s)|} E(z,s)$.   
The Fourier expansion of $E^*$ takes the form
\begin{equation}
E^*(z,1/2+iT) = c_0^*(y) + \rho^*(1) \sum_{n \neq 0} \frac{\tau_{iT}(n)}{\sqrt{|n|}} e(nx) V_T(2\pi |n| y),
\end{equation}
where now $c_0^*(y) = \mu y^{1/2 + iT} + \overline{\mu} y^{1/2-iT}$, with
\begin{equation}
 \label{eq:Adef}
\mu = \frac{\theta(1/2+iT)}{|\theta(1/2+iT)|}.
\end{equation}
Furthermore, by comparison with \eqref{eq:rho1def}, we have
\begin{equation}
\label{eq:rho*}
 \rho^*(1)  = |\rho(1)| = (2/\pi)^{1/2} |\theta(1/2 + iT)|^{-1},
\end{equation}
 and we see that $E^*(z, 1/2 + iT)$ is real-valued.

Our main goal in this subsection is to prove the following
\begin{myprop}
\label{prop:reductiontofourthmoment}
Let $I(T;\psi)$ denote the left hand side of \eqref{eq:EisensteinGeodesicIntegral}.  Then
\begin{equation}
\label{eq:ITpsiasymptotic}
 I(T;\psi) = 4 \frac{\rho^*(1)^2}{\cosh(\pi T)} \frac{1}{2\pi i} \int_{(0)} \widetilde{\psi}(-v) M(v) dv  +  b(T)
 + O(T^{-1/12 + \varepsilon}),
\end{equation}
where
\begin{equation}
 M(v) = \frac{\cosh(\pi T)}{2\pi i} \int_{(0)} L(1/2 + s + v, E_T) L(1/2 - s , E_T) \gamma_{V_T}(1/2 + s + v) \gamma_{V_T}(1/2 - s) ds.
\end{equation}
and with $\mu$ defined above by \eqref{eq:Adef}, we set
\begin{equation}
 \label{eq:bdef}
b(T) = 2 \int_0^{\infty} \psi(y) (y + y^{-1} + \mu^2 + \overline{\mu}^2) \frac{dy}{y}.
\end{equation}
\end{myprop}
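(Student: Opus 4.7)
My plan is to use the Fourier expansion $E(iy, \tfrac12+iT) = c_0(y) + g(y)$, where $c_0(y) = y^{1/2+iT} + \varphi(\tfrac12+iT) y^{1/2-iT}$ is the constant term and $g(y) = 2\rho(1)\sum_{n \geq 1} \tau_{iT}(n) n^{-1/2} V_T(2\pi n y)$ collects the non-constant part (using $\tau_{iT}(-n) = \tau_{iT}(n)$), and to analyze $|E|^2 = |c_0|^2 + c_0\overline{g} + \overline{c_0}g + |g|^2$ integrated against $\psi(y)\,dy/y$ piece by piece.

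The simple pieces I would handle first. Direct expansion gives $|c_0|^2 = 2y + \mu^2 y^{1+2iT} + \overline{\mu}^2 y^{1-2iT}$ (using $\varphi(\tfrac12+iT) = \overline{\mu}^2$ and $|\varphi|=1$), so its $\psi$-integral contributes $2\widetilde{\psi}(1)$ plus $O(T^{-A})$ from $\widetilde{\psi}(1\pm 2iT)$, which decay rapidly. For the cross terms, Lemma~\ref{lemma:Mellin} combined with the Mellin identity $\widetilde{g}(s) = 2\rho(1) L(\tfrac12+s, E_T)\gamma_{V_T}(\tfrac12+s)$ (valid for $\mathrm{Re}(s)>\tfrac12$) turns each into a contour integral in which $\widetilde{\psi}$ is sampled on a line with imaginary part $\pm T$; hence each is $O(T^{-A})$.

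The bulk of the work lies in $\int \psi |g|^2 \, dy/y$. I would apply Mellin inversion $\psi(y) = \frac{1}{2\pi i}\int_{(c_v)}\widetilde{\psi}(-v) y^v dv$ and then Lemma~\ref{lemma:Mellin} to the inner integral $\int g(y)\overline{g(y)} y^v \frac{dy}{y}$, producing the double contour integral
\[
\frac{4|\rho(1)|^2}{(2\pi i)^2}\int_{(c_v)}\int_{(c_s)} \widetilde{\psi}(-v)\, L(\tfrac12+s+v, E_T)L(\tfrac12-s, E_T)\gamma_{V_T}(\tfrac12+s+v)\gamma_{V_T}(\tfrac12-s)\,ds\,dv.
\]
Absolute convergence requires $\mathrm{Re}(v)>1$, $\mathrm{Re}(s) \in (-\tfrac12-\mathrm{Re}(v),\,-\tfrac12)$, reflecting the fact that $g(y) \sim c_0(1/y)$ as $y \to 0$ by the modular symmetry $E(iy)=E(i/y)$, so $|g|^2 \sim 2/y$. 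Using $|\rho(1)|^2 = \rho^*(1)^2$ and the definition of $M(v)$, this integral is exactly $\frac{4\rho^*(1)^2}{\cosh(\pi T)}\cdot\frac{1}{2\pi i}\int \widetilde{\psi}(-v) M(v)\,dv$ once the contours sit on $\mathrm{Re}(s)=\mathrm{Re}(v)=0$.

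To reach those final contours, I would shift $v$ first and then $s$, carefully tracking residues. Shifting $\mathrm{Re}(v)$ down to $0$ crosses the poles of $L(\tfrac12+s+v, E_T)$ at $v = \tfrac12-s\pm iT$, but these have imaginary part $\pm T$ and contribute $O(T^{-A})$ through $\widetilde{\psi}$. Shifting $\mathrm{Re}(s)$ up to $0$ crosses the poles of $L(\tfrac12-s, E_T)$ at $s = -\tfrac12\pm iT$, producing residues proportional to $\zeta(1\mp 2iT)\gamma_{V_T}(1\mp iT)$ times $v$-integrals involving $\zeta(v)\zeta(v\pm 2iT)\gamma_{V_T}(v\pm iT)\widetilde{\psi}(-v)$. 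A subsequent contour shift in $v$ past the pole of $\zeta(v)$ at $v=0$ extracts $\widetilde{\psi}(0)$ multiplied by an explicit gamma-ratio which, via \eqref{eq:gammaVformula}, \eqref{eq:rho1squared}, and $\mu = \theta(\tfrac12+iT)/|\theta(\tfrac12+iT)|$, collapses to $2(\mu^2+\overline{\mu}^2)\widetilde{\psi}(0)$; shifting past the ``dual'' pole (at $v=-1$, coming from $\zeta(v+1)$ type combinations reassembled from the completed gamma factor) produces $2\widetilde{\psi}(-1)$. Combined with the $2\widetilde{\psi}(1)$ from the constant-term computation, this reproduces $b(T)=2\int\psi(y)(y+y^{-1}+\mu^2+\overline{\mu}^2)\tfrac{dy}{y}$. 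The error $O(T^{-1/12+\varepsilon})$ arises from what remains: bounding the shifted $v$-integral using the Weyl-type subconvex bound $\zeta(\tfrac12+iu)\ll (1+|u|)^{1/6+\varepsilon}$ at the relevant arguments, combined with $1/|\zeta(1+2iT)| \ll (\log T)^{O(1)}$, where the exponent $1/12 = \tfrac12\cdot\tfrac16$ appears from the product of two such factors under Cauchy--Schwarz.

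The main obstacle is the contour-shifting bookkeeping: justifying analytic continuation of the double Parseval identity past its natural region, and matching each residue (a product of $\zeta$-values, $\gamma_{V_T}$-values, and $|\rho(1)|^2$-factors) to the prescribed form of $b(T)$. The $y^{-1}$ term in $b(T)$ is invisible to the naive constant-term calculation and emerges only from the dual-pole residue reflecting the $y\leftrightarrow 1/y$ symmetry; the $\mu^2+\overline{\mu}^2$ term requires explicit Stirling evaluation of $\gamma_{V_T}$ at half-integer shifts to recover the phase $\theta(\tfrac12\pm iT)/|\theta(\tfrac12\pm iT)|$.
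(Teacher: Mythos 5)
Your overall skeleton (split $|E|^2$ into $|c_0|^2$, cross terms, and $|g|^2$; Mellin/Parseval to get the double contour integral; shift to $\mathrm{Re}(s)=\mathrm{Re}(v)=0$) is the same as the paper's, and your $|c_0|^2$ computation and the $v=1$ residue giving $2\widetilde{\psi}(-1)$ are correct. But there is a genuine gap in your treatment of the cross terms, and it propagates into your accounting of $b(T)$. The cross terms $2\,\mathrm{Re}\int_0^\infty \psi(y)\,c_0(y)\overline{g(y)}\,\frac{dy}{y}$ are \emph{not} $O(T^{-A})$. After Parseval the integration variable runs over an entire vertical line, and the rapid decay of $\widetilde{\psi}$ only localizes the contour near the height where its argument has small imaginary part; at that height the remaining factors are not small: the Dirichlet series $\sum_n \tau_{iT}(n)\,n^{-1-s\mp iT}=\zeta(1+s)\zeta(1+s\pm 2iT)$ has a pole at $s=0$ (the sum $\sum_n \sigma_{\mp 2iT}(n)/n$ barely diverges), and the exponential decay of $\gamma_{V_T}$ there is exactly cancelled by $\rho(1)\asymp e^{\pi T/2}$. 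That pole is precisely what produces the $2(\mu^2+\overline{\mu}^2)\int_0^\infty\psi(y)\frac{dy}{y}$ portion of $b(T)$ (the paper's $I_2$), and bounding the leftover contour already requires Weyl's bound, giving $O(T^{-1/12+\varepsilon})$, not $O(T^{-A})$. By discarding the cross terms you lose an $O(1)$ contribution.

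Your attempt to recover $\mu^2+\overline{\mu}^2$ from the $|g|^2$ piece does not repair this: $\zeta(v)$ has no pole at $v=0$ (its pole is at $v=1$), and there is no pole at $v=-1$ in the residual integrand $\widetilde{\psi}(-v)\zeta(v)\zeta(v\pm 2iT)\gamma_{V_T}(v\pm iT)$; the only other pole near the real axis comes from $\Gamma(v/2)$ inside $\gamma_{V_T}(v\pm iT)$ at $v=0$, which one never needs to cross — shifting only to $\mathrm{Re}(v)=1/2$ the remaining integral is already $O(T^{-1/12+\varepsilon})$ by Weyl, so the $|g|^2$ piece contributes exactly the $M(v)$ main term plus $2\widetilde{\psi}(-1)$ and nothing of size $\mu^2+\overline{\mu}^2$. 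Hence your total is off by $2(\mu^2+\overline{\mu}^2)\int\psi\,\frac{dy}{y}$. The same rapid-decay fallacy also appears when you shift $\mathrm{Re}(v)$ to $0$ and discard the residues at $v=\tfrac12-s\pm iT$: those residues are integrals over $s$, in which $\widetilde{\psi}$ merely localizes $\mathrm{Im}(s)$ near $\mp T$; they are polynomially bounded and require the Weyl bound (after placing the zeta arguments on the critical line) to be $O(T^{-1/12+\varepsilon})$ — indeed this is where the stated error exponent comes from. Finally, your shifting route also crosses poles of the archimedean factor $\gamma_{V_T}(\tfrac12+s+v)$ at $\tfrac12+s+v=\mp iT$ (not cancelled by trivial zeros of the adjacent zeta factors), which your outline does not account for; the paper's order of shifts (first $s$ with $\mathrm{Re}(v)=2$, then $v$) avoids this issue.
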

Remark. We can interpret $b(T)$ alternatively via
\begin{equation}
 b(T) = \int_0^{\infty} (c_0^*(y) + c_0^*(1/y))^2 \psi(y) \frac{dy}{y} + O(T^{-100}).
\end{equation}
Note also that $M(v) = M(-v)$, which is a symmetry corresponding to the fact that the integrands in $I(T, \psi)$ and $b(T)$ are invariant under $y \rightarrow 1/y$.

\begin{proof}
As a simple method of regularization, write
\begin{equation}
 I(T;\psi) = \int_0^{\infty} E_T^*(iy)^2 \psi(y) \frac{dy}{y} = \int (E_T^* - c_0^*)^2 + 2 \int c_0^*(E_T^* - c_0^*) + \int (c_0^*)^2,
\end{equation}
which we denote as $I_1 + I_2 + I_3$, respectively.  We compute the three terms in turn.

A very short calculation shows
\begin{equation}
 I_3 = 2 \int_0^{\infty} \psi(y) y \frac{dy}{y} + O(T^{-100}),
\end{equation}
which gives one of the four terms making up $b(T)$.

Using the Fourier expansion of $E_T^*$, we have
\begin{equation}
 I_2 = 4 \rho^*(1) \sum_{n \geq 1} \frac{\tau_{iT}(n)}{\sqrt{n}} \int_0^{\infty} V_T(2 \pi n y) (\mu y^{1/2 +iT} + \overline{\mu} y^{1/2 -iT})
\psi(y) \frac{dy}{y}.
\end{equation}
Applying the Mellin inversion formula to $\psi$, changing variables $y \rightarrow y/n$, and reversing orders of integration and summation, we obtain (the reader may wish to recall the definitions \eqref{eq:gammaVformula} and \eqref{eq:EisensteinSeriesLfunction})
\begin{multline}
 I_2 = \frac{4 \rho^*(1)}{2\pi i} \int_{(1)} \widetilde{\psi}(-s) \Big( \mu L(1 + s + iT, E_T) \gamma_{V_T}(1 + s + iT) 
\\
+ \overline{\mu} L(1 + s - iT, E_T) \gamma_{V_T}(1 + s - iT) \Big) ds.
\end{multline}
Next we move the contour of integration to $\text{Re}(s) = -1/2$, crossing poles at $s= 0$, and $s= \pm 2iT$.  The residues at $s = \pm 2 iT$ contribute $O(T^{-100})$ to $I_2$ since $\widetilde{\psi}(-s) \ll (1 + |s|)^{-200}$.  It will be useful to record that
\begin{equation}
\label{eq:EisensteinLfunctionResidue}
 \text{Res}_{s=0} L(1 + s + iT, E_T) \gamma_{V_T}(1 + s + iT) = \half \sqrt{\frac{\pi}{2}} \theta(1/2 + iT).
\end{equation}
We also have that the size of the new contour integral is
\begin{equation}
\label{eq:residueboundforI2}
 \ll T^{\varepsilon} \int_{|t| \ll T^{\varepsilon}} |\zeta(1/2 + it) \zeta(1/2 + it +2iT)| T^{-1/4} dt + O(T^{-100}),
\end{equation}
and using Weyl's bound shows this term is $O(T^{-1/12 + \varepsilon})$.  Gathering these estimates, and using \eqref{eq:rho*}, we have
\begin{equation}
 I_2 = 2 \int_0^{\infty} \psi(y) (\mu^2  + \overline{\mu}^2 )  \frac{dy}{y} + O(T^{-1/12 + \varepsilon}),
\end{equation}
which contributes two of the four terms making up $b(T)$. 

Finally we examine $I_1$.  By Lemma \ref{lemma:Mellin} (the Mellin convolution theorem) and \eqref{eq:completedLFunction2},
\begin{multline}
 I_1 = \frac{4 \rho^*(1)^2}{(2\pi i)^2} \int_{(2)} \widetilde{\psi}(-v)  
\\
 \int_{(1)} L(1/2 + s, E_T) L(1/2 + v-s, E_T) \gamma_{V_T}(1/2 + s) \gamma_{V_T}(1/2 + v -s) ds dv.
\end{multline}
As a first step, we move the $s$-integral to the line $\text{Re}(s) = 0$, crossing poles at $s=1/2 \pm iT$ (and only those, since $\text{Re}(v) = 2$ at this time).  We defer treatment of these residues for a moment.  For the new integral, we reverse the orders of integration and move the $v$-integral to the $0$-line, crossing poles at $v = 1/2 + s \pm iT$.  The new double integral equals the integral main term in Proposition \ref{prop:reductiontofourthmoment} after changing variables $s \rightarrow -s$.  The residues of these poles at $v = 1/2 + s \pm iT$ contribute $O(T^{-1/12 + \varepsilon})$ to $I_1$, by a calculation very similar to \eqref{eq:residueboundforI2}.

By \eqref{eq:EisensteinLfunctionResidue}, the residues at $s = 1/2 \pm iT$ give
\begin{equation}
 \frac{2 \rho^*(1)^2}{2\pi i} \sqrt{\frac{\pi}{2}} \sum_{\pm}  \theta(1/2 \pm iT) \int_{(2)} \widetilde{\psi}(-v)  L(v\mp iT, E_T) \gamma_{V_T}(v\mp iT)  dv.
\end{equation}
We shift this contour to $\text{Re}(v) = 1/2$, crossing poles at $v=1$ and $v = 1 \pm 2iT$.  The residues with $v = 1 \pm 2iT$ are very small by the rapid deay of $\widetilde{\psi}$, and using \eqref{eq:EisensteinLfunctionResidue} again as well as \eqref{eq:rho*}, the residues at $v=1$ give
\begin{equation}
  \rho^*(1)^2 \frac{\pi}{2}  \sum_{\pm} \theta(1/2 \pm iT) \theta(1/2 \mp iT) \widetilde{\psi}(-1) = 2 \int_0^{\infty} \psi(y) y^{-1} \frac{dy}{y},
\end{equation}
which is the remaining term of $b(T)$.  The new integral along $\text{Re}(v) = 1/2$ is again $O(T^{-1/12 + \varepsilon})$ by Weyl's bound.
 
We have accounted for all the terms of \eqref{eq:ITpsiasymptotic}, so the proof is complete.
\end{proof}

\subsection{The main term}
\label{section:EisensteingeodesicQUEMainterm}
Recall \eqref{eq:rho1squared}, that is, $\rho^*(1)^2 = \cosh( \pi T) T^{o(1)}$, and let
\begin{equation}
\label{eq:IJdef}
 %\frac{1}{2 \pi i} \int_{(0)} J(s) J(-s) ds 
I_J(T;\psi)
= 4 \frac{\rho^*(1)^2}{\cosh( \pi T)} \frac{1}{2\pi i} \int_{(0)} \widetilde{\psi}(-v) M(v) dv.
\end{equation}
Since $\widetilde{\psi}(-v)$ has rapid decay, understanding $I_J(T;\psi)$ (and hence $I(T;\psi)$) is, to first approximation, the same as understanding $M(v)$ for a fixed $v$.  
\begin{mytheo}
\label{thm:J(s)squaredintegral}
 We have the asymptotic
\begin{equation}
\label{eq:IJasymptotic}
I_J(T, \psi) =  %\frac{1}{2 \pi i} \int_{(0)} J(s) J(-s) ds = 
2 \langle |E(z, 1/2 + iT)|^2, E(z, y \psi(y)) \rangle + a+c + O(T^{-1/33 + \varepsilon}),
\end{equation}
where
\begin{equation}
 \label{eq:adef}
 a = -2 \int_0^{\infty} (y + y^{-1}) \psi(y) \frac{dy}{y},
\end{equation}
and
where with $f(z) = y^{1/2} |\eta(z)|^2$ (so in particular this $f$ is $\Gamma$-invariant), we define
\begin{equation}
\label{eq:cdef}
 c = -4 \int_0^{\infty} \psi(y) (y + y^{-1} + \tfrac{3}{\pi} \log f(iy)) \frac{dy}{y}.
\end{equation}
\end{mytheo}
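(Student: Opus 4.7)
The plan is to combine the asymptotic evaluation of the twisted fourth moment of $\zeta$ from Section \ref{section:maintermstwistedfourthmoment} (which rests on the shifted divisor sum asymptotic of Theorem \ref{thm:SCS}) with a careful contour-shift analysis of the integral \eqref{eq:IJdef}. More precisely, the cited results should yield, uniformly for $v$ in a bounded vertical strip, an expansion $M(v) = \mathcal{M}(v) + O(T^{-\eta+\varepsilon})$ for some fixed $\eta > 0$, where $\mathcal{M}(v)$ is the sum of the six CFKRS main terms associated to the degree-four $L$-function $L(s,E_T) = \zeta(s+iT)\zeta(s-iT)$; each such main term is a product of four Riemann $\zeta$-values at arguments shifted by $v$ and $\pm 2iT$, multiplied by explicit gamma factors coming from the functional equation.

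Substituting this expansion into \eqref{eq:IJdef} and using the rapid decay of $\widetilde{\psi}(-v)$ to truncate the $v$-integral at $|v| \leq T^{\varepsilon}$, I would shift the contour across the polar divisors of $\mathcal{M}(v)$. The four "swap" main terms contain factors $\zeta(1 + v \pm 2iT)$ that are harmless at $v = 0$ (contributing bounded constants that will be absorbed into $a$), while the two "diagonal" main terms carry a $\zeta(1+v)^2$ factor, giving a double pole at $v=0$ whose residue is the principal contribution. Using \eqref{eq:gammaVsquaredBessel} and Stirling's formula, the leading part of this double-pole residue is $\widetilde{\psi}(0) \log(1/4 + T^2)$, in agreement with the Luo--Sarnak asymptotic \eqref{eq:LuoSarnakQUEresult}. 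To identify the full double-pole residue with $2\langle |E(z, 1/2+iT)|^2, E(z, y\psi(y)) \rangle$, I would compare it with the Rankin--Selberg formula \eqref{eq:UsquaredinnerproductincompleteEisenstein} applied to the incomplete Eisenstein series $E(z, y\psi(y))$: that formula, together with \eqref{eq:ZformulaEisenstein}, exhibits the inner product as the same sort of contour integral, with integrand
\[
\widetilde{h}(-1-s)\,\frac{\zeta^2(1+s)\zeta(1+s+2iT)\zeta(1+s-2iT)}{\zeta(2+2s)} \gamma_{V_T^2}(1+s),
\]
so that the residues at $s=0$ match term-by-term up to identifiable constants.

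The constants $a$ and $c$ are precisely those identifiable constants. The appearance of $\log f(iy) = \log(y^{1/2}|\eta(iy)|^2)$ in $c$ is forced by the Kronecker limit formula: the constant term in the Laurent expansion of $E(z,s)$ at $s=1$ is expressible via $\log f(z)$ as in (22.69) of \cite{IK}, and this is exactly what emerges from differentiating $\zeta(1+v)^2$ and related zeta-quotients in the double-pole residue computation. The term $a$ collects the boundary contributions and the constant-residue pieces from the swap main terms (which involve $\widetilde{\psi}(\pm 2iT) = O(T^{-100})$ and residues at $v=\mp 2iT$ that are negligible by the same rapid-decay estimate combined with Weyl subconvexity for $\zeta(1/2+it\pm 2iT)$).

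The principal obstacle is twofold. First, one must push the main-term analysis of Section \ref{section:maintermstwistedfourthmoment} to full uniformity in the large shifts $\pm 2iT$ so that the residues can be compared with $Z(1+s,E_T)$ on the nose; this is where the power saving of Theorem \ref{thm:SCS} enters critically. Second, after the contour shift to $\operatorname{Re}(v) = -1/2 + \varepsilon$, one must bound the remaining integral by $O(T^{-1/33 + \varepsilon})$, which requires balancing the error from Theorem \ref{thm:SCS} against a Weyl-type subconvex bound for $\zeta(1/2+it\pm 2iT)$ and a short-interval fourth moment bound along the critical line; the exponent $1/33$ reflects exactly this trade-off. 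Verifying that no other spurious residues appear (in particular, from the cusp-like singularities of $\gamma_{V_T^2}$) is a routine but delicate bookkeeping task.
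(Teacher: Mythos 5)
Your overall skeleton (approximate $M(v)$ by the six shifted main terms uniformly in the large shifts, feed this into \eqref{eq:IJdef}, identify the diagonal contribution with the Rankin--Selberg/incomplete Eisenstein inner product, and treat the remaining terms as constants or negligible) is the right one, but the way you propose to extract the main term and the way you assign the constants $a$ and $c$ both go wrong. The paper never takes a residue at $v=0$: the combined main term $\sum_k \mathcal{M}_k(v)$ is holomorphic in $|\mathrm{Re}(v)|<1/2$, and the diagonal piece $\mathcal{M}_0(v)$, integrated over $\mathrm{Re}(v)=\varepsilon$ against $\widetilde{\psi}(-v)$, is recognized \emph{exactly} -- via \eqref{eq:M0formula}, \eqref{eq:Zdef}, \eqref{eq:gammaVsquaredDef} and the unfolding identity \eqref{eq:UsquaredinnerproductincompleteEisenstein} with $h(y)=y\psi(y)$ -- as $2\langle |E(\cdot,1/2+iT)|^2, E(z,y\psi(y))\rangle$ minus the constant-term integral $2\int_0^\infty c_0^*(y)^2\psi(y)\,dy/y$; that subtracted piece is precisely $a$. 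Your route of computing the double-pole residue at $v=0$ and matching its leading part with $\widetilde{\psi}(0)\log(1/4+T^2)$ ``in agreement with Luo--Sarnak'' cannot yield the theorem as stated: converting back to the inner product through \eqref{eq:LuoSarnakQUEresult} costs an error $O(\log T/\log\log T)$, which destroys the power saving $O(T^{-1/33+\varepsilon})$, and if instead you match residues exactly against the Rankin--Selberg expansion you must also control the shifted contours on both sides, at which point the residue detour is superfluous. Moreover only $\mathcal{M}_0$ is identified with the inner product; its mirror $\mathcal{M}_5$ is pushed to $\mathrm{Re}(v)=1/2$ and bounded by $O(T^{-1/6+\varepsilon})$ via Weyl, so your symmetric treatment of the ``two diagonal terms'' as one principal residue does not match the actual bookkeeping.

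The constants are likewise misattributed. Of the four swap terms, two (those with $\Lambda(1\mp 2iT)^2/\Lambda(2\mp 4iT)$) are $O(T^{-1/2+\varepsilon})$ after normalization, not bounded constants absorbed into $a$; the other two, proportional to $\Lambda(1+2iT)\Lambda(1-2iT)\Lambda(1+v)\Lambda(1-v)/\Lambda(2)$, are what produce $c$: their $v$-integral is evaluated exactly using the functional equation, giving $\int_0^\infty\psi(y)\bigl(-\Lambda(2)y^{-1}+D(iy)\bigr)dy/y$ with $D(iy)=2\sum_n\sigma_{-1}(n)e^{-2\pi ny}$, and (22.68) of \cite{IK} converts $D(iy)$ into $\log f(iy)$. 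The $\log|\eta|$ term does not emerge from differentiating $\zeta(1+v)^2$ at a double pole -- that residue would instead produce $\log(1/4+T^2)$, $\zeta'/\zeta(1\pm 2iT)$ and $\widetilde{\psi}'(0)$ contributions. Finally, the exponent $1/33$ does not arise from bounding a shifted $v$-contour against Theorem \ref{thm:SCS}: it comes from the proof of Theorem \ref{thm:fourthmomentlargeshifts}, i.e.\ from the $t$-integral defining $M(v)$ itself, by balancing Iwaniec's short-interval fourth-moment bound on the conductor-dropping ranges ($\Delta\le T^{31/33}$) against the Kuznetsov-based shifted-divisor asymptotic ($\Delta\ge T^{31/33}$); the contour-shift errors in the main-term manipulations are only $O(T^{-1/6+\varepsilon})$ and are not the bottleneck.
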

We deduce Theorem \ref{thm:geodesicEisenstein2} from Proposition \ref{prop:reductiontofourthmoment} and Theorem \ref{thm:J(s)squaredintegral}, and Theorem \ref{thm:geodesicEisenstein} follows from Theorem \ref{thm:geodesicEisenstein2} via \eqref{eq:LuoSarnakQUEresult}.  See also Section \ref{section:QUEEIsensteinsubsection} for a more precise form of the main term (compared to \eqref{eq:LuoSarnakQUEresult}).

We begin by indicating how this main term emerges by a rigorous analog of the work in Section \ref{section:derivation}.
First observe that $M(v)$ is a weighted shifted fourth moment of zeta.  That is, it takes the form
\begin{equation}
\label{eq:fourthmoment}
M(v)= \frac{1}{2\pi} \intR \zeta(1/2 + \alpha + it) \zeta(1/2 + \beta + it) \zeta(1/2 + \gamma - it) \zeta(1/2 + \delta - it) w(t) dt,
\end{equation}
where 
\begin{gather}
\label{eq:alphabetagammadeltadefinition}
 \alpha = v + iT, \quad \beta = v- iT, \quad \gamma =  iT, \quad \delta =  - iT,
\end{gather}
$\text{Re}(v) = 0$, and we may assume $\text{Im}(v) \ll T^{\varepsilon}$.  Furthermore, via \eqref{eq:gammaVformula},
\begin{equation}
\label{eq:gammaweightfunction}
 w(t) = \frac{\cosh(\pi T)}{ 2^{3} %\pi^{-\alpha-\beta-\gamma-\delta}
\pi^{\frac{\alpha+\beta+\gamma+\delta}{2}} }
\Gamma\Big(\frac{\half + \alpha + it}{2}\Big) \Gamma\Big(\frac{\half + \beta+ it}{2}\Big) \Gamma\Big(\frac{\half + \gamma - it}{2}\Big) \Gamma\Big(\frac{\half + \delta - it}{2}\Big).
\end{equation}
% Motohashi \cite{Motohashi} proved an exact formula for shifted moments with a Gaussian weight, but it is not immediately obvious that his result leads to an asymptotic formula here; it is necessary to estimate various integral transforms  to bound an error term.  In \cite{HughesYoung}, we also developed an asymptotic formula for shifted moments but we assumed that the shifts are close to $0$.  No matter the approach taken, it seems that there is some extra work necessary to treat these large shifts as well as the particular weight function.  However, the main term is already predicted ahead of time so we shall first work with this main term, and later bound the error terms required to prove that this is the asymptotic.

As stated in Theorem \ref{thm:HughesYounglargeshifts} below, (which is the large shift analog of Theorem 1.1 of \cite{HughesYoung}), the main term\footnote{Of course we need to show that the error term is indeed smaller than this ``main term''.} of \eqref{eq:fourthmoment} is the sum of six terms. In this paper we shall extend this result to hold for shifts of the form \eqref{eq:alphabetagammadeltadefinition} and $T$ very large; see Theorem \ref{thm:fourthmomentlargeshifts} below.  The full main term in \eqref{eq:Ihkasymptotic} is holomorphic in terms of the shift parameters as long as they are all in the strip $-1/2 < \text{Re}(z) < 1/2$, even though each of the terms in the sum has poles.
One of the six terms is
\begin{equation}
\label{eq:onemainterm}
\mathcal{M}_0(v) :=  \frac{\zeta(1+ \alpha + \gamma) \zeta(1 + \alpha + \delta) \zeta(1 + \beta + \gamma) \zeta(1 + \beta + \delta)}{\zeta(2 + \alpha + \beta + \gamma + \delta)} \frac{1}{2\pi} \intR w(t) dt.
\end{equation}
To match more closely with the arguments in Section \ref{section:derivation}, it is helpful to know that the term \eqref{eq:onemainterm} 
 arises from the ``first part'' of the approximate functional equation and taking the diagonal analogously to \eqref{eq:I2alphadiagonalterm}.  One can see this by following the proof of Theorem \ref{thm:HughesYounglargeshifts}, specifically by looking at the residue at $s=0$ of \eqref{eq:IDhk1integral}.
In this way, we have for $\text{Re}(v) > 0$,
\begin{equation}
\label{eq:M0formula}
 \mathcal{M}_0(v) =  \sum_{n=1}^{\infty} \frac{\tau_{iT}(n)^2}{n^{1+v}} \frac{\cosh(\pi T)}{2 \pi i} \int_{(0)} \gamma_{V_T}(1/2 + s + v) \gamma_{V_T}(1/2-s) ds,
\end{equation}
and the formula extends by meromorphic continuation.  Note the similarity to \eqref{eq:I2alphadiagonalterm}.  
To work with only $\mathcal{M}_0(v)$ we cannot integrate along the line $\text{Re}(v) = 0$ because it passes through poles.  Therefore in \eqref{eq:IJdef} we first shift the contour slightly to the right to $\text{Re}(v) = \varepsilon$ with $\varepsilon < 1/2$, and then
insert \eqref{eq:M0formula} into \eqref{eq:IJdef}.  Using \eqref{eq:Zdef} and \eqref{eq:gammaVsquaredDef}, we then obtain
\begin{equation}
 4 \frac{\rho^*(1)^2}{\cosh( \pi T)} \frac{1}{2 \pi i} \int_{(\varepsilon)} \widetilde{\psi}(-v) \mathcal{M}_0(v) dv 
= 4 |\rho(1)|^2 \frac{1}{2 \pi i} \int_{(\varepsilon)} \widetilde{\psi}(-v) Z(1+v) \gamma_{V_T^2}(1+v) dv.
\end{equation}
By \eqref{eq:UsquaredinnerproductincompleteEisenstein}, with $h(y) = y \psi(y)$ so that $\widetilde{h}(-1-s) = \widetilde{\psi}(-s)$, we can recognize this as the pretty formula
\begin{equation}
\label{eq:M0vintegralintermsofEisenstein}
  \frac{4\rho^*(1)^2}{\cosh(\pi T)} \frac{1}{2 \pi i} \int_{(1)} \widetilde{\psi}(-v) \mathcal{M}_0(v) dv = 2 \langle |E(z, 1/2 + iT)|^2, E(z,y \psi(y)) \rangle - 2 \int_0^{\infty} c_0^*(y)^2 \psi(y) \frac{dy}{y}.
\end{equation}
This second integral we can write more symmetrically, using the assumption $\psi(y) = \psi(y^{-1})$, as
\begin{equation}
- \int_0^{\infty} (c_0^*(y)^2 + c_0^*(y^{-1})^2) \psi(y) \frac{dy}{y} = -2 \int_0^{\infty} (y + y^{-1}) \psi(y) \frac{dy}{y} + O(T^{-100}).
\end{equation}
One immediately reads off the inner product appearing in \eqref{eq:IJasymptotic} as well as the constant $a$ defined by \eqref{eq:adef}.

Next we shall analyze the other five terms making up the main term of \eqref{eq:fourthmoment}, which are given in the general form by Theorem \ref{thm:HughesYounglargeshifts}.  We need to show that these give the constant $c$ up to a satisfactory error term.
The underlying principle is that the original fourth moment has symmetries arising from applying the functional equation to one of $\zeta(1/2 + \alpha + it)$ or $\zeta(1/2 + \beta + it)$, and one of $\zeta(1/2 + \gamma - it)$, $\zeta(1/2 + \delta - it)$ (or both).  There are five such symmetries.
If we apply the functional equation to $\zeta(1/2 + \alpha + it)$ and to $\zeta(1/2 + \gamma - it)$, for instance, then 
this amounts to switching $\alpha$ with $-\gamma$ and multiplying by $X_{\alpha,\gamma,t}$, in the notation of Section \ref{section:maintermstwistedfourthmoment}.  However, our calculations are simplified here by noting that the weight function $w(t)$ also depends on the shifts and in fact $w(t)$ times the product of four zetas gives the completed zeta functions which are then \emph{invariant} under changes of variable $\alpha \leftrightarrow - \gamma$, etc.  That is, the other five terms are obtained by changing variables (1) $\alpha \leftrightarrow -\gamma$ (meaning $\alpha$ is replaced by $-\gamma$ and $\gamma$ is replaced by $-\alpha$) or (2) $\alpha \leftrightarrow -\delta$ or (3) $\beta \leftrightarrow -\gamma$ or (4) $\beta \leftrightarrow -\delta$ or (5) $\alpha \leftrightarrow -\gamma$ and $\beta \leftrightarrow -\delta$.

We need to explicitly evaluate the $t$-integral in terms of $\alpha, \beta, \gamma, \delta$.
By (6.412) of \cite{GR}, 
\begin{equation}
\label{eq:wintegralevaluation}
 \frac{1}{2\pi} \intR w(t) dt = \frac{\cosh(\pi T)}{2^{2} 
\pi^{\frac{\alpha+\beta+\gamma+\delta}{2}} }  \frac{\Gamma(\frac{1+ \alpha + \gamma}{2})\Gamma(\frac{1 + \alpha + \delta}{2}) \Gamma(\frac{1 + \beta + \gamma}{2}) \Gamma(\frac{1 + \beta + \delta}{2})}{\Gamma(\frac{2 + \alpha + \beta + \gamma + \delta}{2})}.
\end{equation}
Letting $\Lambda(s) = \pi^{-s/2} \Gamma(s/2) \zeta(s)$, and inserting \eqref{eq:wintegralevaluation} into \eqref{eq:onemainterm}, we derive
\begin{equation}
\label{eq:M0mainterm}
\frac{\mathcal{M}_0(v)}{\cosh( \pi T)} = \frac{\pi}{4}  \frac{\Lambda(1+ \alpha + \gamma) \Lambda(1 + \alpha + \delta) \Lambda(1 + \beta + \gamma) \Lambda(1 + \beta + \delta)}{\Lambda(2 + \alpha + \beta + \gamma + \delta)}.
\end{equation}

Thus we arrive at
\begin{equation}
\label{eq:Jsquaredasymptotic}
%\frac{1}{2 \pi i} \int_{(0)} J(s) J(-s) ds = 
I_J(T;\psi) = 
4   \frac{1}{2\pi i} \int_{(\varepsilon)} \widetilde{\psi}(-v) \frac{\rho^*(1)^2}{\cosh(\pi T)} \sum_{k=0}^{5} \mathcal{M}_k(v) dv + \mathcal{E}(T)
\end{equation}
where $\mathcal{E}(T)$ is an error term that we shall estimate with Theorem \ref{thm:fourthmomentlargeshifts} (showing $\mathcal{E}(T) \ll T^{-1/33 + \varepsilon}$), and $\sum_{k=0}^{5}\frac{4}{\pi \cosh(\pi T)}  \mathcal{M}_k(v)$ is tediously calculated to be (applying the changes of variables $(1)$-$(5)$ in terms of the shift parameters $\alpha, \beta, \gamma, \delta$, and then substituting back into $v$ and $T$ with \eqref{eq:alphabetagammadeltadefinition})
\begin{multline}
\label{eq:Mvmainterms}
  \frac{\Lambda(1+ v + 2iT) \Lambda(1 +v)^2  \Lambda(1 + v-2iT)}{\Lambda(2 + 2v)} + \frac{\Lambda(1- v - 2iT) \Lambda(1 -2iT)^2  \Lambda(1 + v-2iT)}{\Lambda(2 -4iT)}
 \\
+\frac{\Lambda(1+ 2iT) \Lambda(1 +v)\Lambda(1 -v)  \Lambda(1 -2iT)}{\Lambda(2)}
 + \frac{\Lambda(1+ 2iT) \Lambda(1 -2iT)  \Lambda(1 + v)\Lambda(1 - v)}{\Lambda(2)}
\\
+\frac{\Lambda(1+ v+2iT) \Lambda(1 +2iT)^2 \Lambda(1 -v+2iT)}{\Lambda(2+4iT)}
+ \frac{\Lambda(1- v-2iT) \Lambda(1 -v)^2 \Lambda(1 -v+2iT)}{\Lambda(2-2v)}.
\end{multline}
Here the displayed terms are respective to the index $k$ in $\mathcal{M}_k(v)$ which in turn is with respect to the labelling of the changes of variable in the paragraph immediately preceding \eqref{eq:wintegralevaluation}.  As a consistency check, we remark that \eqref{eq:Mvmainterms} is symmetric under $v \rightarrow -v$, as it should be, recalling that $M(v) = M(-v)$.  It is holomorphic for $-1/2 < \text{Re}(v) < 1/2$, and Stirling's formula shows that $\mathcal{M}_k(v)$ is bounded by a polynomial in $v$ and $T$.  

Let $\mathcal{R}(T)$ denote the integral on the right hand side of \eqref{eq:Jsquaredasymptotic}.  We will presently show
\begin{equation}
\label{eq:RTasymptotic}
 \mathcal{R}(T) = 2 \langle |E(z, 1/2 + iT)|^2, E(z, y \psi(y)) \rangle + a+ c + O(T^{-1/6 + \varepsilon}).
\end{equation}
Write $\mathcal{R}(T) = \sum_{k=0}^{5} \mathcal{R}_k(T)$ according to the sum in \eqref{eq:Jsquaredasymptotic}.
We have already seen that $\mathcal{R}_0(T)$ gives the inner product and the constant $a$ on the right hand side of \eqref{eq:RTasymptotic}.  We will presently show that $\mathcal{R}_k(T)$ for $k=1,4, 5$ is bounded by $O(T^{-1/6 + \varepsilon})$, while $\mathcal{R}_2(T) + \mathcal{R}_3(T)$ give the constant $c$ defined by \eqref{eq:cdef}.

By a trivial estimation, for $k=1, 4$ we have $\mathcal{R}_k(T) \ll T^{-1/2 + \varepsilon}$. 
This bound arises from the fact that $\Lambda(2 \pm 4 iT) \gg T^{1/2} \exp(-\pi T)$.  For $\mathcal{R}_5(T)$, we move the contour to $\text{Re}(v) = \half$, and use $\Lambda(\half + i t) \ll t^{-1/12 + \varepsilon}$ by Weyl's bound $\zeta(1/2 + it) \ll t^{1/6 + \varepsilon}$.  In this way we have
\begin{equation}
\int_{(\varepsilon)} \widetilde{\psi}(-v) \frac{\rho^*(1)^2}{\cosh(\pi T)} \mathcal{M}_5(v) dv   = \int_{(\half)} \widetilde{\psi}(-v) \frac{\rho^*(1)^2}{\cosh(\pi T)} \mathcal{M}_5(v) dv \ll T^{-1/6 + \varepsilon}.
\end{equation}
For $\mathcal{R}_2(T) = \mathcal{R}_3(T)$, by direct substitution we have
\begin{equation}
\label{eq:R2R3initialform}
 \mathcal{R}_2(T) + \mathcal{R}_3(T) = 2\pi |\rho^*(1)|^2 \frac{\Lambda(1+ 2iT) \Lambda(1-2iT)}{\Lambda(2)} \frac{1}{2 \pi i} \int_{(\varepsilon)} \widetilde{\psi}(-v) \Lambda(1+v)\Lambda(1-v) dv.
\end{equation}
We work with this inner integral.
By the functional equation,
\begin{equation}
\label{eq:M2integral}
 \frac{1}{2\pi i} \int_{(\varepsilon)} \widetilde{\psi}(-v) \Lambda(1+v) \Lambda(1-v) dv = \frac{1}{2\pi i} \int_{(\varepsilon)} \widetilde{\psi}(-v) \Lambda(1+v) \Lambda(v) dv,
\end{equation}
which equals
\begin{multline}
 \int_0^{\infty} \psi(y) \frac{1}{2\pi i} \int_{(\varepsilon)} \Lambda(1+v) \Lambda(v) y^{-v} dv \frac{dy}{y}
\\
 = \int_0^{\infty} \psi(y) \Big(- \Lambda(2) y^{-1} + \frac{1}{2\pi i} \int_{(2)} \Lambda(1+v) \Lambda(v) y^{-v} dv \Big) \frac{dy}{y}.
\end{multline}
Then we write $\Lambda(1 + v) \Lambda(v) = \Gamma_{\mr}(1+v) \Gamma_{\mr}(v) \zeta(1+v) \zeta(v)$,  use the identity $\Gamma_{\mr}(1+v) \Gamma_{\mr}(v) = \Gamma_{\mc}(v) = 2 (2\pi)^{-v} \Gamma(v)$, and reverse the orders of summation and integration, so we see that \eqref{eq:M2integral} equals
\begin{equation}
 \int_0^{\infty} \psi(y) \big(-\Lambda(2) y^{-1} + 2\sum_{m,n} m^{-1}  \exp(-2\pi mn y) \big) \frac{dy}{y}.
\end{equation}
The inner sum equals $2 \sum_{n \geq 1} \sigma_{-1}(n) \exp(-2\pi ny)$ which is $D(iy)$ defined by (22.64) of \cite{IK}.  This function $D(iy)$ makes up the tail of the Fourier expansion in the constant term of the Laurent expansion of the Eisenstein series at $s=1$.  Inserting this calculation into \eqref{eq:R2R3initialform}, we have
\begin{equation}
\label{eq:M2M3simplified}
\mathcal{R}_2(T) + \mathcal{R}_3(T) = 2\pi \rho^*(1)^2 \frac{\Lambda(1+2iT) \Lambda(1-2iT)}{\Lambda(2)} \int_0^{\infty} \psi(y) (-\Lambda(2) y^{-1} + D(iy) ) \frac{dy}{y}.
\end{equation}
By (22.68) of \cite{IK}, $D(iy) = -\frac{\pi}{6} y + \half \log y -\half \log f(iy)$ where $f(z) = y^{1/2} |\eta(z)|^2$ is $\Gamma$-invariant.  Also since $\log$ is odd under $y \rightarrow y^{-1}$, and we assumed $\psi$ is even, \eqref{eq:M2M3simplified} then becomes
\begin{equation}
-2\pi \rho^*(1)^2 \Lambda(1+2iT) \Lambda(1-2iT) \int_0^{\infty} \psi(y) (y^{-1} + y + \frac{1}{2 \Lambda(2)} \log f(iy)) \frac{dy}{y}.
\end{equation}
Using \eqref{eq:rho*} and $\theta(1/2 + iT) = \Lambda(1 + 2iT)$, we have
\begin{equation}
\mathcal{R}_2(T) + \mathcal{R}_3(T)  =  -4 \int_0^{\infty} \psi(y) (y + y^{-1} + \frac{3}{\pi} \log f(iy)) \frac{dy}{y},
\end{equation}
which is the constant $c$ defined by \eqref{eq:cdef}.  This finishes the proof of \eqref{eq:RTasymptotic}.

\subsection{The fourth moment of the zeta function with large shifts}
\label{section:fourthmomentofzeta}
% As shown in Section \ref{section:EisensteingeodesicQUEMainterm}, deriving an asymptotic formula for $I(T;\psi)$ is reduced to finding an asymptotic for $M(v)$ defined by \eqref{eq:fourthmoment}, with parameters defined by \eqref{eq:alphabetagammadeltadefinition} and
% \eqref{eq:gammaweightfunction}.  
\begin{mytheo}
\label{thm:fourthmomentlargeshifts}
Let $M(v)$ be defined by \eqref{eq:fourthmoment}, with shifts as in \eqref{eq:alphabetagammadeltadefinition}.
Then
\begin{equation}
 M(v) = M.T. + O(T^{-\frac{1}{33} + \varepsilon}),
\end{equation}
where $M.T. = \sum_{k=0}^{5} \mathcal{M}_k(v)$  denotes the main term defined by \eqref{eq:M0mainterm} and following discussion. 
\end{mytheo}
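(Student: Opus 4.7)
The plan is to apply an approximate functional equation for the product $\zeta(\tfrac12 + \alpha + it)\zeta(\tfrac12 + \beta + it)\zeta(\tfrac12 + \gamma - it)\zeta(\tfrac12 + \delta - it)$, which expresses it as a sum of $2^2$ Dirichlet polynomials (one for each choice of keeping or dualizing one of the two factors on each side) of length roughly $T^2$, up to a small error. After integration against $w(t)$, the diagonal in the ``first'' polynomial (where both factors are kept) produces $\mathcal{M}_0(v)$, consistent with \eqref{eq:M0formula}; the other three sub-products yield, upon diagonal extraction, three of the five remaining main terms, while the off-diagonal contributions in each give smooth shifted divisor sums of the form $\sum_{n} \tau_{iT}(n) \tau_{iT}(n+m)$. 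The tools assembled in Section \ref{section:maintermstwistedfourthmoment} are specifically constructed to package this bookkeeping into the closed form $\sum_{k=0}^{5} \mathcal{M}_k(v)$, and they are set up to work uniformly in the shifts, so they remain valid when $|\alpha|, |\beta|, |\gamma|, |\delta| \asymp T$ provided the off-diagonal estimate has a power saving of sufficient quality.

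Concretely, I would first split the $t$-integral at $|t| = (1-\eta)T$ for a small parameter $\eta > 0$ chosen at the end. On the ``favorable'' range $|t| \leq (1-\eta)T$ all four analytic conductors are comparable to $T^2$, the weight $w(t)$ is smooth and non-oscillatory to leading order, and the approximate functional equation can be inserted without pathology. The off-diagonal terms are of the form $\sum_{m \ne 0} \sum_n \tau_{iT}(n) \tau_{iT}(n+m) F(n,m)$ for smooth $F$, to which Theorem \ref{thm:SCS} applies and delivers a power-saving error; summing this against the $m$-sum of length $\ll T^{1+\varepsilon}$ and combining with the Section \ref{section:maintermstwistedfourthmoment} computation produces $\sum_{k=0}^{5} \mathcal{M}_k(v) + O(T^{-\delta_1 + \varepsilon})$ with $\delta_1$ dictated by Theorem \ref{thm:SCS}.

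On the ``conductor-dropping'' range $(1-\eta)T \leq |t| \leq 2T + T^\varepsilon$ the weight $w(t)$ grows (the rescaled $e^{\pi T}$-normalized factor becomes as large as $T^{-1/4}|T-|t||^{-1/4}$ near $|t|=T$ and as large as $T^{-1/3}$ at $|t| = T$, cf. \eqref{eq:BesselKUB}), but the Lebesgue measure of the set of problematic $t$ is correspondingly small, and I would avoid extracting a main term here, estimating directly. Splitting the four zetas into the pair $\zeta(\tfrac12 + \gamma - it)\zeta(\tfrac12 + \delta - it)$ (which sits in a short window and is amenable to the Iwaniec-type short-interval fourth moment \eqref{eq:IwaniecFourthMoment}) and the pair $\zeta(\tfrac12 + \alpha + it)\zeta(\tfrac12 + \beta + it)$ (to which one applies Weyl's subconvexity bound pointwise), then using Cauchy-Schwarz against the weight and integrating the bound for $w$, one obtains $O(T^{-\delta_2 + \varepsilon})$. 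Exponential decay of $w(t)$ for $|t| \geq 2T + T^\varepsilon$ truncates the tail for free.

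The main obstacle will be the balancing: the exponent $-1/33$ in the statement is not natural from any single input, and reflects the worst among several competing trade-offs, namely (i) the saving in Theorem \ref{thm:SCS} (which itself relies on Kuznetsov plus subconvexity for $L$-functions of spectral parameter $\sim T$), (ii) the exponent $1/6$ from Weyl in the conductor-dropping range, and (iii) the choice of $\eta$ that optimally allocates mass between the two ranges. Getting a clean, uniformly-in-$v$ power saving out of the shifted divisor sum (with the shift $m$ allowed up to $T^{1+\varepsilon}$, and the twist by $\tau_{iT}$ producing oscillations comparable to the sum itself) is the substantive step; once Theorem \ref{thm:SCS} is in hand, the rest is bookkeeping and optimization.
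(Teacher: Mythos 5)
Your overall plan coincides with the paper's: on the bulk of the $t$-integral, insert the approximate functional equation, let the Section \ref{section:maintermstwistedfourthmoment} machinery (Theorem \ref{thm:HughesYounglargeshifts}) assemble the six main terms with Theorem \ref{thm:SCS} supplying the off-diagonal asymptotic, control the conductor-dropping region near $|t|=T$ by short-interval fourth-moment bounds, and optimize the cutoff. But two quantitative steps fail as written. First, the off-diagonal accounting: you sum the error of Theorem \ref{thm:SCS} over shifts $|m|\ll T^{1+\varepsilon}$ and claim this still yields a power saving. Since the error \eqref{eq:ETdef} contains $R^2$ with $R\geq T|m|/Y$, that sum is enormous and no saving survives. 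The missing ingredient is that on a piece of the integral where the weight varies on scale $\Delta$, integration by parts in $t$ against the oscillation $(x/y)^{-it}$ in \eqref{eq:OD} localizes the off-diagonal to $|x-y|\ll \Delta^{-1/2}T^{1/2+\varepsilon}$, so the effective shift range is only $|m|\ll (T/\Delta)^{1/2}T^{\varepsilon}$ (i.e.\ $T^{\varepsilon}$ shifts on the bulk $\Delta\asymp T$); this is precisely why the paper works with a dyadic decomposition in $\Delta=$ distance of $t$ from $T$ (Lemma \ref{lemma:MDeltaAsymptotic}) rather than a single cutoff. Relatedly, your description of the main terms is inaccurate: only two of the six $\mathcal{M}_k$ come from diagonals of the (two-part) approximate functional equation; the other four are the \emph{main terms of the shifted divisor sums themselves}, matched across the two parts via gamma-function identities in Theorem \ref{thm:HughesYounglargeshifts}. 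Since you defer to that theorem this is not fatal, but it is not ``diagonal extraction.''

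Second, the conductor-dropping range. Near $t\approx T$ each of your pairs $\{\alpha,\beta\}$ and $\{\gamma,\delta\}$ contains one factor at height $\approx 2T$ and one at height $|t-T|$, so neither pair ``sits in a short window.'' More importantly, Weyl applied pointwise to the high-height factors gives, on $|t-T|\asymp\Delta$, only a bound of shape $T^{-1/6+\varepsilon}\Delta^{1/2}$, so your direct estimate is a power saving only when the total width of the range is $\leq T^{1/3-\delta}$; meanwhile the main-term extraction has error $T^{25/12}\Delta^{-9/4}+T^{13/24}\Delta^{-5/8}$ (Lemma \ref{lemma:MDeltaAsymptotic}) and is useful only for $\Delta\geq T^{31/33}$, so the two regimes as you have equipped them do not meet. (Note also that a conductor-dropping range of constant relative width $\eta$ cannot simply be bounded: it carries a portion of the main term of size $\asymp\eta^{1/2}T^{o(1)}$, so either $\eta$ is a negative power of $T$ or main terms must be extracted there too.) The paper bridges the gap by H\"older with exponents $4,4,4,4$ and Iwaniec's bound \eqref{eq:IwaniecFourthMoment} on \emph{all four} factors --- the key point being that the fourth moment of the two low-height factors over an interval of length $\Delta$ is $\ll\Delta^{1+\varepsilon}$ --- giving $M_{\Delta}(v)\ll T^{-1/6+\varepsilon}+\Delta^{1/2}T^{-1/2+\varepsilon}$ (Lemma \ref{lemma:MDeltaUB}), valid for all $\Delta\leq T^{1-\delta}$; the crossover at $\Delta=T^{31/33}$ is what produces $1/33$ (the $1/6$ there comes from Iwaniec's $2/3$, not Weyl; Weyl and Jutila--Motohashi enter inside Theorem \ref{thm:SCS} and in the edge ranges $t=O(1)$, $|t-T|\leq 1$). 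Finally, one needs the small bookkeeping step that the local main term on each small-$\Delta$ piece is itself $\ll\Delta^{1/2}T^{-1/2+\varepsilon}$, so that the pieces reassemble into the global $\sum_{k}\mathcal{M}_k(v)$ claimed in the statement.
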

% The main term here is consistent with Theorem 1.1 of \cite{HughesYoung} described in further detail in Section \ref{section:EisensteingeodesicQUEMainterm} above.

We begin the analysis of $M(v)$ by describing the properties of the weight function $w(t)$ defined by \eqref{eq:gammaweightfunction}.  By Stirling's approximation,
\begin{equation}
 w(t) \ll_v \exp(\tfrac{\pi}{4} Q'(t,v, T)) (1 + |t-T|)^{-1/2} (1 + |t+T|)^{-1/2},
\end{equation}
where the implied constant depends polynomially on $v$, and
\begin{equation}
 Q'(t,v, T) = 4T - |v+ t + T| - |v+t-T| - |t-T| - |t+T|.
\end{equation}
We claim that $Q'(t,v,T) \leq 0$ for all $t, v, T \in \mr$.  For a given $t$ and $T$, the maximum of $Q'$ in terms of $v$ must occur at $v=-t-T$ or $v=-t+T$.  At $v= - t - T$ (which it suffices to check by symmetry), $Q'$ specializes as
\begin{equation}
 2T - |t-T| - |t+T|,
\end{equation}
which happens to equal $Q(t,T/2)$ where recall we originally encountered $Q(t,T)$ in \eqref{eq:Qdef}.  We know $Q(t,T/2) \leq 0$ for all $t,T$ and $Q(t,T/2) \leq 2(T-|t|)$ for $|t| \geq T$.  Thus for any $v \in \mr$ and $|t| \geq T$, we have
\begin{equation}
\label{eq:Q'bound}
 Q'(t,v,T) \leq 2(T - |t|).
\end{equation}

By logarithmic differentiation and the asymptotic expansion of $\frac{\Gamma'}{\Gamma}(z) = \log(z) + \frac{a_1}{z} + \dots$, one can derive the bounds
\begin{equation}
\label{eq:wbound}
 w^{(j)}(t) \ll_{v} (1 + |t-T|)^{-1/2} (1 + |t+T|)^{-1/2} \Big(\frac{1}{1 + |t-T|} + \frac{1}{1 + |t+T|} \Big)^j.
\end{equation}

Let $M(v) = M_{+}(v) + M_{-}(v)$, according to $t \geq 0$ or $t \leq 0$.  By symmetry, we focus on $M_{+}(v)$.
Next we apply a smooth partition of unity supported on sets of the form $T- 2\Delta \leq t \leq T - \Delta$ with $1 \leq \Delta \leq T/2$, as well as the region $t \geq T-1$. Accordingly, write $M_{+}(v) = \sum_{\Delta} M_{\Delta}(v) + (T^{-1/6 + \varepsilon})$, where we now explain the origin of this error term.  It 
accounts for $0 \leq t \leq 1$, say, and $t \geq T-1$.  For $t = O(1)$, the contribution to $M_{+}(v)$ is $\ll (T^{1/6 + \varepsilon})^4 T^{-1} \ll T^{-1/3 + \varepsilon}$, by Weyl's bound and the fact that $w(t) \ll T^{-1}$ for such $t$.  For $T-1 \leq t \leq T + T^{\varepsilon}$, the error is $\ll (T^{1/6 + \varepsilon})^2 T^{-1/2} \ll T^{-1/6 + \varepsilon}$, by the same type of reasoning, except here $w(t) \ll T^{-1/2}$, and only two of the zeta functions are evaluated at height $T$, the other two having height $T^{\varepsilon}$.  The exponential decay of $w$ overwhelms the polynomial growth of the zeta functions for $t \geq T + T^{\varepsilon}$, using \eqref{eq:Q'bound}.

We need to treat $M_{\Delta}(v)$ in two different ways depending on the size of $\Delta$.
\begin{mylemma}
\label{lemma:MDeltaUB}
 We have
\begin{equation}
\label{eq:MDeltaUB}
 M_{\Delta}(v) \ll T^{-1/6 + \varepsilon} + \Delta^{1/2} T^{-1/2 + \varepsilon}.
\end{equation}
\end{mylemma}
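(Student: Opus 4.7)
The plan is to bound $M_\Delta(v)$ by $\|w\|_\infty$ on the support of the smooth cutoff, times the $L^1$ norm of the product of the four zeta factors, then apply Cauchy--Schwarz twice to reduce to the short-interval fourth moment of $\zeta$. For the weight estimate, on the range $t \in [T-2\Delta, T-\Delta]$ the pointwise inequality $Q'(t,v,T) \leq 0$ discussed in the paragraph preceding \eqref{eq:Q'bound} gives $\exp(\tfrac{\pi}{4} Q'(t,v,T)) \leq 1$, so \eqref{eq:wbound} yields
\begin{equation*}
w(t) \ll (1+|t-T|)^{-1/2}(1+|t+T|)^{-1/2} \ll \Delta^{-1/2} T^{-1/2}.
\end{equation*}

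The four zeta factors in \eqref{eq:fourthmoment} split by conductor on this interval: $\zeta_1 := \zeta(\tfrac12 + v + iT + it)$ and $\zeta_4 := \zeta(\tfrac12 - iT - it)$ have imaginary argument of size $\asymp 2T$, while $\zeta_2 := \zeta(\tfrac12 + v - iT + it)$ and $\zeta_3 := \zeta(\tfrac12 + iT - it)$ have imaginary argument of size $\asymp \Delta$. Two applications of the Cauchy--Schwarz inequality give
\begin{equation*}
\int_{T-2\Delta}^{T-\Delta} |\zeta_1 \zeta_2 \zeta_3 \zeta_4|\, dt \leq \prod_{j=1}^{4}\Bigl(\int_{T-2\Delta}^{T-\Delta} |\zeta_j|^4\, dt\Bigr)^{1/4}.
\end{equation*}

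I would then invoke the short-interval fourth moment bound $\int_U^{U+H} |\zeta(\tfrac12 + it)|^4\, dt \ll (H + U^{2/3})^{1+\varepsilon}$, which is a form of \eqref{eq:IwaniecFourthMoment}. Each of the two factors at height $2T$ contributes $(\Delta + T^{2/3})^{1+\varepsilon}$, while each at height $\Delta$ contributes $\Delta^{1+\varepsilon}$, so the quadruple integral is bounded by $(\Delta + T^{2/3})^{1/2}\Delta^{1/2} T^{\varepsilon} \ll (\Delta^{1/2} + T^{1/3})\Delta^{1/2} T^{\varepsilon}$. Combining with the weight bound gives
\begin{equation*}
M_\Delta(v) \ll \Delta^{-1/2} T^{-1/2+\varepsilon}(\Delta^{1/2} + T^{1/3})\Delta^{1/2} = T^{-1/6+\varepsilon} + \Delta^{1/2} T^{-1/2+\varepsilon},
\end{equation*}
which is \eqref{eq:MDeltaUB}. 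The only mild issue is that $\zeta_1$ and $\zeta_2$ contain an imaginary shift $v = O(T^{\varepsilon})$, but this is absorbed into a harmless translation of the integration variable and does not affect the application of the fourth moment bound. Conceptually, the $T^{-1/6+\varepsilon}$ term reflects exactly the $T^{2/3}$ floor in the short-interval fourth moment: when $\Delta$ drops below $T^{2/3}$, that floor dominates the contribution from the height-$2T$ factors, which is why the bound no longer improves with decreasing $\Delta$.
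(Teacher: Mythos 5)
Your proposal is correct and is essentially the paper's own proof: the same pointwise bound $w(t)\ll(\Delta T)^{-1/2}$ on the support, the same H\"older step (your two applications of Cauchy--Schwarz are exactly H\"older with exponents $4,4,4,4$), and the same application of Iwaniec's short-interval fourth moment bound \eqref{eq:IwaniecFourthMoment} with the two factors at height $\asymp 2T$ contributing $(\Delta+T^{2/3})^{1/2}$ and the two at height $\asymp\Delta$ contributing $\Delta^{1/2}$. Your remark that the shift $v$ with $\mathrm{Im}(v)\ll T^{\varepsilon}$ is harmless matches how the paper treats it as well.
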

This bound is satisfactory for $\Delta \ll T^{1-\delta}$.  For $\Delta$ large, we need to extract the main term, and to this end we have
\begin{mylemma}
\label{lemma:MDeltaAsymptotic}
We have
\begin{equation}
\label{eq:MDeltaAsymptotic}
 M_{\Delta}(v) = M.T.^{(\Delta)} + O(T^{\varepsilon} (T^{25/12} \Delta^{-9/4} + T^{13/24} \Delta^{-5/8}).
\end{equation}
Here the main term is a sum of the form $\sum_{k=0}^{5} \mathcal{M}_k^{(\Delta)}(v)$ 
where for example $\mathcal{M}_0^{(\Delta)}$ is defined by \eqref{eq:onemainterm}, but with $w$ multiplied by the appropriate constituent of the partition of unity.
\end{mylemma}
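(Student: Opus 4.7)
The plan is to establish \eqref{eq:MDeltaAsymptotic} by carrying out the shifted fourth moment analysis of Section \ref{section:maintermstwistedfourthmoment} (a generalization of the Hughes--Young \cite{HughesYoung} framework) with the shifted divisor asymptotic from Theorem \ref{thm:SCS} as input, taking care to maintain uniformity in the large shift parameter $T$ and in the width $\Delta$ of the weight.

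First I would write $M_\Delta(v)$ as the integral in \eqref{eq:fourthmoment} with $w$ replaced by the localized weight $w_\Delta$ supported in $T-2\Delta \leq t \leq T-\Delta$. On this support, the zetas with shifts $\alpha + it$ and $\delta - it$ have analytic conductor $\asymp T$, while those with shifts $\beta + it$ and $\gamma - it$ have conductor $\asymp \Delta$. I would then apply approximate functional equations to the four zetas, grouping the resulting pieces into the six natural symmetry classes corresponding to which pair (or empty set) of shifts gets swapped via $\alpha \leftrightarrow -\gamma$, $\alpha \leftrightarrow -\delta$, $\beta \leftrightarrow -\gamma$, $\beta \leftrightarrow -\delta$, or the pair swap $\alpha \leftrightarrow -\gamma$ and $\beta \leftrightarrow -\delta$. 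Each grouping leads to a double Dirichlet sum; performing the $t$-integral produces a smooth kernel $F$ supported on the appropriate scales. The diagonal contribution within each grouping, evaluated by Mellin--Barnes manipulations as in \eqref{eq:M0formula}, gives the corresponding $\mathcal{M}_k^{(\Delta)}(v)$ for $k = 0, 1, \dots, 5$.

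The off-diagonal pieces are shifted sums of the form $\sum_n \tau_{iT}(n)\tau_{iT}(n+m) F(n,m)$ (and analogues in the other regimes), with $m$ ranging up to $O(\Delta T^\varepsilon)$ after truncating the smooth weight. Applying Theorem \ref{thm:SCS} to each such sum and summing over $m$ with the $F$-weights produces secondary main terms that, together with the diagonals, reproduce the full $M.T.^{(\Delta)}$ via the \cite{CFKRS} recipe, plus an error. The quantitative bookkeeping produces the two error contributions in \eqref{eq:MDeltaAsymptotic}: the first, $T^{25/12}\Delta^{-9/4}$, arises from the subconvex bound for Hecke--Maass $L$-functions invoked by Theorem \ref{thm:SCS}, inflated by the effective length $\asymp \Delta$ of the $m$-sum, the effective length $\asymp T$ of the inner $n$-sum, and the width $\asymp \Delta$ of the $t$-integral; the second, $T^{13/24}\Delta^{-5/8}$, arises from a residual $\zeta$-integral handled by Weyl's subconvex bound after shifting contours past the relevant poles.

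The main obstacle is the combinatorial matching between the secondary main terms produced by Theorem \ref{thm:SCS} and the diagonal main terms from the various AFE pieces: one must verify that the six symmetry classes combine exactly into the six terms $\mathcal{M}_k^{(\Delta)}(v)$ predicted by the recipe, with uniformity in $v$, $T$, and $\Delta$. The analogous matching was carried out in \cite{HughesYoung} for small shifts, and the generalization to the present regime requires careful tracking of how the asymmetric weight $w_\Delta$ (which is not invariant under $t \mapsto -t$) interacts with the gamma factors arising from the Mellin--Barnes integrals, especially because the two different conductor scales $T$ and $\Delta$ force an asymmetric AFE split between the long-conductor and short-conductor pairs of zetas.
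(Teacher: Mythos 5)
Your plan is essentially the paper's proof: apply Theorem~\ref{thm:HughesYounglargeshifts} with $h=k=1$, $w$ replaced by $w_\Delta$, and shifts \eqref{eq:alphabetagammadeltadefinition} — this theorem already packages the AFE expansion, the extraction of the six diagonal main terms $\mathcal{M}_k^{(\Delta)}$, and the reduction of the off-diagonal to a shifted divisor sum — and then feed Theorem~\ref{thm:SCS} into the $E_2$ term, with the stray $E_1$ contour term bounded at $O(T^{-2/3+\varepsilon})$ by Weyl. One slip worth fixing in your bookkeeping: the shift $r$ in the off-diagonal sum $\sum_r \sum_n \tau_{iT}(n)\tau_{iT}(n+r)F(n)$ is forced by the oscillation of the $t$-integral in the kernel (i.e., $\Delta|\log(x/y)|\ll T^\varepsilon$ together with $n\ll(\Delta T)^{1/2+\varepsilon}$) to satisfy $|r|\ll (T/\Delta)^{1/2+\varepsilon}$, \emph{not} $|r|\ll\Delta T^\varepsilon$ as you wrote; a shift of size $\Delta$ would make $R=P+\frac{T|r|}{Y}$ too large and violate the hypothesis $R\ll T/(TY)^{\delta}$ of Theorem~\ref{thm:SCS}. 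With the correct shift range and the worst dyadic scale $N\asymp(\Delta T)^{1/2}$, the two terms of \eqref{eq:ETdef} (the first from the Jutila--Motohashi subconvex bound on the Maass spectral side, the second from the Eisenstein contribution via Weyl) indeed sum to $T^\varepsilon(T^{25/12}\Delta^{-9/4}+T^{13/24}\Delta^{-5/8})$.
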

Here the main term depends on $\Delta$, but is bounded from above by $\Delta^{1/2} T^{-1/2 + \varepsilon}$.  Thus in Lemma \ref{lemma:MDeltaUB} we can freely claim that $M_{\Delta}(v) = M.T.^{(\Delta)} + (\text{error})$.  Also, $\sum_{\Delta} M.T.^{(\Delta)}$ forms the main term in Theorem \ref{thm:fourthmomentlargeshifts}, as summing over $\Delta$ simply eliminates the partition of unity.  Taking the optimal choice of using Lemma 
\ref{lemma:MDeltaUB} for $\Delta \leq T^{31/33}$, and Lemma \ref{lemma:MDeltaAsymptotic} for $\Delta \geq T^{31/33}$, we obtain the error term stated in Theorem \ref{thm:fourthmomentlargeshifts}.

\begin{proof}[Proof of Lemma \ref{lemma:MDeltaUB}]
 The key tool here is Iwaniec's \cite{IwaniecFourthMoment} upper bound on the fourth moment:
\begin{equation}
\label{eq:IwaniecFourthMoment}
 \int_X^{X + Y} |\zeta(1/2 + it)|^4 dt \ll (X^{2/3} + Y) (XY)^{\varepsilon}.
\end{equation}
By applying H\"{o}lder's inequality to $M_{\Delta}(v)$ with exponents $4,4,4,4$, 
and using $w(t) \ll (T\Delta)^{-1/2}$ on this range of $t$, 
we obtain
\begin{equation}
 M_{\Delta}(v) \ll (\Delta T)^{-\frac12} \Big(\int_{T-2\Delta}^{T-\Delta} |\zeta(\tfrac12 + v + it + iT)|^4 dt \Big)^{\frac14} \Big(\int_{T-2\Delta}^{T-\Delta} |\zeta(\tfrac12 + v + it - iT)|^4 dt \Big)^{\frac14} (\dots),
\end{equation}
with the dots representing two more factors in the product which are identical to the two displayed terms, but with $v=0$.  Thus Iwaniec's bound gives
\begin{equation}
 M_{\Delta}(v) \ll (\Delta T)^{-1/2}  T^{\varepsilon} (\Delta + T^{2/3})^{1/2} \Delta^{1/2},
\end{equation}
which is \eqref{eq:MDeltaUB}.
\end{proof}

\begin{proof}[Proof of Lemma \ref{lemma:MDeltaAsymptotic}]
We may assume $\Delta \geq T^{31/33}$, since otherwise the result is already included in Lemma \ref{lemma:MDeltaUB}.

We apply Theorem \ref{thm:HughesYounglargeshifts} below with $h=k=1$, $w(t)$ replaced by $w_{\Delta}(t)$ and shifts defined by \eqref{eq:alphabetagammadeltadefinition}.  There are two error terms in Theorem \ref{thm:HughesYounglargeshifts}, denoted $E_1$ and $E_2$.  Here $E_1$ comes from taking the diagonal terms and shifting the contour to the left; a residue produces one of the main terms, and the new contour integral is this error term.
Precisely, $E_1$ is of the form \eqref{eq:E1errorbound} which with our current notation is
\begin{multline}
 \ll \int_{T-2\Delta}^{T-\Delta} (\Delta T)^{-1/2} T^{-1/2} \Delta^{-1/2} dt
\\
\max_{|u| \ll T^{\varepsilon}} |\zeta(1/2 + v + 2iT+iu) \zeta(1/2 + v + iu)^2 \zeta(1/2 + v-2iT + iu)|.
\end{multline}
By Weyl's bound, $E_1 \ll T^{-2/3 + \varepsilon}$.

The other error term $E_2$ arises as the error term in the shifted divisor problem. 
The off-diagonal terms are given by \eqref{eq:IO1} below, which we write here as
\begin{equation}
\label{eq:OD}
M_{\Delta}^{OD}(v) := \sum_{r \neq 0} \sum_{m-n=r}  \frac{\sigma_{\alpha, \beta}(m) \sigma_{\gamma, \delta}(n)}{(mn)^{1/2}} f(m,n),  
\end{equation}
with
\begin{equation}
f(x,y) = \intR \Big(\frac{x}{y}\Big)^{-it} V_{\alpha, \beta, \gamma, \delta, t}(\pi^2 xy) w_{\Delta}(t) dt,
\end{equation}
and
where the notation is given below by \eqref{eq:Vweightfunction}, \eqref{eq:galphabetagammadeltadef}.
By a short calculation, using \eqref{eq:alphabetagammadeltadefinition}, we have that
\begin{equation}
 \sigma_{\alpha,\beta}(n) = n^{-v} \tau_{iT}(n), \qquad \sigma_{\gamma, \delta}(n) = \tau_{iT}(n).
\end{equation}
We wish to apply Theorem \ref{thm:SCS}, which is an asymptotic formula for a shifted divisor sum.  For this, we need to know the sizes of the derivatives of $f$.
First we note that Stirling's formula implies
\begin{equation}
\label{eq:gshiftsbound}
g_{\alpha,\beta,\gamma,\delta}(s,t) \ll (\Delta T)^{\sigma},
\end{equation}
for $\text{Im}(s) \ll T^{\varepsilon}$, for $t$ in the support of $w_{\Delta}$.
The weight function $V$ decays quickly for $xy \geq (\Delta T)^{1+\varepsilon}$ (as this is the square-root of the conductor), by shifting the contour far to the right if necessary, using \eqref{eq:gshiftsbound}.
In terms of $t$, we have
\begin{equation}
 \frac{\partial}{\partial t} V_{\alpha,\beta,\gamma,\delta,t}(xy) \ll_{j,s} (xy)^{\varepsilon j} \Delta^{-j},
\end{equation}
where the implied constant depends polynomially on $s$; this is a slight generalization of \eqref{eq:wbound}.
Hence $H(t)$ defined by $H(t) = V_{\alpha,\beta,\gamma,\delta,t}(xy) w_{\Delta}(t)$ satisfies a bound similar to \eqref{eq:wbound}, except bigger by a small factor $(xy)^{\varepsilon j}$ (which has no practical effect for our work).
Furthermore, by integration by parts, $f(x,y)$ is small unless $\Delta |\log(x/y)| \leq  T^{\varepsilon}$.  
We can conclude that $f$ is very small unless $|x-y| \ll \Delta^{-1/2} T^{1/2 + \varepsilon}$.  Furthermore, since
$|x-y| \geq 1$, we have that $f$ is very small unless $x,y \gg \Delta T^{-\varepsilon}$.

By these observations, we have
\begin{equation}
\label{eq:offdiagonal}
M^{OD}_{\Delta}(v) = \sum_{r \neq 0}  \sum_{n \geq 1} \tau_{iT}(n) \tau_{iT}(n+r) F(n),
\end{equation}
where $F(n) = F_{r,v}(n) = n^{-v} f(n, n+r)$ is a smooth function satisfying the following bound
\begin{equation}
 \frac{d^j}{dx^j} F(x) \ll (1 + |v|)^j x^{-1-j} T^{-1/2} \Delta^{1/2}.
\end{equation}
In order to apply Theorem \ref{thm:SCS} to the shifted convolution sum appearing in \eqref{eq:offdiagonal},  we need two minor modifications to meet the conditions of the theorem.  Firstly, we need to apply a dyadic partition of unity to say $[N, 2N]$, summing over $1 \ll N \ll (\Delta T)^{1/2 + \varepsilon}$, and then we need to extract the constant factor $N^{-1} T^{-1/2} \Delta^{1/2}$ from the weight function (a simple re-scaling of the weight function).  
Having done this, we obtain
\begin{equation}
\label{eq:MODasymptotic}
 M^{OD}_{\Delta}(v) = M.T. + E.T.,
\end{equation}
where, in the notation of Theorem \ref{thm:SCS}, we have $Y = N$, $P = T^{\varepsilon}$,  $R \ll \frac{|r|T}{N} T^{\varepsilon}$.  Thus 
\begin{equation}
\label{eq:MDeltaOffdiagonalET}
 E.T. \ll \sum_{\substack{1 \ll N \ll \Delta^{1/2} T^{1/2+\varepsilon} \\ N \text{ dyadic}}} \frac{\Delta^{1/2} T^{\varepsilon} }{T^{1/2} N}  \sum_{1 \leq |r| \ll \frac{N}{\Delta} T^{\varepsilon}} 
\Big(T^{1/3} N^{1/2} \Big(\frac{|r|T}{N}\Big)^2 +  T^{1/6} N^{3/4}  \big(\frac{|r|T}{N} \big)^{1/2} 
 \Big).
\end{equation}
We do not need to work with the main term at this point because that is the purview of Theorem \ref{thm:HughesYounglargeshifts}.
To simplify \eqref{eq:MDeltaOffdiagonalET}, it is useful to notice that the ``worst'' value of $N$ is at $N = \Delta^{1/2} T^{1/2+\varepsilon}$ (the maximal value we need to consider), since after summing over $r$, $N$ does not appear in a denominator.  Thus, after simplifications, we obtain
\begin{equation}
 E.T. \ll T^{\varepsilon} (T^{25/12} \Delta^{-9/4} + T^{13/24} \Delta^{-5/8}),
\end{equation}
which is the error claimed in Lemma \ref{lemma:MDeltaAsymptotic}.  This term $E.T.$ is precisely one of the two terms making up $E_2$ in Theorem \ref{thm:HughesYounglargeshifts}, while the other term in $E_2$ has a bound of the same size by symmetry.
\end{proof}

\section{The main terms in the shifted fourth moment of the zeta function}
\label{section:maintermstwistedfourthmoment}
In \cite{HughesYoung}, we proved an asymptotic formula for the twisted fourth moment of the Riemann zeta function, with small shifts.  One standard approach to this type of problem is to reduce it to an asymptotic evaluation of a shifted divisor problem.  The main term in the fourth moment comes about through a complicated process of matching of terms.  The method used in \cite{HughesYoung} to simplify the main terms unfortunately does not carry over to large shift parameters because we used Stirling's approximation at different stages of the calculations.  
Here we modify the calculations of the main terms of \cite{HughesYoung} to carry over to arbitrary shifts.  This section is independent from the rest of this paper but heavily relies on the work of \cite{HughesYoung}.  For the purposes of this paper, we do not require the full ``twisted'' version, so the reader may wish to consider the simpler special case $h = k =1$ below.

Begin with some smooth weight function $w$ having compact support, and let
\begin{equation}
 I(h,k) = \int_{-\infty}^{\infty} \Big(\frac{h}{k}\Big)^{-it} \zeta(\tfrac12 + \alpha + it) \zeta(\tfrac12 + \beta + it) \zeta(\tfrac12 + \gamma - it) \zeta(\tfrac12 + \delta - it) w(t) dt,
\end{equation}
where $(h,k) = 1$.
We set some more notation.
Let $\sigma_{\alpha,\beta}(n) = \sum_{ab = n} a^{-\alpha} b^{-\beta}$.
Define
\begin{equation}
X_{\alpha,\beta,\gamma,\delta,t} = \pi^{\alpha + \beta + \gamma + \delta} 
\frac{\Gamma(\frac{\half -\alpha - it}{2})}{\Gamma(\frac{\half + \alpha + it}{2})}
\frac{\Gamma(\frac{\half -\beta - it}{2})}{\Gamma(\frac{\half + \beta + it}{2})}
\frac{\Gamma(\frac{\half -\gamma + it}{2})}{\Gamma(\frac{\half + \gamma - it}{2})}
\frac{\Gamma(\frac{\half -\delta + it}{2})}{\Gamma(\frac{\half + \delta - it}{2})},
\end{equation}
and similarly write $X_{\alpha,\beta,\gamma,\delta,t} = X_{\alpha,\gamma, t} X_{\beta,\delta, t}$ (with hopefully obvious meaning).  Let
\begin{equation}
\label{eq:galphabetagammadeltadef}
g_{\alpha, \beta, \gamma, \delta}(s,t) = 
\frac{\Gamma\left(\frac{\half + \alpha + s +it}{2} \right)}{\Gamma\left(\frac{\half + \alpha +it }{2} \right)} 
\frac{\Gamma\left(\frac{\half + \beta + s +it}{2} \right)}{\Gamma\left(\frac{\half + \beta +it }{2} \right)} 
\frac{\Gamma\left(\frac{\half + \gamma + s -it}{2} \right)}{\Gamma\left(\frac{\half + \gamma -it }{2} \right)}
\frac{\Gamma\left(\frac{\half + \delta + s -it }{2} \right)}{\Gamma\left(\frac{\half + \delta -it }{2} \right)},
\end{equation}
and with $G(s) = e^{s^2}$ (or any even holomorphic function with rapid decay in vertical strips), set
\begin{equation}
\label{eq:Vweightfunction}
V_{\alpha, \beta, \gamma, \delta,t}(x) = \frac{1}{2 \pi i} \int_{(1)} \frac{G(s)}{s} g_{\alpha, \beta, \gamma, \delta}(s,t) x^{-s} ds.
\end{equation}
Let
\begin{equation}
\label{eq:Aarithmeticfactor}
A_{\alpha,\beta,\gamma,\delta}(s) = \frac{\zeta(1+s + \alpha + \gamma)\zeta(1+s + \alpha + \delta)\zeta(1+s + \beta + \gamma)\zeta(1+s + \beta + \delta)}{\zeta(2+ 2s + \alpha + \beta + \gamma + \delta)},
\end{equation}
and supposing %$(h,k) = 1$, 
$p^{h_p} || h$ and $p^{k_p} || k$, we define
\begin{multline}
\label{eq:B}
B_{\alpha,\beta,\gamma,\delta,h,k}(s) = \prod_{p | h} \left(\frac{\sum_{j=0}^{\infty} \sigma_{\alpha,\beta}(p^j) \sigma_{\gamma,\delta}(p^{j + h_p}) p^{-j(s+1)}}{\sum_{j=0}^{\infty} \sigma_{\alpha,\beta}(p^j) \sigma_{\gamma,\delta}(p^{j}) p^{-j(s+1)}} 
\right)
\\
\times \prod_{p | k} 
\left(
\frac{\sum_{j=0}^{\infty} \sigma_{\alpha,\beta}(p^{j + k_p}) \sigma_{\gamma,\delta}(p^{j}) p^{-j(s+1)}}{\sum_{j=0}^{\infty} \sigma_{\alpha,\beta}(p^j) \sigma_{\gamma,\delta}(p^{j}) p^{-j(s+1)} }
\right).
\end{multline}
Let
\begin{equation}
Z_{\alpha,\beta,\gamma,\delta,h,k}(s) =  A_{\alpha,\beta,\gamma,\delta}(s) B_{\alpha,\beta,\gamma,\delta,h,k}(s).
\end{equation}
Our result is the following.
\begin{mytheo}
\label{thm:HughesYounglargeshifts}
Let $T > 0$, and suppose that $w(t)$ is supported on $|t| \leq T$.  Assume that the  
shifts $\alpha, \beta, \gamma, \delta$ have real part $O(1/\log{T})$, that $|\alpha + it|, |\beta+it|, |\gamma-it|, |\delta - it| \gg T^{\varepsilon}$ for all $t$ in the support of $w$, and that all the shifts have imaginary parts bounded by some fixed polynomial in $T$.  Then
\begin{multline}
\label{eq:Ihkasymptotic}
I(h,k) = \frac{1}{\sqrt{hk}} \int_{-\infty}^{\infty} w(t) \left( Z_{\alpha,\beta,\gamma,\delta,h,k}(0)  
+ X_{\alpha,\beta,\gamma,\delta, t} Z_{-\gamma, - \delta, -\alpha, -\beta,h,k}(0) \right. \\
+ X_{\alpha,\gamma,t} Z_{-\gamma,\beta,-\alpha,\delta,h,k}(0) 
+ X_{\alpha,\delta,t} Z_{-\delta,\beta,\gamma,-\alpha,h,k}(0) \\
\left.
+ X_{\beta,\gamma,t} Z_{\alpha,-\gamma, -\beta,\delta,h,k}(0) 
+ X_{\beta,\delta,t} Z_{\alpha,-\delta,\gamma,-\beta,h,k}(0)  
\right) dt 
+ E_1 + E_2 + O_A(T^{-A}),
\end{multline}
where $E_1$ is bounded by \eqref{eq:E1errorbound} below, and $E_2$ is defined by \eqref{eq:E2} below.
\end{mytheo}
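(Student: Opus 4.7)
My plan is to adapt the proof of the twisted fourth moment in Hughes--Young to the regime of imaginary shifts of arbitrary polynomial size, the essential new difficulty being that Stirling's approximation on gamma ratios involving the shifts is no longer admissible. The strategy has three stages: (i) an approximate functional equation; (ii) the diagonal, handled by a Dirichlet series representation and contour shift; (iii) the off-diagonal, handled by the shifted divisor asymptotic (Theorem \ref{thm:SCS}).

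First, I would apply the approximate functional equation to each of the pairs $\zeta(\tfrac12+\alpha+it)\zeta(\tfrac12+\beta+it)$ and $\zeta(\tfrac12+\gamma-it)\zeta(\tfrac12+\delta-it)$, writing $I(h,k)$ as a sum of four pieces $I^{(j)}$ indexed by whether each pair is in its ``main'' or ``dual'' form. Each piece takes the form
\begin{equation}
\frac{1}{\sqrt{hk}}\intR w(t)\, X^{(j)}(t) \sum_{m,n \ge 1} \frac{\sigma_{*,*}(m)\sigma_{*,*}(n)}{\sqrt{mn}}\Big(\frac{hn}{km}\Big)^{it} V^{(j)}_{*,t}(\pi^2 mn)\,dt,
\end{equation}
where the starred subscripts denote the appropriate (possibly dualized) shifts, $X^{(j)}(t)$ is the product of the $X$-factors induced by the dualizations (with $X^{(1)}(t)=1$ and $X^{(4)}(t)=X_{\alpha,\beta,\gamma,\delta,t}$), and $V^{(j)}$ is as in \eqref{eq:Vweightfunction}. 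This step uses only the exact functional equation and is insensitive to the sizes of the shifts.

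Second, I would split each $(m,n)$-sum into the diagonal $hm=kn$ and the off-diagonal $hm-kn=r\neq 0$. The diagonal of $I^{(1)}$ is recognized, via the standard Euler product computation reproduced in Hughes--Young, as a contour integral of $Z_{\alpha,\beta,\gamma,\delta,h,k}(s)\,g_{\alpha,\beta,\gamma,\delta}(s,t)\,G(s)/s$ on $\text{Re}(s)=1$; shifting to $\text{Re}(s)=-\tfrac12+\varepsilon$ picks up a residue at $s=0$ yielding the main term $Z_{\alpha,\beta,\gamma,\delta,h,k}(0)$ integrated against $w$, while the leftover contour contributes $E_1$ (bounded by \eqref{eq:E1errorbound}). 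The diagonal of $I^{(4)}$ gives analogously the second main term $X_{\alpha,\beta,\gamma,\delta,t}\, Z_{-\gamma,-\delta,-\alpha,-\beta,h,k}(0)$. The off-diagonals of all four pieces are then evaluated using the shifted divisor asymptotic (Theorem \ref{thm:SCS}); the resulting main terms, after Mellin inversion in the long variable and recombination of the gamma factors arising from the associated Ramanujan-type singular series, assemble into the four cross main terms with $X$-factors $X_{\alpha,\gamma,t}, X_{\alpha,\delta,t}, X_{\beta,\gamma,t}, X_{\beta,\delta,t}$, with any leftover off-diagonal pieces from $I^{(2)}, I^{(3)}, I^{(4)}$ absorbed by the symmetries between the pieces. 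The error from this step aggregates into $E_2$.

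The main obstacle is the recombination of gamma ratios in the third step. In Hughes--Young, the matching of the off-diagonal main terms with the symmetric form of the asymptotic uses Stirling's approximation on ratios like $\Gamma(\tfrac{1/2+\alpha+it}{2})/\Gamma(\tfrac{1/2-\gamma+it}{2})$, which is valid only when the shifts are $O(1/\log T)$. For shifts with large imaginary parts, each such ratio must instead be kept in exact closed form and identified with the appropriate $X_{*,*,t}$ factor via the functional equation for $\Gamma_\mr$, converting what was an asymptotic calculation into an exact algebraic identity. Once this bookkeeping is in place, the six summands of \eqref{eq:Ihkasymptotic} emerge on the nose, and $E_1$, $E_2$ absorb all remaining pieces.
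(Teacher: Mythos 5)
Your skeleton (approximate functional equation, diagonal via a contour shift giving $Z_{\alpha,\beta,\gamma,\delta,h,k}(0)$ and $E_1$, off-diagonal via the shifted divisor asymptotic giving $E_2$, and exact gamma-function identities in place of Stirling) is the same as the paper's, but the decomposition you start from and, more importantly, the step that actually produces the four cross main terms are not right as written. The paper uses a single approximate functional equation for the degree-four product, giving exactly two pieces: $I^{(1)}$ and the fully dualized $I^{(2)}$ carrying $X_{\alpha,\beta,\gamma,\delta,t}$. If instead you dualize each pair separately, the two mixed pieces have phase of product type $(hm\,kn)^{\mp it}$, not the ratio $(hn/km)^{\pm it}$ you wrote; they have no $hm-kn=r$ structure, cannot be fed into Theorem \ref{thm:SCS}, and (being the terms with ``oscillating root number'') contribute nothing to the main term. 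So all six main terms must be extracted from the two ratio-phase pieces, and your claim that ``the off-diagonals of all four pieces'' assemble into the cross terms is structurally off. You also suppress the poles of $Z_{\alpha,\beta,\gamma,\delta,h,k}(2s)$ at $2s=-\alpha-\gamma$, $-\alpha-\delta$, $-\beta-\gamma$, $-\beta-\delta$: these have real part $O(1/\log T)$, so any leftward shift of the diagonal integral past $s=0$ crosses them, and the resulting ``junk'' residues must be accounted for.

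The genuine gap is the mechanism by which the terms $X_{\alpha,\gamma,t}Z_{-\gamma,\beta,-\alpha,\delta,h,k}(0)$, etc.\ emerge and the junk terms disappear; this bookkeeping is essentially the whole content of the theorem. In the paper, each $N$-term of \eqref{eq:shifteddivisorsumasymptotic} is written exactly as an $s$-integral of gamma factors; the $r>0$ and $r<0$ pieces are combined with the identity $\Gamma(\tfrac12-z)/\Gamma(\tfrac12+w)+\Gamma(\tfrac12-w)/\Gamma(\tfrac12+z)=\pi^{-1}\Gamma(\tfrac12-z)\Gamma(\tfrac12-w)\,2\cos(\tfrac{\pi}{2}(z+w))\cos(\tfrac{\pi}{2}(z-w))$ and the functional equation of $\zeta$; and the hypothesis $|\alpha+it|,|\beta+it|,|\gamma-it|,|\delta-it|\gg T^{\varepsilon}$ is used precisely here, to replace $\tan(\tfrac{\pi}{2}z)\tan(\tfrac{\pi}{2}w)$ by $1+O_A(T^{-A})$ --- this is the sole source of the $O_A(T^{-A})$ in \eqref{eq:Ihkasymptotic}, and your sketch never invokes this hypothesis. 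Crucially, the resulting integral on $\mathrm{Re}(s)=\varepsilon$ from the first part of the approximate functional equation is then paired with the corresponding permuted, $X$-twisted off-diagonal term from the second part, which (after $s\rightarrow -s$ and exact identities for $g$, $C$, and the gamma ratios) has the \emph{identical} integrand on $\mathrm{Re}(s)=-\varepsilon$; the two contours close up, the residue at $s=0$ gives the cross main term (with $h^{\alpha}k^{\gamma}C_{\alpha,\beta,\gamma,\delta,h,k}(0)=B_{-\gamma,\beta,-\alpha,\delta,h,k}(0)$ supplying the arithmetic factor), and the residues at $2s=-\alpha-\gamma$ and $2s=-\beta-\delta$ cancel exactly the junk residues from the diagonal shift. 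Without this pairing-and-cancellation argument, ``recombination of the gamma factors'' and ``symmetries between the pieces'' does not yield \eqref{eq:Ihkasymptotic}; an individual off-diagonal piece on a single vertical line is not a main term plus a small error.
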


We do not attempt to bound the error terms with large shifts since our purpose here is solely the calculation of the main terms.  
\begin{proof}
As in (37) of \cite{HughesYoung} (the approximate functional equation), we have
\begin{multline}
 I(h,k) = \sum_{m, n} \frac{\sigma_{\alpha,\beta}(m) \sigma_{\gamma,\delta}(n)}{\sqrt{mn}} \intR \Big(\frac{hm}{kn} \Big)^{-it} V_{\alpha,\beta,\gamma,\delta,t}(\pi^2 mn) w(t) dt 
\\
+
\sum_{m, n} \frac{\sigma_{-\gamma,-\delta}(m) \sigma_{-\alpha,-\beta}(n) }{\sqrt{mn}} \intR \Big(\frac{hm}{kn} \Big)^{-it} X_{\alpha,\beta,\gamma,\delta, t} V_{-\gamma,-\delta,-\alpha,-\beta, t}(\pi^2 mn) w(t) dt,
\end{multline}
and we write $I(h,k) = I^{(1)}(h,k) + I^{(2)}(h,k)$ correspondingly.  We will work mainly with $I^{(1)}$ because we can derive analogous formulas for $I^{(2)}$ by first switching $\alpha$ with $-\gamma$ and $\beta$ with $-\delta$, and then replacing $w(t)$ by $X_{\alpha,\beta,\gamma,\delta,t} w(t)$.

Let $I_D^{(1)}(h,k)$ denote the contribution to $I^{(1)}(h,k)$ from $hm=kn$.  By (46) of \cite{HughesYoung}, we have
\begin{equation}
\label{eq:IDhk1integral}
 I_D^{(1)}(h,k) = \frac{1}{\sqrt{hk}}  \int_{-\infty}^{\infty} w(t) 
\frac{1}{2 \pi i}  \int_{(\varepsilon)} \frac{G(s)}{s} (\pi^2 hk)^{-s}  g_{\alpha, \beta, \gamma, \delta}(s,t) 
Z_{\alpha,\beta,\gamma,\delta,h,k}(2s) 
ds dt.
\end{equation}
We shift the contour to $\text{Re}(s) = -\frac14 + \varepsilon$, crossing a pole at $s=0$ as well as four poles at $2s = -\alpha - \gamma$, etc.  The pole at $s=0$ gives the first main term in \eqref{eq:Ihkasymptotic}.  The other four poles give ``junk terms'' which surely cannot persist in the final answer since they depend on $G(s)$ which is chosen from a wide class of functions.  In fact it is possible to choose $G$ to vanish at these four points.  However, we do not need to impose this condition.  The new contour, which we view as an ``error term," is
\begin{equation}
\ll (hk)^{-1/4+\varepsilon} \intR |w(t)| \int_{(-1/4 + \varepsilon)} \frac{|G(s)|}{|s|} |g_{\alpha,\beta,\gamma,\delta}(s,t)| |A_{\alpha,\beta,\gamma,\delta}(2s)| ds dt,
\end{equation}
using a divisor-type bound on $B(s)$.  By Stirling's formula applied to $g(s,t)$, we have that this is
\begin{multline}
\ll (hk)^{-1/4+\varepsilon} T^{\varepsilon} \intR |w(t)| (1 + |\alpha+it|)^{-\frac14} 
(1 + |\beta+it|)^{-\frac14} (1 + |\gamma -it|)^{-\frac14} (1 + |\delta -it|)^{-\frac14}
\\
\int_{(-\frac14 + \varepsilon)} \frac{|G(s)|}{|s|}  |A_{\alpha,\beta,\gamma,\delta}(2s)| ds dt.
\end{multline}
By the rapid decay of $G(s)$, we bound the second line above by its maximum value (plus an error of size $O(T^{-A})$, which amounts to
\begin{multline}
\label{eq:E1errorbound}
\ll (hk)^{-1/4+\varepsilon} T^{\varepsilon} \intR |w(t)| (1 + |\alpha+it|)^{-\frac14} 
(1 + |\beta+it|)^{-\frac14} (1 + |\gamma -it|)^{-\frac14} (1 + |\delta -it|)^{-\frac14} dt
\\
\times \max_{|u| \ll T^{\varepsilon}} |\zeta(\tfrac12 + \alpha + \gamma + iu) \zeta(\tfrac12 + \alpha + \delta + iu) \zeta(\tfrac12 + \beta + \gamma + iu) \zeta(\tfrac12 + \beta + \delta + iu)| .
\end{multline}
Observe that switching the parameters $\alpha \leftrightarrow -\gamma$, $\beta \leftrightarrow -\delta$, and multiplying by $X_{\alpha,\beta,\gamma,\delta,t}$ (which has absolute value $O(1)$) does not alter the form of this bound, so it is valid also for the ``second part" of the approximate functional equation.

Next we look at the off-diagonal terms, which take the form
\begin{equation}
\label{eq:IO1}
I_O^{(1)}(h,k) = \sum_{r \neq 0} \sum_{hm - kn = r} \frac{\sigma_{\alpha,\beta}(m) \sigma_{\gamma,\delta}(n)}{\sqrt{mn}} f(hm,kn),
\end{equation}
with
\begin{equation}
\label{eq:fdefinition}
f(x,y) = \intR \Big(\frac{x}{y} \Big)^{-it} V_{\alpha,\beta,\gamma,\delta,t}\Big(\frac{\pi^2 xy}{hk} \Big) w(t) dt.
\end{equation}
As a working hypothesis, we suppose that we have an asymptotic formula for the shifted divisor sum
\begin{multline}
\label{eq:shifteddivisorsumasymptotic}
\sum_{hm - kn = r} \sigma_{\alpha,\beta}(m) \sigma_{\gamma,\delta}(n) F(hm,kn) 
= N_{\alpha,\beta,\gamma,\delta}(h,k;r;F) + N_{\beta,\alpha,\gamma,\delta}(h,k;r;F)
\\
+ N_{\alpha,\beta,\delta ,\gamma}(h,k;r;F) + N_{\beta,\alpha,\delta,\gamma}(h,k;r;F)
+ \mathcal{E}_{\alpha,\beta,\gamma,\delta}(h,k;r,F),
\end{multline}
where
\begin{multline}
\label{eq:Nalphabetagammadeltadef}
N_{\alpha,\beta,\gamma,\delta}(h,k;r;F) = \frac{\zeta(1-\alpha+\beta) \zeta(1-\gamma+\delta)}{h^{1-\alpha} k^{1-\gamma}} \int_{\max(0,r)}^{\infty} x^{-\alpha} (x-r)^{-\gamma} F(x,x-r) dx 
\\
\sum_{l=1}^{\infty} \frac{S(r,0;l) (h,l)^{1-\alpha+\beta} (k,l)^{1-\gamma+\delta}}{l^{2-\alpha+\beta-\gamma+\delta}},
\end{multline}
and $\mathcal{E}$ is a presumed error term, and it is not our purpose here to prove a bound on this $\mathcal{E}$.  Here $F(x,y) = (xy)^{-1/2} f(x,y)$.  Formally speaking, we have then
\begin{equation}
\label{eq:E2}
E_2 = \sum_{r \neq 0} \mathcal{E}_{\alpha,\beta,\gamma,\delta}(h,k;r; (xy)^{-1/2} f(x,y)) + \mathcal{E}_{-\gamma,-\delta,-\alpha,-\beta}(h,k;r; (xy)^{-1/2} f^*(x,y)),
\end{equation}
where $f(x,y) = f_{\alpha,\beta,\gamma,\delta}(x,y)$ is defined by \eqref{eq:fdefinition} and $f^*$ is identical to $f$ but with $\alpha \leftrightarrow -\gamma$, $\beta \leftrightarrow -\delta$, and multiplied by $X_{\alpha,\beta,\gamma,\delta,t}$.

Now define $I^{(1\pm)}_{\alpha,\beta,\gamma,\delta} = \sum_{\pm r > 0} N_{\alpha,\beta,\gamma,\delta}(h,k;r, F)$.  The arguments of (83)-(87) from \cite{HughesYoung} carry over almost without change.  
There is a small error in (89) which should read
\begin{multline}
\label{eq:Kminus}
K^- = r^{-\alpha - \gamma}
\int_{0}^{\infty} x^{-\half -\alpha}  (x+1)^{ -\half- \gamma} 
\\
\frac{1}{2 \pi i}  \int_{(\varepsilon)} \frac{G(s)}{s} \left(\frac{hk}{\pi^2 r^2 x(x+1)}\right)^{s}  \int_{-\infty}^{\infty} x^{-it}(1+x)^{it} g(s,t)   w(t) dt ds dx,
\end{multline}
where the difference is that we previously had $x^{-\half-\gamma} (1+x)^{-\half -\alpha}$ appearing in $K^{-}$.   In addition, there is an error in (91); the $\pm$ signs need to be switched on the right hand side.  It turns out that these two errors compensate for each other and the resulting calculations of \cite{HughesYoung} remain valid.

We cannot use (94) (which is Stirling's formula) because it assumes the shifts are small.  However, we can evaluate the $x$-integrals in terms of gamma functions, and use 
 (98) to evaluate the arithmetical sum.  In this way, we arrive at
\begin{multline}
I^{(1+)}_{\alpha,\beta,\gamma,\delta} = \frac{\zeta(1-\alpha+\beta) \zeta(1-\gamma + \delta)}{\zeta(2-\alpha+\beta-\gamma+\delta)} \frac{1}{h^{1/2-\alpha} k^{1/2-\gamma}}
\intR  \frac{w(t)}{2 \pi i} \int_{(\varepsilon)} \frac{G(s)}{s} g_{\alpha,\beta,\gamma,\delta}(s,t) 
\\
\Big(\frac{hk}{\pi^2} \Big)^s 
\frac{\Gamma(\tfrac12 - \gamma - s + it) \Gamma(\alpha + \gamma + 2s) }{\Gamma(\tfrac12 + \alpha + s + it)} C_{\alpha,\beta,\gamma,\delta,h,k}(s) \zeta(\alpha+\gamma+2s) \zeta(1+\beta+\delta+2s) ds dt.
\end{multline}
 Here $C(s)$ is a finite Euler product defined by (99)--(103) of \cite{HughesYoung}.  Similarly, for the case of $r < 0$, we find
\begin{multline}
I^{(1-)}_{\alpha,\beta,\gamma,\delta} = \frac{\zeta(1-\alpha+\beta) \zeta(1-\gamma + \delta)}{\zeta(2-\alpha+\beta-\gamma+\delta)} \frac{1}{h^{1/2-\alpha} k^{1/2-\gamma}}
\intR  \frac{w(t)}{2 \pi i} \int_{(\varepsilon)} \frac{G(s)}{s} g_{\alpha,\beta,\gamma,\delta}(s,t) 
\\
\Big(\frac{hk}{\pi^2} \Big)^s 
\frac{\Gamma(\tfrac12 - \alpha - s - it) \Gamma(\alpha + \gamma + 2s) }{\Gamma(\tfrac12 + \gamma + s - it)} C_{\alpha,\beta,\gamma,\delta,h,k}(s) \zeta(\alpha+\gamma+2s) \zeta(1+\beta+\delta+2s) ds dt.
\end{multline}
Note the following simple gamma function identity
\begin{equation}
 \frac{\Gamma(\tfrac12 - z)}{\Gamma(\tfrac12 + w)} + \frac{\Gamma(\tfrac12 - w)}{\Gamma(\tfrac12 + z)} = \pi^{-1} \Gamma(\tfrac12 - z) \Gamma(\tfrac12 - w) 2 \cos(\tfrac{\pi}{2}(z+w)) \cos(\tfrac{\pi}{2}(z-w)).
\end{equation}
We apply this with $z = \alpha + s +it$, $w  = \gamma + s - it$, whence
\begin{multline}
\sum_{\pm} I^{(1\pm)}_{\alpha,\beta,\gamma,\delta} = \frac{\zeta(1-\alpha+\beta) \zeta(1-\gamma + \delta)}{\zeta(2-\alpha+\beta-\gamma+\delta)} \frac{\pi^{-1}}{h^{1/2-\alpha} k^{1/2-\gamma}}
\intR  \frac{w(t)}{2 \pi i} \int_{(\varepsilon)} \frac{G(s)}{s} g_{\alpha,\beta,\gamma,\delta}(s,t) 
\\
\Big(\frac{hk}{\pi^2} \Big)^s 
\Gamma(\tfrac12 - \alpha - s - it) \Gamma(\tfrac12 - \gamma - s + it)  C_{\alpha,\beta,\gamma,\delta,h,k}(s) \cos(\tfrac{\pi}{2}(\alpha-\gamma+2it))
\\
  2 \cos(\tfrac{\pi}{2}(\alpha + \gamma + 2s)) \Gamma(\alpha + \gamma + 2s) \zeta(\alpha+\gamma+2s) \zeta(1+\beta+\delta+2s) ds dt.
\end{multline}

Next we use the functional equation of the zeta function in the form
\begin{equation}
\pi^{-2s} \Gamma(\alpha + \gamma + 2s) \zeta(\alpha + \gamma + 2s) = \frac{\pi^{\alpha + \gamma} 2^{\alpha + \gamma + 2s}}{2 \cos(\tfrac{\pi}{2}(\alpha + \gamma + 2s)} \zeta(1-\alpha-\gamma-2s).
\end{equation}
Thus we obtain (we suppress the subscripts on $C(s)$ as they match $g(s,t)$)
\begin{multline}
\label{eq:I1pm}
\sum_{\pm} I^{(1\pm)}_{\alpha,\beta,\gamma,\delta} = \frac{\zeta(1-\alpha+\beta) \zeta(1-\gamma + \delta)}{\zeta(2-\alpha+\beta-\gamma+\delta) \sqrt{hk}} 
\intR  \frac{w(t)}{2 \pi i} \int_{(\varepsilon)} \frac{G(s)}{s} g_{\alpha,\beta,\gamma,\delta}(s,t) 
\\
C(s) h^{s+\alpha} k^{s+\gamma}
\Gamma(\tfrac12 - \alpha - s - it) \Gamma(\tfrac12 -\gamma - s + it) 
 \pi^{\alpha + \gamma-1} 2^{\alpha + \gamma + 2s}
\\
\cos(\tfrac{\pi}{2}(\alpha-\gamma+2it))
\zeta(1-\alpha-\gamma-2s) \zeta(1+\beta+\delta+2s) ds dt.
\end{multline}
Anticipating some future simplifications, we apply another gamma function identity.  We claim
\begin{equation}
\pi^{-1} 2^{1-z-w} \Gamma(z) \Gamma(w) \cos(\tfrac{\pi}{2}(w-z))  = \half
\frac{\Gamma(\frac{z}{2})}{\Gamma(\frac{1-z}{2})} \frac{\Gamma(\frac{w}{2})}{\Gamma(\frac{1-w}{2})}
(1 + \tan(\tfrac{\pi}{2} z) \tan(\tfrac{\pi}{2} w)),
\end{equation}
which can quickly be checked using the duplication and reflection formulas for the gamma function, and the addition formula for cosine.  We apply this
with $z = 1/2 - \alpha - s - it$ and $w = 1/2 - \gamma - s + it$.   Our basic assumption $|\alpha + it| \gg T^{\varepsilon}$ (and similarly for the other shifts) implies that $\tan(\tfrac{\pi}{2}z) = \pm i + O_A(T^{-A})$ (the choice of sign depending on the sign of $t$) and similarly $\tan(\tfrac{\pi}{2}w) = \mp i + O_A(T^{-A})$.  Thus
\begin{multline}
\label{eq:I1pmsimplified}
\sum_{\pm} I^{(1\pm)}_{\alpha,\beta,\gamma,\delta} = \frac{\zeta(1-\alpha+\beta) \zeta(1-\gamma + \delta)}{\zeta(2-\alpha+\beta-\gamma+\delta) \sqrt{hk}} 
\intR  \frac{w(t)}{2 \pi i} \int_{(\varepsilon)} \frac{G(s)}{s} g_{\alpha,\beta,\gamma,\delta}(s,t) C(s) h^{s+\alpha} k^{s+\gamma}
\\
\frac{\Gamma(\frac{\half -\alpha -s- it}{2})}{\Gamma(\frac{\half + \alpha +s+ it}{2})}
\frac{\Gamma(\frac{\half -\gamma -s + it}{2})}{\Gamma(\frac{\half + \gamma +s- it}{2})}
 \pi^{\alpha + \gamma} 
\zeta(1-\alpha-\gamma-2s) \zeta(1+\beta+\delta+2s) ds dt + O(T^{-A}).
\end{multline}

We need to match this term with another one arising from the second part of the approximate functional equation.  Define $I^{(2 \pm)}_{\alpha,\beta,\gamma,\delta}$ to be the term arising from taking $I^{(1 \pm)}_{\beta,\alpha,\delta,\gamma}$, and then applying the symmetries $\alpha \leftrightarrow -\gamma$, $\beta \leftrightarrow -\delta$, and multiplying by $X_{\alpha,\beta,\gamma,\delta_t}$ (so in all we applied $\alpha \leftrightarrow - \delta$, $\beta \leftrightarrow -\gamma$, and multiplied by $X$).  Inserting these changes into \eqref{eq:I1pm} and changing variables $s \rightarrow -s$, we 
obtain
\begin{multline}
\label{eq:I2pmsimplified}
\sum_{\pm} I^{(2\pm)}_{\alpha,\beta,\gamma,\delta}  = - \frac{\zeta(1-\alpha+\beta) \zeta(1-\gamma + \delta)}{\zeta(2-\alpha+\beta-\gamma+\delta) \sqrt{hk}} 
\intR  \frac{w(t)}{2 \pi i} \int_{(-\varepsilon)} \frac{G(s)}{s} g_{-\delta, -\gamma, -\beta, -\alpha}(-s,t) \frac{C(-s)}{h^{s+\delta} k^{s+\beta}}
\\
X_{\alpha,\beta,\gamma,\delta,t}
\frac{\Gamma(\frac{\half +\delta +s- it}{2})}{\Gamma(\frac{\half -\delta -s+ it}{2})}
\frac{\Gamma(\frac{\half +\beta +s + it}{2})}{\Gamma(\frac{\half +-\beta -s- it}{2})}
 \pi^{-\beta-\delta} 
\zeta(1-\alpha-\gamma-2s) \zeta(1+\beta+\delta+2s) ds dt + O(T^{-A}).
\end{multline}
By Corollary 6.5 of \cite{HughesYoung}, we have
\begin{equation}
 h^{s+\alpha} k^{s+\gamma} C_{\alpha,\beta,\gamma,\delta,h,k}(s) = h^{-s-\delta} k^{-s-\beta} C_{-\delta, -\gamma, -\beta, -\alpha, h, k}(-s).
\end{equation}
We also claim that
\begin{multline}
g_{\alpha,\beta,\gamma,\delta}(s,t) \frac{\Gamma(\frac{\half -\alpha -s- it}{2})}{\Gamma(\frac{\half + \alpha +s+ it}{2})}
\frac{\Gamma(\frac{\half -\gamma -s + it}{2})}{\Gamma(\frac{\half + \gamma +s- it}{2})}
 \pi^{\alpha + \gamma} 
 \\
 =
 g_{-\delta, -\gamma, -\beta, -\alpha}(-s,t)
 X_{\alpha,\beta,\gamma,\delta,t}
\frac{\Gamma(\frac{\half +\delta +s- it}{2})}{\Gamma(\frac{\half -\delta -s+ it}{2})}
\frac{\Gamma(\frac{\half +\beta +s + it}{2})}{\Gamma(\frac{\half-\beta -s- it}{2})}
 \pi^{-\beta -\delta},
\end{multline}
which immediately follows upon writing out the definitions of both sides.

We can recognize now that $\sum_{\pm} I_{\alpha,\beta,\gamma,\delta}^{(1\pm)}$ and $\sum_{\pm} I_{\alpha,\beta,\gamma,\delta}^{(2\pm)}$ have identical integrands (up to the tiny error $O(T^{-A})$), but different contours that when combined give a closed contour around the finitely many poles with real part near $0$.  Thus by Cauchy's theorem this inner $s$-integral
can be expressed as the sum of residues with small real parts, plus a very small error term from $O_A(T^{-A})$.
The residue at $s=0$ gives
\begin{multline}
 \frac{\zeta(1-\alpha+\beta) \zeta(1-\gamma + \delta)\zeta(1-\alpha-\gamma) \zeta(1+\beta+\delta)}{\zeta(2-\alpha+\beta-\gamma+\delta) \sqrt{hk}} 
\intR  w(t) X_{\alpha,\gamma,t} C(0) h^{\alpha} k^{\gamma}
 dt + O(T^{-A}).
\end{multline}
From Lemma 6.10 of \cite{HughesYoung}, we have $h^{\alpha} k^{\gamma} C_{\alpha,\beta,\gamma,\delta,h,k}(0) = B_{-\gamma,\beta,-\alpha,\delta, h, k}(0)$.
Hence this residue gives the first term on the second line of \eqref{eq:Ihkasymptotic}.  The other three permutations of the off-diagonal terms give the remaining three terms of \eqref{eq:Ihkasymptotic}.

Finally, we briefly indicate how the ``junk terms" cancel.  This is luckily almost identical to the calculations of \cite{HughesYoung}.  Looking at \eqref{eq:I1pmsimplified}, the residue  at $2s = -\alpha - \gamma$ has the pleasant feature that the ratio of two gamma factors on the second line evaluates to $1$.  This term exactly matches the analogous residue of the diagonal term at $2s = -\alpha - \gamma$, using Lemma 6.11 of \cite{HughesYoung} to match the arithmetical factors.  Similarly, Lemma 6.12 of \cite{HughesYoung} matches the other residue at $2s = -\beta -\delta$ (in this case it is easier to compare via \eqref{eq:I2pmsimplified}).
\end{proof}

\section{Unconditional shifted divisor sum}
\label{section:SCSunconditional}
\begin{mytheo}
\label{thm:SCS}
 Suppose that $w(x)$ is a smooth function on the positive reals supported on $Y \leq x \leq  2Y$ and satisfying $w^{(j)}(x) \ll_j (P/Y)^j$ for some parameters $1 \leq P \leq Y$.  Let $\theta = 7/64$, and set $R = P + \frac{T|m|}{Y}$.  Then for $m \neq 0$, $ R \ll T/(TY)^{\delta}$, we have
\begin{equation}
\label{EisensteinShiftedConvolutionSum}
 \sum_{n \in \mz} \tau_{iT}(n) \tau_{iT}(n+m) w(n) = M.T. +  E.T.,
\end{equation}
where
\begin{multline}
\label{eq:MTdef}
 M.T. = \sum_{\pm}  \frac{|\zeta(1 + 2iT)|^2}{\zeta(2)} \sigma_{-1}(m) \int_{\max(0, -m)}^{\infty} (x + m)^{\mp iT} x^{\pm iT} w(x) dx
\\
+  \sum_{\pm} \frac{\zeta(1 \mp  2iT)^2}{\zeta(2 \mp 4iT)}  \sigma_{-1 \pm 4iT}(m) \int_{\max(0, -m)}^{\infty} (x + m)^{\mp iT} x^{\mp iT} w(x) dx.
\end{multline}
and
\begin{equation}
\label{eq:ETdef}
 E.T.\ll %T^{1/6} Y^{3/4} (TY)^{\varepsilon} +  
 %\begin{cases} 
(|m|^{\theta} T^{\frac13} Y^{\frac12} R^2 %+ m^{-\frac12} Y^{\frac12} R 
+ 
T^{\frac16} Y^{\frac34}  R^{\frac12} ) (TY)^{\varepsilon} .
%\\
 %         m^{\theta-\half} T^{-\frac16} Y R^{\frac52} %+ m^{-\frac12} Y^{\frac12} R 
%+ T^{-\frac{1}{12}} Y R^{\frac34}
%        \end{cases}
\end{equation}
Furthermore, with $R = P + \frac{TM}{Y}$, we have
\begin{equation}
\label{eq:ETsummedoverm}
 \sum_{1 \leq |m| \leq M} |E.T.| \ll (M T^{\frac13} Y^{\frac12} R^2 %+ m^{-\frac12} Y^{\frac12} R 
+ 
M T^{\frac16} Y^{\frac34}  R^{\frac12} ) (TY)^{\varepsilon}
\end{equation}
\end{mytheo}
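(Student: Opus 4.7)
The plan is to evaluate the shifted convolution sum via Voronoi summation followed by the Kuznetsov trace formula: the continuous (Eisenstein) spectrum, together with polar residues at $t=0,\pm T$, will produce the main term \eqref{eq:MTdef}, while the cuspidal spectrum, estimated by subconvexity for Hecke-Maass $L$-functions in the $T$-aspect, will produce the error term \eqref{eq:ETdef}.

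The first step is to open $\tau_{iT}(n+m)=\sum_{cd=n+m}(c/d)^{iT}$, reducing the problem to evaluating $\sum_c c^{2iT}\sum_{n\equiv -m(c)} \tau_{iT}(n)(n+m)^{-iT} w(n)$. Applying Voronoi summation to the inner sum of $\tau_{iT}$ over an arithmetic progression modulo $c$ transforms it into a dual sum of $\tau_{iT}$-coefficients twisted by Kloosterman sums $S(\pm m, n; c)$, with weight an integral transform of $w$ depending on the spectral parameter and on $m$. Stationary phase on the transform, together with the effective dual length, shows that the significant range of moduli $c$ and dual variables is governed by the quantity $R=P+T|m|/Y$, which encodes the joint archimedean and non-archimedean conductors.

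Applying Kuznetsov's formula to the resulting sum of Kloosterman sums produces a continuous and a cuspidal piece. The continuous piece contains Mellin transforms of $w$ paired with Eisenstein Fourier coefficients $\sigma_{2it}(m)/|\zeta(1+2it)|^2$; shifting contours past the poles of $\zeta(1\pm 2it)$ at $t=\pm T$ and of $\zeta$ at $t=0$ produces the four residues comprising \eqref{eq:MTdef}, with $\sigma_{-1}(m)$ and $\sigma_{-1\pm 4iT}(m)$ arising as residual divisor coefficients, and the leftover integral along the new contour, bounded by Weyl's subconvexity $\zeta(\tfrac12+it)\ll|t|^{1/6+\varepsilon}$, contributes the $T^{1/6}Y^{3/4}R^{1/2}(TY)^\varepsilon$ error. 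The cuspidal piece has the shape $\sum_j \rho_j(1)\overline{\rho_j(m)} L(u_j,\tfrac12+iT) L(u_j,\tfrac12-iT)\widehat{\Phi}(t_j)$, where the transform $\widehat{\Phi}$ is negligible for $|t_j|\gg R^{1+\varepsilon}$; combining Kim-Sarnak $|\rho_j(m)/\rho_j(1)|\ll |m|^{\theta+\varepsilon}$ with $\theta=7/64$, Cauchy-Schwarz in $j$, the spectral large sieve, and Weyl-type subconvexity $L(u_j,\tfrac12\pm iT)\ll T^{1/3+\varepsilon}(1+|t_j|)^A$ yields the $|m|^\theta T^{1/3}Y^{1/2}R^2(TY)^\varepsilon$ error. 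The averaged estimate \eqref{eq:ETsummedoverm} follows by pulling the sum over $1\leq|m|\leq M$ inside the spectral sum so that it acts on $\lambda_j(m)$ prior to Cauchy-Schwarz, using a Hecke-type large sieve to replace $|m|$ by $M$ in the final bound.

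The principal obstacle is the precise analysis of the integral transform $\widehat{\Phi}$ produced by Kuznetsov: determining its effective support in $t_j$ and its decay rate requires careful stationary phase on oscillatory Bessel kernels, uniformly across the wide ranges of $T$, $|m|$, $Y$, and $P$ allowed by the hypothesis, and it is this analysis that pins down the exponents of $R$ in \eqref{eq:ETdef} and calibrates the subconvexity inputs to their Weyl-type optima. A secondary technical burden is the exact matching of polar residues to the constants $\zeta(2)$, $\zeta(2\mp 4iT)$, and $\sigma_{-1\pm 4iT}(m)$ appearing in \eqref{eq:MTdef}, which is essentially bookkeeping but must be done with care to avoid sign and normalization errors.
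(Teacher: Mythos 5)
Your overall pipeline (open one divisor function to produce a congruence, dualize to Kloosterman sums, apply Kuznetsov, bound the cuspidal piece via Jutila--Motohashi subconvexity $L(u_j,\tfrac12\pm 2iT)\ll (T+t_j)^{1/3+\varepsilon}$ together with $|\lambda_j(m)|\ll |m|^{\theta+\varepsilon}$ and a spectral mean value $\sum_{t_j\le R}|L(u_j,\tfrac12)|^2\ll R^{2+\varepsilon}$, and prove \eqref{eq:ETsummedoverm} by moving the $m$-sum inside and using $\sum_{|m|\le M}|\lambda_j(m)|^2\ll M(Mt_j)^{\varepsilon}$) is essentially the paper's route, and the exponents you quote match \eqref{eq:ETdef}. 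The genuine gap is in where you locate the main term. In this setup \eqref{eq:MTdef} is \emph{not} produced by the continuous spectrum of the Kuznetsov formula: it must be extracted at the Voronoi/Estermann stage, from the poles at $s=1$ and $s=1\mp 2iT$ of the additively twisted series $D(s,\mp 2iT,a/l)=\sum_n \sigma_{\mp 2iT}(n)e(an/l)n^{-s}$; summing those residues over the moduli (Ramanujan sums) is exactly what yields the factors $\sigma_{-1}(m)/\zeta(2)$, $|\zeta(1+2iT)|^2$, $\sigma_{-1\pm 4iT}(m)\,\zeta(1\mp 2iT)^2/\zeta(2\mp 4iT)$, and the non-oscillatory integrals $\int (x+m)^{\mp iT}x^{\pm iT}w(x)\,dx$ of size $\asymp Y$. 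Your proposal drops this polar (zero-frequency) part of Voronoi entirely and instead attributes \eqref{eq:MTdef} to residues from the Eisenstein piece of Kuznetsov. But $\zeta(1+2it)$ has no pole at $t=\pm T$, and the residues that do appear in the continuous-spectrum term (from zeta factors created when the dual $n$-sum is evaluated against $\tau_{ir}(n)$, at points such as $u=1\pm 2ir$ and $u=1\pm 2ir\pm 2iT$ in the paper's notation) have a different shape and are only $O\big(T^{1/6}Y^{3/4}R^{1/2}(TY)^{\varepsilon}\big)$: they belong to the error term. As written, your argument therefore loses the main term altogether; once you restore the Voronoi polar term, the main-term bookkeeping is exactly the residue computation of \eqref{eq:MTdef}.

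Two further, smaller points of comparison with the paper's proof. The paper effects your "Voronoi in progressions" step via a Ramanujan-sum expansion of $\tau_{iT}(n+m)$ followed by the functional equation of the Estermann function, which is equivalent to what you propose; after that, both signs of the Kuznetsov formula are needed (the opposite-sign case requires a separate treatment of its transform), and the holomorphic and diagonal terms must also be accounted for, though they satisfy the same bounds as the Maass contribution (with $\theta=0$ for holomorphic forms by Deligne). Your heuristic that the transform is concentrated on $|t_j|\ll R(TY)^{\varepsilon}$, with $R=P+T|m|/Y$, is correct and is the source of the $R^2$ in \eqref{eq:ETdef}, but establishing it does not require delicate stationary phase: in the paper it follows from Mellin-transform bounds on the weight $g$ and Stirling estimates on the attached gamma factors.
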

The crucial qualitative feature of the error term is that when $X = T$, $m\asymp P \asymp 1$ (so $R \asymp 1$), it is a power saving in $X$.

The main term here is identical to the main term in \eqref{eq:shifteddivisorsumasymptotic}, because when $h=k=1$ the sum over $l$ in \eqref{eq:Nalphabetagammadeltadef} can be explicitly evaluated in the above form.

In his work on the $L^4$ norm of Eisenstein series, Spinu \cite{SpinuL4norm} was also led to estimating a shifted divisor sum with large spectral parameter, but of the form $\sum_{n \approx T^2} \tau_{iT}(n) \tau_0(n+m)$; see his (1.11).  

For the proof, we proceed in an analogous way to Motohashi \cite{MotohashiBinaryDivisor} who found a spectral expansion for shifted convolution sums.  We would not save a lot of effort by directly appealing to the work in \cite{MotohashiBinaryDivisor} because the exact form we need does not appear there, and anyway, with any approach
it is necessary to make estimations which forms the bulk of our work (by comparison, examine Chapter 5 of \cite{Motohashi}).  Therefore we chose to give a mostly self-contained exposition.

The proof requires the full spectral theory of automorphic forms.  The older Estermann-type approach using Weil's bound for Kloosterman sums is far from satisfactory; see \eqref{eq:Weilbound} for this estimate.

% First note that the stated error term is nontrivial for
% \begin{equation}
% \label{eq:TupperboundintermsofY}
% T \leq Y^{\frac{15}{14} - \varepsilon},
% \end{equation}
% at least for $m=1$,
% so we may impose this condition without loss of generality.  

By a symmetry argument, we may assume that $m \geq 1$; this uses the fact that $w_m(x) : = w(x-m)$ has support on $x \asymp Y$ too.

\subsection{Separation of variables}
As a starting point, we find it convenient to appeal to an approximate functional equation for the divisor function, as in Lemma 5.4 of \cite{YoungFourthMomentofDirichlet}.  This is a particularly simple alternative to the circle method or the delta symbol method for the divisor function. We have $\tau_{iT}(m) = m^{-iT} \sigma_{2iT}(m)$.  The formula is
\begin{equation}
\tau_{iT}(m) = m^{-iT} \sum_{l = 1}^{\infty} \frac{S(m,0;l)}{l^{1-2iT}} f_{2iT}\Big(\frac{l}{\sqrt{m}}\Big) + m^{iT} \sum_{l = 1}^{\infty} \frac{S(m,0;l)}{l^{1+2iT}} f_{-2iT}\Big(\frac{l}{\sqrt{m}}\Big),
\end{equation}
where
\begin{equation}
 f_{\lambda}(x) = \frac{1}{2 \pi i} \int_{(\varepsilon)} x^{-w} \zeta(1-\lambda + w) \frac{G(w)}{w} dw,
\end{equation}
and $G(w)$ is any fixed even holomorphic function with rapid decay in vertical strips (e.g., $\exp(w^2)$).  The terms are symmetric under $T \rightarrow -T$.  By shifting the contour to the right, we see that $f_{\pm 2 iT}(x) \ll x^{-A}$ with $A > 0$ arbitrary (uniformly in $T$).  One can verify this identity by reversing the orders of integration and summation, evaluating the sum over $l$, and using symmetry to express the sum of the two contour integrals as the residue.
% Moving the contour slightly past the $0$-line, we see that $f_{\pm 2 iT}(x) \ll \log xT$.  Generalizing this to arbitrary derivatives, and putting the two types of bounds together, we obtain
% \begin{equation}
%  \frac{d^j}{dx^j} f_{\pm 2iT}(x) \ll x^{-j} (\log{xT}) (1 + x)^{-A}.
% \end{equation}

Letting $B_w(T)$ denote the left hand side of \eqref{EisensteinShiftedConvolutionSum},
we then have
\begin{equation}
\label{eq:offdiagonal2}
B_w(T) = \sum_{\pm} \sum_{l=1}^{\infty} \frac{1}{l^{1\mp 2iT}} \sum_{n=1}^{\infty} \frac{ \tau_{iT}(n) S(n+m,0;l)}{n^{\pm iT}} \Big(1 + \frac{m}{n}\Big)^{\mp iT} f_{\pm 2iT}\Big(\frac{l}{\sqrt{n+m}}\Big) w(n).
\end{equation}
For later use, we wish to have the variables $l$ and $n$ separated, so we rearrange this as follows:
\begin{equation}
\label{eq:BwintermsofBwu}
 B_w(T) = \sum_{\pm} \frac{1}{2 \pi i} \int_{(1)} \frac{G(u)}{u} \zeta(1 \mp 2iT + u) B_{w, \pm, u, m}(T) du,
\end{equation}
with
\begin{equation}
\label{eq:offdiagonal3}
 B_{w, \pm, u, m}(T) =  \sum_{l=1}^{\infty}  \sum_{n=1}^{\infty} \frac{\tau_{iT}(n) S(n+m,0;l)}{l^{1\mp 2iT+u} n^{\pm iT - u/2}} g_{m,u,T}(n), \quad g_{m,u,T}(x) = \Big(1 + \frac{m}{x}\Big)^{\mp iT + \frac{u}{2}} w(x).
\end{equation}
The changing of orders of integration and summation is justified by absolute convergence.
This function $g$ has support for $x \asymp Y$ and satisfies
\begin{equation}
\label{eq:gproperty}
 \frac{d^j}{dx^j} g_{m,u,T}(x) \ll_{u,j}  \Big(\frac{T m}{Y^2} + \frac{P}{Y} \Big)^{j} = Y^{-j}  R^{j}, 
\end{equation}
with an implied constant depending polynomially on $u$.  In practice, we think of $u$ as almost bounded because the rapid decay of $G(u)$ overcomes any $u$-dependence of $B$.  Notice that in estimating $B_w(T)$, we are free to move the $u$-contour to any fixed line with $\text{Re}(u) > 0$ since the integrand is holomorphic in this region.

We record a bound on the Mellin transform of $g$.  Integration by parts shows
\begin{equation}
\label{eq:gformulaintegrationbyparts}
 \widetilde{g}(s) = \frac{(-1)^j}{s(s+1)\dots (s+j-1)} \int_0^{\infty} g^{(j)}(x) x^{s+j} \frac{dx}{x},
\end{equation}
so that using \eqref{eq:gproperty} we derive the bound
\begin{equation}
\label{eq:gbound}
 \widetilde{g}(s) \ll_{\sigma,j} Y^{\sigma} \frac{R^j}{|s(s+1)\dots(s+j-1)|}, %\frac{(1 + \frac{T|m|}{Y})^j}{|s(s+1)\dots(s+j-1)|},
\end{equation}
so that $\widetilde{g}(s)$ is very small for $|\text{Im}(s)| \gg (TY)^{\varepsilon} R$.  Since $g$ has compact support, $\widetilde{g}$ is entire, and the apparent singularities at $s=0, -1, -2, \dots$ in \eqref{eq:gformulaintegrationbyparts} are necessarily removable.

\subsection{Relationships with the Estermann function}
Next we open the Kloosterman sum, and use the Mellin transform of $g = g_{m,u,T}$ to write \eqref{eq:offdiagonal3} as
\begin{equation}
\label{eq:divisorsum}
B_{w, \pm, u, m}(T) =  \sum_{l=1}^{\infty} \sumstar_{a \shortmod{l}} \frac{\e{am}{l}}{l^{1\mp 2iT+u}} \frac{1}{2 \pi i} \int_{(2)} \widetilde{g}(s+\tfrac{u}{2})
D(s , \mp 2 i T, \frac{a}{l})  %\sum_{n} \frac{\sigma_{\mp 2iT}(n) \e{an}{l}}{n^{ - \frac{u}{2} +s}} 
ds,
\end{equation}
where we have used $\tau_{iT}(n)/n^{\pm iT} = \sigma_{\mp 2 iT}(n)$, and $D$ is the Estermann function defined for $\text{Re}(s) > \max(1, 1 + \text{Re}(\xi))$ 
\begin{equation}
 D(s, \xi, \frac{a}{l}) = \sum_{n=1}^{\infty} \sigma_{\xi}(n) \e{an}{l} n^{-s}.
\end{equation}
See Lemma 3.7 of \cite{Motohashi} for proofs of the following properties.
The Estermann function has a meromorphic continuation and functional equation
\begin{multline}
 \label{eq:EstermannFE}
D(s, \xi, a/l) = 2 (2\pi)^{2s - \xi - 2} l^{\xi - 2s + 1} \Gamma(1-s) \Gamma(1+\xi - s)
\\
(D(1-s, -\xi, \overline{a}/l) \cos(\tfrac12 \pi \xi) - D(1-s, -\xi, -\overline{a}/l) \cos(\pi (s-\tfrac12 \xi)) ).
\end{multline}
Furthermore, the pole at $s=1$ has residue $l^{\xi-1} \zeta(1-\xi)$ and the one at $1+\xi$ has residue $l^{-\xi-1} \zeta(1 + \xi)$.

We analyze $B$ by moving the contour to the line $-1-\varepsilon$, applying the functional equation, and reversing the order of summation and integration.  There are poles at $s = 1$ and $s  =  1\mp 2iT$ which lead to the main terms that we shall examine shortly in Section \ref{section:SCSmainterm}.  Thus we obtain, with shorthand $\alpha_T = 2 (2\pi)^{\pm 2iT - 2}$,
\begin{multline}
\label{eq:SafterFunctionalequation}
 B_{w,\pm,u,m} - M.T. = \alpha_{T}\sum_{l=1}^{\infty} \sumstar_{a \shortmod{l}} \frac{\e{am}{l}}{l^{u}} \sum_{n =1}^{\infty} \frac{\sigma_{\pm 2iT}(n)}{n^{}} 
\frac{1}{2\pi i} \int_{(-1-\varepsilon)} \widetilde{g}(s + \tfrac{u}{2}) 
\\
(2\pi)^{2s} \Big(\frac{n}{l^2}\Big)^s 
\Gamma(1-s) \Gamma(1\mp 2iT -s) 
\Big(e\Big(\frac{\overline{a} n}{l}\Big) \cosh(\pi T) - e\Big(\frac{-\overline{a} n}{l}\Big) \cos(\pi(s \pm iT)) \Big) ds,
\end{multline}
% Thus we obtain
% \begin{equation}
% \label{eq:SafterFunctionalequation}
%  B_{w,\pm,u,m} = M.T. + \alpha_{T}\sum_{l} \sumstar_{a \shortmod{l}} \frac{\e{am}{l}}{l^{u}} \sum_{n =1}^{\infty} \frac{\sigma_{\pm 2iT}(n)}{n^{}}\Big(e\big(\frac{\overline{a} n}{l}\big) k_{1}\big(\frac{4 \pi \sqrt{n}}{l} \big) - e\big(\frac{-\overline{a} n}{l}\big) k_{2}\big(\frac{4 \pi \sqrt{n}}{l} \big) \Big),
% \end{equation}
where $M.T.$ stands for the main terms.  

\subsection{Computing the main terms}
\label{section:SCSmainterm}
The pole at $s=1$ gives to $B_w(T)$
\begin{equation}
\sum_{\pm} \frac{1}{2 \pi i} \int_{(1)} \frac{G(u)}{u} \zeta(1 \mp 2iT + u) \sum_{l=1}^{\infty} \sumstar_{a \shortmod{l}} \frac{e(\frac{am}{l})}{l^{1\mp 2iT + u}} \widetilde{g}(1+\tfrac{u}{2}) \zeta(1\pm 2iT) l^{-1 \mp 2iT} du.
\end{equation}
Using $S(m,0;l) = \sum_{d|(m,l)} d \mu(l/d)$, this 
simplifies as
\begin{equation}
\sum_{\pm} \frac{1}{2 \pi i} \int_{(1)} \frac{G(u)}{u} \widetilde{g}(1+\tfrac{u}{2}) \zeta(1 \mp 2iT + u) \frac{\sigma_{-1-u}(m)}{\zeta(2+u)}  \zeta(1\pm 2iT)  du.
\end{equation}
By shifting the contour left to $\text{Re}(u) = -1/2$ , we cross a pole at $u=0$ which gives
\begin{equation}
\label{eq:MT1}
\sum_{\pm} \widetilde{g}(1) \frac{|\zeta(1 + 2iT)|^2}{\zeta(2)} \sigma_{-1}(m).
\end{equation}
The pole at $u = \pm 2iT$ is very small but it also will cancel a forthcoming term (this should not be surprising because one could choose a different function $G$ and change the value of $G(\pm 2iT)$).
Using \eqref{eq:gbound} and Weyl's bound, we see that the new integral satisfies the bound
\begin{equation}
\label{eq:boundforoffdiagonalmainterms}
(TY)^{\varepsilon} \intR \Big| \frac{G(-\frac12 + it)}{-\frac12 + it}  \widetilde{g}(3/4 + it/2)  \zeta(1/2 + it \mp 2iT) \Big|     dt \ll  T^{1/6}  Y^{3/4} (YT)^{\varepsilon}, 
\end{equation}
and this is absorbed by the second error term appearing in \eqref{eq:ETdef}.

The other residue at $s=1 \mp 2iT$ equals
\begin{equation}
\frac{1}{2 \pi i} \int_{(1)} \frac{G(u)}{u} \widetilde{g}(1 \mp 2iT + \tfrac{u}{2}) \zeta(1 \mp 2iT + u) \frac{\sigma_{-1\pm 4iT-u}(m)}{\zeta(2\mp 4iT +u)}  \zeta(1\mp 2iT)  du.
\end{equation}
Again shifting contours to the left to $\text{Re}(u) = -1/2$, we cross a pole at $u=0$ which gives
\begin{equation}
\label{eq:MT2}
\widetilde{g}(1 \mp 2iT) \frac{\zeta(1 \mp  2iT)^2}{\zeta(2 \mp 4iT)} \sigma_{-1 \pm 4iT}(m).
\end{equation}
The pole at $u = \pm 2iT$ is small, but could also be checked to cancel a previous pole.  The new contour is much smaller than the bound appearing in \eqref{eq:boundforoffdiagonalmainterms}.

Note that
\begin{equation}
\widetilde{g}(1) = \int_0^{\infty} (x + m)^{\mp iT} x^{\pm iT} w(x) dx, \quad \widetilde{g}(1 \mp 2iT) = \int_0^{\infty} (x + m)^{\mp iT} x^{\mp iT} w(x) dx.
\end{equation}
Thus \eqref{eq:MT1} and \eqref{eq:MT2} form the main term in \eqref{eq:MTdef}.  This combination of main terms precisely agrees with the main terms of \eqref{eq:shifteddivisorsumasymptotic}.

\subsection{Analysis of the weight functions}
We return to \eqref{eq:SafterFunctionalequation}.
By changing variables $s \rightarrow s+ \half - \frac{u}{2}$, it becomes
\begin{equation}
\label{eq:BwumwithKloosterman}
 B_{w,\pm,u,m} - M.T. = \alpha_{T}' \sum_{\delta = \pm 1} \delta  \sum_{n=1}^{\infty} \frac{\sigma_{\pm 2iT}(n)}{n^{\frac12 + \frac{u}{2}}} \sum_{l=1}^{\infty} \frac{S(n, \delta m;l)}{l} \varphi_{\delta} \Big(\frac{4 \pi \sqrt{mn}}{l}\Big),
\end{equation}
where $\alpha_T' = 2 (2\pi)^{\pm 2iT -1 - u}$ and
\begin{equation}
\label{eq:varphideltadef}
 \varphi_{\delta} (x) = \frac{1}{2\pi i} \int_{(\sigma)} \widetilde{g}(s + \tfrac{1}{2}) (x/2)^{2s} m^{-s} \Gamma(\tfrac12-s + \tfrac{u}{2}) \Gamma(\tfrac12-s + \tfrac{u}{2} \mp 2iT) c_{\delta}(s) ds,
\end{equation}
with $c_{+}(s) = \cosh(\pi T)$ and $c_{-}(s) = \cos(\pi(s + \half -\frac{u}{2} \pm iT))$. Here $\sigma < 1/2$ (recall $\text{Re}(u) > 0$).

We shall derive some bounds on $\varphi_{\delta}$.
Let $\text{Re}(s) = \sigma$ and $\text{Re}(u) = \nu > 0$.  We claim that for $\sigma < 1/2$ fixed, and each choice of $\delta = \pm$, that
\begin{equation}
\label{eq:gammabound}
 \Gamma(\tfrac12-\sigma - it) \Gamma(\tfrac12-\sigma - it  \mp 2iT) c_{\delta}(\sigma + it) \ll (1 + |t|)^{-\sigma} (1 + |t\pm 2T|)^{-\sigma}.
\end{equation}
To prove this, initially in the case $\delta = +$, observe that Stirling's formula gives a bound of this form, except multiplied by $\exp(\frac{\pi}{2} Q_1(t))$, where $Q_1(t) = 2T - |t \pm 2T| - |t|$.  Notice $Q_1(t \mp T) = Q(t,T/2)$ defined by \eqref{eq:Qdef}, so $Q_1(t) \leq 0$ for all $t,T \in \mr$ (this can also be checked directly).  The case $\delta = -$ is similar, except with $Q_1(t)$ replaced by $Q_2(t) = 2|t\pm T| - |t| - |t \pm 2T|$, which we again claim is $\leq 0$ for all $t$.  It takes the value $0$ for $|t|$ large, and so its maximum value must occur at $t =0$ or $t \pm 2T = 0$, and $Q_2(t) = 0$ at both of these points.

By \eqref{eq:gbound} and \eqref{eq:gammabound}, we have
\begin{equation}
\label{eq:k1integralbound}
 \varphi_{\delta}(x) \ll  \Big(\frac{x^2}{m}\Big)^{\sigma} \intR 
%\exp(\pi T -\frac{\pi}{2} (|t+2T|+ |t|)) \exp(\tfrac{\pi}{2}
(1 + |t|)^{ - \sigma + \frac{\nu}{2}} (1 + |t \pm 2T|)^{ - \sigma + \frac{\nu}{2}} |\widetilde{g}(\sigma + \tfrac12 + it)| dt,
\end{equation}
the implied constant depends polynomially on $\text{Im}(u)$.
Using the assumption $R \ll T/(TY)^{\delta}$, %which implies $m \ll Y/(YT)^{\delta}$, 
we deduce for fixed $\sigma < 1/2$ that
\begin{equation}
 \varphi_{\delta}(x) \ll_{\sigma, u} x^{2\sigma} Y^{\frac12 + \sigma} m^{-\sigma} T^{-\sigma+\frac{\nu}{2}} R^{1-\sigma+\frac{\nu}{2}}.
\end{equation}
By taking $\sigma = 1/2-\varepsilon$ or $-\sigma > 0$ very large, we obtain 
\begin{equation}
\label{eq:k1bound}
 \varphi_{\delta}(x) \ll x Y^{} m^{-1/2} T^{-1/2} R^{1/2} (TY/x)^{\varepsilon} \Big(1 + \frac{x^2
}{mTR/Y} \Big)^{-A} (TR)^{\nu/2}.
\end{equation}
% One can also differentiate the integral representation of $\varphi_{\delta}(x)$ to show
% \begin{equation}
%  x^j \varphi_{\delta}^{(j)}(x) \ll_{T,Y,m,R,\varepsilon} x^{2-\varepsilon} (1 + x)^{-A}.
% \end{equation}

Since it is straightforward at this point, we record the effect of bounding $B$ trivially with the Weil bound, even though this is not our final objective.  The sum over $l$ can be truncated at $Y^{1/2 + \varepsilon}$.  Taking $\nu = \varepsilon$ and $\sigma = 1/2-\varepsilon$, we then obtain that the error term in $B$ is
\begin{equation}
 \ll (TY)^{\varepsilon}  \sum_{l \ll Y^{1/2 + \varepsilon}} \sum_{n \ll (TY)^{\varepsilon} l^2 RT/Y} |S(n, \pm m, l)|  \frac{Y R^{1/2}}{l^2 T^{1/2}}.
\end{equation}
A short computation shows that this error term is
\begin{equation}
\label{eq:Weilbound}
 \ll Y^{3/4} T^{1/2} R^{3/2} (TY)^{\varepsilon},
\end{equation}
which is only nontrivial for $T \leq Y^{1/2-\delta}$ when $R \asymp 1$.

\subsection{Application of the Kuznetsov formulas}
To get a stronger result, we apply the spectral theory of automorphic forms in the guise of the Kuznetsov formula.  Let
\begin{equation}
 K_{\delta}(m, n, \varphi_{\delta}) = \sum_{l=1}^{\infty} \frac{S(n, \delta m;l)}{l} \varphi_{\delta} \Big(\frac{4 \pi \sqrt{mn}}{l}\Big).
\end{equation}
The Kuznetsov formula (in one particularly useful way for us) states
\begin{multline}
\label{eq:Kuznetsovplus}
 K_{+}(m, n, \varphi_{+}) = \sum_j \frac{|\rho_j(1)|^2}{\cosh(\pi t_j)} \lambda_j(m) \lambda_j(n) \frac{1}{2\pi i} \int_{(\sigma)} \varphi_{+}^{*}(s) h_{+}(s,t_j) ds 
\\
+ \frac{1}{\pi} \intR \frac{\tau_{ir}(m) \tau_{ir}(n)}{|\zeta(1+2ir)|^2} \frac{1}{2\pi i} \int_{(\sigma)} \varphi_{+}^{*}(s) h_{+}(s,r) ds dr
\\
+ \sum_k (2k-1) q_{m,n}(k) \frac{1}{2\pi i} \int_{(\sigma)} \varphi_{+}^{*}(s) \frac{\Gamma(k-1+s)}{\Gamma(k+1-s)} ds - \frac{1}{2\pi} \delta_{m=n} \frac{1}{2\pi i} \int_{(\sigma)} \varphi_{+}^{*}(s) \frac{\Gamma(s)}{\Gamma(1-s)} ds,
\end{multline}
with notation as follows: the $s$-integrals are over a vertical line with $\sigma = 3/4 + \varepsilon$, 
\begin{equation}
 h_{+}(s, r) = \tfrac12 \sin(\pi s) \Gamma(s-\tfrac12 + ir) \Gamma(s - \tfrac12 - ir),
\end{equation}
\begin{equation}
 \varphi_{\delta}^{*}(s) = \int_0^{\infty} \varphi_{\delta}(x) (x/2)^{-2s} dx,
\end{equation}
and $q_{m,n}(k)$ is the sum over a Hecke eigenbasis of weight $2k$ (even) holomorphic forms, with scaling such that
\begin{equation}
 q_{m,n}(k) = k^{-1 + o(1)} \sum_{f \in B_{2k}} \lambda_f(m) \lambda_f(n).
\end{equation}
We also recall that $\rho_j(1)$ is scaled so that $\frac{|\rho_j(1)|^2}{\cosh(\pi t_j)} = t_j^{o(1)}$.  One can find this form of Kuznetsov's formula as (2.4.13) of \cite{Motohashi}.  It is easy to check that $\varphi_{\delta}(x)$ satisfies the required bounds to apply the Kuznetsov formula, using \eqref{eq:k1bound} as well as simple variants for the derivatives of $\varphi_{\delta}$.

By the Mellin inversion formula and \eqref{eq:varphideltadef}, we have
\begin{equation}
 \varphi_{\delta}^{*}(s) =   2^{2s}  \widetilde{\varphi_{\delta}}(1-2s) =   \widetilde{g}(s) m^{\half - s} \Gamma(1-s+\tfrac{u}{2})  \Gamma(1-s+\tfrac{u}{2} \mp 2iT) c_{\delta}(s-\tfrac12).
\end{equation}
It follows from \eqref{eq:gbound} and \eqref{eq:gammabound} that
for $|t| \leq R(TY)^{\varepsilon}$ (recall $R \ll T(TY)^{-\delta}$ so $t = o(T)$):
\begin{equation}
\label{eq:varphistarbound}
 \varphi_{\delta}^*(s) \ll_{\sigma, u} Y^{\sigma} m^{\half - \sigma} (1 + |t|)^{\half - \sigma + \frac{\nu}{2}} T^{\half - \sigma + \frac{\nu}{2}}.
\end{equation}
Also, $\varphi_{\delta}^*(s)$ is very small for $|t| \geq R(TY)^{\varepsilon}$, and the implied constant depends polynomially on $\text{Im}(u)$.

We also need the opposite sign case
where the Kuznetsov formula has a different shape than \eqref{eq:Kuznetsovplus}.  For this, we claim
\begin{multline}
\label{eq:Kuznetsovminus}
 K_{-}(m,n, \varphi_{-}) = \sum_j \frac{|\rho_j(1)|^2}{\cosh(\pi t_j)} \lambda_j(-m) \lambda_j(n)  \frac{1}{2\pi i} \int_{(\sigma)} \varphi_{-}^{*}(s) h_{-}(s,t_j) ds 
\\
+ \frac{1}{\pi} \intR \frac{\tau_{ir}(m) \tau_{ir}(n)}{|\zeta(1+2ir)|^2}  \frac{1}{2\pi i} \int_{(\sigma)} \varphi_{-}^{*}(s) h_{-}(s,r) ds,
\end{multline}
where $1/2 < \sigma < 1$, and
\begin{equation}
 h_{-}(s,r) = \tfrac12 \cosh( \pi r) \Gamma(s-\tfrac12 + ir) \Gamma(s-\tfrac12 - ir).
\end{equation}
Towards this, we first quote Theorem 2.5 of \cite{Motohashi}, stating 
\begin{equation}
 K_{-}(m,n, \varphi) = \sum_j \frac{|\rho_j(1)|^2}{\cosh(\pi t_j)} \lambda_j(-m) \lambda_j(n) \check{\varphi}(t_j) %\frac{1}{2\pi i} \int \varphi_{-}^{*}(s) h_{-}(s,t_j) ds 
+ \frac{1}{\pi} \intR \frac{\tau_{ir}(m) \tau_{ir}(n)}{|\zeta(1+2ir)|^2} \check{\varphi}(r)dr %\frac{1}{2\pi i} \int \varphi_{-}^{*}(s) h_{-}(s,r) ds,
\end{equation}
where
\begin{equation}
 \check{\varphi}(r) = 2 \cosh(\pi r) \int_0^{\infty} \varphi(x) K_{2ir}(x) \frac{dx}{x}.
\end{equation}
We prefer the Mellin transform version of this formula, which we quickly derive as follows:
\begin{equation}
\check{\varphi}(r) = \frac{2 \cosh( \pi r)}{2 \pi i} \int_{(\sigma)} \widetilde{\varphi}(-s) \int_0^{\infty} K_{2ir}(x) x^{s-1} dx ds,
\end{equation}
valid for $0 < \sigma < 1$ (the upper bound arises to ensure the holomorphy of $\widetilde{\varphi}(-s)$), so by (6.561.16) of \cite{GR} and a change of variables, we have for $1/2 < \sigma < 1$ that
\begin{equation}
\check{\varphi}(r) =  \half \frac{\cosh( \pi r) }{2 \pi i} \int_{(\sigma)} \widetilde{\varphi}(1-2s) 2^{2s} \Gamma(s-\tfrac12 + ir) \Gamma(s-\tfrac12 - ir) ds.
\end{equation}
Using $\varphi_{\delta}^{*}(s) =   2^{2s}  \widetilde{\varphi_{\delta}}(1-2s)$,  we have
\begin{equation}
 \check{\varphi}(r) = \half \cosh( \pi r) \frac{1}{2\pi i}\int_{(\sigma)} \varphi^*(s) \Gamma(s-\tfrac12 + ir) \Gamma(s-\tfrac12 - ir) ds.
\end{equation}
Hence we derive \eqref{eq:Kuznetsovminus}.

Define
\begin{multline}
\label{eq:EMaass+}
 E^{\delta}_{\text{Maass}} = \delta \sum_{\pm} \frac{1}{2 \pi i} \int_{(2)} \frac{G(u)}{u} \zeta(1 \mp 2iT + u) 2 (2\pi)^{\pm 2iT -1 - u} \sum_{n=1}^{\infty} \frac{\sigma_{\pm 2iT}(n)}{n^{\frac12 + \frac{u}{2}}} 
\\
\sum_j \frac{|\rho_j(1)|^2}{\cosh(\pi t_j)} \lambda_j(m) \lambda_j(n) \frac{1}{2\pi i} \int_{(\sigma)} \varphi_{\delta}^{*}(s) h_{\delta}(s,t_j) ds du,
\end{multline}
\begin{multline}
\label{eq:EEisdef}
E_{\text{Eis}} = \sum_{\pm} \frac{1}{2 \pi i} \int_{(2)} \frac{G(u)}{u} \zeta(1 \mp 2iT + u) 2 (2\pi)^{\pm 2iT -1 - u} \sum_{n=1}^{\infty} \frac{\sigma_{\pm 2i T}(n)}{n^{\frac12 + \frac{u}{2}}}
\\
\frac{1}{\pi} \intR \frac{\tau_{ir}(m) \tau_{ir}(n)}{|\zeta(1+2ir)|^2}  \frac{1}{2\pi i} \int_{(\sigma)} [\varphi_{+}^{*}(s) h_{+}(s,r) - \varphi_{-}^{*}(s) h_{-}(s,r)] ds dr du.
\end{multline}
Then 
inserting \eqref{eq:Kuznetsovplus} and \eqref{eq:Kuznetsovminus}
into \eqref{eq:BwumwithKloosterman}, and then in turn into \eqref{eq:BwintermsofBwu}, we get, say
\begin{equation}
\label{eq:Bwmaintermerrorterm}
 B_w(T) = M.T. + O(T^{1/6} Y^{3/4} (TY)^{\varepsilon}) + E_{\text{Maass}}^{+} + E_{\text{Maass}}^{-} + E_{\text{holo}} + E_{\text{diag}} + E_{\text{Eis}}.
\end{equation}
Here this error term comes from the calculation of the residues of the Estermann function as in Section \ref{section:SCSmainterm}, and $E_{\text{holo}}$ and $E_{\text{diag}}$ are given by analogous formulas to $E_{\text{Maass}}$ and $E_{\text{Eis}}$.  See \eqref{eq:Eholodef} below for $E_{\text{holo}}$.
In the forthcoming sections, we show
\begin{equation}
\label{eq:EMaassbound1}
\sum_{\delta = \pm} |E^{\delta}_{\text{Maass}}| + |E_{\text{holo}}| + |E_{\text{diag}}| \ll m^{\theta} T^{1/3} Y^{1/2} R^2 (TY)^{\varepsilon},
\end{equation}
and
\begin{equation}
\label{eq:EEisbound}
 E_{\text{Eis}} \ll T^{1/6}  Y^{3/4}   R^{1/2} (TY)^{\varepsilon} + T^{1/3} Y^{1/2} R (TY)^{\varepsilon},
\end{equation}
which together account for \eqref{eq:ETdef}.  Note the second term in \eqref{eq:EEisbound} is smaller than \eqref{eq:EMaassbound1}, as is the error term in \eqref{eq:Bwmaintermerrorterm}.

\subsection{Cusp form contributions}
We pick up with \eqref{eq:EMaass+}.
By absolute convergence, we can bring the sum over $n$ to the inside.  By an exercise with the Hecke relations, one can check that for $\text{Re}(v) > 1$, 
\begin{equation}
 \sum_{n=1}^{\infty} \frac{\sigma_{\pm 2iT}(n) \lambda_j(n)}{n^v} = \frac{L(v\mp 2iT, u_j)L(v, u_j)}{\zeta(2v \mp 2iT)}.
\end{equation}
We evaluate the sum over $n$, and then shift the $u$-contour back to $\nu = \varepsilon$ (crossing no poles), and obtain
\begin{multline}
\label{eq:EMaassdef}
 E^{\delta}_{\text{Maass}} = \sum_{\pm} \frac{1}{2 \pi i} \int_{(\varepsilon)} \frac{G(u)}{u} \alpha_{T}'
\\
\sum_j \frac{|\rho_j(1)|^2}{\cosh(\pi t_j)} \lambda_j(\delta m) L(\tfrac12 + \tfrac{u}{2}\mp 2iT, u_j)L(\tfrac12 + \tfrac{u}{2}, u_j)  \frac{1}{2\pi i} \int_{(3/4 + \varepsilon)} \varphi_{\delta}^{*}(s) h_{\delta}(s,t_j) ds du.
\end{multline}
Next we estimate the $s$-integral above, initially for $\delta = +$.  Compared to \eqref{eq:varphideltadef}, the integral representation is the same except we have $s$ shifted by $1/2$, and we have multiplied by $h_{+}(s,t_j)$.  Stirling's formula shows that 
\begin{equation}
\label{eq:h+bound}
h_+(\sigma + it, r) \ll (1 + |t + r|)^{\sigma - 1} (1 + |t - r|)^{\sigma - 1} \exp(\tfrac{\pi}{2} (2|t| - |t+r| - |t-r|).
\end{equation}
The exponential part above is $1$ for $|r| \leq |t|$, and is $\exp(-\pi |t-r|)$ for $|r| > |t|$ (again we have encountered \eqref{eq:Qdef}).  Thus the $s$-integral in \eqref{eq:EMaassdef} is very small unless $t_j \leq R(TY)^{\varepsilon}$.  Suppose that $1/2 < \sigma < 1$.  Then with $\nu = \varepsilon$, using \eqref{eq:varphistarbound} and \eqref{eq:h+bound}, we have
\begin{multline}
\label{eq:h+varphi+starbound}
 \int_{(\sigma)} \varphi_{+}^*(s) h_{+}(s,r)ds 
\\
\ll Y^{\sigma} m^{\frac12 - \sigma} T^{\frac12 - \sigma} (TY)^{\varepsilon} \int_{|t| 
\ll R(TY)^{\varepsilon}} (1 + |t|)^{\half - \sigma} (1 + |t+r|)^{\sigma - 1} (1 + |t-r|)^{\sigma - 1} dt+ \dots,
\end{multline}
with the dots representing a very small error term arising from the truncation.  
Taking $\sigma = 1/2 + \varepsilon$, we obtain the bound
\begin{equation}
 \int_{(\sigma)} \varphi_{+}^*(s) h_{+}(s,r)ds \ll (TY)^{\varepsilon}
Y^{1/2} \Big(1 + \frac{|r|}{R (TY)^{\varepsilon}}\Big)^{-100} .
\end{equation}
Using the uniform subconvexity bound $L(1/2 + u/2 \pm 2i T, u_j) \ll (t_j + T)^{1/3+\varepsilon}$ of Jutila-Motohashi \cite{JutilaMotohashi}, the bound $|\lambda_j(m)| \ll m^{\theta+\varepsilon}$ (with $\theta = 7/64$), and the mean value result (with polynomial dependence on $u$) $\sum_{t_j \leq R} |L(1/2 + u/2, u_j)|^2 \ll R^{2+\varepsilon}$ (see \cite{Motohashi}, Theorem 3.1, for example), 
we obtain \eqref{eq:EMaassbound1} for $E_{\text{Maass}}^{+}$.

Furthermore, if we sum this error term over $m \leq M$, then we can use $\sum_{m \leq M} |\lambda_j(m)|^2 \ll M (t_j M)^{\varepsilon}$ \cite{IwaniecSpectralgrowth}, which explains why \eqref{eq:ETsummedoverm} effectively has $\theta =0$.

We claim that the bound of \eqref{eq:EMaassbound1} holds for $E_{\text{Maass}}^{-}$ also.
By Stirling's formula, analogously to \eqref{eq:h+bound},
\begin{equation}
 h_{-}(\sigma + ir, r) \ll (1 + |t+r|)^{\sigma -1} (1 + |t-r|)^{\sigma -1} \exp(\tfrac{\pi}{2} (2|r| - |t+r| - |t-r|)
\end{equation}
The polynomial factor here is identical to that in \eqref{eq:h+bound}, while the exponential factor is $1$ for $|r| \geq |t|$, and is $\exp(-\pi |r-t|)$ for $|r| < |t|$, which simply means that we cannot immediately truncate $|r|$ at $R (TY)^{\varepsilon}$ in this case, in contrast to the $E_{\text{Maass}}^{+}$ case.  If $|r| \leq R(TY)^{\varepsilon}$, then in fact \eqref{eq:h+varphi+starbound} holds in the case $\delta = -$ too, since the bound \eqref{eq:varphistarbound} is independent of $\delta$, and the right hand side of \eqref{eq:h+varphi+starbound} drops the exponential part of $h_{+}(s,r)$ anyway, and so we get the same bound for $h_{-}(s,r)$.  Since the estimates on the weight functions are identical, we have that 
the contributions to $E_{\text{Maass}}^{-}$ from $|t_j| \leq R(TY)^{\varepsilon}$ immediately leads to \eqref{eq:EMaassbound1}.

We claim that $\int \varphi_{-}^{*}(s) h_{-}(s, r) ds$ is very small if $|r| \geq R (TY)^{\varepsilon}$.  We note that on the line $\text{Re}(s) = \sigma$, $\text{Re}(u) = \varepsilon$ (which we assume avoids any pole of a gamma function), we have
\begin{equation}
\label{eq:EMaass-bound}
 \int_{(\sigma)} \varphi_{-}^{*}(s) h_{-}(s,r) ds \ll |r|^{2\sigma -2} Y^{\sigma} m^{\half - \sigma} R^{\frac32 - \sigma} T^{\frac12 - \sigma} (TY)^{\varepsilon} =  \frac{m^{\frac12} T^{\frac12} R^{\frac32}}{r^2} (TY)^{\varepsilon} \Big(\frac{r^2 Y}{mRT} \Big)^{\sigma}.
\end{equation}
If $- \sigma > 0$ is very large, then this bound becomes very small unless $r^2 \leq \frac{mRT}{Y} (TY)^{\varepsilon} \leq R^2 (TY)^{\varepsilon}$. %, but we assumed that $|r| \geq R(TY)^{\varepsilon}$.  
Our original integral representation requires $\half < \sigma < 1$, and moving the contour far to the left crosses poles at $s-\half \pm ir = 0, -1, -2, \dots$.  However, these residues are also very small because these occur at $|t| = |r| \geq R(TY)^{\varepsilon}$, but $\widetilde{g}(\sigma + it)$ is very small for such $t$'s, by \eqref{eq:gbound}.  Thus $E_{\text{Maass}}^{-}$ satisfies the same bounds as $E_{\text{Maass}}^{+}$, as desired.

\subsection{Holomorphic forms}
Define  $E_{\text{holo}}$ analogously to $E^{\delta}_{\text{Maass}}$.
Following the argument of $E_{\text{Maass}}$, we arrive at
\begin{multline}
\label{eq:Eholodef}
E_{\text{holo}} = \sum_{\pm} \frac{1}{2 \pi i} \int_{(\varepsilon)} \frac{G(u)}{u} 2 (2\pi)^{\pm 2iT -1 - u} \sum_{k} \sum_{f \in B_{2k}} k^{o(1)}  \lambda_f(m) 
\\
L(\tfrac12 + \tfrac{u}{2},f) L(\tfrac12 + \tfrac{u}{2} \mp 2iT, f) \frac{1}{2 \pi i} \int_{(\sigma)} \varphi_{+}^*(s) \frac{\Gamma(k-1+s)}{\Gamma(k+1-s)} ds  du.
\end{multline}
Now we estimate this $s$-integral.  We first show that it is very small unless $k \ll R (TY)^{\varepsilon}$.  Suppose otherwise.  Then by Stirling and a trivial bound,
\begin{equation}
 \int_{(\sigma)} \varphi_{+}^*(s) \frac{\Gamma(k-1+s)}{\Gamma(k+1-s)} ds \ll_{\sigma} m^{1/2} R^{3/2} T^{1/2} k^{-2} (TY)^{\varepsilon} \Big(\frac{Y k^2}{TRm} \Big)^{\sigma}, 
\end{equation}
so by taking $\sigma < 0$ very far to the left (but with $\sigma  > 1-k$), we see that this bound can be made to be very small since $TRm/Y \leq R^2$.  
Note the similarity with \eqref{eq:EMaass-bound}. 
Then with the truncation $k \ll R (TY)^{\varepsilon}$, setting $\sigma = 1/2$, and using $L(\tfrac12 + iT, f) \ll (T + k)^{1/3 +\varepsilon}$, another result of Jutila-Motohashi \cite{JutilaMotohashi}, the bound on the holomorphic forms becomes equivalent to that of the Maass forms, except we can take $\theta = 0$ in this case since we have Deligne's bound.  That is, \eqref{eq:EMaassbound1} holds for $E_{\text{holo}}$.

\subsection{Diagonal term} The diagonal term, say $E_{\text{diag}}$, is easily checked, with $\sigma = 1/2 + \varepsilon$, to give the following bound, which is much smaller than \eqref{eq:EMaassbound1}:
\begin{equation}
\label{eq:EDiag}
 E_{\text{diag}} \ll m^{-1/2} Y^{1/2} R (TY)^{\varepsilon}.
\end{equation}

\subsection{Eisenstein contribution}
We continue with \eqref{eq:EEisdef}.
By absolute convergence, we can sum over $n$ first, getting now
\begin{multline}
E_{\text{Eis}} = \sum_{\pm} \frac{1}{2 \pi i} \int_{(2)} \frac{G(u)}{u}  2 (2\pi)^{\pm 2iT -1 - u}
\frac{1}{\pi} \intR \frac{\tau_{ir}(m)}{|\zeta(1+2ir)|^2} 
\\
L(\tfrac12 + \tfrac{u}{2}\mp 2iT, E_r)L(\tfrac12 + \tfrac{u}{2}, E_r)  
\frac{1}{2\pi i} \int_{(\sigma)} [\varphi_{+}^{*}(s) h_{+}(s,r) - \varphi_{-}^{*}(s) h_{-}(s,r)] ds dr du,
\end{multline}
where we recall the definition \eqref{eq:EisensteinSeriesLfunction}.  For ease of reference, we recall that
\begin{multline}
\label{eq:varphihdeltaformula}
 \varphi_{\delta}^{*}(s) h_{\delta}(s,r) = \frac12 \widetilde{g}(s) m^{\half - s} \Gamma(1-s+\tfrac{u}{2})  \Gamma(1-s+\tfrac{u}{2} \mp 2iT) \Gamma(s-\tfrac12 + ir) \Gamma(s-\tfrac12 - ir) 
\\
\times
\begin{cases}
 \cosh(\pi T) \sin(\pi s), \quad &\delta = + \\
 \cos(\pi (s-\tfrac{u}{2} \pm iT)) \cosh( \pi r), \quad &\delta = -. 
\end{cases}
\end{multline}
Now we move the $u$-integral to the inside and shift it to the line $\nu = \varepsilon$.  In contrast to the cusp form cases, there are poles at $u = 1 + 2ir$, $u=1-2ir$, $u = 1 +2ir \pm 2iT$, and $u = 1 - 2ir \pm 2iT$.  The bound on the new line is completely analogous to the bound on $E_{\text{Maass}}$ (i.e., \eqref{eq:EMaassbound1}), and in fact the estimates here are slightly better since the spectral measure of the Eisenstein series is smaller (by a factor of $R$), and we have the estimate $|\tau_{ir}(m)| \leq d(m)$ (``Ramanujan'').  This explains the second-listed error term in \eqref{eq:EEisbound}.

We examine the residues now.  The poles at $u = 1+ 2ir \pm 2iT$ and $u = 1- 2ir \pm 2iT$ give a very small contribution because the inner integral over $s$ is small unless $|r| \leq R (TY)^{\varepsilon} = o(T)$, and $G(u)$ is small at height $T$.  We work with the residue at $u=1+2ir$ as the other is similar.  By a calculation, we have that this residue, denoted say $E_{\text{Eis}}^{r}$ is
\begin{multline}
\label{eq:EEisresidue}
E_{\text{Eis}}^{r} = \sum_{\pm} \frac{1}{\pi} \intR \frac{G(1+2ir)}{1+2ir}  2 (2\pi)^{\pm 2iT -2-2ir}
 \frac{\tau_{ir}(m)}{\zeta(1-2ir)} \zeta(1 + 2ir \mp 2iT) \zeta(1  \mp 2iT) 
\\
\frac{1}{2\pi i} \int_{(\sigma)} \frac12 \widetilde{g}(s) m^{\half - s} \Gamma(\tfrac32-s+ir)  \Gamma(\tfrac32-s+ir \mp 2iT) \Gamma(s-\tfrac12 + ir) \Gamma(s-\tfrac12 - ir)  
\\
\times [\cosh(\pi T) \sin(\pi s) - \cos(\pi(s-\tfrac12 -ir \pm iT)) \cosh(\pi r)]
ds.
\end{multline}
With a cursory examination, it appears that the integrand apparently passes through a pole at $r = \pm T$ (from the pole of the zeta function), but note that the third line of \eqref{eq:EEisresidue} vanishes there.  This is the reason we combined the $+$ and $-$ cases.

Now we can fix $\sigma$ with $\frac34 < \sigma \leq 1$ and shift the $r$-contour to $\text{Im}(r) = 1/4$ without crossing any poles.
From the decay of $G$, we can truncate the $r$-integral at $(TY)^{\varepsilon}$, and then we obtain a bound
\begin{equation}
 E_{\text{Eis}}^{r} \ll m^{1-\sigma} T^{1/6} Y^{\sigma} T^{3/4-\sigma} (TY)^{\varepsilon} \int_{|t| \leq R (TY)^{\varepsilon}} (1 + |t|)^{\sigma - \frac54} dt \ll T^{\frac{11}{12} - \sigma} Y^{\sigma} m^{3/4-\sigma} R^{\sigma - \frac14} (TY)^{\varepsilon}.
\end{equation}
Taking $\sigma =3/4 + \varepsilon$, this becomes
\begin{equation}
 E_{\text{Eis}}^{r} \ll T^{1/6}  Y^{3/4}  R^{1/2} (TY)^{\varepsilon},
\end{equation}
which is the first-stated bound appearing in \eqref{eq:EEisbound}.

This completes the proof of Theorem \ref{thm:SCS}.

\end{document}